\def \udot {{}^{\textstyle .}}
\newtheorem{Thm}{Theorem}[section]
\newtheorem{Cor}[Thm]{Corollary}
\newtheorem{Prop}[Thm]{Proposition}
\newtheorem{Lem}[Thm]{Lemma}
\theoremstyle{definition}
\newtheorem{Def}[Thm]{Definition}
\newtheorem{Not}[Thm]{Notation}
\theoremstyle{remark}
\numberwithin{equation}{section}
\newcommand{\Aut}{\operatorname{Aut}}
\newcommand{\Hom}{\operatorname{Hom}}
\newcommand{\Mor}{\operatorname{Mor}}
\newcommand{\Syl}{\operatorname{Syl}}
\newcommand{\Out}{\operatorname{Out}}
\newcommand{\Dih}{\operatorname{Dih}}
\newcommand{\Inn}{\operatorname{Inn}}
\newcommand{\Stab}{\operatorname{Stab}}
\newcommand{\Ly}{\operatorname{Ly}}
\def \a {\alpha}\def \b {\beta}
\newcommand{\Sym}{\operatorname{Sym}}
\newcommand{\GSp}{\operatorname{GSp}}
\newcommand{\Alt}{\operatorname{Alt}}
\newcommand{\M}{\operatorname{M}}
\newcommand{\HN}{\operatorname{HN}}
\newcommand{\Mo}{\operatorname{M}}
\newcommand{\BM}{\operatorname{B}}
\newcommand{\Gee}{\operatorname{G}}
\newcommand{\Sp}{\operatorname{Sp}}
\newcommand{\Mat}{\operatorname{Mat}}
\newcommand{\PSL}{\operatorname{PSL}}
\newcommand{\GL}{\operatorname{GL}}
\newcommand{\SL}{\operatorname{SL}}
\newcommand{\PSp}{\operatorname{PSp}}
\renewcommand{\epsilon}{\varepsilon}
\renewcommand{\bar}{\overline}
\renewcommand{\hat}{\widehat}
\renewcommand{\leq}{\leqslant}
\renewcommand{\geq}{\geqslant}
\newcommand{\X}{\mathcal{X} }
\newcommand{\F}{\mathcal{F}}
\newcommand{\G}{\mathcal{G}}
\newcommand{\U}{\mathcal{U}}
\renewcommand{\P}{\mathcal{P}}
\newcommand{\E}{\mathcal{E}}
\newcommand{\W}{\mathcal{W}}
\newcommand{\C}{\mathcal{C}}
\def \Frob {\mathrm{Frob}}
\def \PSU{\mathrm{PSU}}
\begin{document}

\title{Fusion systems over a Sylow $p$-subgroup of $\Gee_2(p)$}

\author{Chris Parker}
\address{
School of Mathematics\\
University of Birmingham\\
Edgbaston\\
Birmingham B15 2TT\\ United Kingdom
} \email{c.w.parker@bham.ac.uk}

\author{Jason Semeraro}
\address{Heilbronn Institute for Mathematical Research, Department of Mathematics, University of Bristol,  United Kingdom}
\email{js13525@bristol.ac.uk}

\begin{abstract} For $S$ a Sylow $p$-subgroup of the group $\Gee_2(p)$ for $p$ odd, up to isomorphism of fusion systems,
we determine all saturated fusion systems $\F$ on $S$ with $O_p(\F)=1$. For $p \ne 7$, all such fusion systems are realized by  finite groups whereas for $p=7$ there are 29 saturated fusion systems   of which 27 are exotic.
\end{abstract}

\keywords{groups of Lie type, fusion systems, exotic fusion systems}

\subjclass[2010]{20D20, 20D05}

\maketitle

\section{Introduction}

Let $p \ge 3$ and $S$ be a Sylow $p$-subgroup of the group $\Gee_2(p)$. The purpose of this paper is
to give a complete classification of all saturated fusion systems $\F$ over $S$ with $O_p(\F)=1$.
This may be viewed as a contribution to a program which aims to classify all saturated fusion
systems over maximal unipotent subgroups of finite groups of Lie Type of rank 2 and is thus a
natural continuation of work carried out in \cite{C,  HS,RV}. In a different direction, when $p\ge 5$ our paper
contributes to the problem of listing all saturated fusion systems $\F$ over a Sylow $p$-subgroup
with an  extraspecial $p$-subgroup of index $p$, currently under investigation by the first
author and Raul Moragues Moncho.  An infinite family of such fusion systems was discovered recently by the
first author and Stroth \cite{PS}, and it is the $p$-group underlying the smallest member of this
family on which we focus our attention. It will also form part of the classification of fusion systems of sectional $p$-rank $4$ for odd primes $p$. All of these contributions add to our knowledge of saturated fusion systems defined on $p$-groups for odd primes $p$ and so extend our  understanding of how exotic fusion systems arise at odd primes \cite[Problem 7.4]{AKO} and \cite[Problem 7.6]{AO}.

When $p \ge 5$, the problem naturally breaks into three stages. First in Section \ref{s:constructs} we give a presentation for $S$ and provide a concrete description of its action on the unique extraspecial subgroup $Q$ of index $p$. Using this description, if $\F$ is a saturated fusion system on $S$ we whittle down the possibilities for the $\F$-essential subgroups in Section \ref{s:candidates} by using results concerning the way in which automorphisms of a $p$-group act on various subgroups and conditions on the existence of certain lifts of automorphism groups which arise because of the saturation axiom.

Armed with a small list of possibilities for the $\F$-essential subgroups, in Section \ref{s:determining} we proceed to analyse the various combinations of essential subgroups and morphisms for $\F$ which have the potential to lead to a saturated fusion system. Here we are especially reliant on a short list of possibilities for the group $\Aut_\F(Q)$ which follows from some results obtained by the second author together with Craven and Oliver in \cite{COS}. One issue that arises during this stage is the question of whether or not a fusion system is uniquely determined  by the above data. We develop some techniques to answer this, especially relying on some delicate calculations of automorphism groups carried out at the end of Section \ref{s:constructs}. Generally, our scheme is as follows: suppose for simplicity that we are in the typical case where there are just two essential subgroups $Q$ and $R$ in $\F$, which are the unipotent radical subgroups of proper parabolic subgroups  of $\Gee_2(p)$ lying in $S$. In this generic case we know that $\Out_\F(R)$ contains a normal subgroup isomorphic to $\SL_2(p)$ by Lemma~\ref{l:AFR1}.  The saturation axiom and the presence of this subgroup of $\Out_\F(R)$ combine to give the existence of certain morphisms in $\Aut_\F(S)$ and then in  $\Aut_\F(Q)$ by restriction. Now we use just the existence of these automorphisms to  determine the possibilities for the structure of $\Aut_\F(Q)$ as a subgroup of $\Aut(Q)$ containing $\Aut_S(Q)$. Using the Model Theorem \cite[Theorem I.4.9]{AKO},  we discover that $N_\F(Q)$ and $\Aut_\F(S)$ are uniquely determined. Since we are allowed to adjust a fusion system by morphisms in $\Aut(S)$ while preserving its isomorphism type, we may from this point on assume that $\Aut_\F(S)$ is a fixed subgroup of $\Aut(S)$ identified as a subgroup of $\Aut_B(S)$ where $B$ is as defined in Section \ref{s:constructs}. This allows us to make explicit calculations with elements of $\Aut_\F(S)$. Next we consider the subgroup  $N_{\Aut_\F(R)}(\Aut_S(R))$ given by restricting the morphisms in  $\Aut_\F(S)$  to $R$.  Employing Lemma~\ref{l:runique}, we already know that, in these favourable  circumstances, in $\Aut(R)$ there is a unique subgroup $X$ containing $N_{\Aut_\F(R)}(\Aut_S(R))$ with $\Aut_{S}(R) \in \Syl_p(X)$ and $O^{p'}(X) \cong \SL_2(p)$. Thus we must have $\Aut_\F(R)=X$ and this is uniquely determined as a subgroup of $\Aut(R)$. Thus we see that all the morphisms of the essential subgroups of $\F$  are given uniquely by the group $\Aut_\F(S)$ and so the fusion systems are uniquely determined.

In the final stage, in Section \ref{s:final}, we examine each candidate fusion system $\F$ in turn and establish (a) its existence, (b) whether it is saturated and (c) whether it is realizable as the fusion system of a finite (almost simple) group. Here the fact that the fusion systems are uniquely determined by the structure of the automorphism groups of their essential subgroups is used implicitly. In all but finitely many cases, we obtain an affirmative answer to (a) and (b) from an affirmative answer to (c). In the remaining cases, it is always possible to realize $\F$ as the fusion system of a free amalgamated product of finite groups and saturation is established using the geometry of the associated coset graph (Theorem \ref{They are saturated}).

When $p=3$, the two unipotent radical subgroups of $\Gee_2(3)$ are isomorphic so that although the overall strategy of the proof is the same, the individual arguments are  somewhat different. In addition, in this case there is only one group to consider and we can support our arguments by computer calculations \cite{magma} especially in the proof of uniqueness of the fusion systems.  This case is treated in the final section.

Our main theorem is as follows:

\begin{Thm}\label{t:main1}
Suppose that $p \geq 3$, $S$ is a Sylow $p$-subgroup of $\Gee_2(p)$ and $\F$ is a saturated fusion system over $S$ with $O_p(\F)=1$. Then either $\F$ is isomorphic to the fusion system of $\Gee_2(p)$, $\Aut(\Gee_2(3))$ or $p \in \{5,7\}$ and $\F$ is isomorphic to one of 32 examples tabulated in Table~\ref{table1}. Furthermore, each of the  fusion systems given in Table~\ref{table1}  is saturated.
\end{Thm}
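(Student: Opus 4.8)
The plan is to establish the theorem by a long case analysis that follows the three-stage strategy outlined in the introduction, treating the case $p\ge 5$ and the case $p=3$ separately. The backbone of the argument is the classification of $\F$-essential subgroups: by the theory of saturated fusion systems, $\F$ is generated by $\Aut_\F(S)$ together with $\Aut_\F(E)$ as $E$ ranges over the $\F$-essential subgroups (Alperin's fusion theorem), so the entire classification reduces to (i) pinning down the possible essential subgroups inside $S$, (ii) pinning down the possible automizers $\Out_\F(E)$ and $\Aut_\F(S)$, and (iii) checking which combinations assemble into a saturated system and whether that system is realizable. I would first invoke the structural results of Section~\ref{s:candidates} to reduce the candidate essential subgroups to a short explicit list, the key players being the extraspecial subgroup $Q$ of index $p$ and the unipotent radical $R$ of the other maximal parabolic; the condition $O_p(\F)=1$ is what forces $\F$ to have essential subgroups at all, since otherwise $S\trianglelefteq\F$ would give $O_p(\F)=S\ne 1$.

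\textbf{Reducing the automizers.} Once the candidate essentials are fixed, I would determine the possible automorphism groups. For $Q$ this rests on the short list for $\Aut_\F(Q)$ coming from \cite{COS}, and for $R$ on Lemma~\ref{l:AFR1} (which supplies a normal $\SL_2(p)$ in $\Out_\F(R)$) together with Lemma~\ref{l:runique} (which makes $\Aut_\F(R)$ the \emph{unique} overgroup of $N_{\Aut_\F(R)}(\Aut_S(R))$ with the stated Sylow and $O^{p'}$ structure). The crucial uniqueness mechanism is the one spelled out in the introduction: the saturation axiom propagates the $\SL_2(p)$ in $\Out_\F(R)$ up to morphisms in $\Aut_\F(S)$ and hence, by restriction, into $\Aut_\F(Q)$; the Model Theorem \cite[Theorem I.4.9]{AKO} then forces $N_\F(Q)$ and $\Aut_\F(S)$ to be uniquely determined. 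After normalizing by an element of $\Aut(S)$ — legitimate because this preserves the isomorphism type of $\F$ — one may take $\Aut_\F(S)$ to be a fixed subgroup of $\Aut_B(S)$, after which every remaining automizer is determined. This shows each candidate $\F$ is \emph{unique} given its essential-subgroup data, which is exactly what lets the list be finite and explicit.

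\textbf{Existence, saturation, and realizability.} With a finite list of candidate systems in hand, I would run through Section~\ref{s:final}: for each candidate, exhibit a group (or, failing that, a free amalgamated product of finite groups) realizing the prescribed local data, and deduce saturation. For the generic primes $p\ne 7$ the realizing group is an actual finite almost simple group, so existence and saturation come for free from realizability; for the sporadic cases $p\in\{5,7\}$, and in particular the exotic ones at $p=7$, saturation cannot be read off from a finite group and must instead be verified directly via the coset graph of the amalgam, which is precisely the content of Theorem~\ref{They are saturated}. Counting the surviving non-isomorphic candidates then yields the fusion system of $\Gee_2(p)$ and of $\Aut(\Gee_2(3))$ in general, plus the $32$ tabulated examples when $p\in\{5,7\}$. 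Finally the case $p=3$ is handled on its own, where the two unipotent radicals coincide in isomorphism type and the uniqueness-of-$\F$ step is corroborated by the \textsc{Magma} computations \cite{magma}.

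\textbf{The main obstacle.} I expect the hard part to be the uniqueness and saturation of the genuinely exotic systems at $p=7$. Ruling out spurious candidate automizers requires the delicate automorphism-group computations promised at the end of Section~\ref{s:constructs}, and verifying saturation for a system with no realizing finite group forces one to analyze the geometry of the coset graph of the amalgam by hand rather than borrowing saturation from an ambient group. Organizing the case analysis so that no candidate is double-counted or overlooked — i.e.\ correctly identifying isomorphisms of fusion systems that collapse apparently-distinct candidates and thereby arriving at the exact count in Table~\ref{table1} — is the principal bookkeeping difficulty underlying the whole theorem.
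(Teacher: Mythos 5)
Your proposal follows the paper's own three-stage strategy essentially verbatim: the reduction to essential subgroups via Alperin's theorem, the determination of automizers through the \cite{COS} list for $Q$ and Lemmas~\ref{l:AFR1} and \ref{l:runique} for $R$, the uniqueness argument via the Model Theorem after normalizing $\Aut_\F(S)$ inside $\Aut_B(S)$, the saturation of the exotic $p=7$ systems via coset graphs of amalgams (Theorem~\ref{They are saturated}), and the separate \textsc{Magma}-assisted treatment of $p=3$. This is the same decomposition and the same key lemmas as the paper, so the proposal is correct in approach.
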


The examples described in Table~\ref{table1} include  the fusion systems of the sporadic simple groups $\Ly$, $\HN$, $\mathrm B$, the almost simple group $\Aut(\HN)$ (all for $p=5$) and the sporadic simple group $\Mo$ when $p=7$. It also includes  27  exotic  fusion systems which all occur when $p=7$.  Two of the exotic systems  were discovered by Parker and Stroth \cite{PS} and the remainder are new to this article. They all are in some way related to the Monster sporadic simple group, though it is \emph{not} the case that the Monster is ``universal" in the sense that it ``contains" all the smaller examples.  This is somehow a subtle point. The fact is, and this plays no part in the classification, that in $\GSp_4(7)$, the subgroup $3 \times 2\udot \Sym(7)$  does not contain $\GL_2(7)$ but rather only a half of this group and so the fusion system that comes from $\Gee_2(7)$ is not contained in the fusion system determined by the Monster when $p=7$.

\begin{Cor}
Suppose that $p \geq 3$, $S$ is a Sylow $p$-subgroup of $\Gee_2(p)$ and $\F$ is a saturated fusion system over $S$ with $O_p(\F)=1$. Then either $\F$ is realized by a finite group or $p=7$ and  $\F$ is one of 27 fusion systems listed in Table \ref{table1}.
\end{Cor}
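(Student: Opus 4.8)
The plan is to read the Corollary off Theorem~\ref{t:main1}, of which it is the realizability half made explicit. First I would apply Theorem~\ref{t:main1} to reduce to the case that, up to isomorphism, $\F$ is the fusion system of $\Gee_2(p)$, the fusion system of $\Aut(\Gee_2(3))$, or (when $p\in\{5,7\}$) one of the $32$ systems listed in Table~\ref{table1}.

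The first two cases require nothing further: the fusion system of $\Gee_2(p)$ is realized by $\Gee_2(p)$ and that of $\Aut(\Gee_2(3))$ by $\Aut(\Gee_2(3))$, so the first alternative of the Corollary holds. It then remains to classify the entries of Table~\ref{table1} according to their realizing group. For $p=5$ every such system is realized by a finite almost simple group, namely one of $\Ly$, $\HN$, $\BM$ or $\Aut(\HN)$; hence for $p\neq 7$ the conclusion is simply that $\F$ is realizable, which is now established. For $p=7$ exactly one entry of the table is realized, by the Monster $\Mo$, and the remaining $27$ entries are exotic. Since the fusion systems of $\Gee_2(7)$ and $\Mo$ are realizable and the $27$ exotic entries are precisely the systems named in the Corollary, the dichotomy follows after matching these counts against those recorded in the abstract.

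The only genuine content, and hence the main obstacle, lies not in this bookkeeping but in the assertion that the $27$ systems for $p=7$ are realized by \emph{no} finite group at all. That exoticity is exactly what the case-by-case analysis of Section~\ref{s:final} establishes: each such $\F$ is first realized concretely as the fusion system of a free amalgamated product, and then one rules out any finite realization using the classification of finite simple groups together with the restrictions on $\Aut_\F(Q)$ coming from \cite{COS}. Granting those results, the Corollary is obtained simply by collating the realizing groups attached to the systems in Theorem~\ref{t:main1} and Table~\ref{table1}.
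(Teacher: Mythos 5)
Your proposal is correct and matches the paper's (implicit) derivation: the Corollary is read off from Theorem~\ref{t:main1} together with Theorem~\ref{Some are exotic}, which identifies the realizable systems via Theorem~\ref{t:simpfus} and leaves the remaining $27$ systems for $p=7$ exotic. The only quibble is that the exoticity argument in Section~\ref{s:final} rests on the CFSG-based Sylow analysis of Theorem~\ref{t:simpfus} rather than on the \cite{COS} restrictions on $\Aut_\F(Q)$, but this does not affect the correctness of your outline.
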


We close the introduction with a few words about our notation. We use \cite{AKO,Cr1, GOR} for standard group theoretic and fusion theoretic  conventions. Particularly we use \cite{AKO,Cr1} as a sources for the  introduction of fusion systems in Section~\ref{s:pgroup}. The field of order $p$ is denoted by $\mathbb F_p$, the symmetric and alternating groups of degree $n$ are denoted by $\Alt(n)$ and $\Sym(n)$ respectively and other than that   we follow classical nomenclature for the finite simple groups and their near relatives. The Frobenius group of order $n$ is written as $\Frob(n)$ and cyclic groups are mostly represented just by their order. The notation $2^{1+4}_-$ denotes an extraspecial group of $-$-type and order $2^5$ and, for $p$ odd,  $p^{1+2}_+$  is extraspecial of order $p^3$ and  exponent $p$. We use $G=A\circ B$ to indicate that $G$ is a central product of the groups $A$ and $B$. We follow the  \textsc{atlas} conventions for group extensions. This means that an ``upper" dot informs the reader that an extension is non-split. When we write $G \sim A.B$ we read that $G$ has a normal subgroup isomorphic to $A$ and a corresponding  quotient isomorphic to $B$. This provides a handy but inaccurate description of group structures.  In our case, each time we use this notation the groups will be determined uniquely up to isomorphism as a subgroup of $\GSp_4(p)$ or $\GL_2(p)$. We   point out that the notation $\SL_2(7).2$ will denote the unique normal subgroup of $\GL_2(7)$ of index $3$.

\section{Preliminaries: fusion systems and group theory}\label{s:pgroup}

We begin by recalling the definition of a fusion system. For a group $G$, $p$-subgroup $S$ of $G$ and $P,Q \le S$ define $$N_G(P,Q)=\{g \in G \mid P^g \le Q\} \mbox{ and } \Hom_G(P,Q)=\{c_g \mid g \in N_G(P,Q)\},$$ where $c_g$ is the conjugation map induced by $g$: $$c_g: x \mapsto g^{-1}xg.$$ Define  $\F_S(G)$ to be the category with objects all the subgroups of $S$, and for objects $P$ and $Q$ of $\F_S(G)$,  the set of morphisms from $P$ to $Q$ is   $$\Mor_{\F_S(G)}(P,Q)=\Hom_G(P,Q).$$ Then $\F_S(G)$ is an example of a \textit{fusion system} on $S$ as defined, for example, in \cite[Definition 2.1]{AKO}.  If $S$ is a finite $p$-group and $\F$ is  a fusion system on $S$ we say that $\F$ is \textit{realizable} if there exists a \textit{finite} group $G$ with $S \in \Syl_p(G)$ such that $\F=\F_S(G)$. Otherwise $\F$ is said to be \textit{exotic.} If $P \le S$, then define the set of \emph{$\F$-conjugates} of $P$ to be $$P^\F=\{P\alpha \mid \alpha \in \Hom_\F(P,S)\}$$  and similarly, for $g \in S$,  we use  $$g^\F=\{g\alpha \mid \alpha \in \Hom_\F(\langle g \rangle, S)\}$$ for the set of images of $g$ under morphisms in $\F$.    For $P \le S$, we put $\Aut_\F(P) = \Mor_\F(P,P)$,  $\Aut_S(P) = \Hom_S(P,P)$, $\Inn(P)$ the inner automorphisms of $P$ and $\Out_\F(P)= \Aut_\F(P)/\Inn(P)$. Similarly $\Out_S(P)= \Aut_S(P)/\Inn(P)$. Note that $\Aut_\F(Q) \cong \Aut_\F(P)$ for each $Q \in P^\F$. The set  of all morphisms in $\F$ is denoted by $\Mor(\F)$. Two fusion systems $\F$ and $\F'$ on $S$ are isomorphic if there exists $\alpha \in \Aut(S)$ such that for all $P,Q \le S$, $$\Hom_{\F'}(P\alpha,Q\alpha)=\{\alpha^{-1}|_{P\alpha} \theta \alpha\mid\theta \in \Hom_\F(P,Q)\}.$$ We write $\F \cong \F'$ or $\F' = \F^\alpha$ if we wish to specify $\alpha$.   A proper subgroup $H < G$ of a finite group $G$ is \textit{strongly $p$-embedded} in $G$ if $p$ divides $ |H|$ and $p$  does not divide $|H \cap H^g|$ for each $g \in G \setminus H.$ The next definition summarizes the main concepts we will need when dealing with fusion systems:

\begin{Def}
Let $\F$ be a fusion system on a finite $p$-group $S$ and $P,Q \le S$. Then,
\begin{itemize}
\item[(a)] $P$ is \textit{fully $\F$-normalized} provided $|N_S(P)| \ge |N_S(Q)|$ for all $Q \in \P^\F$;
\item[(b)] $P$ is \textit{fully $\F$-centralized} provided $|C_S(P)| \ge |C_S(Q)|$  for all $Q \in \P^\F$;
\item[(c)]   $P$ is \textit{fully $\F$-automized} provided $\Aut_S(P) \in \Syl_p(\Aut_\F(P))$;
\item[(d)] $P$ is \textit{$\F$-centric} if $C_S(Q)=Z(Q)$ for all $Q \in P^\F$;
\item[(e)] $P$ is \textit{$\F$-essential} if $P < S$, $P$ is $\F$-centric and  fully $\F$-normalized and $\Out_\F(P)$ contains a strongly $p$-embedded subgroup; write $\E_\F$ (or simply $\E$) to denote the set of $\F$-essential subgroups of $\F$;
\item[(f)] $P$ is \textit{strongly $\F$-closed} if for each $g \in P$, $g^\F \subseteq P$;
\item[(g)] if $\alpha \in \Hom_\F(P,Q)$ is an isomorphism, $$N_\alpha=\{g \in N_S(P) \mid \alpha^{-1}c_g \alpha \in \Aut_S(Q)\}$$ is the $\alpha$-\emph{extension control subgroup} of $S$;
\item[(h)] $Q$ is $\F$-\emph{receptive} provided for all isomorphisms $\alpha \in \Hom_\F(P,Q)$, there exists $\widetilde{\alpha} \in \Hom_\F(N_\alpha,S)$ such that $\widetilde{\alpha}|_P =\alpha;$
\item[(i)] $P$ is $\F$-\emph{saturated} provided there exists $Q \in P^\F$ such that $Q$ is simultaneously
\begin{itemize}
\item[(1)] fully $\F$-automized; and
\item[(2)] $\F$-receptive;
\end{itemize}
\item[(j)] $\F$ is \emph{saturated} if every subgroup of $S$ is $\F$-saturated.
\end{itemize}
\end{Def}

Saturated fusion systems are the main focus of study. Suppose that $\F$ is saturated. Then,  by \cite[Lemma 2.6 (c)]{AKO}, a subgroup $Q$ of $S$ is fully $\F$-normalized if and only if it is fully $\F$-automized and $\F$-receptive.  In particular,  $\F$-essential subgroups are both  fully $\F$-automized and $\F$-receptive.
We shall exploit the saturation property as follows.  Suppose that $Q$ is $\F$-receptive.   If $\alpha \in N_{\Aut_\F(Q)}(\Aut_S(Q))$, then $$N_\a= \{x\in N_S(Q)\mid \a^{-1}c_x\alpha \in N_{\Aut_\F(Q)}(\Aut_S(Q))\} =N_S(Q)$$ and so there exists $\widetilde \a \in \Hom_\F(N_S(Q),S)$ extending $\alpha$. Since $$N_S(Q)\widetilde \alpha \le N_S(Q\widetilde \alpha )= N_S(Q\alpha)= N_S(Q),$$ we have $\widetilde \a \in \Aut_\F(N_S(Q))$.  Therefore every $\alpha \in N_{\Aut_\F(Q)}(\Aut_S(Q))$ extends to an element of $\Aut_\F(N_S(Q))$. We shall often use the fact that $O_p(\Aut_\F(E))= \Inn(E)$ if $E$ is $\F$-essential which follows as $\Out_\F(E)$ has a strongly $p$-embedded subgroup.

Recall that when $G$ is a finite group and $S \in \Syl_p(G)$, we have that $\F_S(G)$ is saturated. If $X$ is a set of injective morphisms between various subgroups of $S$, then we may define $\langle X \rangle$ to be the fusion system obtained by intersecting all the fusion systems on $S$ which have the members of $X$ as morphisms.

The next result  is commonly referred to in the literature as ``Alperin's Theorem."

\begin{Thm}\label{t:alp}
Let $\F$ be a saturated fusion system on a finite $p$-group $S$. Then
 $$\F=\langle \Aut_\F(E) \mid E \in \E_\F \cup \{S\} \rangle.$$
\end{Thm}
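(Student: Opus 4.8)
The plan is to establish the Alperin--Goldschmidt fusion theorem by downward induction on the order of the domain. Write $\F_0=\langle \Aut_\F(E)\mid E\in\E_\F\cup\{S\}\rangle$. Each generating set consists of $\F$-morphisms, so $\F_0\subseteq\F$, and $\F_0$ is a fusion system on $S$ and hence is closed under composition, restriction, and inversion of its isomorphisms. In particular $\F_0$ contains every inclusion $P\hookrightarrow Q$ (a restriction of $\Id_S\in\Aut_\F(S)$) and contains $\Inn(P)$ for every $P\le S$ (restrictions of $\Inn(S)\le\Aut_\F(S)$). It remains to prove $\F\subseteq\F_0$, and since any morphism of $\F$ factors as an $\F$-isomorphism onto its image followed by an inclusion, it suffices to show that every $\F$-isomorphism lies in $\F_0$. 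I would prove this by downward induction on $|P|$ for $\alpha\in\Iso_\F(P,P')$; the base case $P=S$ is immediate, as then $\alpha\in\Aut_\F(S)\subseteq\F_0$.

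For the inductive step with $|P|<|S|$, fix a fully $\F$-normalized $R\in P^\F$. The standard extension-to-the-normalizer consequence of saturation (see \cite{AKO}) provides, for each $Q\in R^\F$, an $\F$-morphism $\varphi\colon N_S(Q)\to N_S(R)$ with $Q\varphi=R$. Applying this to $Q=P$ and to $Q=P'$ yields morphisms $\varphi,\varphi'$ whose domains $N_S(P),N_S(P')$ strictly contain $P,P'$ (as these are proper in $S$), so $\varphi,\varphi'\in\F_0$ by the inductive hypothesis, and hence so do the restrictions $\psi:=\varphi|_P\colon P\to R$ and $\psi':=\varphi'|_{P'}\colon P'\to R$. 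Setting $\gamma:=\psi^{-1}\alpha\psi'\in\Aut_\F(R)$ gives $\alpha=\psi\,\gamma\,(\psi')^{-1}$, so the whole problem reduces to showing that $\Aut_\F(R)\subseteq\F_0$ for every fully normalized $R\le S$. If $R=S$ this is immediate, so assume $R<S$.

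The crux is the key proposition: if $R$ is fully $\F$-normalized, $R<S$, and $R\notin\E_\F$, then every element of $\Aut_\F(R)$ is a product of restrictions to $R$ of $\F$-automorphisms of subgroups strictly containing $R$, all of which lie in $\F_0$ by induction; thus $\Aut_\F(R)\subseteq\F_0$. I would prove this in two stages. First reduce to $R$ being $\F$-centric: a fully normalized non-centric $R$ has $C_S(R)\not\le R$, and the $\F$-centric subgroups are shown to form a conjugation family by the same push-up technique, so non-centric $R$ contributes nothing new. Second, suppose $R$ is $\F$-centric, fully normalized, $R<S$, and non-essential. Full automization gives $\Out_S(R)\in\Syl_p(\Out_\F(R))$, while non-essentiality means $\Out_\F(R)$ has no strongly $p$-embedded subgroup. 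The decisive input is the group-theoretic fact that a finite group with no strongly $p$-embedded subgroup is generated by the normalizers of the nonidentity $p$-subgroups of a fixed Sylow $p$-subgroup. Applied to $\Out_\F(R)$ with Sylow subgroup $\Out_S(R)$, this expresses $\gamma$ modulo $\Inn(R)$ as a product of elements lying in groups $N_{\Aut_\F(R)}(\Aut_P(R))$ with $R<P\le N_S(R)$; and any $\psi$ normalizing $\Aut_P(R)$ has $P\le N_\psi$, so by receptivity it extends to $\Aut_\F(P)$ with $P>R$ and hence lies in $\F_0$.

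The main obstacle is exactly this key proposition, and inside it the group-theoretic generation lemma together with the verification that its normalizer generators lift to automorphisms of strictly larger subgroups via receptivity; granting these, both inductions close. A secondary point demanding care is the bookkeeping that guarantees every reduction---to isomorphisms, to a fully normalized representative, and to $\F$-centric subgroups---terminates at an essential subgroup, at $S$, or at a strictly larger domain to which the inductive hypothesis applies.
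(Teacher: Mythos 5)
The paper does not actually prove this statement: Theorem~\ref{t:alp} is quoted as a standard result (``Alperin's Theorem'', essentially \cite[Theorem I.3.5]{AKO}), so there is no in-paper argument to compare yours against. Your proposal is a correct outline of the standard proof. The double downward induction --- first reducing an arbitrary $\F$-isomorphism to an element of $\Aut_\F(R)$ for a fully $\F$-normalized representative $R$ via the morphisms $N_S(Q)\to N_S(R)$ supplied by saturation, then disposing of non-essential $R$ --- together with the use of receptivity to push automorphisms up to strictly larger domains and the group-theoretic fact that a finite group with nontrivial Sylow $p$-subgroup $T$ and no strongly $p$-embedded subgroup is generated by the $N_G(U)$ for $1\ne U\le T$, are exactly the ingredients of the published proof; note that the generation lemma needs $\Out_S(R)\ne 1$, which holds because $R$ is centric and proper so $N_S(R)>R=RC_S(R)$. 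The one place I would ask you to tighten is the non-centric reduction: rather than invoking ``conjugation families'', observe that for every $\alpha\in\Aut_\F(R)$ one has $RC_S(R)\le N_\alpha$ (centralizing elements induce the identity on $R$), so if $R$ is fully $\F$-normalized, hence fully $\F$-centralized and $\F$-receptive, and not $\F$-centric, then $\alpha$ extends to a morphism on the strictly larger subgroup $RC_S(R)$ and the inductive hypothesis applies; the phrase ``contributes nothing new'' should be replaced by this one-line argument. With that filled in, and with the routine verification that restrictions of inner automorphisms of $S$ put $\Inn(R)$ inside $\F_0$, your proof is complete.
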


For $Q$ a subgroup of $S$, we take the definition of $N_\F(Q)$ from \cite[Definition I.5.3]{AKO} and note that when $Q$ is fully $\F$-normalised, $N_\F(Q)$ is a saturated fusion system on $N_S(Q)$ by \cite[Theorem I.5.5]{AKO}.

 A subgroup $Q \le S$ is \textit{normal} in $\F$ if  and only if $N_\F(Q)=\F$ which is if and only if $Q \le \bigcap_{P\in \E_\F}P$  and, for $P \in \E_\F \cup \{S\}$, $Q$ is $\Aut_\F(P)$-invariant (see \cite[Proposition 4.5]{AKO}.) The subgroup $O_p(\F)$ of $S$ is the largest normal subgroup of $\F$. Recall the definition of $O^{p'}(\F)$ which can be found  in \cite[Section 7.5]{Cr1}, and that a subsystem of $\F$ has \textit{index prime to $p$}  (or \textit{$p'$-index}) in $\F$ if and only if it contains $O^{p'}(\F)$.  Define $$O^{p'}_*(\F)= \langle O^{p'}(\Aut_\F(R)) \mid R \le S\rangle.$$
Then put \begin{eqnarray*}\Aut_{\F}^0(S) &=&\langle \alpha \in \Aut_\F(S)\mid \alpha|_P \in \Hom_{O^{p'}_*(\F)}(P,S) \text { for some }\F\text{-centric  } P\le S\rangle,\end{eqnarray*}
and set $$\Gamma_{p'}(\F) = \Aut_\F(S)/\Aut_{\F}^0(S).$$ We have the following:

\begin{Thm}\label{Op'} Suppose that $\F$ is a saturated fusion system on $S$.  Then there is a one-to-one correspondence between saturated sub-fusion systems of $\F$ on $S$ of index prime to $p$ and subgroups of $\Gamma_{p'}(\F)$.
\end{Thm}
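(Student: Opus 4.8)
The plan is to exhibit two inclusion-preserving maps, inverse to one another, between the set of saturated subsystems of $\F$ on $S$ of index prime to $p$ and the set of subgroups of $\Gamma_{p'}(\F)$. Write $K = O^{p'}_*(\F)$, so that $K$ records the data of the groups $O^{p'}(\Aut_\F(P))$ for all $P\le S$, and recall that $\Aut_\F^0(S)$ consists of those $\alpha \in \Aut_\F(S)$ restricting into $K$ on some $\F$-centric subgroup. First I would check that $\Gamma_{p'}(\F)$ is a genuine group, indeed a $p'$-group. Since each $O^{p'}(\Aut_\F(P))$ is characteristic in $\Aut_\F(P)$ and the class of $\F$-centric subgroups is closed under $\F$-conjugacy, the defining condition for membership in $\Aut_\F^0(S)$ is stable under conjugation by $\Aut_\F(S)$, so $\Aut_\F^0(S) \trianglelefteq \Aut_\F(S)$; and taking $P=S$ shows $O^{p'}(\Aut_\F(S)) \le \Aut_\F^0(S)$, whence $\Gamma_{p'}(\F)$ is a quotient of the $p'$-group $\Aut_\F(S)/O^{p'}(\Aut_\F(S))$.

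The forward map sends a saturated subsystem $\F'$ on $S$ of $p'$-index to the subgroup $\Aut_{\F'}(S)/\Aut_\F^0(S)\le\Gamma_{p'}(\F)$. This is well defined: by hypothesis $\F'$ contains $O^{p'}(\F)$, so $\Aut_{\F'}(P)\supseteq O^{p'}(\Aut_\F(P))$ for every $P$, the restrictions into $K$ are morphisms of $\F'$, and therefore $\Aut_{\F'}(S)\supseteq\Aut_\F^0(S)$. The backward map sends a subgroup $\Delta$ with $\Aut_\F^0(S)\le\Delta\le\Aut_\F(S)$ to the subsystem $\F_\Delta:=\langle K,\Delta\rangle$, the fusion system generated on $S$ by all the $O^{p'}(\Aut_\F(P))$ together with $\Delta=\Aut_\Delta(S)$. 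Both maps visibly preserve inclusions.

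The content of the theorem then splits into three assertions: (i) $\F_\Delta$ is saturated and of $p'$-index; (ii) $\Aut_{\F_\Delta}(S)=\Delta$, so the forward map recovers $\Delta$; and (iii) every saturated subsystem $\F'$ of $p'$-index equals $\F_{\Aut_{\F'}(S)}$. For (iii) I would invoke Alperin's Theorem~\ref{t:alp}: the morphisms of $\F'$ are generated by the groups $\Aut_{\F'}(E)$ for $E\in\E_{\F'}\cup\{S\}$, and since $\F'$ has $p'$-index each such $\Aut_{\F'}(E)$ lies between $O^{p'}(\Aut_\F(E))$ and $\Aut_\F(E)$ with common Sylow $p$-subgroup $\Aut_S(E)$; a normalizer/counting argument then pins down $\Aut_{\F'}(E)$ in terms of $\Aut_{\F'}(S)$, so $\F'$ is already generated by $K$ and $\Aut_{\F'}(S)$. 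Assertion (ii) is a compatibility check: any $\F_\Delta$-automorphism of $S$ arising as a composite of restrictions of elements of $K$ and of $\Delta$ must, by the very definition of $\Aut_\F^0(S)\le\Delta$, again lie in $\Delta$.

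The hard part is (i), the saturation of $\F_\Delta$. The plan is to transport the two saturation axioms from $\F$. Because $K$ contains $\Aut_S(P)\le O^{p'}(\Aut_\F(P))$ for every $P$, the subsystem $\F_\Delta$ retains the full $p$-local structure of $\F$: $\F_\Delta$-conjugacy classes refine $\F$-conjugacy classes while preserving the relevant Sylow data, and one shows that a subgroup fully $\F$-normalized remains fully $\F_\Delta$-automized and $\F_\Delta$-receptive. The genuinely delicate point is receptivity, where an $\F_\Delta$-isomorphism must be extended over its extension control subgroup $N_\alpha$ using the saturation of the ambient $\F$; this, together with establishing that the constructed system is closed under the generation process, is where essentially all the work lies. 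This is exactly the theory of subsystems of $p'$-index, and rather than reprove it I would appeal to the established framework in \cite[Section~I.7]{AKO} and \cite[Section~7.5]{Cr1}.
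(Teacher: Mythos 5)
Your proposal is correct and in the end takes the same route as the paper: the paper's entire proof is the citation \cite[Theorem 7.7]{AKO}, and your sketch is a faithful outline of that theorem's proof (the two mutually inverse maps, the reduction of a $p'$-index subsystem to $\langle O^{p'}_*(\F),\Aut_{\F'}(S)\rangle$ via Alperin's theorem, and the saturation of $\F_\Delta$ as the hard step) before you, too, defer the details to \cite{AKO} and \cite{Cr1}. Nothing further is needed.
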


\begin{proof}
See \cite[Theorem 7.7]{AKO}.
\end{proof}

When proving that a fusion system is saturated, the following theorem is a basic tool:

\begin{Thm}\label{t:alpconv}
Let $\F$ be a fusion system on a finite $p$-group $S$ and let $\C$ denote the set of all $\F$-centric subgroups. Suppose that $\F=\langle \Aut_\F(P) \mid P \in \C \rangle.$ If $P$ is $\F$-saturated for each $P \in \C$, then  $\F$ is saturated.
\end{Thm}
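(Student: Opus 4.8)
The plan is to reduce the statement to the claim that \emph{every fully $\F$-normalized subgroup of $S$ is both fully $\F$-automized and $\F$-receptive}. This suffices: every subgroup $P$ has a fully $\F$-normalized member in its class $P^\F$, and by clause (i) of the Definition such a member being fully automized and receptive makes $P$ $\F$-saturated, whence $\F$ is saturated by clause (j). Two closure properties of $\C$ are used throughout: the $\F$-centric subgroups are closed under $\F$-conjugacy (immediate from (d)) and under passage to overgroups, since if $P \le Q$ with $P$ centric and $Q' = Q\phi$ is any $\F$-conjugate, then $C_S(Q') \le C_S(P\phi) = Z(P\phi) \le P\phi \le Q'$, so $C_S(Q') = Z(Q')$.

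I would prove the reduced claim by downward induction on $|P|$, the base case $P=S$ being an instance of the centric case. So suppose first that $P$ is fully normalized and $\F$-centric. By hypothesis its class contains a fully automized and receptive member $Q$, which is again centric. Since $C_S(P) = Z(P)$, $C_S(Q) = Z(Q)$ and $|Z(P)| = |Z(Q)|$, full normalization gives $|\Aut_S(P)| = |N_S(P)|/|Z(P)| \ge |N_S(Q)|/|Z(Q)| = |\Aut_S(Q)|$; as $\Aut_S(Q) \in \Syl_p(\Aut_\F(Q))$ with $\Aut_\F(Q) \cong \Aut_\F(P)$ and $\Aut_S(P)$ is a $p$-subgroup of $\Aut_\F(P)$, the reverse estimate $|\Aut_S(P)| \le |\Aut_S(Q)|$ also holds, forcing equality and $\Aut_S(P) \in \Syl_p(\Aut_\F(P))$. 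For receptivity, using Sylow's theorem in $\Aut_\F(P)$ I would adjust an $\F$-isomorphism $Q \to P$ by an element of $\Aut_\F(P)$ to obtain $\rho\colon Q \to P$ carrying $\Aut_S(Q)$ onto $\Aut_S(P)$; then $N_{\rho^{-1}} = N_S(P)$, so receptivity of $Q$ extends $\rho^{-1}$ to an injection $N_S(P) \to N_S(Q)$, which full normalization forces to be an isomorphism. Given any $\F$-isomorphism $\alpha\colon R \to P$, one checks $N_\alpha \le N_{\alpha\rho^{-1}}$, extends $\alpha\rho^{-1}$ over $N_{\alpha\rho^{-1}}$ by receptivity of $Q$, and composes with the inverse of $N_S(P)\to N_S(Q)$ to extend $\alpha$ over $N_\alpha$.

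The substantive case is a non-centric fully normalized $P$, treated within the induction. Here I would first record that a fully normalized subgroup is fully $\F$-centralized (by transporting to a fully centralized conjugate along an isomorphism respecting the Sylow structure of the automizers and invoking receptivity of that conjugate, supplied by the centric case). A fully centralized non-centric subgroup satisfies $C_S(P) \gneq Z(P)$: were $C_S(P)=Z(P)$, then for each conjugate $P'$ one would have $|C_S(P')| \ge |Z(P')| = |Z(P)| = |C_S(P)| \ge |C_S(P')|$, so $C_S(P')=Z(P')$ and $P$ would be centric. Hence $PC_S(P) \gneq P$ is a strictly larger, inductively saturated, subgroup. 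I would then use the generation hypothesis $\F = \langle \Aut_\F(R) \mid R \in \C\rangle$ to express the isomorphisms governing the automization and receptivity of $P$ as composites of restrictions of automorphisms of centric subgroups, lift the relevant maps to the strictly larger overgroups such as $PC_S(P)$ where the axioms are already known, and restrict back.

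The main obstacle I anticipate is precisely this last step: coordinating the generation hypothesis with the extension-control subgroups $N_\alpha$ when $P$ is non-centric. Because $C_S(P)$ is then not contained in $P$, extending an incoming isomorphism is not a single formal application of receptivity; one must lift through a centric overgroup, and the delicate point is to assemble the chain of centric automorphisms furnished by the generation hypothesis into one extension defined on all of $N_\alpha$ (respectively, to make a putative $p$-element of $\Aut_\F(P)$ lying outside $\Aut_S(P)$ visible inside an already-saturated larger subgroup). Verifying the compatibility of these lifts, together with the bookkeeping behind ``fully $\F$-normalized implies fully $\F$-centralized'' needed to launch the non-centric argument, is where the real content lies.
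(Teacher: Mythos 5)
The paper does not prove this statement; its ``proof'' is the citation to \cite[Theorem A]{BCGLO}, so what you are attempting is a from-scratch reproof of that result. Your preliminary reductions and your centric case are correct and complete: the closure of $\C$ under $\F$-conjugacy and overgroups, the Sylow counting showing $\Aut_S(P)\in\Syl_p(\Aut_\F(P))$ for $P$ fully normalized and centric, and the adjustment of an isomorphism $\rho\colon Q\to P$ so that $N_{\rho^{-1}}=N_S(P)$ followed by the comparison $N_\alpha\le N_{\alpha\rho^{-1}}$ are exactly the standard devices. However, for centric subgroups the statement is close to a formality; the entire content of the theorem lies in the non-centric case, and there your proposal is a plan rather than a proof, as you yourself acknowledge in the final paragraph.

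Two concrete gaps. First, your route into the non-centric case needs ``fully $\F$-normalized implies fully $\F$-centralized,'' and you justify this by ``invoking receptivity of that conjugate, supplied by the centric case''; but a fully centralized conjugate of a non-centric subgroup is again non-centric, so the centric case supplies no receptivity for it. This implication is itself part of what must be established inside the downward induction (in a saturated system it is proved \emph{from} receptivity of fully normalized subgroups, which is unavailable here). Without it you cannot conclude $C_S(P)>Z(P)$, hence cannot produce the strictly larger subgroup $PC_S(P)$ on which the induction is supposed to run. Second, the decisive step --- factoring an arbitrary morphism, via the generation hypothesis, through restrictions of morphisms between centric overgroups of the successive images of $P$, and splicing the resulting extensions into a single extension over $N_\alpha$ (respectively, forcing $\Aut_S(P)$ to be Sylow in $\Aut_\F(P)$) --- is precisely the multi-page inductive argument of \cite[Section 2]{BCGLO}, and you explicitly leave it undone. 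As written, the proposal correctly identifies where the difficulty sits but does not resolve it, so it does not constitute a proof of the theorem.
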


\begin{proof}
See \cite[Theorem A]{BCGLO}.
\end{proof}

Sometimes we consider the fusion system determined by $G$, the universal completion of an amalgam $G_1 \ge G_{12} \le G_2$ of finite groups with $S$ a Sylow  $p$-subgroup of either $G_1$ or $G_2$ (or both). We define the \emph{coset graph} of $G_1$ and $G_2$ in $G$ to be the graph
$\Gamma=\Gamma(G,G_1,G_2,G_{12})$ which has $$V(\Gamma)=\{G_ig \mid g \in G, i \in \{1,2\}\} \mbox{ and } E(\Gamma)=\{\{G_1g,G_2h\} \mid G_1g \cap G_2h \neq \emptyset, g,h \in G\}.$$ Since $G$ is the universal completion of the amalgam, $\Gamma$ is a tree \cite[Theorem 6]{Serre}. It is easy to verify that $G$ acts on $\Gamma$ by right multiplication.  We shall always consider amalgams which are ``simple" in the sense that no normal subgroup of $G$ is contained in $G_{12}$. In this case, the action of $G$ on $\Gamma$ is faithful.  Finally, we note that the stabiliser of the vertex $G_ig$ is just $G_i^g$ and that the edge-stabilizers are $G$-conjugate to $G_{12}$.

The following result shows that the saturation of $\F_S(G)$ is determined to some extent by the graph
$\Gamma$ and the action of $G$ on it. The proof of this result, which is taken from \cite{P},  requires that we remember that when a finite group acts on a tree without exchanging the vertices of some edge, then it fixes a vertex.

\begin{Thm}\label{t:satamalg}
Let $\mathcal A= (G_1 \ge G_{12} \le G_2)$ be an amalgam of finite groups, assume that $\Syl_p(G_{12}) \subseteq \Syl_p(G_2)$ and fix $S_i \in \Syl_p(G_i)$ with $S_2 \le S_1$.  Assume that $G=G_1 *_{G_{12}} G_2$ is the   universal completion of $\mathcal A$ and write $\Gamma=\Gamma(G,G_1,G_2,G_{12})$ for    the coset graph. Suppose that:
\begin{enumerate}
\item[(a)] for all $\F_{S_1}(G)$-centric subgroups $P$ of $S_1$, $\Gamma^P$ is finite; and
\item[(b)] each $\F_{S_i}(G_i)$-essential subgroup is $\F_{S_1}(G)$-centric.
\end{enumerate}
Then $\F_{S_1}(G)$ is saturated.
\end{Thm}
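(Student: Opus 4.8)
The plan is to deduce saturation of $\F_{S_1}(G)$ by verifying the hypotheses of Theorem~\ref{t:alpconv}: namely that $\F_{S_1}(G)$ is generated by the automorphism groups of its centric subgroups, and that each such centric subgroup is $\F_{S_1}(G)$-saturated. The role of the coset-graph hypotheses (a) and (b) is precisely to establish the second of these, so the strategy splits into two independent tasks. The generation statement should follow essentially formally from Alperin's Theorem (Theorem~\ref{t:alp}) once we know $\F_{S_1}(G)$ is itself saturated---but since that is what we are trying to prove, I instead expect to use the amalgam structure directly: the morphisms of $\F_{S_1}(G)$ all arise from conjugation by elements of $G=G_1*_{G_{12}}G_2$, and a standard argument shows $\F_{S_1}(G)=\langle \Aut_{\F_{S_i}(G_i)}(P)\mid P\le S_i\rangle$, so $\F_{S_1}(G)$ is generated by morphisms living inside the two finite (hence saturated) local systems $\F_{S_i}(G_i)$.

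\textbf{The heart of the argument} is to show that each $\F_{S_1}(G)$-centric subgroup $P$ is $\F_{S_1}(G)$-saturated, and this is where hypotheses (a) and (b) and the tree structure of $\Gamma$ enter. Fix an $\F_{S_1}(G)$-centric $P\le S_1$. By hypothesis (a) the fixed subgraph $\Gamma^P$ is finite and, being a subgraph of the tree $\Gamma$, it is itself a tree (or a forest). The key point flagged before the statement is that \emph{when a finite group acts on a tree without inverting any edge, it fixes a vertex}; I would apply this to the normaliser-type stabiliser of $P$ acting on $\Gamma^P$. Concretely, the group $N_G(P)$ acts on the finite tree $\Gamma^P$, and after checking no edge is inverted, we obtain a fixed vertex $G_i g$, whence $N_G(P)\le G_i^g$. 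This forces all the $\F_{S_1}(G)$-conjugation and normalisation data for $P$ to be witnessed inside a single conjugate of one of the finite groups $G_i$, where the corresponding local fusion system $\F_{S_i}(G_i)$ is saturated. I would then transport the fully-automized and receptive properties of (a conjugate of) $P$ from the saturated system $\F_{S_i}(G_i)$ up to $\F_{S_1}(G)$, using that $P$ being centric in the ambient system controls centralizers appropriately.

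\textbf{The main obstacle} I anticipate is the bookkeeping needed to pass between a subgroup and its $\F_{S_1}(G)$-conjugates while tracking which vertex stabiliser $G_i^g$ captures the relevant automorphisms, and in particular verifying the no-inversion hypothesis so that the fixed-point theorem applies cleanly. Hypothesis (b) is what guarantees that the essential subgroups of the local systems $\F_{S_i}(G_i)$---which by Alperin's Theorem generate them---are themselves $\F_{S_1}(G)$-centric, so that the generation step and the per-subgroup saturation step are compatible and no essential subgroup is ``lost'' when we pass from the local picture to the global one. Once a fully $\F_{S_1}(G)$-normalized representative of $P^{\F_{S_1}(G)}$ is pinned inside some $G_i^g$, the remaining verification of conditions (i)(1) and (i)(2) of the definition reduces to the saturation of the finite system $\F_{S_i}(G_i)$, after which Theorem~\ref{t:alpconv} delivers the conclusion that $\F_{S_1}(G)$ is saturated.
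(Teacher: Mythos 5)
Your proposal follows essentially the same route as the paper: reduce to Theorem~\ref{t:alpconv} via Robinson's generation theorem and Alperin's theorem applied inside the finite groups $G_i$ (with hypothesis (b) ensuring the generators live on $\F_{S_1}(G)$-centric subgroups), then use the finiteness of $\Gamma^P$ together with the fixed-point theorem for group actions on trees to force $N_G(P)$ into a single vertex stabiliser $G_i^g$, where full automization and receptivity of a suitable conjugate are verified by finite group theory. The only step where the paper is more explicit than your sketch is receptivity, which it proves directly for every centric $P$ by a Sylow argument in $N_{S_1}(P)C_G(P)$ using the decomposition $C_G(P)=Z(P)\times O_{p'}(C_G(P))$ --- precisely the ``control of centralizers'' your plan gestures at.
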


\begin{proof}
Since, for $i=1,2$,  $G_i$ is finite, $\F_{S_i}(G_i)$ is a saturated fusion system on $S_i$ and hence $\F_{S_i}(G_i)$ is generated by the  $\F_{S_i}(G_i)$-automorphisms of $S$ and $\F_{S_i}(G_i)$-automorphisms of the $\F_{S_i}(G_i)$-essential subgroups by Alperin's Theorem. Since, by \cite[Theorem 1]{Robinson}, $$\F_{S_1}(G)=\langle \F_{S_1}(G_1), \F_{S_2}(G_2) \rangle,$$ (b) implies that $\F_{S_1}(G)$ is generated by the collection of  $\Aut_{\F_i}(X)$ for $X$ an $\F_{S_1}(G)$-centric subgroup of $S_1$. Thus, by Theorem \ref{t:alpconv}, $\F_{S_1}(G)$ is saturated provided each $\F_{S_1}(G)$-centric subgroup is $\F_{S_1}(G)$-saturated.

Put $\F= \F_{S_1}(G)$ and assume that $P\le S_1$ is an $\F$-centric subgroup of $S_1$. Since $G_1,G_2$ and $\Gamma^P$ are finite, the subgroup  $K$ of $N_G(P)$ which fixes every vertex of $\Gamma^P$ is finite.
 Now $N_G(P)/K$ embeds into $\Aut(\Gamma^P)$ and so is also finite.  Thus $N_G(P)$ is finite and so $N_G(P)$ is contained in $\mathrm{Stab}_G(\a)$  for some $\a \in \Gamma^P$. Therefore $N_G(P)$ is $G$-conjugate to a subgroup of either $G_1$ or $G_2$. Hence we may choose a $G$-conjugate $P^f$ of $P$ so that either
 $$N_G(P^f) \le G_1\text{ and }R\in  \Syl_p(N_G(P^f)) \text{ has } R\le S_1, $$ or
   $$N_G(P^f) \le G_2\text{ and }R\in  \Syl_p(N_G(P^f)) \text{ has } R\le S_2. $$
 Thus $$\Aut_{S_1}(P^f)= RC_G(P^f)/C_G(P^f) \in \Syl_p(\Aut_G(P^f))$$ and hence $P^f$ is fully $\F$-automized.

 It remains to prove that every $\F$-centric subgroup $P$ in $S_1$ is $\F$-receptive. So assume that $c_g \in \Hom_\F(U,P)$ is an isomorphism and define
$$N=N_{c_g} = \{h \in N_{S_1}(U)\mid c_{g^{-1}hg} \in \Aut_{S_1}(P)\}.$$ Then $$N^gC_G(P) \le N_{S_1}(P)C_G(P).$$ Since $P$ is $\F$-centric and $C_G(P)$ is finite, $$C_G(P) = Z(P) \times O_{p'}(C_G(P)).$$ Thus $N_{S_1}(P) \in \Syl_p(N_{S_1}(P)C_G(P))$.  So there exists $x \in C_G(P)$ such that $N^{gx} \le N_{S_1}(P)$. Set $y= gx$. Then $c_{y}\in \Hom_G(N,N_{S_1}(P))$  and $c_y$ extends $c_g \in \Hom(U,P)$.  We have shown that $P$ is $\F$-receptive. In particular, $P^f$ as in the previous paragraph is both fully $\F$-automized and $\F$-receptive.  Thus $P$ is $\F$-saturated. This completes the proof.
\end{proof}

We will also need the following result from \cite{S} which gives conditions under which one can enlarge a saturated fusion system on a $p$-group $S$ to form a new saturated fusion system, by adding morphisms of certain subgroups.

\begin{Thm}\label{t:proofsat}
Let $\F_0$ be a saturated fusion system on a finite $p$-group $S$. For $1 \leq  i \leq  m$, let $W_i  \le S$ be a fully $\F_0$-normalized subgroup with $W_i\varphi \not\le W_j$ for each $\varphi \in \Hom_{\F_0}(W_i, S)$ and $i \neq j$. Set $K_i = \Out_{\F_0} (W_i)$ and let
$\widetilde{\Delta}_i \le \Out(W_i)$ be such that $K_i$ is a strongly $p$-embedded subgroup of $\widetilde{\Delta}_i$. For $\Delta_i$ the full preimage of $\tilde{\Delta}_i$ in $\Aut(W_i)$, write
$$\F =\langle \Mor(\F_0), \Delta_1,\ldots, \Delta_m \rangle.$$
Assume further that for each $1 \leq  i \leq  m$,
\begin{itemize}
\item[(a)]  $W_i$ is $\F_0$-centric  and minimal under inclusion amongst all $\F$-centric subgroups; and
\item[(b)] no proper subgroup of $W_i$ is $\F_0$-essential.
\end{itemize}
Then $\F$ is saturated.
\end{Thm}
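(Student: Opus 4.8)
The plan is to verify the hypotheses of Theorem~\ref{t:alpconv} for the enlarged system $\F$, reducing saturation to showing that every $\F$-centric subgroup is $\F$-saturated. First I would record that $\F = \langle \Mor(\F_0), \Delta_1,\ldots,\Delta_m\rangle$ is generated by the automorphisms of $S$ and of the $\F_0$-essential subgroups (via Alperin's Theorem~\ref{t:alp} applied to $\F_0$) together with the newly adjoined groups $\Delta_1,\ldots,\Delta_m$. Since each $W_i$ is $\F_0$-centric by hypothesis~(a), and the $\F_0$-essential subgroups together with $S$ are $\F_0$-centric, the whole system $\F$ is generated by $\Aut_\F(P)$ for $P$ ranging over $\F$-centric subgroups, which is condition of Theorem~\ref{t:alpconv}. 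So it remains to check $\F$-saturation of each $\F$-centric $P$.

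The key structural input is the minimality and essentiality hypotheses~(a) and~(b). I would argue that if $P$ is $\F$-centric and properly contains no conjugate pattern forcing new morphisms, then $\Aut_\F(P)$ is controlled by $\Aut_{\F_0}(P)$ together with the restrictions of the $\Delta_i$. The essential points are: (i) by~(a), each $W_i$ is minimal among $\F$-centric subgroups, so no proper subgroup of $W_i$ is $\F$-centric, and by~(b) no proper subgroup of $W_i$ is $\F_0$-essential; hence the only place the new morphisms $\Delta_i$ can genuinely act is on $W_i$ itself and its overgroups. For an arbitrary $\F$-centric $P$ I would distinguish cases according to whether $P$ is $\F_0$-conjugate to some $W_i$ or not. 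If $P$ is not in any $W_i^{\F}$ and is $\F_0$-centric, then the $\Delta_i$ contribute nothing new to morphisms out of $P$ (using the condition $W_i\varphi\not\le W_j$ to prevent interference), so $\Aut_\F(P)=\Aut_{\F_0}(P)$ and $P$ inherits $\F$-saturation directly from the saturation of $\F_0$. If $P$ is $\F$-conjugate to some $W_i$, I would take $P=W_i$ itself (which is fully $\F_0$-normalized by hypothesis) and verify the two conditions of $\F$-saturation at $W_i$.

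The heart of the argument, and the step I expect to be the main obstacle, is establishing $\F$-saturation at each $W_i$ itself. Here the hypothesis that $K_i=\Out_{\F_0}(W_i)$ is a strongly $p$-embedded subgroup of $\widetilde\Delta_i$ is crucial. I would show $W_i$ is fully $\F$-automized by checking that $\Aut_S(W_i)$ remains a Sylow $p$-subgroup of $\Aut_\F(W_i)$: since $\Aut_S(W_i)$ is Sylow in $\Aut_{\F_0}(W_i)$ and $\Out_{\F_0}(W_i)=K_i$ is strongly $p$-embedded in $\widetilde\Delta_i$, passing to $\Delta_i$ (the full preimage) does not enlarge the Sylow $p$-subgroup modulo $\Inn(W_i)$, so $\Aut_S(W_i)\in\Syl_p(\Aut_\F(W_i))$. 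For $\F$-receptivity of $W_i$, given an isomorphism $\alpha\in\Hom_\F(U,W_i)$ I would analyse the extension control subgroup $N_\alpha$ and use the strongly $p$-embedded structure to produce an extension $\widetilde\alpha\in\Hom_\F(N_\alpha,S)$; the strong $p$-embedding ensures that the relevant $p'$-part of $\alpha$ is absorbed into $\widetilde\Delta_i$ while the $p$-part is handled by the already-established full automization together with the saturation of $\F_0$. The delicate bookkeeping lies in controlling how morphisms factor through the $W_i$ and in ruling out unwanted fusion among the $W_i$ via the separation hypothesis $W_i\varphi\not\le W_j$; getting this interaction precisely right, so that the strongly $p$-embedded subgroup condition translates into genuine receptivity, is where the real work concentrates.
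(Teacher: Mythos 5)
The paper offers no proof of this statement at all --- its ``proof'' is the single line ``See \cite[Theorem C]{S}'' --- so there is no internal argument to measure yours against. Your plan (reduce to Theorem \ref{t:alpconv}, then check $\F$-saturation of the $\F$-centric subgroups, with the strongly $p$-embedded hypothesis carrying the weight at the $W_i$) is the natural one and is consistent in spirit with the argument in \cite{S}. However, as written the proposal has genuine gaps at precisely the decisive points, and they are not merely ``bookkeeping''.

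First, every statement you make about $\Mor(\F)$ is currently unjustified: Alperin's Theorem \ref{t:alp} applies only to \emph{saturated} systems, so for the generated system $\F$ you may not decompose morphisms along essential subgroups. Thus the claims that the $\F_0$-essential subgroups are $\F$-centric (needed even to verify the generation hypothesis of Theorem \ref{t:alpconv}, since $\F$-centricity is strictly stronger than $\F_0$-centricity once new conjugates appear), that $\Aut_\F(P)=\Aut_{\F_0}(P)$ and $P^\F=P^{\F_0}$ for $P$ not $\F$-conjugate to any $W_i$, and that $\Aut_\F(W_i)=\Delta_i$, all require a direct analysis of words in the generators, using minimality of the $W_i$ and the separation hypothesis $W_i\varphi\not\le W_j$; none of this is carried out. (Note also that restrictions of elements of $\Delta_i$ act on \emph{subgroups} of $W_i$, not on its overgroups as you assert; it is minimality of $W_i$ among $\F$-centric subgroups that makes those subgroups irrelevant to Theorem \ref{t:alpconv}.) Second, and more seriously, receptivity of $W_i$ --- which you correctly identify as the heart of the matter --- is only asserted. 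The actual argument factors an isomorphism $\alpha\in\Hom_\F(U,W_i)$ as $\alpha=\chi\delta$ with $\chi\in\Hom_{\F_0}(U,W_i)$ and $\delta\in\Delta_i$, and then splits into cases: if $\delta\in\Aut_{\F_0}(W_i)$ then $\alpha$ is an $\F_0$-morphism and extends because $W_i$ is fully $\F_0$-normalized in the saturated system $\F_0$; if $\delta\notin\Aut_{\F_0}(W_i)$, then since $\Out_S(W_i)\le K_i$ and $K_i\cap K_i^{\bar\delta}$ has order prime to $p$ by strong $p$-embedding, any $c_g$ with $\alpha^{-1}c_g\alpha\in\Aut_S(W_i)$ must lie in $\Inn(U)$, so $N_\alpha=UC_S(U)=U$ (using that $U$ is $\F$-centric) and the extension condition is vacuous. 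Saying that the $p'$-part of $\alpha$ is ``absorbed'' into $\widetilde\Delta_i$ does not substitute for this computation. As it stands the proposal is a sensible plan with its two hardest steps left open.
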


\begin{proof}
See \cite[Theorem C]{S}.
\end{proof}

We now develop some tools for listing the possible $\F$-essential subgroups of a $p$-group $S$ when $\F$ is a saturated fusion system on $S$. We need two basic facts concerning the way in which a $p$-group acts on its subnormal subgroups.

\begin{Lem}\label{l:outinj}
Let $E$ be a finite $p$-group and $A \le\Aut(E)$. Suppose there exists a normal chain $$1=E_0 \unlhd E_1 \unlhd E_2 \unlhd \cdots \unlhd E_m=E$$ of subgroups such that for each $\alpha \in A$, $E_i\alpha=E_i$ for all $0 \le i \le m.$  If for all $1\le i \le m$, $A$ centralizes $E_i/E_{i-1}$, then $A \le O_p(\Aut(E))$.
\end{Lem}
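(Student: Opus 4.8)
The plan is to show that $A$ acts as a $p$-group by producing a power of $p$ that annihilates every element of $A$ in the sense that any $\alpha \in A$ raised to a suitable $p$-power is trivial, and moreover that $A$ consists of automorphisms which are trivial modulo inner automorphisms of bounded order. Since we want $A \le O_p(\Aut(E))$, it suffices to show that $A$ is itself a $p$-group, because $A$ normalizes itself and any $p$-subgroup of $\Aut(E)$ that is closed in the relevant sense lies in $O_p(\Aut(E))$ once we know the stabilizer of the chain is normal. Concretely, the cleanest route is to prove that the subgroup $\operatorname{Stab}(E_\bullet)$ of $\Aut(E)$ consisting of all automorphisms fixing every $E_i$ and acting trivially on each factor $E_i/E_{i-1}$ is a \emph{normal} $p$-subgroup of the full stabilizer of the chain, hence is contained in $O_p(\Aut(E))$; then observe $A$ lies inside this subgroup.

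First I would recall the classical \textbf{stability group} result: if a group $\Gamma$ acts on a finite group $E$ stabilizing a chain $1 = E_0 \unlhd E_1 \unlhd \cdots \unlhd E_m = E$ and centralizing each quotient $E_i/E_{i-1}$, then $\Gamma$ is nilpotent, and in fact the exponent of $\Gamma$ divides a suitable power of the exponents of the factors. Since here $E$ is a $p$-group, every factor $E_i/E_{i-1}$ is a $p$-group, so the stability group is a $p$-group. I would give the standard commutator argument: for $\alpha \in A$ the map $x \mapsto x^{-1}(x\alpha)$ sends $E_i$ into $E_{i-1}$ (this is exactly the statement that $\alpha$ centralizes $E_i/E_{i-1}$), and iterating the commutator of several elements of $A$ pushes things down the chain. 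Formally, one shows $[E_i, \underbrace{A, \ldots, A}_{i}] \le E_0 = 1$, so that $A$ acts as a unipotent-type group; the key inequality is that a product of $m$ elements' worth of such maps lands in $E_0$.

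The concrete key step I would carry out: for each $\alpha \in A$ define the displacement map and show that the subgroup $A$ has a normal series whose factors embed into $\prod_i \operatorname{Hom}(E_i/E_{i-1}, \text{something})$, each of which is an abelian $p$-group; this forces $A$ to be a $p$-group. Then, because the set of all automorphisms stabilizing the chain \emph{and} centralizing its factors is a characteristic-flavoured subgroup that is normalized by the full chain-stabilizer, and it is a $p$-group, it is contained in $O_p$ of the stabilizer and ultimately in $O_p(\Aut(E))$ — here I would use that $O_p(\Aut(E))$ contains every normal $p$-subgroup and that the stability subgroup is normal in the stabilizer of the flag. Thus $A \le O_p(\Aut(E))$.

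The main obstacle is the final passage from ``$A$ is a $p$-group'' to ``$A \le O_p(\Aut(E))$''. Being a $p$-subgroup of $\Aut(E)$ is not by itself enough to land inside $O_p(\Aut(E))$; one needs $A$ to sit inside a \emph{normal} $p$-subgroup of $\Aut(E)$. So the genuine content is to identify the right normal $p$-subgroup: I would argue that the full stability group $\mathcal{S}$ of the chain (all automorphisms fixing each $E_i$ and centralizing each factor) is normalized by the stabilizer of the chain, and more usefully show directly that $\mathcal{S}$ is a normal $p$-subgroup of $\Aut(E)$ by checking it is preserved under conjugation by \emph{any} automorphism provided the $E_i$ are assumed characteristic — but since the hypothesis only gives $A$-invariance of the $E_i$, the honest statement must be recovered by taking $\mathcal{S}$ to be the stability group relative to the chain formed by the \emph{core}, i.e.\ replacing each $E_i$ by $\bigcap_{\beta \in \Aut(E)} E_i\beta$ is not available. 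The clean fix, which I expect to be the crux, is to invoke the standard fact (e.g.\ a theorem of Hall on stability groups, or the analogous statement in \cite{GOR}) that the group of all automorphisms stabilizing \emph{any} such chain and centralizing its factors is automatically a $p$-group lying in $O_p(\Aut(E))$ because $\Aut(E)$ acts with $O_p(\Aut(E))$ equal to the kernel of the action on a suitable characteristic chain refining the given one; verifying that the refinement to a characteristic chain preserves the ``centralizes each factor'' hypothesis is the delicate bookkeeping I would need to handle with care.
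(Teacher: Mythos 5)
Your first step is correct: the commutator argument (equivalently, the observation that $x\mapsto x^{-1}(x\alpha)$ pushes $E_i$ into $E_{i-1}$, so that $[E,\underbrace{A,\dots,A}_{m}]=1$ and the layers of the stability group embed into abelian $p$-groups) shows that $A$ is a $p$-group. This is exactly the content of the result the paper cites as its entire proof, namely \cite[5.3.2]{GOR}, which says that a $p'$-group of automorphisms stabilizing such a chain and centralizing its factors is trivial. So up to that point you have reproduced the paper's argument. Everything beyond it --- the passage from ``$A$ is a $p$-group'' to ``$A\le O_p(\Aut(E))$'' --- is the part you rightly could not finish.

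That gap is genuine, and it cannot be closed, because the lemma as printed is false. Take $E$ elementary abelian of order $p^2$, let $E_1\le E$ have order $p$, and let $A$ be generated by a transvection fixing $E_1$ pointwise and acting trivially on $E/E_1$. All hypotheses hold, yet $A\ne 1$ while $O_p(\Aut(E))=O_p(\GL_2(p))=1$. Your diagnosis is exactly the right one: a $p$-subgroup of $\Aut(E)$ lies in $O_p(\Aut(E))$ only if it sits inside a \emph{normal} $p$-subgroup, and since the $E_i$ are only assumed $A$-invariant the full stability group of the chain need not be normal in $\Aut(E)$; the ``standard fact'' you hoped to invoke (that one can refine to a characteristic chain while preserving the centralization of factors) is not a fact, as the example shows. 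What rescues the paper is that in every application the chain consists of characteristic subgroups of $E$ (such as $\Phi(E)$, $F\Phi(E)$ with $F$ characteristic, $\Omega_1(E)$, $Z(E)$), or at least of subgroups invariant under all of $\Aut_\F(E)$, and where the bottom factor is not genuinely centralized Burnside's lemma (Lemma~\ref{l:burnside}) is invoked alongside; in that setting the stability group is normal in $\Aut(E)$ (respectively in $\Aut_\F(E)$), and your $p$-group computation then does place $A$ inside the relevant $O_p$. So the statement your argument actually proves --- and the one that should be recorded --- either adds the hypothesis that each $E_i$ is characteristic in $E$, or weakens the conclusion to ``$A$ is a $p$-group''.
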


\begin{proof}
See \cite[5.3.2]{GOR}.
\end{proof}

\begin{Lem}[Burnside]\label{l:burnside}
Let $S$ be a finite $p$-group. Then $C_{\Aut(S)}(S/\Phi(S))$ is a normal $p$-subgroup of $\Aut(S)$.
\end{Lem}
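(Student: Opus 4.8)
The plan is to split the assertion into its two halves: normality is formal, while the $p$-group claim carries the substance. Write $C = C_{\Aut(S)}(S/\Phi(S))$ for the group in question. Since the Frattini subgroup $\Phi(S)$ is characteristic in $S$, every $\alpha \in \Aut(S)$ induces an automorphism of the quotient $S/\Phi(S)$, giving a homomorphism $\rho \colon \Aut(S) \to \Aut(S/\Phi(S))$. By definition $C = \ker \rho$, and the kernel of a homomorphism is normal; hence $C \trianglelefteq \Aut(S)$. Note also that $\Aut(S)$ is finite because $S$ is, so $C$ is finite.

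For the second half it suffices, $C$ being finite, to show that $C$ contains no nontrivial subgroup of order prime to $p$; equivalently, that every $\alpha \in C$ of $p'$-order is the identity. Fix such an element and set $A = \langle \alpha \rangle$, a $p'$-group acting on the $p$-group $S$. Because $\alpha$ acts trivially on $S/\Phi(S)$, for every $s \in S$ we have $s^{-1}(s\alpha) \in \Phi(S)$, and therefore the commutator of the action satisfies $[S,A] \le \Phi(S)$. As $S$ is a $p$-group it is solvable, so the coprime action of $A$ on $S$ yields the decomposition $S = C_S(A)[S,A]$ (see \cite{GOR}). Combining these, $S = C_S(A)\Phi(S)$.

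I would then invoke the defining property of the Frattini subgroup as the set of non-generators: if a subgroup together with $\Phi(S)$ generates $S$, then that subgroup already equals $S$. Applied to $C_S(A)$, the equality $S = C_S(A)\Phi(S)$ forces $C_S(A) = S$, so $\alpha$ centralizes $S$ and hence $\alpha = 1$ in $\Aut(S)$. This shows $C$ has trivial $p'$-part, i.e.\ $C$ is a $p$-group, completing the proof.

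I expect the only genuinely delicate points to be the correct invocation of coprime generation $S = C_S(A)[S,A]$ (which requires $S$, or $A$, solvable --- here automatic since $S$ is a $p$-group) and the non-generator characterization of $\Phi(S)$; both are standard. An alternative route avoiding coprime action is to apply Lemma~\ref{l:outinj} to the characteristic lower exponent-$p$ central series $S = P_1(S) \ge P_2(S) \ge \cdots \ge 1$, where $P_{i+1}(S) = [P_i(S),S]\,P_i(S)^p$, after checking that $C$ centralizes each factor $P_i(S)/P_{i+1}(S)$. There the main obstacle becomes the bookkeeping of commutators and $p$-th powers needed to propagate triviality on $S/\Phi(S)$ down the series, which is precisely what the coprime argument sidesteps; for this reason I would present the coprime version as the primary proof.
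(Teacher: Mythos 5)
Your proof is correct. Note, however, that the paper does not actually prove this lemma: its entire ``proof'' is the citation \emph{See \cite[5.1.4]{GOR}}, so there is no in-paper argument to match. Your argument is a standard and complete one: normality via the kernel of the induced map $\Aut(S)\to\Aut(S/\Phi(S))$, and the $p$-group claim by taking a $p'$-element $\alpha$, observing $[S,\langle\alpha\rangle]\le\Phi(S)$, invoking the coprime decomposition $S=C_S(\langle\alpha\rangle)[S,\langle\alpha\rangle]$ (valid here since $S$ is a $p$-group, hence solvable, and $\langle\alpha\rangle$ is a $p'$-group), and finishing with the non-generator property of $\Phi(S)$. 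The reduction ``every $p'$-element is trivial implies $C$ is a $p$-group'' is justified by Cauchy's theorem. For comparison, the proof in the cited source is the classical Burnside counting argument: $C$ acts on the set of ordered tuples $(y_1,\dots,y_d)$ lifting a fixed basis of $S/\Phi(S)$, a set of cardinality $|\Phi(S)|^d$; each such tuple generates $S$, so the action is semiregular and $|C|$ divides a power of $p$. Your coprime-action route trades that counting for the standard decomposition theorem; both are textbook arguments and either would serve. Your secondary suggestion via the lower exponent-$p$ central series and Lemma~\ref{l:outinj} would also work, but as you note it requires separately verifying that $C$ centralizes each factor of that series, so your choice of the coprime argument as the primary proof is the right one.
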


\begin{proof}
See \cite[5.1.4]{GOR}.
\end{proof}

The following result can also be found in  \cite[Lemma 3.4]{OV}.

\begin{Lem}\label{l:usecrit}
Let $S$ be a finite $p$-group and $F \le E \le S$  be such that $F$ is characteristic in $E$. If there exists $g \in N_S(E) \backslash E$ such that
\begin{itemize}
\item[(a)] $[g,E] \le F  \Phi(E)$ and
\item[(b)] $[g,F] \le \Phi(E)$,
\end{itemize}
then $E$ is not an $\F$-essential subgroup in any saturated fusion system $\F$ on $S$.
\end{Lem}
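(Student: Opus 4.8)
The plan is to assume that $E$ is $\F$-essential for some saturated fusion system $\F$ on $S$ and derive a contradiction by exhibiting a nontrivial element of $\Out_\F(E)$ that is forced to lie in $O_p(\Out_\F(E))$. Recall that, because $E$ is $\F$-essential, $\Out_\F(E)$ has a strongly $p$-embedded subgroup and hence $O_p(\Aut_\F(E))=\Inn(E)$, equivalently $O_p(\Out_\F(E))=1$; moreover $E$ is $\F$-centric, so $C_S(E)=Z(E)\le E$. Since $g\in N_S(E)$, the map $c_g$ lies in $\Aut_S(E)\le\Aut_\F(E)$; write $\theta$ for its image in $\Out_\F(E)$. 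As $C_S(E)=Z(E)\le E$, we have $c_g\in\Inn(E)$ if and only if $g\in E$, so the hypothesis $g\notin E$ guarantees $\theta\neq 1$.

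Next I would pass to the action on the Frattini quotient $\bar E=E/\Phi(E)$, viewed as an $\mathbb F_p\Out_\F(E)$-module (this is legitimate because $\Inn(E)$ acts trivially on $\bar E$). Because $F$ is characteristic in $E$, the image $\bar F=F\Phi(E)/\Phi(E)$ is an $\Out_\F(E)$-submodule of $\bar E$. The two hypotheses translate cleanly into module-theoretic statements about $\theta$: condition (a), namely $[g,E]\le F\Phi(E)$, says precisely that $(\theta-1)\bar E\subseteq\bar F$, i.e. $\theta$ acts trivially on $\bar E/\bar F$; and condition (b), namely $[g,F]\le\Phi(E)$, says that $\theta$ acts trivially on $\bar F$. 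Verifying these two translations via the standard commutator-to-quotient correspondence is the one place requiring a little care, and is where the characteristic (hence $\Aut_\F(E)$-invariant) nature of $\bar F$ is essential.

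With these translations in hand, set
$$U=\ker\bigl(\Out_\F(E)\longrightarrow \Aut(\bar F)\times\Aut(\bar E/\bar F)\bigr),$$
the subgroup of elements acting trivially on both flag factors; this is normal in $\Out_\F(E)$ since $\bar F$ is a submodule, and by construction $\theta\in U$. It remains to show $U$ is a $p$-group. Here I would use Lemma~\ref{l:burnside}: the kernel of the action of $\Out_\F(E)$ on $\bar E$ is the image of $\Aut_\F(E)\cap C_{\Aut(E)}(E/\Phi(E))$, a normal $p$-subgroup of $\Aut_\F(E)$ by Burnside's lemma, and hence is a normal $p$-subgroup of $\Out_\F(E)$; modulo this kernel, $U$ embeds into the group of transformations of $\bar E$ fixing the flag $0\le\bar F\le\bar E$ and trivial on its factors, which is unipotent and so a $p$-group (cf. Lemma~\ref{l:outinj}). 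Therefore $U$ is a normal $p$-subgroup of $\Out_\F(E)$, so $U\le O_p(\Out_\F(E))=1$, forcing $\theta=1$ and contradicting $\theta\neq 1$.

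I expect the main obstacle to be purely bookkeeping: correctly identifying the module-theoretic meaning of conditions (a) and (b) and confirming that $U$ is simultaneously normal (from $F$ characteristic) and a $p$-group (from Burnside together with the unipotence of flag stabilizers). Once these are in place the contradiction is immediate, and the argument never needs the full strength of saturation beyond the consequences of $E$ being $\F$-essential recorded above.
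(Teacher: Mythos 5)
Your proof is correct and takes essentially the same route as the paper's: both combine Burnside's Lemma~\ref{l:burnside} (to handle the action below $\Phi(E)$) with the unipotence of the stabilizer of the chain $\Phi(E)\le F\Phi(E)\le E$ (Lemma~\ref{l:outinj}) to force the class of $c_g$ into a normal $p$-subgroup, contradicting essentiality. The only difference is cosmetic: you work directly inside $\Out_\F(E)$ with the flag $0\le\bar F\le\bar E$, whereas the paper concludes $c_g\in O_p(\Aut(E))$ and then notes $\Out_S(E)\cap O_p(\Out(E))\neq 1$.
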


\begin{proof}
Since $C_{\Aut(E)}(E / \Phi(E)) \le O_p(\Aut(E))$ by Burnside's Lemma \ref{l:burnside} and since $F\Phi(E)$ is normal in $E$, it follows from Lemma \ref{l:outinj} that $c_g \in O_p(\Aut(E))$.  But then $O_p(\Aut(E)) \not \le \Inn(E)$  which means that $\Out_S(E) \cap O_p(\Out(E)) \neq 1$ and hence $E \notin \E_\F$ for any saturated fusion system $\F$ on $S$.
\end{proof}

We   need the next result about certain subgroups of $\mathrm{PGL}_3(p)$.

\begin{Prop}\label{p:l3pstr}
Let $p$ be an odd prime, and $G$ be a subgroup of $\mathrm{PGL}_3(p)$ which contains a strongly $p$-embedded subgroup. Then either $O^{p'}(G)$ is isomorphic to one of $\mathrm{PSL}_2(p)$ or $\SL_2(p)$ or $p=3$ and $G\cong \Frob(39)$.
\end{Prop}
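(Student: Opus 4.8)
The plan is to reduce the statement to a question about subgroups of $\mathrm{PGL}_3(p)$ containing a strongly $p$-embedded subgroup, and to exploit the fact that a strongly $p$-embedded subgroup forces the $p$-local structure to be very constrained. First I would recall the standard fact that a group $G$ has a strongly $p$-embedded subgroup if and only if $G$ acts on the set of its Sylow $p$-subgroups (equivalently, on a suitable coset space) in such a way that $N_G(S_p)$ for $S_p \in \Syl_p(G)$ is strongly $p$-embedded, and in particular $O_p(G)=1$ and the normalizer of a Sylow $p$-subgroup controls all the $p$-local information. Passing to $O^{p'}(G)$ does not destroy the existence of a strongly $p$-embedded subgroup, so I may as well assume $G=O^{p'}(G)$ from the start.

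The key geometric input is the structure of $\Syl_p(\mathrm{PGL}_3(p))$: a Sylow $p$-subgroup is the image of the upper unitriangular matrices, which is extraspecial of order $p^3$ (for $p$ odd, of exponent $p$), i.e. $p^{1+2}_+$. The next step is to determine how large the intersection of two distinct Sylow $p$-subgroups of $G$ can be. Since $G$ has a strongly $p$-embedded subgroup, distinct Sylow $p$-subgroups must intersect trivially; this is the crucial leverage. I would therefore examine the possible orders of $S_p \cap S_p^g$ inside $\mathrm{PGL}_3(p)$ and show that, because the unipotent radicals of the two maximal parabolics each meet a Sylow $p$-subgroup in an order-$p$ subgroup, a Sylow $p$-subgroup of $G$ cannot have order $p^3$ unless $G$ essentially contains the full $\mathrm{PGL}_3(p)$-level unipotent radical structure; hence the Sylow $p$-subgroups of $G$ have order $p$. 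Concretely, $G$ is then a $\{p\}$-transitive-like group with cyclic (order $p$) Sylow subgroups and a strongly $p$-embedded subgroup, so by the classification of groups with a strongly $p$-embedded subgroup and cyclic Sylow $p$-subgroup (or directly by Burnside/transfer-type arguments), $G$ is either a group with a normal $p$-complement excluded by $O_p(G)=1=O^{p'}(G)$, forcing $G$ to be almost simple with socle a rank-one group over $\mathbb F_p$.

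From here I would invoke the embedding constraints: the rank-one simple groups that embed in $\mathrm{PGL}_3(p)$ with $p$ dividing the order in the required way are exactly $\mathrm{PSL}_2(p)$ and its central extension $\SL_2(p)$ (the latter appearing because the preimage in $\mathrm{SL}_3$ can involve a nontrivial center), realized via the natural $\mathrm{SL}_2 \hookrightarrow \mathrm{SL}_3$ or as the Levi/derived subgroup of a parabolic. The remaining task is to pin down the genuinely small exceptional case. For $p=3$, $\mathrm{PGL}_3(3)$ contains $\mathrm{PSL}_3(3)$, and a subgroup with cyclic Sylow $3$-subgroup of order $3$ and a strongly $3$-embedded subgroup can be a Frobenius group $\Frob(39) = 13{:}3$, coming from a Singer-type element of order $13$ normalized by an element of order $3$; I would verify this lives in $\mathrm{PSL}_3(3)$ and that its normalizer structure genuinely gives a strongly $3$-embedded subgroup.

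The main obstacle I anticipate is the case analysis establishing that the Sylow $p$-subgroup of $G$ must have order $p$ rather than $p^2$ or $p^3$: one must rule out configurations where two Sylow $p$-subgroups of $\mathrm{PGL}_3(p)$ intersect nontrivially yet $G$ still admits a strong $p$-embedding. This requires a careful look at the two conjugacy classes of order-$p^2$ and order-$p$ subgroups (line stabilizers versus point stabilizers, and root subgroups) and at how $\mathrm{SL}_2(p)$-type sections sit inside the parabolics; the delicate point is that the normalizer of a root subgroup in $\mathrm{PGL}_3(p)$ already contains a full $\mathrm{GL}_2$-type Levi, so once $G$ contains enough of a Levi the Sylow intersections become nontrivial and the strong embedding is lost unless $O^{p'}(G)$ is exactly one of the listed $\mathrm{SL}_2$-type groups. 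Handling the Singer-cycle normalizer for the $p=3$ exception cleanly, and checking no analogous Frobenius exception survives for $p\ge 5$ (where $|\,\mathrm{GL}_1(p^3)\,|/(p-1)$ no longer has the right small prime factorization to normalize into a strongly embedded configuration), is the secondary subtlety.
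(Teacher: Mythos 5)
The paper does not actually prove this proposition --- it cites the subgroup classification carried out in Grazian's thesis --- so the real question is whether your direct argument stands on its own, and it does not: your central lever, ``a strongly $p$-embedded subgroup forces distinct Sylow $p$-subgroups to intersect trivially,'' is false. The definition only yields that Sylow $p$-subgroups lying in \emph{distinct conjugates} of the strongly $p$-embedded subgroup $H$ meet in $p'$-groups; several Sylow $p$-subgroups can lie inside a single conjugate of $H$ and intersect nontrivially. A standard counterexample is $\Alt(2p)$ for $p\ge 5$ with $H$ the stabiliser of the partition into two $p$-sets: $H$ is strongly $p$-embedded, yet a fixed $p$-cycle lies in several Sylow $p$-subgroups (this also refutes your opening ``standard fact'' that $N_G(S)$ is then strongly $p$-embedded). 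Consequently your reduction to $|G|_p=p$ is not established. The honest routes are either the CFSG list of groups with a strongly $p$-embedded subgroup and $p$-rank at least $2$ ($\mathrm{PSL}_2(p^2)$, $\mathrm{PSU}_3(p)$, $\Alt(2p)$, and a few sporadic cases, all excluded from $\mathrm{PGL}_3(p)$ by order considerations, e.g.\ $p^2+1$ and $p^2-p+1$ have gcd at most $4$ and $3$ respectively with $|\mathrm{PGL}_3(p)|$), or the known subgroup structure of $\mathrm{PGL}_3(p)$ (Mitchell/Hartley, or the relevant tables of Bray--Holt--Roney-Dougal), which is what the cited source uses.

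The second genuine gap is in your cyclic-Sylow case. You discard the normal $p$-complement possibility as ``excluded by $O_p(G)=1=O^{p'}(G)$'' and conclude $G$ is almost simple of rank one; neither step is valid. A group can equal its own $O^{p'}$ and still be $p$-nilpotent --- your own exceptional case $\Frob(39)=13{:}3$ is exactly such a group, with normal $3$-complement $C_{13}$ and $O^{3'}(\Frob(39))=\Frob(39)$ --- so your argument would erase the exception you later reinstate. When $|G|_p=p$ and $S\ntrianglelefteq G$, the subgroup $N_G(S)$ is \emph{automatically} strongly $p$-embedded, so the entire content of the proposition is the computation of $O^{p'}(G)=\langle S^G\rangle$ over all subgroups of $\mathrm{PGL}_3(p)$ of order divisible by $p$ with non-normal Sylow: reducible ones lie in a parabolic $[p^2]{:}\GL_2(p)$, meet the unipotent radical trivially (as $O_p(G)=1$) and give $\SL_2(p)$ by Dickson's theorem; the conic stabiliser $\mathrm{SO}_3(p)\cong\mathrm{PGL}_2(p)$ gives $\mathrm{PSL}_2(p)$; and only for $p=3$ do the monomial subgroup $\Sym(4)$ (giving $\Alt(4)\cong\mathrm{PSL}_2(3)$) and the Singer normaliser $\Frob(39)$ acquire order divisible by $p$. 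Your sketch gestures at this but never carries out the case analysis, which is where the proposition actually lives.
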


\begin{proof}
See \cite[Proposition]{Valentina}.
\end{proof}

We end this section with a result about finite simple groups which will be required when proving that certain saturated fusion systems we construct are exotic. The next result is a special case of  \cite[Theorem]{Raul}. We use the following two facts about a Sylow $p$-subgroup $S$ of $\Gee_2(p)$: first $|S|=p^6$ and second if $K$ is an abelian normal subgroup of $S$, then $|K|\le p^3$ and $S/K$ is non-abelian (see Lemma~\ref{l:charz} (c)).

\begin{Thm}\label{t:simpfus}
Suppose that $p \ge 5$ and let $G$ be a finite simple group with a Sylow $p$-subgroup isomorphic to that of $\Gee_2(p)$. Then one of the following holds:
\begin{itemize}
\item[(a)] $p=5$ and $G \in \{\Gee_2(5),\BM,\HN, \Ly\}$;
\item[(b)] $p=7$ and $G  \in \{\Gee_2(7),\Mo\}$;
\item[(c)] $p > 7$ and $G  =\Gee_2(p)$.
\end{itemize}
\end{Thm}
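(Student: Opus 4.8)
The plan is to invoke the classification of finite simple groups and then, for each family, determine whether a Sylow $p$-subgroup can be isomorphic to $S$, the Sylow $p$-subgroup of $\Gee_2(p)$. The two structural facts quoted just before the statement are the main leverage: $|S|=p^6$, and every abelian normal subgroup $K$ of $S$ satisfies $|K|\le p^3$ with $S/K$ nonabelian. The first fact is a strong numerical constraint, since for most simple groups the exact power of $p$ dividing the order is known explicitly; the second is a rigidity constraint that rules out groups whose Sylow $p$-subgroups are abelian, homocyclic, or otherwise ``too commutative''. First I would dispose of the generic cases: if $G$ is alternating, then $|S|=p^6$ forces the degree to lie in a narrow range, and one checks directly that the Sylow $p$-subgroup of $\Sym(n)$ (an iterated wreath product of cyclic groups) never has the required normal-subgroup structure for $p\ge 5$. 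Likewise the sporadic groups can be checked one at a time from their known orders and Sylow structure, yielding exactly $\BM,\HN,\Ly$ at $p=5$ and $\Mo$ at $p=7$.

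The bulk of the work is the groups of Lie type, which I would organize by the characteristic. If $G=G(q)$ is defined over a field of characteristic $p$, then $S$ is the full unipotent radical and $|S|=p^6$ pins down the product of $q$ and the number of positive roots; running through the possibilities shows that a rank-one group cannot reach $p^6$ with the right structure, that $\Gee_2(p)$ itself of course works, and that the other rank-two groups (types $A_2$, $B_2=C_2$) have unipotent subgroups of the wrong order or wrong nilpotence structure. Here I expect to use the second structural fact heavily: for example the Sylow $p$-subgroup of $\SL_3(p)$ has an abelian normal subgroup that is too large relative to the quotient. For $G$ of Lie type in characteristic $\ell\ne p$, the Sylow $p$-subgroup is governed by the order of $q$ modulo $p$ (the relevant cyclotomic/Zsygmondy data), and $|S|=p^6$ together with the requirement that $S$ be nonabelian with the prescribed normal structure severely restricts $q$; in most such families the Sylow $p$-subgroups are abelian or are direct products of cyclic groups, contradicting nonabelianness of $S/K$, and these are eliminated wholesale.

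The hard part, and the place where I would lean most on the cited \cite[Theorem]{Raul}, is the cross-characteristic Lie type analysis: there the $p$-local structure is not read off from a root system but from the factorization of group orders into cyclotomic polynomials evaluated at $q$, and one must rule out sporadic coincidences where some $G(q)$ in a ``wrong'' characteristic happens to have a Sylow $p$-subgroup of order $p^6$ isomorphic to that of $\Gee_2(p)$. Rather than reprove this in full, I would quote the result of \cite{Raul} as the engine that carries out precisely this bookkeeping and extract the stated conclusion as the special case relevant to $S=\Syl_p(\Gee_2(p))$; the role of my argument is then to verify that the hypotheses of that theorem are met, namely that the two facts about $|S|$ and its abelian normal subgroups hold (with a forward reference to Lemma~\ref{l:charz}(c)), and to assemble the resulting list into cases (a)--(c) according to $p=5$, $p=7$, and $p>7$.
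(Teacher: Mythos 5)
Your overall strategy coincides with the paper's: both invoke the classification and run through alternating, Lie type (split by characteristic) and sporadic groups, using $|S|=p^6$ and the bound on abelian normal subgroups from Lemma~\ref{l:charz}. However, there is a concrete gap in your defining-characteristic analysis. Once $|Z(S)|=p$ forces the field to be $\mathbb F_p$, the constraint $|S|=p^6$ says the root system has exactly six positive roots, and this does \emph{not} confine you to rank at most two: besides $\Gee_2$ the type $\mathrm A_3$ also has six positive roots, so $\PSL_4(p)$ and $\PSU_4(p)$ are genuine competitors with Sylow $p$-subgroups of exactly the right order. These are precisely the cases where the second structural fact must be deployed (their Sylow $p$-subgroups contain an elementary abelian normal subgroup of order $p^4$, violating $|K|\le p^3$), and your enumeration, which stops at types $\mathrm A_2$ and $\mathrm B_2$ (both already excluded by order alone) and cites $\SL_3(p)$ as the illustrative example, never reaches them. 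As written, the argument would let $\PSL_4(p)$ and $\PSU_4(p)$ through.

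On the cross-characteristic case your route also diverges from the paper's, in a way worth noting. You propose to outsource the cyclotomic bookkeeping to \cite{Raul}; but the theorem is itself stated as a special case of that (unpublished) result, so citing it for the hardest subcase is close to circular, and the paper instead gives a short self-contained argument: by \cite[Theorem 4.10.2]{GLS3} a Sylow $p$-subgroup in cross-characteristic has a normal abelian ``torus part'' $S_T$ with $S/S_T$ embedding in the Weyl group, whence $|S_T|\le p^3$ forces $S/S_T$ to be a non-abelian $p$-subgroup of a Weyl group, pushing the Weyl group's Sylow order to at least $p^{p+1}$ and leaving only $p=5$ with $S\cong 5\wr 5$, which is excluded exactly as in the alternating case. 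This is both more elementary and more self-contained than the cyclotomic-polynomial analysis you anticipate, and you may as well adopt it.
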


\begin{proof} We use the classification of finite simple groups to prove this result. Assume that $G$ is a finite simple group with Sylow $p$-subgroup $S$ isomorphic to a Sylow $p$-subgroup of $\Gee_2(p)$ for $p \ge 5$.

 If $G$ is an alternating group $\Alt(n)$, then, as $S$  is non-abelian we require $n \ge p^2$. But then $|S| \ge p^{p+1}$. As $|S| = p^6$, we have $p=5$ and $S$ is isomorphic to the wreath product $5\wr 5 \in \Syl_5(\Alt(25))$. But then $S$ has an abelian subgroup of index $5$, a contradiction.

Suppose that $G$ is a Lie type group in characteristic $p$. Then, by \cite[Theorem 2.2.9]{GLS3}, $|S|= {p^a}^N$ where $N$ is the number of positive roots of the untwisted root system of $G$ and $p^a$ is the order of the centre of a long root subgroup of $G$. Since $|Z(S)|=p$, we have $N=6$.
The values of $N$ are given in \cite[Table 2.2]{GLS3} and this yields that the root systems with exactly $6$ positive roots are of type $\Gee_2$ and $\mathrm A_3$. Thus we need to consider the groups $\Gee_2(p)$, $\mathrm A_3(p)\cong \PSL_4(p)$ and ${}^2\mathrm A_3(p)\cong \PSU_4(p)$. In the latter two cases we see that a Sylow $p$-subgroup has an elementary abelian normal subgroup of order $p^4$, whereas in $S$ there is no such subgroup. Hence in this case we have $G \cong \Gee_2(p)$.

Suppose that $G$ is a Lie type group in characteristic $r \ne p$.  Then, by \cite[Theorem 4.10.2]{GLS3}, $S$ has a normal abelian subgroup $S_T$ such that $S/S_T$ is isomorphic to a subgroup of the Weyl group of $G$.  By Lemma~\ref{l:charz}, $S_T$ has order at most $p^3$ and  $S/S_T$ is non-abelian of order at least $p^3$. Now notice that, if a Weyl group $W$ has a non-abelian Sylow $p$-subgroup with $p \ge 5$, then $W$ has type $A_{n-1}$, $B_n$, $C_n$ or $D_n$ with $n \ge p$.  In particular, we see that $W$ has Sylow $p$-subgroups of order at least $p^{p+1}$.  Since $|S|= p^6$, we again have $p=5$ and   $S \cong 5\wr 5$, which is  a contradiction.

Finally assume that $G$ is a sporadic simple group.  Then, as $|S|= p^6$ and $p \ge 5$, using the orders of the sporadic simple groups \cite[Table 5.3]{GLS3} yields that $G$ must be   $\Ly$, $\HN$ or $\BM$ with $p=5$ or $\Mo$ with $p=7$.
\end{proof}

\section{Definition and basic properties of a Sylow $p$-subgroup of $\Gee_2(p)$ when $p \ge 5$}\label{s:constructs}

\subsection{Construction of $S$}\label{ss:constructs}
Let $q=p^f$ with $p \ge 5$ a prime  and $\mathbb{F}$ be a field of order $q$. In what follows, we construct a group $S$ which is isomorphic to a Sylow $p$-subgroup of $\Gee_2(q)$ (see the Appendix). To this end, we start with $V$ the 4-dimensional subspace of homogeneous polynomials of degree 3 in $\mathbb{F}[X,Y].$ Then $L=\mathbb{F}^\times \times \GL_2(\mathbb{F})$ acts on $V$ via  the $\mathbb F$-linear extension of  \[X^aY^b \cdot \left(t, \left( \begin{smallmatrix}
\alpha & \beta  \\
\gamma & \delta  \\ \end{smallmatrix} \right) \right)=t\cdot(\alpha X + \beta Y)^a \cdot (\gamma X + \delta Y)^b\] where $a+b=3$. We   define a bilinear function $\beta : V \times V \rightarrow \mathbb{F}$ by first defining $\beta$ on basis vectors by \[
 \beta(X^aY^b,X^cY^d)=\left\{\begin{array}{ll}
           0, & \textrm{ if $a \neq d$;} \\
           \frac{(-1)^a}{{3 \choose a}}, &  \textrm{ if $a = d$}
          \end{array}
          \right. \] and extending linearly. Let $Q$ be the group $(V \times \mathbb{F}^+,*)$ where $$(v,y)*(w,z)=(v+w,y+z+\beta(v,w)),$$ for $(v,y),(w,z) \in Q$. In \cite[Lemma 2.2]{PS} it is noted that $Q$ is a special group with the property that $$Z(Q)=\{(0,\lambda) \mid \lambda \in \mathbb{F}\}.$$

We now construct the group $S$ by extending the action of $L$ on $V$ to an action on $Q$ defined as follows: For $(t,A) \in L$ and $(v,z) \in Q$, \begin{eqnarray}\label{action}(v,z)^{(t,A)}&=&(v.{(t,A)},t^2(\det A)^3 z).\end{eqnarray} A simple check (carried out in the discussion before  \cite[Lemma 2.3]{PS}) shows that this action is a group action  (in the sense that $((v,y)(w,z))^{(t,A)}=(v,y)^{(t,A)}(w,z)^{(t,A)}$) and that the kernel of the action is \begin{eqnarray}\label{kernel}\left\{\left(\mu^{-3}, \left( \begin{smallmatrix}
\mu & 0    \\
0 & \mu  \\  \end{smallmatrix} \right)\right)\mid \mu \in \mathbb F^\times\right\}.\end{eqnarray} As in \cite{PS}, let
\begin{eqnarray*} B_0=\mathbb{F}^\times \times \left \{ \left( \begin{smallmatrix}
\alpha & 0  \\
\gamma & \beta  \\ \end{smallmatrix} \right)  \mid \alpha,\beta \in \mathbb{F}^\times, \gamma \in \mathbb{F} \right \}  \mbox{ and } S_0=\{1\} \times  \left\{ \left( \begin{smallmatrix}
1 & 0  \\
\gamma & 1  \\ \end{smallmatrix} \right) \mid  \gamma \in \mathbb{F} \right \} \end{eqnarray*} and set
\begin{eqnarray}\label{d:Bo}B=B_0Q \hspace{.5mm} \mbox{ and }  \hspace{.5mm} S=S_0Q. \end{eqnarray}
For $\lambda \in \mathbb{F}$, we define the following elements of $Q$: \begin{eqnarray*}x_6(\lambda)&=&(0,-2\lambda), \hspace{.5mm} x_5(\lambda)=(-\lambda X^3,0),\hspace{.5mm} x_4(\lambda)=(3\lambda X^2Y,0),\hspace{.5mm}\\ x_3(\lambda)&=&(-3\lambda XY^2,0), \hspace{.5mm} x_2(\lambda)=(\lambda Y^3,0).\end{eqnarray*}
Also write \[x_1(\lambda)=(1, \left( \begin{smallmatrix}
1 & 0  \\
\lambda & 1  \\ \end{smallmatrix} \right) ) \in S_0. \] Observe that $$S=\langle x_1(\lambda),x_2(\mu),x_3(\nu),x_4(\xi), x_5(o),x_6(\pi) \mid \lambda, \mu, \nu, \xi, o, \pi \in \mathbb{F} \rangle.$$

\subsection{Properties of $S$ and some  subgroups}
We now specialize to the case when $f=1$, so that $S$ is a Sylow $p$-subgroup of $\Gee_2(p)$. By the discussion in Section \ref{ss:constructs},  $S=\langle x_1,x_2,x_3,x_4,x_5,x_6\rangle$ where for each $1 \le j \le 6$ we write $x_j=x_j(1)$. Note that $S$ has nilpotency class $5$ and so $S$ is of maximal class. Thus let $$1 < Z=Z_1 < Z_2 < Z_3 < Z_4 < Z_5=S$$ be the upper (and lower) central series of $S$ where, for ease of notation, we set $Z= Z(S)$ and, for $ 2\le i \le 4$,  $Z_i = Z_i(S)$. Of particular importance to us will be the groups $$Q=\langle x_2,x_3,x_4,x_5,x_6 \rangle $$ and $$R= \langle x_1,x_3,x_4,x_5,x_6 \rangle.$$
From the construction of $S$, we see that

\begin{Lem}
The subgroup $Q$ is extraspecial of order $p^5$ and exponent $p$.\qed
\end{Lem}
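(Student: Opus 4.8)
The plan is to verify the two defining properties of an extraspecial group directly from the construction of $Q$ as the set $(V \times \mathbb{F}^+,*)$ with the twisted multiplication and then check the exponent. First I would establish the order: since $V$ is a $4$-dimensional $\mathbb{F}$-space over the prime field ($f=1$, so $\mathbb{F}=\mathbb{F}_p$) and $\mathbb{F}^+$ is $1$-dimensional, as a set $Q = V \times \mathbb{F}^+$ has $|Q| = p^4 \cdot p = p^5$, giving the claimed order immediately.

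Next I would recall, or reprove, the fact cited from \cite[Lemma 2.2]{PS} that $Q$ is special with $Z(Q) = \{(0,\lambda) \mid \lambda \in \mathbb{F}\}$, which has order $p$. Being special means $Z(Q) = [Q,Q] = \Phi(Q)$, and so $Q/Z(Q)$ is elementary abelian; combined with $|Z(Q)| = p$ this says precisely that $Q$ is extraspecial. Concretely, the commutator in $Q$ of $(v,y)$ and $(w,z)$ computes to $(0, \beta(v,w) - \beta(w,v))$, landing in the central copy of $\mathbb{F}$, and the nondegeneracy of the alternating form $\beta(v,w)-\beta(w,v)$ on $V$ (which one reads off from the definition of $\beta$ on the basis $X^aY^b$) forces $[Q,Q]$ to be all of $Z(Q)$ and $Z(Q)$ to be exactly the central fibre. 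This gives the extraspecial structure.

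Finally, for the exponent I would compute the $p$-th power of an arbitrary element $(v,y)$. Using the multiplication rule, $(v,y)^{*n} = \left(nv,\, ny + \binom{n}{2}\beta(v,v)\right)$, and since $\beta(v,v)=0$ (each basis pairing $\beta(X^aY^b,X^aY^b)$ forces $a=d$ but also $b=c$, so only diagonal-in-a-sense terms survive and the form is alternating on the relevant part), this reduces to $(v,y)^{*p} = (pv, py) = (0,0)$ because we are in characteristic $p$ and $\binom{p}{2} \equiv 0 \pmod p$ for $p$ odd. Hence every nonidentity element has order $p$, so $Q$ has exponent $p$.

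The only genuinely delicate point is the exponent computation, where one must confirm that the correction term $\binom{p}{2}\beta(v,v)$ vanishes: for $p$ odd this is automatic since $\binom{p}{2} = p(p-1)/2 \equiv 0 \pmod p$, but one should still check that $\beta(v,v)$ behaves as expected and that no subtler power term appears from the associativity of $*$. Everything else is a routine unwinding of the explicit cocycle definition of $Q$, and all of it is already recorded in the discussion surrounding \cite[Lemma 2.2]{PS} and \cite[Lemma 2.3]{PS}, so the statement follows without further structural input.
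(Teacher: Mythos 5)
Your proposal is correct and follows essentially the same route as the paper, which simply asserts the lemma from the construction of $Q=(V\times\mathbb{F}^+,*)$ and the fact, quoted from \cite[Lemma 2.2]{PS}, that $Q$ is special with $Z(Q)=\{(0,\lambda)\mid\lambda\in\mathbb{F}\}$. You merely write out the routine verifications (order $p^4\cdot p$, commutator $(0,\beta(v,w)-\beta(w,v))$ with $\beta$ alternating and non-degenerate, and $(v,y)^{*n}=(nv,ny+\binom{n}{2}\beta(v,v))$ with $\beta(v,v)=0$) that the paper leaves implicit.
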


In fact, if $p \ge 7$, then $S$ has exponent $p$ and, if $p=5$, then $S$ has exponent $25$. Indeed, $\Gee_2(p)$ has a $7$-dimensional faithful representation and so for $p \ge 7$, $S$ has exponent $p$. For $p=5$, we remark that every element of $S \setminus (R\cup Q)$ has order $25$ and $R$ and $Q$ both have exponent $5$.

\begin{Lem}\label{l:charz}
The following hold:
\begin{itemize}
\item[(a)] $Z=\langle x_6 \rangle$ and  $Z_2=\langle x_6,x_5 \rangle;$
\item[(b)] $R= C_S(Z_2);$
\item[(c)] $Z_3=\langle x_6,x_5,x_4 \rangle$ is elementary abelian and $$Z_4= C_{Q}(Z_2)= Q\cap R=\Phi(S)=\langle x_6,x_5,x_4,x_3 \rangle $$ and $Z_4$ is not abelian.
\item[(d)] $Q$ and $R$ are  characteristic maximal subgroups of $S$;
\item[(e)] the non-trivial normal subgroups of $S$ of order at most $p^4$ are the subgroups $Z_i$ with $1\le i \le 4$; and
\item [(f)] the action of $x_1$ on $Q/Z$ has a unique Jordan block.
\end{itemize}
\end{Lem}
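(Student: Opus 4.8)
The plan is to prove Lemma~\ref{l:charz} by explicit computation with the commutator relations among the generators $x_1,\ldots,x_6$ of $S$, reading these off from the construction of $Q$ and the action of $x_1$ given in Section~\ref{ss:constructs}. The key input is that $S$ has maximal class, so the lower central series coincides with the upper central series, and each term $Z_i$ has order $p^i$. I would first establish the commutator relations once and for all: using the multiplication $(v,y)*(w,z)=(v+w,y+z+\beta(v,w))$ on $Q$ together with the action formula~(\ref{action}), compute $[x_i,x_j]$ for all relevant pairs. The bilinear form $\beta$ and the $\GL_2$-action on the degree-$3$ polynomials should yield the standard Chevalley-type commutator relations for $\Gee_2$, with $x_6$ spanning the center and successive commutators stepping down the indices. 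This relation table is the engine driving every part of the lemma.

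Let me think about how the parts follow. For (a), $Z=\langle x_6\rangle$ is immediate since $Z(Q)=\{(0,\lambda)\}$ and one checks $x_1$ centralizes $x_6$; that $Z_2=\langle x_6,x_5\rangle$ follows because $Z_2/Z=Z(S/Z)$ and the relations show $x_5$ is the unique next generator centralized modulo $Z$. For (b), I would verify $R=C_S(Z_2)$ by showing $x_3,x_4,x_5,x_6,x_1$ all centralize both $x_5$ and $x_6$ while $x_2$ does not — so $C_S(Z_2)$ omits exactly the $x_2$-direction, giving the listed generators. For (c), $Z_3=\langle x_6,x_5,x_4\rangle$ is read off as the next term of the central series; elementary abelianness follows since all three lie in $Q$ (exponent $p$) and pairwise commute by the relations. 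The identification $Z_4=C_Q(Z_2)=Q\cap R=\Phi(S)$ requires checking four descriptions agree: $Q\cap R=\langle x_3,x_4,x_5,x_6\rangle$ by inspection of generators, this equals $\Phi(S)=[S,S]S^p$ since $S$ is maximal class of order $p^6$ (so $|\Phi(S)|=p^4=|[S,S]|$), and $C_Q(Z_2)$ picks out those elements of $Q$ commuting with $x_5$, which again omits the $x_2$-direction. Non-abelianness of $Z_4$ follows from $[x_3,x_4]\ne 1$ (or whichever pair the relations show is noncentral), a single commutator check.

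For (d), that $Q$ and $R$ are characteristic: $\Phi(S)=Z_4$ and $Z_4$ is characteristic, and both $Q$ and $R$ are maximal subgroups containing $Z_4$; I would argue $Q$ is characteristic because it is the unique maximal subgroup of exponent $p$ that is extraspecial (distinguishing it from $R$ and the remaining maximal subgroups), and $R=C_S(Z_2)$ is characteristic since $Z_2$ is. For (e), the claim that $Z_1,\ldots,Z_4$ are the \emph{only} nontrivial normal subgroups of order $\le p^4$ uses the maximal-class structure: any normal subgroup $N\ne 1$ meets $Z=Z_1$ nontrivially (so contains it), and then $N/Z$ is normal in $S/Z$, which is again maximal class, so one induces down the chain — each normal subgroup of order $p^i$ must coincide with $Z_i$. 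This is the standard rigidity of the normal subgroup lattice in a maximal-class $p$-group and I would cite the uniqueness of the $Z_i$ as the lower central terms. For (f), that $x_1$ acts on $Q/Z$ with a single Jordan block, I would compute the matrix of $c_{x_1}$ on the $4$-dimensional space $Q/Z$ in the basis $\bar x_2,\bar x_3,\bar x_4,\bar x_5$: the commutator relations should give $[x_1,x_{i}]\equiv x_{i+1}\pmod{\text{lower terms}}$, producing a single Jordan chain $\bar x_2\mapsto\bar x_3\mapsto\bar x_4\mapsto\bar x_5\mapsto 0$, hence $(c_{x_1}-1)$ is a regular nilpotent matrix with one Jordan block.

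\emph{The main obstacle} I anticipate is establishing the precise commutator relations cleanly from the polynomial model, since the bilinear form $\beta$ involves the binomial-coefficient denominators $\binom{3}{a}^{-1}$ and the signs $(-1)^a$, and the $\GL_2$-action on $X^aY^b$ mixes terms in a way that needs care to extract the correct structure constants. Everything else is bookkeeping once that table is in hand; the only other delicate point is part~(e), where I must ensure I am genuinely using maximal class rather than merely listing the $Z_i$, so that no \emph{other} normal subgroup of order $p^i$ can exist. I would lean on the fact (already invoked in the text, ``$S$ is of maximal class'') that in a maximal-class $p$-group the terms of the lower central series are the unique normal subgroups of their respective orders below the two-step maximal subgroups, which handles orders $\le p^4$ exactly as claimed.
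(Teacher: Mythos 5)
Your proposal is correct and follows essentially the same route as the paper: every part is read off from the explicit values of $\beta$ on the monomial basis together with the maximal-class property of $S$ (which gives $|Z_i|=p^i$ and yields parts (e) and (f) almost for free). The only small divergence is in (d), where you characterize $Q$ as the unique extraspecial maximal subgroup while the paper characterizes $Q/Z$ as the unique abelian subgroup of order $p^4$ in $S/Z$; both variants rest on the same easy verification that every maximal subgroup $X\ne Q$ has $[X,X]\ge Z_3$, so nothing is gained or lost.
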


\begin{proof}
Parts (a) and (b) follow directly from considering the description of  $S$. Since $\beta(X^3,X^2Y)=0$, we have $[x_4,x_5]=1$ so that $Z_3$ is abelian, and hence elementary abelian. Similarly, $x_3$, $x_4$ and $x_5$ all centralize $x_2$ so that $Z_4 \subseteq C_Q(Z_2)$. Since $\beta(X^3,Y^3) \neq 0$, $x_2 \notin C_Q(Z_2)$ so $Z_4 = C_Q(Z_2)$ and the remaining equalities in (c) are clear. To see that $Q$ is characteristic, we note that $Q/Z$ is the unique abelian subgroup of order $p^4$ in $S/Z$. That $R$ is characteristic follows from  the fact that $R=C_S(Z_2)$ and $Z_2$ is characteristic in $S$. Thus (d) is proved. Part (e) follows from the fact that $S$ has maximal class so  that the upper central series for $S$ and the lower central series for $S$ coincide. Part (f) follows from the fact that $S$ has maximal class.
\end{proof}

\begin{Lem}\label{l:z4char}
Suppose $X$ is a maximal subgroup of $S$ with $X \ne Q $. Then
\begin{enumerate}\item [(a)]$Z_3=\Phi(X)$ is characteristic in $X$;
\item [(b)]$Z_2 $ is characteristic in $X$; and
\item  [(c)]  either $Z_4$ is characteristic in $X$ or $X=R$.
\end{enumerate}
\end{Lem}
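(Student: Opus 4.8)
The plan is to exploit the lower central series of $S$, whose terms are the $Z_i$ because $S$ has maximal class, together with the handful of commutator relations among the generators $x_1,\dots,x_6$ coming from the construction of $S$. First I would enumerate the maximal subgroups. Since $\Phi(S)=Z_4$ by Lemma~\ref{l:charz}(c) and $S$ is two-generated, $S$ has exactly $p+1$ maximal subgroups, all containing $Z_4$; these are $Q=\langle x_2\rangle Z_4$ and the subgroups $X_c=\langle x_1x_2^c\rangle Z_4$ for $c\in\mathbb{F}_p$, with $R=X_0$. Thus every maximal subgroup $X\ne Q$ has the form $X=\langle g\rangle Z_4$ with $g=x_1x_2^c$. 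I would record the few relations needed: $[x_2,x_3]=1$ while $[x_1,x_3]\in Z_3\setminus Z_2$ (its $x_4$-component is nonzero); $[x_2,x_4]=1$ while $[x_1,x_4]$ is a nontrivial power of $x_5$; $x_1\in R=C_S(Z_2)$ so $[x_1,x_5]=1$, whereas $[x_2,x_5]$ is a nontrivial power of $x_6$; and, since $Z_4\le Q$ is nonabelian by Lemma~\ref{l:charz}(c), $Z_4'\le Q'=Z$ forces $Z_4'=Z$.

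For (a) I would compute $\Phi(X)=X'X^p$ directly. Writing $X'=\langle[g,Z_4],Z_4'\rangle$ and using the relations above gives $[g,x_3]\in Z_3\setminus Z_2$, $[g,x_4]$ a nontrivial power of $x_5\in Z_2\setminus Z$, $[g,x_5]=[x_2^c,x_5]\in Z$, $[g,x_6]=1$ and $Z_4'=Z$; each of these lies in $Z_3$, so (as $Z_3\trianglelefteq S$) we get $X'\le Z_3$, and together they reach the $x_4$-, $x_5$- and $x_6$-directions, whence $X'=Z_3$. It remains to see $X^p\le Z_3$. For $p\ge 7$ the group $S$ has exponent $p$, so $X^p=1$. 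For $p=5$ one applies the Hall--Petrescu collection formula to any $y\in X$: the terms of weight $2\le i\le p-1$ lie in $\gamma_i(X)\le X'=Z_3\le Q$, which has exponent $p$, and are raised to the exponent $\binom{p}{i}\equiv 0\pmod p$, while the weight-$p$ term lies in $\gamma_p(X)\le\gamma_p(S)=Z$; hence $y^p\in Z$ and $X^p\le Z\le Z_3$. Therefore $\Phi(X)=Z_3$, which is characteristic in $X$.

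Parts (b) and (c) then follow formally. For (b), the commutator $[X,\Phi(X)]=[X,Z_3]$ is characteristic in $X$; it is contained in $[S,Z_3]=Z_2$, it contains $[Z_4,Z_3]=Z$, and it contains $[g,x_4]$, a nontrivial power of $x_5$ lying in $Z_2\setminus Z$. As $Z\le[X,Z_3]\le Z_2$ with $|Z_2:Z|=p$, we conclude $[X,Z_3]=Z_2$, which is therefore characteristic in $X$. For (c), since $Z_2$ is characteristic in $X$ and $C_S(Z_2)=R$ by Lemma~\ref{l:charz}(b), the subgroup $C_X(Z_2)=X\cap R$ is characteristic in $X$. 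If $X\ne R$, then $X$ and $R$ are distinct maximal subgroups, so $X\cap R$ has order $p^4$ and contains $\Phi(S)=Z_4$ of the same order; hence $X\cap R=Z_4$ and $Z_4$ is characteristic in $X$. Otherwise $X=R$, which is the stated exception.

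The only genuinely delicate point is the verification that $X^p\le Z_3$ when $p=5$, where $S$ fails to have exponent $p$ and one must invoke Hall--Petrescu; everything else is routine bookkeeping with commutators in the lower central series. The remainder is essentially formal once $\Phi(X)=Z_3$ is known, since $Z_2$ and $Z_4$ are then pinned down intrinsically as the characteristic subgroups $[X,\Phi(X)]$ and $C_X(Z_2)=X\cap R$, respectively.
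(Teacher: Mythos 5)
Your proof is correct, but it reaches part (a) by a considerably more computational route than the paper, and the one step you flag as ``genuinely delicate'' is in fact avoidable. The paper never touches $X^p$: from $X\ne Q$ it gets $S=QX$, hence $[Z_4,X]=[Z_4,QX]=[Z_4,S]=Z_3$ in one line (using $[Z_4,X]\ge[Z_4,Q]=Z$ and normality of $[Z_4,X]$ in $S$), so $X'\ge Z_3$; since $Z_3$ already has index $p^2$ in $X$ and $X$ is non-cyclic, $\Phi(X)$ is squeezed to equal $X'=Z_3$ with no exponent analysis whatsoever. Your explicit commutator bookkeeping with $g=x_1x_2^c$ establishes the same lower bound $X'=Z_3$ correctly, but because you then compute $\Phi(X)$ as $X'X^p$ from below you are forced into the exponent-$25$ issue at $p=5$ and the Hall--Petrescu collection argument; that argument does go through (the weight-$\ge 2$ terms die in $Z_3$, the weight-$p$ term lands in $\gamma_p(S)=Z$, and the leading $p$-th powers reduce to $(x_1x_2^c)^p\in Z$ by a second application), though as written it is compressed and should make explicit that the leading term $g^{ip}$ needs its own collection step. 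Part (b) is essentially the paper's argument. For part (c) your route is a little different and arguably cleaner: you observe that $C_X(Z_2)=X\cap R$ is characteristic once $Z_2$ is, and equals $Z_4$ whenever $X\ne R$ by order count; the paper instead argues that if $Z_4\alpha\ne Z_4$ then $X=Z_4\cdot Z_4\alpha\le C_S(Z_2)=R$. Both are valid; yours pins $Z_4$ down intrinsically, while the paper's shows directly how an automorphism moving $Z_4$ forces $X=R$.
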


\begin{proof} As $X \ne Q$, we have  $S= QX$ and, as $Z_4 = \Phi(S)$, also $Z_4 < X$. Now note that, as $[Z_4,X] \ge [Z_4,Q]=Z$,  $$[Z_4,X]= [Z_4,QX]= [Z_4,S]= Z_3$$  and so $[X,X] \ge[Z_4,X] = Z_3$. Since $|X/Z_3|=p^2$, we deduce that $[X,X]= \Phi(X)=Z_3$.  In particular,  $Z_3$ is characteristic in $X$. This proves (a).

Now $$[Z_3,X]= [Z_3,QX]=[Z_3,S]=Z_2$$ and so (b) holds.
By Lemma~\ref{l:charz} (c) $Z_4$ centralizes $Z_2$. Let $\alpha \in \Aut(X)$ and assume that $Z_4\alpha \ne Z_4$. Then $X=Z_4 Z_4\alpha$. Since $Z_4\alpha$ centralizes $Z_2\alpha$, and  $Z_2\alpha=Z_2$ by (b),  we have $X\le R$ and as $X$ is maximal, $X=R$. This proves (c).

\end{proof}

As remarked in the introduction, we  need to prove that each of the fusion systems  $\F$ we construct is uniquely determined by the $\F$-automorphism groups of $\E_\F \cup \{S\}$. For this, a detailed description of the automorphism groups of $Q$, $R$ and $S$ is helpful.

\subsection{The structure of $\Aut(Q)$}

The structure of the automorphism group of an extraspecial $p$-group of exponent $p$ is well known, and we state it here only for convenience:

\begin{Prop}\label{l:autQ}
Set $A=\Aut(Q)$ and $\bar{A}=\Out(Q)$. There exists $\theta \in A$ of order $p-1$ such that $A=\langle \theta \rangle \cdot C_A(Z(Q)) \mbox{ and } \langle \theta \rangle \cap C_A(Z(Q)) =1.$ Moreover  $\overline{C_A(Z(Q))} \cong \Sp_4(p)$ and   $\bar{A} \cong \GSp_4(p)$.
\end{Prop}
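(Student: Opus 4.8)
The plan is to exploit the two characteristic data attached to $Q$: the centre $Z=Z(Q)$ and the quotient $V=Q/Z$. Since $Q$ is extraspecial of exponent $p$, the centre $Z\cong\mathbb{F}_p$ is characteristic, so every $\alpha\in A$ restricts to a scalar on $Z$, giving a homomorphism $\chi\colon A\to\Aut(Z)\cong\mathbb{F}_p^\times$ with kernel exactly $C_A(Z(Q))$. Simultaneously $\alpha$ acts $\mathbb{F}_p$-linearly on $V$, and the commutator map $Q\times Q\to Z$ descends to a nondegenerate alternating form on $V$, namely the antisymmetrisation of the bilinear form $\beta$ from Section~\ref{ss:constructs}. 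Because $[\alpha x,\alpha y]=\alpha[x,y]$, this form is preserved up to the scalar $\chi(\alpha)$, so the action on $V$ defines a homomorphism $\rho\colon A\to\GSp(V)=\GSp_4(p)$ whose similitude character is $\chi$, and $\rho$ carries $C_A(Z(Q))$ into $\Sp_4(p)$.

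First I would identify $\ker\rho$ with $\Inn(Q)$. An automorphism $\alpha\in\ker\rho$ acts trivially on both $Z$ and $Q/Z$, hence centralises the chief series $1<Z<Q$; writing $\alpha(x)=x\,z(x)$ with $z(x)\in Z$ and using that $\alpha$ is a homomorphism shows $z\in\Hom(Q/Z,Z)\cong V^{*}$, so $|\ker\rho|=p^{4}$ (that $\ker\rho$ is a $p$-group also follows from Burnside's Lemma~\ref{l:burnside} via Lemma~\ref{l:outinj}). As $\Inn(Q)\cong Q/Z(Q)$ also has order $p^{4}$ and clearly lies in $\ker\rho$, we obtain $\ker\rho=\Inn(Q)$. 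Consequently $\bar A=\Out(Q)$ embeds into $\GSp_4(p)$ and $\overline{C_A(Z(Q))}$ embeds into $\Sp_4(p)$, and it remains only to prove that these embeddings are surjective.

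The heart of the argument, and the main obstacle, is the surjectivity of $\rho$ on $C_A(Z(Q))$ onto $\Sp_4(p)$: every symplectic transformation of $V$ must lift to an automorphism of $Q$ fixing $Z$ pointwise. Realising $Q$ as the group $(V\times\mathbb{F}_p,*)$ of Section~\ref{ss:constructs}, then given $g\in\Sp(V)$ I would seek an automorphism of the shape $(v,t)\mapsto(gv,\,t+\gamma(v))$; the requirement is that the cocycle $\beta(gv,gw)$ agree with $\beta(v,w)$ after adjustment by the coboundary $\gamma$, and since $g$ preserves the underlying alternating form these two cocycles are cohomologous, so a suitable correction $\gamma$ exists. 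It is precisely here that $p$ odd and the exponent-$p$ hypothesis on $Q$ are essential. Equivalently, one may instead realise directly the symplectic transvections that generate $\Sp_4(p)$ by explicit automorphisms of $Q$. Either route yields $\overline{C_A(Z(Q))}\cong\Sp_4(p)$; this is the classical theorem of Winter on automorphism groups of extraspecial groups, and for the detailed verification I would appeal to that standard reference.

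Finally I would exhibit the splitting through a single element $\theta$ of order $p-1$. Choosing preimages $e_1,e_2,f_1,f_2\in Q$ of a symplectic basis of $V$ with $[e_i,f_i]=z$ a fixed generator of $Z$ and all other basis commutators trivial, define $\theta$ by $e_i\mapsto e_i$ and $f_i\mapsto f_i^{\,g}$, where $g$ generates $\mathbb{F}_p^\times$; the images satisfy the same commutator and (exponent-$p$) power relations, so $\theta$ extends to an automorphism, necessarily with $\theta(z)=z^{g}$. Then $\theta^{k}=1$ if and only if $g^{k}=1$, so $\theta$ has order exactly $p-1$, while $\chi(\theta)=g$ generates $\mathbb{F}_p^\times$. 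Hence $\chi$ is surjective, $A=\langle\theta\rangle\,C_A(Z(Q))$, and $\langle\theta\rangle\cap C_A(Z(Q))=1$ because $\theta^{k}\in C_A(Z(Q))$ forces $(p-1)\mid k$. Combined with the preceding step, surjectivity of $\chi$ upgrades the embedding $\bar A\hookrightarrow\GSp_4(p)$ to an isomorphism, completing the proof.
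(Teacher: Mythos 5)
Your argument is correct and is essentially the content of the paper's proof, which consists entirely of a citation to Winter's theorem on automorphisms of extraspecial $p$-groups (\cite[Theorem 1]{Winter}); you reconstruct that standard argument (the map to $\GSp(V)$, kernel $=\Inn(Q)$, the cocycle correction for lifting $\Sp(V)$, and the order-$(p-1)$ complement $\theta$) and defer to the same reference for the one genuinely nontrivial step. Nothing is missing.
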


\begin{proof}
See \cite[Theorem 1]{Winter}.
\end{proof}

\subsection{The structure of $\Aut(R)$} The next lemma provides us with a rather precise  description of $\Aut(R)$.

\begin{Lem}\label{l:autR} Let $A= \Aut(R)$, $\overline A = \Out(R)$ and put $$\mathcal A= \{(x,y) \in R\times R\mid R= \langle x,y\rangle\}.$$ Then the following  hold:
\begin{itemize}
\item[(a)] $\Inn(R) \cong p^{1+2}_+$;
\item[(b)] $|\Aut(R)|= p^7(p^2-1)(p-1)$;
\item[(c)]  if $(x,y), (x_1,y_1) \in \mathcal A$, then there exists $\theta \in \Aut(R)$ such that $x\theta = x_1$ and $y \theta = y_1$;
\item[(d)] $A/O_p(A) \cong \GL_2(p)$;
\item[(e)] $O_p(\overline A)$ is elementary abelian of order $p^3$ and as an $O^{p'}(A/O_p(A))$-module is isomorphic to the module of $2\times 2$-matrices over $\mathbb F_p$ of trace $0$ acted upon by conjugation by $\SL_2(p)$;
\item[(f)] $Z(\overline  A) $ has order $2$; and
\item[(g)] there exists a subgroup $\overline X$ of $\overline A$ with $\overline X \cong \GL_2(p)$.
\end{itemize}
\end{Lem}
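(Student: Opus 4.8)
The plan is to establish the structure of $A = \Aut(R)$ concretely by working with the explicit presentation of $R = \langle x_1, x_3, x_4, x_5, x_6\rangle$, which is extraspecial $p_+^{1+2}$ (once we identify its center). First I would pin down the characteristic structure of $R$: since $R$ is extraspecial of order $p^3$ and exponent $p$ (it lies inside $S$ which has exponent $p$ for $p \ge 7$, and one checks exponent $p$ directly for $p=5$), we have $Z(R) = \Phi(R) = [R,R]$ cyclic of order $p$, and $R/Z(R)$ elementary abelian of rank $2$. This immediately gives (a). For (b), I would invoke the classical fact (Winter, as in Proposition~\ref{l:autQ}, applied to the rank-$1$ case) that for extraspecial $p_+^{1+2}$ the outer automorphism group is $\GSp_2(p) \cong \GL_2(p)$ and the inner automorphisms contribute $\Inn(R) \cong R/Z(R)$ of order $p^2$; combining $|\Inn(R)| = p^2$ (note: careful, $\Inn(R) \cong R/Z(R) \cong p \times p$, but the statement says $p_+^{1+2}$, so I must recheck whether the intended $R$ here has order $p^3$ or $p^4$—from Lemma~\ref{l:charz} the subgroup $R$ has order $p^5$, so $R$ is \emph{not} extraspecial of order $p^3$, and I must instead analyze $R$ as a group of order $p^5$ with $\Inn(R) \cong R/Z(R) \cong p_+^{1+2}$, i.e.\ $Z(R)$ has order $p^2$ and $R/Z(R)$ is extraspecial). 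So the corrected reading is that $R$ has order $p^5$, $Z(R) = Z_2 = \langle x_5, x_6\rangle$, and $\Inn(R) \cong R/Z(R)$ is extraspecial $p_+^{1+2}$; this is (a).

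Granting this, the core of the proof is the computation of the full automorphism group. I would proceed as follows. Using Lemma~\ref{l:charz}, the chain $Z \le Z_2 \le Z_3 \le Z_4 \le R$ consists of characteristic subgroups of $R$ (since they are characteristic in $S$ and $R$ is characteristic in $S$), so every automorphism of $R$ preserves this filtration. The induced action on the graded pieces gives a homomorphism $A \to \GL(R/Z_4) \times \GL(Z_4/Z_3) \times \cdots$, whose image lands in $\GL_2(p)$ via the action on $R/Z_4 \cong p \times p$ together with compatibility constraints. The kernel of the map to $\GL(R/\Phi(R))$ is $O_p(A)$ by Burnside's Lemma~\ref{l:burnside}, giving the exact structure $A/O_p(A) \hookrightarrow \GL_2(p)$; surjectivity (d) follows by exhibiting explicit automorphisms realizing $\GL_2(p)$ on $R/\Phi(R)$, which one reads off from the action of $B_0$ and the torus on the generators $x_1, x_3$. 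Part (c) is the statement that $\Aut(R)$ acts transitively on generating pairs, which follows once (d) and the order formula (b) are known, by a counting argument comparing $|\mathcal{A}|$ against $|A| / |\text{stabilizer}|$; alternatively it is a direct consequence of the structure. For (b), I would count $|O_p(A)|$ explicitly (it is the stabilizer of the filtration acting trivially on each quotient, computed via Lemma~\ref{l:outinj}-type arguments) and multiply by $|A/O_p(A)| = |\GL_2(p)| = p(p^2-1)(p-1)$ to get $p^7(p^2-1)(p-1)$, so $|O_p(A)| = p^6$.

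For (e), I would identify $O_p(\bar A) = O_p(A)/\Inn(R)$ as a module for $O^{p'}(A/O_p(A)) \cong \SL_2(p)$. Since $|O_p(A)| = p^6$ and $|\Inn(R)| = p^3$, the quotient has order $p^3$; I would show it is elementary abelian (using that the relevant commutators land in deeper terms of the series) and then identify the $\SL_2(p)$-action. The natural candidate is the adjoint-type module $\mathfrak{sl}_2(p)$ of trace-zero $2\times 2$ matrices under conjugation, which is the unique $3$-dimensional $\SL_2(p)$-module of the right isomorphism type; matching this requires computing the action of a few lifted torus and unipotent elements on coset representatives of $\Inn(R)$ in $O_p(A)$. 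Parts (f) and (g) then follow structurally: $Z(\bar A)$ of order $2$ is the image of the central $\GL_2(p)$-scalar $-1$ acting nontrivially (the scalar $\mu \mapsto \mu^2$ issues from \eqref{action} show the center of $\bar A$ is generated by an involution), and (g) is obtained by exhibiting a complement $\bar X \cong \GL_2(p)$ to $O_p(\bar A)$ in $\bar A$, realized by the automorphisms induced from the action of $L$ normalizing $R$ as in Section~\ref{ss:constructs}.

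\textbf{The main obstacle} will be part (e), specifically the precise identification of the $\SL_2(p)$-module structure on $O_p(\bar A)$ as $\mathfrak{sl}_2(p)$ rather than merely as some $3$-dimensional module. This demands choosing explicit coset representatives for $\Inn(R)$ in $O_p(A)$, computing how lifts of $\SL_2(p)$-elements conjugate them, and verifying the bracket relations match the adjoint action—delicate because one must track contributions modulo $\Inn(R)$ and confirm the module is uniserial of the claimed type and not, say, a twist or a non-split extension. A secondary difficulty is ensuring the surjectivity in (d) and the exact order count in (b) are consistent: these pin down $|O_p(A)| = p^6$, and any miscount in the filtration stabilizer propagates into both (b) and (e). The explicit automorphisms needed to realize $\GL_2(p)$ in (d) and (g) should come cleanly from the $L$-action already set up, so I expect those to be routine once the module computation is in hand.
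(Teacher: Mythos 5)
Your skeleton (identify $\Inn(R)\cong R/Z_2\cong p^{1+2}_+$, realize $A/O_p(A)\cong\GL_2(p)$ through the action on $R/\Phi(R)$, compute $O_p(\overline A)$ as an $\SL_2(p)$-module, deduce (f) from a scalar computation) is the same as the paper's, but there is a genuine gap at the one point where everything else hangs: the source of the full $\GL_2(p)$ in (d) and (g). You propose to read it off ``from the action of $B_0$ and the torus on $x_1,x_3$'' and, for (g), ``from the action of $L$ normalizing $R$.'' But $L$ is the Levi factor of the parabolic normalizing $Q$; the elements of $L$ that normalize $R$ must preserve $Z_4=R\cap Q$, i.e.\ the polynomials divisible by $X$, so they induce only \emph{diagonal} matrices on $R/\Phi(R)=R/Z_3$ (a group of order $(p-1)^2$, consistent with Lemma~\ref{l:liftnorm}); nothing in $B$ mixes the images of $x_1$ and $x_3$. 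The missing $\SL_2(p)$ comes from the Levi complement of the \emph{other} maximal parabolic $N_{\Gee_2(p)}(R)$, which the paper imports by identifying $S$ with a Sylow $p$-subgroup of $\Gee_2(p)$ (the Appendix) and quoting $\Aut_{\Gee_2(p)}(R)\cong p^{1+2}_+{:}\GL_2(p)$. Without this input you get neither the lower bound on $|\Aut(R)|$ nor surjectivity in (d), and (g) fails outright. Relatedly, your plan to compute $|O_p(A)|=p^6$ ``explicitly as the filtration stabilizer'' is not carried out and is not easy to do directly; the paper instead sandwiches $|\Aut(R)|$ between the lower bound $|C_{\Aut(R)}(R/Z(R))\Aut_G(R)|=p^7(p^2-1)(p-1)$ and the upper bound $(p^5-p^3)(p^5-p^4)$ obtained by counting possible images of a generating pair. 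Since the map $\theta\mapsto(x\theta,y\theta)$ is injective, equality of the two bounds delivers (b) and (c) \emph{simultaneously}; your version of (c) presupposes (b), which you have not independently established.

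Two further points. For (e), the obstacle you flag as the main difficulty dissolves if you use the $\Aut(R)$-equivariant isomorphism $C_{\Aut(R)}(R/Z(R))\cong\Hom(R/\Phi(R),Z(R))$ from \cite[Lemma 5.2]{PaR}: this is the full $2\times 2$ matrix module with conjugation action, and $\Inn(R)\cap C_{\Aut(R)}(R/Z(R))=Z(\Inn(R))$ is exactly the line of scalar matrices, so the quotient is the trace-zero module with no choice of coset representatives needed. For (f), your sketch asserts rather than proves the claim, and the gesture at Equation~\ref{action} is the wrong computation (that formula governs the action on $Z(Q)$, not on $Z(R)=Z_2$). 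The actual content is a triple-commutator calculation inside $R$: a diagonal automorphism acting as the scalar $a$ on $R/\Phi(R)$ acts as $a^3$ on $Z_2$, hence as $a^{-1}\cdot a^3=a^2$ on $\Hom(R/\Phi(R),Z(R))$, so it centralizes $O_p(\overline A)$ if and only if $a=\pm1$; that is where the order $2$ comes from, and it needs to be done explicitly.
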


\begin{proof}
Since $S$ has maximal class and $Q \cap R= Z_4$, $[R, Q\cap R]= [S,Z_4]= Z_3$ and so $\Inn(R) \cong R/Z_2 $ is extraspecial. Since  $(Q\cap R)/Z_2$ is elementary abelian and $x_1 Z_2$ has order $p$, we see that $\Inn(R)$ has exponent $p$.  This proves (a).

By \cite[Lemma 5.2] {PaR},  the map
$$\widetilde{}: C_{\Aut(R)}(R/Z(R)) \rightarrow \Hom(R,Z(R))$$ which sends $\Psi \in C_{\Aut(R)}(R/Z(R))$ to the map $\widetilde \Psi \in \Hom(R,Z(R))$ defined, for $g \in R$,  by  $g\widetilde{\Psi}=g^{-1} (g\Psi) $ is an isomorphism.  Moreover, $\;\widetilde{}\;$ is $\Aut(R)$-invariant. Indeed suppose that $\alpha \in \Aut(R)$ and $g \in R$.   Then, for $\theta \in \Hom(R, Z(R))$, we have  \begin{eqnarray}\label{55} g\theta ^\alpha= g\alpha^{-1}\theta\alpha\end{eqnarray} and so we calculate
 $$g\widetilde {\Psi^\alpha} = g^{-1} (g \Psi^\alpha)=g^{-1}(g\alpha^{-1}\Psi \alpha  ) = (g^{-1}\alpha^{-1}(g\alpha^{-1}\Psi))\alpha= ((g\alpha^{-1})\widetilde \Psi)\alpha=g\widetilde {\Psi}^\alpha.$$
Since $\Hom(R,Z(R)) \cong \Hom(R/\Phi(R), Z(R))$ we see that $C_{\Aut(R)}(R/Z(R))$ is isomorphic to the set of all linear transformations from a $2$-space to a $2$-space. Thus $C_{\Aut(R)}(R/Z(R))$ is elementary abelian of order $p^4$.

Next we collect some automorphisms of $R$ which can be obtained from a parabolic subgroup $P$ in  $G=\Gee_2(p)$. After identifying $S$ with a Sylow $p$-subgroup of $G$, the relevant parabolic subgroup is $P= N_G(R)$ and there we observe $P/C_G(R) =\Aut_G(R)\cong p^{1+2}_+{:}\GL_2(p)$.

Also  $$\Aut_G(R) \cap C_{\Aut(R)}(R/Z(R)) = \Inn(R)\cap C_{\Aut(R)}(R/Z(R)) = Z(\Inn(R))=\Phi(R)/Z(R)= Z_3/Z_2$$ has order $p$. Hence $\Aut(R)$ has order at least $$|C_{\Aut(R)}(R/Z(R))\Aut_G(R)|=\frac{p^4.p^3. p.(p^2-1)(p-1)}{p}= p^7(p^2-1)(p-1).$$

We now establish an upper bound for $|\Aut(R)|$ and thus simultaneously prove parts (b) and (c). Since $R/\Phi(R)= R/Z_3$ has order $p^2$, there exist $x,y \in R$ such that $R= \langle x, y\rangle$.  We count  the possible  number of images of $x$ and $y$ under an  automorphism $\theta$ of $R$.  Plainly, $R= \langle x \theta , y\theta\rangle$, $x\theta \not \in \Phi(R)$ and $y\theta  \not \in \langle x \theta \rangle \Phi(R)$.  There are at most $$|R|-|\Phi(R)|=p^5-p^3$$ choices for  $x\theta $ and then $$|R|- |\langle x \theta \rangle \Phi(R)|  =p^5-p^4$$ choices for $y\theta$.  Thus there are at most $(p^5-p^3)(p^5-p^4)= p^7(p^2-1)(p-1)$ automorphisms of $R$. Thus (b) and (c) hold.

Furthermore, from the discussion   in the proof of (b), we see that $\overline A$ contains $\overline{\Aut_G(R)} \cong \GL_2(p)$ (which gives (d) and (g)) and $\Inn(R) \cap  C_{\Aut(R)}(R/Z(R))$ has order $p$.  Hence $O_p(A)=\Inn(R)  C_{\Aut(R)}(R/Z(R))$ and $$O_p(\overline A) \cong C_{\Aut(R)}(R/Z(R))/(\Inn(R) \cap  C_{\Aut(R)}(R/Z(R)))$$ and this isomorphism is as $A$-groups. In particular, $O_p(\overline A)$ is elementary abelian of order $p^3$. Since $C_{\Aut(R)}(R/Z(R))$ is isomorphic to the set of all linear transformations from a $2$-space to a $2$-space and is also an $A$-group, we infer that as an $O^{p'}(A/O_p(A))$-module, $O_p(\overline A)$ is isomorphic to the module of trace zero $2\times 2$-matrices over $\mathbb F_p$ with $\SL_2(p)$ acting by conjugation.  This proves (e).

Because $O_p(\overline A)$ is a minimal normal subgroup of $\overline A$ by (e), $Z(\bar{A}) \cap O_p(\bar{A}) = 1$ so $$Z(\overline A) \cong Z(\overline A) O_p(\overline A)/O_p(\overline A) \le Z( \overline A /O_p(\overline A)) \cong Z(\GL_2(p))$$ by (d).
Thus we need to determine the centre of the preimage of $Z( \overline A /O_p(\overline A)) $.  Since $O_p(\overline A)$ is abelian, it suffices to determine which elements of $Z( \overline A /O_p(\overline A)) $ lift to elements of $A$ which  centralize $O_p(\overline A)$.

Let $(x,y) \in \mathcal A$ with $y \in Q$.  Then, by (c), the map $x\mapsto x^a$, $y \mapsto y^a$ for $1 \le a \le p-1$ extends to a unique automorphism $\theta$ of $R$ and $\theta \in Z(\overline A /O_p(\overline A))$.
 Then define $l= [x,y]$, $m=[l,x]$ and $n=[l,y]$.
Notice that, as $R/Z_2\cong \Inn(R)$ is extraspecial, $l \in Z_3 \setminus Z_2$. Because $C_{Q}(Z_3)= Z_3$, we then see that $1\ne n \in Z$.  As $x$ acts on $Q/Z$ with a single Jordan block, we have $C_{Q/Z}(x)= Z_2/Z$ and so $m \in Z_2 \setminus Z$. This shows that $Z_2= \langle m, n\rangle$.
Now we use \cite [Lemmas 2.2.1 and 2.2.2]{GOR} to notice first that
$$l\theta=[x\theta, y \theta]=[x^a,y^a] = [x,y]^{a^2}z $$ for some $z \in Z_2$ and then  calculate that $$m\theta=[\ell\theta,x\theta]=[[x,y]^{a^2}z,x^a]=[[x,y]^{a^2},x^a]=[[x,y],x]^{a^3}=m^{a^3}$$
where the third equality follows from \cite[Theorem 2.2.1]{GOR}.
Similarly, we determine \begin{eqnarray}\label{interestingfact} n\theta=[[x,y]^{a^2}z,y^a]=[[x,y]^{a^2},y^a]=[[x,y],y]^{a^3}=n^{a^3}.\end{eqnarray}
Now using Equation \ref{55} and noting that $\theta$ operates as  the scalar $a$ on $R/\Phi (R)$ and $a^3$ or $Z_2$, we calculate, for $\Psi \in C_{\Aut(R)}(R/Z(R))$ and $g \in R/\Phi(R)$,
$$g \widetilde \Psi^\theta = g \theta ^{-1} \widetilde \Psi \theta = g^{a^{-1}} \widetilde \Psi \theta =
 ((g \widetilde \Psi) ^{a^{-1}})\theta =  (g \widetilde \Psi) ^{a^{-1}a^3}= (g\widetilde \Psi)^{a^2}$$
 Thus we see that $\theta$ centralizes $ C_{\Aut(R)}(R/Z(R))$ if and only if $ (g\widetilde \Psi)^{a^2}=  g\widetilde \Psi$ for all $g \in R/\Phi(R)$ and $\Psi \in  C_{\Aut(R)}(R/Z(R))$ which is if and only if  $a^2=1$. As $\theta$ induces a scalar action on $C_{\Aut(R)}(R/Z(R))$ and $O_p(\overline A)=   C_{\Aut(R)}(R/Z(R))/(\Inn(R) \cap  C_{\Aut(R)}(R/Z(R)))$,
 we now deduce that $C_{\overline A}(O_p(\overline A))$ has order $2p^3$ and part (f) follows from this.
\end{proof}

It is perhaps interesting to note that Equation~\ref{interestingfact}  implies that $\Aut(R)/C_{\Aut(R)}(R/\Phi(R))\cong \GL_2(p)$ whereas $\Aut(R)/C_{\Aut(R)}(Z(R)) \cong \GL_2(p)/X$ where $X$ is central of order $(p-1,3)$.

\begin{Lem}\label{l:runique} Let $\overline A= \Out(R)$ and suppose that $\overline Y \le \overline A$, $\overline T \in \Syl_p(\overline Y)$ with $|\overline T|=p$ and $|C_{\overline Y}(\overline T)| > 2$.  Assume that  $\overline X \le \overline A$ with $\overline X \cong \SL_2(p)$ and $\overline Y \le N_{\overline A}(\overline X)$.  Then $\overline{XY} \le C_{\overline A}(C_{\overline Y}(\overline T))\cong \GL_2(p)$. In particular, if such an $\overline X$ exists, then it is uniquely determined by $\overline Y$.
\end{Lem}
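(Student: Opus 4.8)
The plan is to read off the precise structure of $\overline A$ from Lemma~\ref{l:autR} and then pin down $\overline X$ as the derived subgroup of a canonical $\GL_2(p)$. Write $V=O_p(\overline A)$ and let $\pi\colon\overline A\to\overline A/V\cong\GL_2(p)$ be the quotient map. By Lemma~\ref{l:autR}(d),(e), $V$ is elementary abelian of order $p^3$ and, as a module for $O^{p'}(\overline A/V)\cong\SL_2(p)$, is the space of trace-zero $2\times2$ matrices under conjugation; in particular $C_V(\overline W)=0$ for every $\overline W\leq\overline A$ with $\pi(\overline W)=\SL_2(p)$. Moreover the computation in the proof of Lemma~\ref{l:autR}(f) shows that a preimage of a scalar $aI\in\GL_2(p)$ acts on $V$ by multiplication by $a^2$, so such a preimage is fixed-point-free on $V$ whenever $a\neq\pm1$. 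These two facts will be used repeatedly.

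First I would locate $\overline X$ and $\overline Y$ relative to $V$. Since $\SL_2(p)$ has trivial $O_p$, we get $\overline X\cap V=1$; as $\overline X$ is perfect its image lies in $[\GL_2(p),\GL_2(p)]=\SL_2(p)$, and comparing orders gives $\pi(\overline X)=\SL_2(p)$. Thus $\overline X$ is a complement to $V$ in the preimage of $\SL_2(p)$, and for $v\in V$ one has $\overline X^{\,v}=\overline X$ iff $[\overline X,v]\leq\overline X\cap V=1$, i.e. iff $v\in C_V(\overline X)=0$. Hence $N_{\overline A}(\overline X)\cap V=1$, so $\overline Y\cap V=1$, the map $\pi|_{\overline Y}$ is injective, and $\overline T$ projects to a transvection of $\SL_2(p)$.

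Next I would extract a semisimple element. The image of $C_{\overline Y}(\overline T)$ lies in the centralizer in $\GL_2(p)$ of a transvection, which is abelian of order $p(p-1)$ with cyclic $p'$-part equal to the scalars; writing $C_{\overline Y}(\overline T)=\overline T\times D$ with $D$ its Hall $p'$-subgroup, the hypothesis $|C_{\overline Y}(\overline T)|>2$ is precisely what forces $D$ to contain an element $z$ with $\pi(z)=aI$ and $a\neq\pm1$. I then claim $z$ centralizes $\overline X$: conjugation by $z$ acts trivially on $\pi(\overline X)=\SL_2(p)$ because $\pi(z)$ is central in $\GL_2(p)$, so it induces an automorphism of $\overline X$ that is the identity modulo $V$, and since $\overline X\cap V=1$ this automorphism is trivial. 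Hence $\overline X\leq C_{\overline A}(z)$.

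Finally I would identify $C_{\overline A}(z)$ and conclude. By fixed-point-freeness $C_V(z)=0$, so $C_{\overline A}(z)\cap V=1$ and $C_{\overline A}(z)$ embeds into $C_{\GL_2(p)}(aI)=\GL_2(p)$. Using the $\GL_2(p)$-complement $\overline X_0$ of Lemma~\ref{l:autR}(g), the central scalar of $\overline X_0$ with image $aI$ centralizes $\overline X_0$; and because $z$ is fixed-point-free on $V$ the whole coset $Vz$ is a single $V$-conjugacy class, so $z$ is $\overline A$-conjugate to that scalar and therefore $C_{\overline A}(z)\cong\GL_2(p)$. Consequently $\overline X$, an $\SL_2(p)$ inside $C_{\overline A}(z)\cong\GL_2(p)$, must equal $[C_{\overline A}(z),C_{\overline A}(z)]$; as $z$ — and hence $C_{\overline A}(z)$ — is determined by $\overline Y$, this yields the uniqueness of $\overline X$, and the containment $\overline{XY}\leq C_{\overline A}(C_{\overline Y}(\overline T))\cong\GL_2(p)$ follows once one records that the relevant torus is central in $\overline Y$ so that both $\overline X$ and $\overline Y$ lie in this $\GL_2(p)$. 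I expect the main obstacle to be exactly this last identification $C_{\overline A}(z)\cong\GL_2(p)$: the fixed-point-free conjugacy argument tying $z$ to a central element of a $\GL_2(p)$-complement, together with the careful extraction of a genuinely non-involutory $z$ from the order hypothesis.
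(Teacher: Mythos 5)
Your proof is correct and follows essentially the same route as the paper's: both reduce to $\overline{XY}\cap O_p(\overline A)=1$ via $N_{O_p(\overline A)}(\overline X)=1$, extract a $p'$-element of $C_{\overline Y}(\overline T)$ mapping to a scalar $aI$ with $a^2\neq 1$, and use the $a^2$-action on $O_p(\overline A)$ from Lemma~\ref{l:autR}(f) to identify the centralizer of that element as a copy of $\GL_2(p)$ whose derived subgroup must be $\overline X$. Your conjugation of $z$ into the complement of Lemma~\ref{l:autR}(g) supplies the one step the paper leaves implicit (that this centralizer is all of $\GL_2(p)$, not merely embedded in it), and, like the paper, you read the hypothesis $|C_{\overline Y}(\overline T)|>2$ as a condition on the Hall $p'$-part of that centralizer (as literally stated it is vacuous, since $\overline T\le C_{\overline Y}(\overline T)$), which is what the intended applications in fact provide.
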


\begin{proof} By Lemma~\ref{l:autR} (d) and (e), $\overline A$ has shape $p^{3}{:}\GL_2(p)$ and $\overline U=O_p(\overline A)$ is a minimal normal subgroup of $\overline {UX}$.  Since $\overline Y$ normalizes $\overline X$ and $N_{\overline U}(\overline X) = 1$, we deduce that $\overline {XY} $ is isomorphic to a subgroup of $\overline A /\overline U \cong \GL_2(p)$. In particular, $|C_{\overline Y}(\overline T)|\le Z(\overline {XY})$ from the structure of $\GL_2(p)$. Now, as $C_{\overline Y}(\overline T)$ has order greater than $2$, Lemma~\ref{l:autR} (f) and the fact that $\overline U$ is a minimal normal subgroup of $\overline {UX}$ imply that $C_{\overline Y}(\overline T) \cap \overline U=1$ (note that  $C_{\overline Y}(\overline T) \cap \overline U$ is normalized by $\overline X$.) Thus  $C_{\overline A}(C_{\overline Y}(\overline T))\cong \GL_2(p)$ and this proves the result.
\end{proof}
\subsection{The structure of $\Aut(S)$}
We conclude this section with description of $\Aut(S).$
\begin{Lem} \label{l.CA}
Suppose that $X$ is a group and $Y$ is a normal subgroup of $X$ of index $p$ where $p$ is a prime.  Then $[X,C_{\Aut(X)}(Y)] \le C_X(Y)$.
\end{Lem}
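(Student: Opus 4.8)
The plan is to establish the containment one commutator at a time. By definition $[X, C_{\Aut(X)}(Y)]$ is generated by the elements $[x,\alpha] = x^{-1}(x\alpha)$ with $x \in X$ and $\alpha \in C_{\Aut(X)}(Y)$, and since $C_X(Y)$ is a subgroup it suffices to prove that each such generator centralizes $Y$. I would carry out the computation inside the semidirect product $X \rtimes \Aut(X)$, in which conjugating $x$ by $\alpha$ realizes the image $x\alpha$; because $X$ is normal in this product, each commutator $[x,\alpha] = x^{-1}(x\alpha)$ is genuinely an element of $X$, so asking whether it lies in $C_X(Y) \le X$ makes sense.

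For the key step, fix $\alpha \in C_{\Aut(X)}(Y)$, so that $y\alpha = y$ for every $y \in Y$, fix $x \in X$, and set $c = [x,\alpha] = x^{-1}(x\alpha)$, so that $x\alpha = xc$. For an arbitrary $y \in Y$, normality of $Y$ in $X$ gives $x^{-1}yx \in Y$. I then evaluate $(x^{-1}yx)\alpha$ in two ways. On the one hand, since $x^{-1}yx \in Y$ and $\alpha$ fixes $Y$ pointwise, $(x^{-1}yx)\alpha = x^{-1}yx$. On the other hand, since $\alpha$ is an automorphism and $y\alpha = y$,
\[
(x^{-1}yx)\alpha = (x\alpha)^{-1}(y\alpha)(x\alpha) = (xc)^{-1} y (xc) = c^{-1}(x^{-1}yx)c.
\]
Comparing the two expressions yields $x^{-1}yx = c^{-1}(x^{-1}yx)c$, that is, $c$ commutes with $x^{-1}yx$.

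To conclude, I note that as $y$ ranges over $Y$ the element $x^{-1}yx$ ranges over all of $Y$, since conjugation by $x$ restricts to a bijection of $Y$ (as $Y$ is normal in $X$). Hence $c$ commutes with every element of $Y$, i.e. $c = [x,\alpha] \in C_X(Y)$, and the result follows. I do not anticipate any genuine obstacle: the argument never invokes the hypothesis $[X:Y] = p$, using only that $Y \unlhd X$, so the stated hypothesis is in fact stronger than necessary. The only point requiring care is bookkeeping with the conventions, namely that automorphisms act on the right (so $x\alpha$ denotes the image of $x$) and that $[x,\alpha]$ is interpreted inside $X \rtimes \Aut(X)$, which is what guarantees $[x,\alpha] \in X$.
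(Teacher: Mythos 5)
Your proof is correct and is essentially the same computation as the paper's: both arguments show that each generator $[x,\alpha]$ of $[X,C_{\Aut(X)}(Y)]$ centralizes $Y$ by applying $\alpha$ to the conjugate $x^{-1}yx\in Y$ (equivalently, conjugating $[x,\alpha]$ by an element of $Y$) and exploiting that $\alpha$ fixes $Y$ pointwise while $Y$ is normal. Your remark that the hypothesis $[X:Y]=p$ is superfluous is also accurate; the paper invokes it only in an inessential preliminary reduction, writing every element of $X$ as $yx^i$ so as to work with commutators $[x^i,\alpha]$ for a fixed coset representative $x$.
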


\begin{proof}
Select $x \in X \setminus Y$ and notice that since $p$ is prime every element $z$ of $X$ can be written as $z=yx^i$ for some $y \in Y$ and $1 \le i \le p$.  Now for each $\alpha \in C_{\Aut(X)}(Y)$, $$[z,\alpha] =[yx^i, \alpha]= [y,\alpha]^{x^i}[x^{i},\alpha] = [x^{i}, \alpha]$$ and so it suffices to show that $[x^i,\alpha] \in C_X(Y)$.

Let $y_1 \in Y$.  Then $(x^{i})^{y_1}= y_2x^{i}$ for $y_2=[y_1,x^{-i}]$ and so
\begin{eqnarray*}[x^i,\alpha] ^{y_1}&=&(x^{-i}(x^i)\alpha)^{y_1}= (x^{-i})^{y_1} ((x^i)\alpha^{(y_1)\alpha})= (x^{-i})^{y_1}((x^i)^{y_1})\alpha\\&=& (y_2x^{i})^{-1} (y_2x^{i})\alpha=x^{-i}y_2^{-1} y_2(x^i)\alpha = [x^{i},\alpha] \end{eqnarray*}
Hence $[x^i,\alpha] \in C_X(Y)$ and consequently $[X,\alpha] \le C_X(Y)$. The result follows.
\end{proof}

\begin{Lem}\label{l:liftnorm}
$\Aut(S)/C_{\Aut(S)}(S/\Phi(S))$ is isomorphic to the subgroup of diagonal matrices in $\GL_2(p)$. In particular, $|\Aut(S)|=p^a(p-1)^2$ for some natural number $a$. Furthermore, $\Aut(S) =\Aut_B(S)C_{\Aut(S)}(S/\Phi(S))$.
\end{Lem}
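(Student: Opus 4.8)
The plan is to study the natural action of $\Aut(S)$ on the Frattini quotient. By Lemma~\ref{l:charz}(c) we have $\Phi(S)=Z_4=\langle x_6,x_5,x_4,x_3\rangle$, so $\bar S:=S/\Phi(S)$ is $2$-dimensional over $\mathbb F_p$ with basis $\{\bar x_1,\bar x_2\}$ (the images of $x_1,x_2$). Restriction to $\bar S$ gives a homomorphism $\rho\colon\Aut(S)\to \GL(\bar S)\cong\GL_2(p)$ whose kernel is, by definition, $C_{\Aut(S)}(S/\Phi(S))$. Hence $\Aut(S)/C_{\Aut(S)}(S/\Phi(S))\cong \operatorname{Im}\rho$, and the whole task reduces to identifying $\operatorname{Im}\rho$ with the diagonal subgroup of $\GL_2(p)$ taken with respect to the basis $\{\bar x_1,\bar x_2\}$.

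First I would show $\operatorname{Im}\rho$ is contained in the diagonal subgroup. By Lemma~\ref{l:charz}(d) both $Q$ and $R$ are characteristic in $S$; since each is maximal and contains $\Phi(S)=Z_4$, their images $Q/\Phi(S)=\langle\bar x_2\rangle$ and $R/\Phi(S)=\langle\bar x_1\rangle$ are two distinct lines in $\bar S$. Every $\alpha\in\Aut(S)$ maps $Q$ to $Q$ and $R$ to $R$, hence fixes each of the lines $\langle\bar x_1\rangle$ and $\langle\bar x_2\rangle$ setwise; thus $\rho(\alpha)$ is diagonal in the basis $\{\bar x_1,\bar x_2\}$, and $\operatorname{Im}\rho$ lies in the diagonal subgroup.

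The reverse inclusion is the technical heart, and is where I would use $\Aut_B(S)$. Since $S\trianglelefteq B=B_0Q$, conjugation identifies $\Aut_B(S)$ with the image of $B$ in $\Aut(S)$, so it suffices to realize every diagonal matrix via the torus $T_0=\mathbb F^\times\times\{\operatorname{diag}(a,d)\}\le B_0$. I would compute its action on the two basis elements separately, taking care that they transform by two genuinely different rules. For $x_1=x_1(1)\in S_0\le L$ conjugation happens inside $\GL_2$, and a direct computation gives $x_1(\lambda)^{(t,\operatorname{diag}(a,d))}=x_1(ad^{-1}\lambda)$, so $\bar x_1\mapsto ad^{-1}\bar x_1$. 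For $x_2=(Y^3,0)\in Q$ I would instead use the action~\eqref{action} together with the polynomial action of $L$ on $V$, obtaining $Y^3\cdot(t,\operatorname{diag}(a,d))=td^3Y^3$ and hence $\bar x_2\mapsto td^3\bar x_2$. Therefore $(t,\operatorname{diag}(a,d))$ induces $\operatorname{diag}(ad^{-1},td^3)$ on $\bar S$; letting $(t,a,d)$ range over $(\mathbb F^\times)^3$ (e.g.\ fixing $d=1$ and varying $a,t$) realizes every diagonal matrix. This proves $\operatorname{Im}\rho$ equals the diagonal subgroup, which has order $(p-1)^2$.

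Finally I would read off the two remaining assertions. Burnside's Lemma~\ref{l:burnside} tells us $C_{\Aut(S)}(S/\Phi(S))$ is a normal $p$-subgroup of $\Aut(S)$, so $|\Aut(S)|=|C_{\Aut(S)}(S/\Phi(S))|\cdot|\operatorname{Im}\rho|=p^a(p-1)^2$ for some natural number $a$. For the last statement, the computation above shows $\rho(\Aut_B(S))=\operatorname{Im}\rho$; since $C_{\Aut(S)}(S/\Phi(S))=\ker\rho$, a standard preimage argument gives that any $\alpha\in\Aut(S)$ agrees on $\bar S$ with some $\beta\in\Aut_B(S)$, whence $\alpha\beta^{-1}\in C_{\Aut(S)}(S/\Phi(S))$ and $\Aut(S)=\Aut_B(S)\,C_{\Aut(S)}(S/\Phi(S))$. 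The only step requiring genuine care is the torus computation, precisely because $x_1$ and $x_2$ sit in different constituents ($S_0\le L$ versus $Q$) and so are moved by the two different actions recorded in Section~\ref{s:constructs}.
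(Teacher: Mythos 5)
Your proposal is correct and follows essentially the same route as the paper: Burnside's Lemma for the kernel, the fact that $Q$ and $R$ are characteristic to force the image into the diagonal subgroup, and the torus inside $B$ (via Equations \ref{action} and \ref{kernel}) to realize all of it. The only difference is that you write out explicitly the computation $(t,\operatorname{diag}(a,d))\mapsto\operatorname{diag}(ad^{-1},td^{3})$ on $S/\Phi(S)$, which the paper leaves implicit, and your computation is accurate.
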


\begin{proof}
By Lemma \ref{l:burnside}, $C_{\Aut(S)}(S/\Phi(S))$ is a $p$-group. Taking $B$ as defined in Equation \ref{d:Bo}, using Equation~\ref{kernel} we obtain that the image of $\Aut_{B}(S)$ in $\Aut(S)/C_{\Aut(S)}(S/\Phi(S))$ is isomorphic to $(p-1) \times (p-1)$.

As $S/\Phi(S)$ is elementary abelian of order $p^2$, we know that $\Aut(S)/C_{\Aut(S)}(S/\Phi(S))$ is isomorphic to a subgroup of $\GL_2(p)$. By Lemma~\ref{l:charz} (d), $Q$ and $R$ are characteristic in $S$. Thus $\Aut(S)/C_{\Aut(S)}(S/\Phi(S))$  is isomorphic to a subgroup of the diagonal matrices in $\GL_2(p)$. This proves the main claim.
\end{proof}

\section{Candidates for the essential subgroups when $p \ge 5$}\label{s:candidates}

Suppose that $p \ge 5$ and let $S$ be the $p$-group defined in Section 3 and adopt all the notation introduced there.  We   require the following additional piece of notation:

\begin{Not}\label{n:wx}
Define:
$$\begin{array}{cll} W_x&= \langle Z , x\rangle & x \in S \setminus (Q\cup R); \text{ and } \\
U_x&= \langle Z_2, x\rangle & x \in S \setminus Q.
\end{array}$$
Also put
$$\W =\{W_x\mid x \in S \backslash (Q\cup R)\} \hspace{2mm} \mbox{ and } \hspace{2mm}
\U =   \{U_x\mid x \in S \backslash Q\}.$$
\end{Not}

The goal of this section is to prove the following result:

\begin{Thm}\label{t:esslist}
Let $\F$ be a saturated fusion system on $S$ and denote by $\E$ the set of $\F$-essential subgroups. Then $$\E \subseteq \{Q,R\} \cup \W. $$  Moreover, if $\W \cap \E\ne \emptyset$,  then $p=7$.
\end{Thm}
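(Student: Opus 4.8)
The plan is to show that an $\F$-essential subgroup $E$ is forced into the listed shapes by induction on $|E|$, using throughout that $E$ is $\F$-centric (so $Z=Z(S)\le Z(E)\le E$ and $C_S(E)=Z(E)$), fully $\F$-normalized, and that $\Out_\F(E)$ contains a strongly $p$-embedded subgroup (so $O_p(\Out_\F(E))=1$). The two engines are Lemma~\ref{l:usecrit} (which eliminates $E$ as soon as some $g\in N_S(E)\setminus E$ acts trivially enough relative to a characteristic $F\le E$) and, for the elementary abelian subgroups on which Lemma~\ref{l:usecrit} is vacuous, Proposition~\ref{p:l3pstr}: if $E$ is elementary abelian of rank $3$ then $\Out_\F(E)=\Aut_\F(E)\le\GL_3(p)$, its image in $\mathrm{PGL}_3(p)$ still has a strongly $p$-embedded subgroup, so $O^{p'}$ of that image is $\PSL_2(p)$ or $\SL_2(p)$, whence $\Aut_S(E)=N_S(E)/E$ is cyclic of order $p$.

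The easy orders come first. No subgroup of order $p$ is centric (it would be $Z$, which is not self-centralizing). If $|E|=p^2$ then $E$ is abelian with $C_S(E)=E$, so $E=\langle Z,x\rangle$ for some $x\in E\setminus Z$; but $x\in Q$ gives $|C_S(x)|\ge|C_Q(x)|\ge p^4$ (non-central elements of the extraspecial group $Q$ have centralizer of index $p$), while $x\in R=C_S(Z_2)$ gives $Z_2\le C_S(x)$ and hence $|C_S(x)|\ge p^3$; either way $C_S(x)=C_S(E)=E$ fails, so $x\notin Q\cup R$ and $E\in\W$. For $|E|=p^5$, if $E\ne Q,R$ then $Z_4=\Phi(S)=[S,S]$ is characteristic in $E$ by Lemma~\ref{l:z4char}(c); taking $F=Z_4$ and any $g\in S\setminus E$ we have $[g,E]\le[S,S]=Z_4=F\Phi(E)$ and $[g,Z_4]\le[S,Z_4]=Z_3=\Phi(E)$, so Lemma~\ref{l:usecrit} eliminates $E$ and only $Q,R$ survive.

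The orders $p^3,p^4$ require the most work and are organized by the position of $E$ in the maximal subgroups ($E$ always lies in one, as $E\Phi(S)<S$). If $E\le Q$ then $Z=Q'\le E$ forces $E\trianglelefteq Q$, so $Q\le N_S(E)$; for $|E|=p^4$ Lemma~\ref{l:usecrit} applies with any $g\in Q\setminus E$ and $F=Z(E)$ (since $[Q,E],[Q,Z(E)]\le Q'=Z\le\Phi(E)$), while for $|E|=p^3$ a centric such $E$ is elementary abelian and $|\Aut_S(E)|\ge|\Aut_Q(E)|=p^2$ contradicts Proposition~\ref{p:l3pstr}. The normal examples are immediate: $Z_4$ dies by Lemma~\ref{l:usecrit} with $F=Z(Z_4)=Z_2$ (as $C_S(Z_4/Z_2)$ has order $p^5$ and $[S,Z_2]=Z=\Phi(Z_4)$), and $Z_3$ dies by Proposition~\ref{p:l3pstr} since $\Aut_S(Z_3)=S/Z_3$ is a \emph{full} Sylow $p$-subgroup $U_3(p)$ of $\GL_3(p)$. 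The residual candidates lie in a maximal subgroup and meet neither test—most importantly the elementary abelian subgroups of $R$ containing $Z_2=Z(R)$ (here every element of $E\le R$ centralizes $Z_2$, so $Z_2\le Z(E)\le E$), on which $\Aut_S(E)$ has order $p$ and Jordan type $2+1$, exactly compatible with $\SL_2(p)$ acting as natural$\,\oplus\,$trivial. \textbf{This is the crux of the section:} essentiality forces, via Proposition~\ref{p:l3pstr}, a copy of $\SL_2(p)$ in $\Out_\F(E)$ whose natural $2$-dimensional constituent meets $Z_2$ nontrivially, producing an $\F$-conjugacy between an element of the second centre $Z_2=Z_2(S)$ and one lying outside it; the resulting mismatch of $S$-centralizer orders (read off from Lemma~\ref{l:charz}) is then shown to be incompatible with the saturation and full $\F$-normalization of $E$.

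Finally, suppose $\W\cap\E\ne\emptyset$, say $W_x\in\E$. Centricity forces $C_S(x)=\langle Z,x\rangle$, so $x$ has order $p$; as every element of $S\setminus(Q\cup R)$ has order $25$ when $p=5$, this already gives $p\ge 7$. Then $W_x\cong C_p\times C_p$ and $\Out_\F(W_x)=\Aut_\F(W_x)\le\GL_2(p)$ has a strongly $p$-embedded subgroup with Sylow $p$-subgroup of order $p$; the only such subgroups of $\GL_2(p)$ with $O_p=1$ contain $\SL_2(p)$, so $\SL_2(p)\le\Aut_\F(W_x)$. Since $\SL_2(p)$ is transitive on $W_x\setminus\{1\}$, the central generator $x_6\in Z$ becomes $\F$-conjugate to the non-central element $x$. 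Testing this forced fusion of a central element into a non-central one against the maximal-class geometry of $S$—here $N_S(W_x)$ is extraspecial of order $p^3$ with centre $Z=Z(S)$—is what pins the prime to $p=7$. \textbf{Establishing the upper bound $p\le 7$ at this step is the delicate point}, and is precisely where the special arithmetic of $p=7$ enters.
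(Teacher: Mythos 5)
Your reduction of the easy orders is sound and close to the paper's: order $p^2$ forces $E\in\W$, order $p^5$ with $E\notin\{Q,R\}$ dies by Lemma~\ref{l:usecrit} with $F=Z_4$, and subgroups of $Q$ are handled by Lemma~\ref{l:usecrit} plus Proposition~\ref{p:l3pstr} exactly as in Lemma~\ref{l:econtq}. But the two places you flag as the crux are precisely where the proposal has genuine gaps, and there are smaller omissions besides. First, you never eliminate order-$p^4$ candidates not contained in $Q$ (the paper shows $E\cap Q=Z_3$, then derives a contradiction from $E=Z_3\cdot Z_3\alpha$ with both factors elementary abelian), nor the \emph{non-abelian} order-$p^3$ candidates $\langle Z_2,x\rangle$ with $x\notin Q\cup R$ (Lemma~\ref{l:nop3}); you also skip the normality argument showing any order-$p^3$ essential outside $Q$ must meet $Q$ in $Z_2$.

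The more serious problem is your proposed mechanism for killing the elementary abelian $U_x\le R$. You claim that the $\SL_2(p)$-fusion of an element of $Z_2\setminus Z$ with an element of $U_x\setminus Z_2$ yields "a mismatch of $S$-centralizer orders incompatible with saturation." It does not: in a saturated fusion system $\F$-conjugate elements routinely have different $S$-centralizer orders (only a fully centralized representative attains the maximum), and indeed this very fusion pattern occurs inside $\F_S(\Gee_2(p))$ itself, which is saturated. The paper's actual argument (Lemmas~\ref{l:UxEthenRnotE}--\ref{l:Nop3}) is of a different nature: one first shows $R\notin\E$ whenever some $U_x\in\E$, so that the central involution of $O^{p'}(\Aut_\F(U_x))$ extends through $N_S(U_x)$ all the way to some $\tau\in\Aut_\F(S)$; since $\Aut_\F(S)$ acts diagonally (Lemma~\ref{l:outfsid}), Lemma~\ref{l:diags} forbids $\tau$ from inverting $Q/Z_4$, while the commutator pairing $[a,b]$ between $Q/Z_4$ and $Z_2/Z$ forces exactly that inversion --- a contradiction. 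Similarly, for the "Moreover" clause you explicitly defer "the delicate point": the actual proof that $p=7$ is a short computation, namely that the diagonal element $d=(t,\mathrm{diag}(\lambda,1))$ extending $\delta\in N_{\Aut_\F(W)}(\Aut_S(W))$ must satisfy $t=\lambda$ (scalar action on $S/\Phi(S)$), hence scales $Z$ by $t^2\lambda^3=\lambda^5$, while $\det\delta=1$ on $W$ forces it to scale $Z$ by $\lambda^{-1}$; thus $\lambda^6=1$ with $\lambda$ a generator of $\mathbb F_p^\times$, so $p-1\mid 6$ and $p=7$. Without these two arguments the theorem is not proved.
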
 Thus our hypotheses are that $\F$ is a saturated fusion system on $S$ with $O_p(\F)=1$ and $\E=\E_\F$ is the set of $\F$-essential subgroups of $S$. The proof of Theorem \ref{t:esslist} will proceed in a series of steps.

\begin{Lem}\label{l:econtq}
If $E \le Q$ is $\F$-essential, then $E=Q$.
\end{Lem}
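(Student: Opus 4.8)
The plan is to combine the $\F$-centric condition with the symplectic geometry of $Q/Z$ to pin down $|E|$, and then eliminate the two surviving orders by separate arguments.

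First I would extract the structural constraints. Since $E$ is $\F$-essential it is $\F$-centric, so $C_S(E)=Z(E)$; as $Z=Z(S)\le C_S(E)$ this forces $Z\le E$ and $C_Q(E)\le C_S(E)\le E$. Because $Q$ is extraspecial with $[Q,Q]=Z$, the commutator map makes $Q/Z\cong\mathbb{F}_p^4$ a nondegenerate symplectic space, under which $C_Q(E)/Z=(E/Z)^\perp$. The containment $(E/Z)^\perp\le E/Z$ says $E/Z$ is coisotropic, so $\dim(E/Z)\ge 2$; together with $E<Q$ this yields $|E|\in\{p^3,p^4\}$. I would also record that $[Q,E]\le[Q,Q]=Z\le E$, so $E\unlhd Q$ and hence $Q\le N_S(E)$.

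For $|E|=p^4$ the subspace $E/Z$ has dimension $3$, which exceeds the maximal dimension $2$ of a totally isotropic subspace, so the form is nonzero on $E/Z$; thus $E$ is nonabelian and (as $E$ has exponent $p$) $\Phi(E)=[E,E]=Z$. Choosing any $g\in Q\setminus E\subseteq N_S(E)\setminus E$, I would observe $[g,E]\le[Q,Q]=Z=\Phi(E)$ and $[g,Z]=1$, so Lemma~\ref{l:usecrit} applied with $F=Z=\Phi(E)$ (characteristic in $E$) shows $E$ is not essential, a contradiction. For $|E|=p^3$ the subspace $E/Z$ is Lagrangian, so $E$ is elementary abelian with $C_Q(E)=E$; then $\Aut_Q(E)=Q/E$ has order $p^2$ and sits inside $\Aut_S(E)\in\Syl_p(\Aut_\F(E))$. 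Since $\Inn(E)=1$, $\Out_\F(E)=\Aut_\F(E)\le\GL_3(p)$ contains a strongly $p$-embedded subgroup; passing to the image in $\mathrm{PGL}_3(p)$ (which has the same order as $\Aut_S(E)$, since $Z(\GL_3(p))$ is a $p'$-group) and invoking Proposition~\ref{p:l3pstr} with $p\ge 5$, the relevant $O^{p'}$ is $\PSL_2(p)$ or $\SL_2(p)$, whose Sylow $p$-subgroups have order $p$. This contradicts $|\Aut_S(E)|\ge p^2$. Hence no proper $E<Q$ can occur and $E=Q$.

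I expect the $|E|=p^3$ case to be the main obstacle: one has to recognise that the Lagrangian candidate is forced to carry at least $Q/E\cong C_p\times C_p$ worth of $S$-automorphisms, and then correctly translate the strongly $p$-embedded hypothesis into a Sylow-order bound inside $\mathrm{PGL}_3(p)$ via Proposition~\ref{p:l3pstr}. The $|E|=p^4$ case is comparatively routine once Lemma~\ref{l:usecrit} is available, and the preliminary reduction to $|E|\in\{p^3,p^4\}$ is a direct symplectic computation.
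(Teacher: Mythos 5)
Your proof is correct and follows essentially the same route as the paper's: the non-abelian case is eliminated by Lemma~\ref{l:usecrit} with $F=\Phi(E)=Z$, and the abelian case is forced to be maximal abelian of order $p^3$ and then killed by Proposition~\ref{p:l3pstr} because $\Aut_\F(E)$ contains the $p^2$ automorphisms coming from $Q/E$. The only difference is organizational: you first pin down $|E|\in\{p^3,p^4\}$ via the symplectic geometry of $Q/Z$, whereas the paper splits directly on whether $\Phi(E)=1$, which makes that preliminary reduction unnecessary.
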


\begin{proof} Suppose that $E$ is $\F$-essential with $E \le Q$ but that $E \ne Q$.  Then $N_Q(E) > E$ and  $[E,N_Q(E)] \le Q'= \Phi(Q)$. If $\Phi(E) \ne 1$, then we have $\Phi(E)=\Phi(Q)$,  $[E,N_Q(E)] \le \Phi(E)$ and $[\Phi(E), N_Q(E)]=1$. Thus Lemma~\ref{l:usecrit} implies that $E$ is not essential, a contradiction. Therefore $\Phi(E)=1$ and $E$ is elementary abelian.  Since $E \ge C_Q(E)$, we deduce that $E$ is a maximal abelian subgroup of $Q$. Hence $|E|= p^3$. Now $Q/E$ embeds into $\Aut_\F(E)$ and so Proposition~\ref{p:l3pstr} provides a contradiction as $|Q/E|= p^2$. Hence, if $E \le Q$ and $E \in \E$, then $E=Q$.

\end{proof}

\begin{Lem}\label{l:2cases}
If $E \nleq Q$ is $\F$-essential, then either $E=R$ or $E \in \U \cup \W$.
\end{Lem}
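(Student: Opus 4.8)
The plan is to fix an element $x\in E\setminus Q$ and study $D:=E\cap Q$. Since $E$ is essential it is $\F$-centric, so $Z=Z(S)\le Z(E)\le E$, and as $Z\le Q$ we get $Z\le D$; since $Q$ is maximal and $E\not\le Q$ we have $EQ=S$, so $D$ has index $p$ in $E$. As $x$ normalizes both $E$ and $Q$, the subgroup $D$ is $x$-invariant. The structural input is Lemma~\ref{l:charz}(f): $x$ acts on $Q/Z$ with a single Jordan block, so $Q/Z$ has a unique $x$-invariant subspace of each dimension, namely the chain $Z_2/Z<Z_3/Z<Z_4/Z<Q/Z$. Passing to preimages, the only $x$-invariant subgroups of $Q$ containing $Z$ are $Z,Z_2,Z_3,Z_4,Q$; as $D<Q$ (otherwise $E=S$) this forces $D=Z_k$ with $|E|=p^{k+1}$ and $E=\langle Z_k,x\rangle$ for some $1\le k\le 4$. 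The proof then proceeds through the four values of $k$.

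The cases $k\in\{1,2,4\}$ I expect to dispatch quickly. For $k=2$ we have $E=\langle Z_2,x\rangle=U_x\in\U$ by the definition of $\U$, so there is nothing to do. For $k=1$ we have $E=\langle Z,x\rangle=W_x$; I would rule out $x\in R$, since then $x$ centralizes $Z_2$ and $C_S(E)=C_S(x)=\langle Z_2,x\rangle$ has order $p^3$ and is not contained in the order-$p^2$ group $E$, contradicting that $E$ is centric. Hence $x\notin R$ and $E\in\W$. For $k=4$, $E$ is maximal: if $x\in R$ then $E=\langle Z_4,x\rangle\le R$ has order $p^5=|R|$, so $E=R$; if $x\notin R$ then $E\ne R$, so by Lemma~\ref{l:z4char}(a),(c) both $Z_3=\Phi(E)$ and $Z_4$ are characteristic in $E$. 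Taking $F=Z_4$ and any $g\in Q\setminus E$ (which exists as $Q\not\le E$), one has $[g,E]\le[S,S]=Z_4=F\Phi(E)$ and $[g,Z_4]\le[Q,Z_4]\le Z\le Z_3=\Phi(E)$, so Lemma~\ref{l:usecrit} shows $E$ is not essential, a contradiction.

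The decisive case is $k=3$, where $E=\langle Z_3,x\rangle$ has order $p^4$ and $E\cap Q=Z_3$. A direct computation gives $N_S(E)=\langle Z_4,x\rangle$ of order $p^5$, and every $g\in N_S(E)\setminus E$ satisfies $[x,g]\in Z_3\setminus Z_2$. When $x\notin R$ one computes $E'=[Z_3,x]=Z_2$, so $C_E(Z_2)=Z_3$ is characteristic in $E$; choosing $F=Z_3$ and $g\in Z_4\setminus Z_3\subseteq N_S(E)\setminus E$ gives $[g,E]\le Z_3=F\Phi(E)$ and $[g,Z_3]=[Z_4,Z_3]=Z\le Z_2=E'\le\Phi(E)$, so Lemma~\ref{l:usecrit} rules $E$ out, again contradicting essentiality.

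I expect the subcase $k=3$ with $x\in R$ (so $E\le R$) to be the main obstacle. Here $Z(E)=Z_2$ and $E'=\Phi(E)=Y$ is a line of $Z_2$ distinct from $Z$, and $E\cong p^{1+2}_+\times C_p$. The difficulty is that there is no characteristic subgroup of $E$ through which the commutators factor: all $p+1$ subgroups of order $p^3$ containing $Z_2$ are elementary abelian and permuted by $\Aut(E)$, while for every $g\in N_S(E)\setminus E=R\setminus E$ one still has $[x,g]\in Z_3\setminus Z_2$, so Lemma~\ref{l:usecrit} cannot be applied and one checks that $\Out_S(E)\cap O_p(\Out(E))=1$. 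To overcome this I would analyze $\Aut(E)$ directly: $\Out_S(E)$ has order $p$, acts trivially on the characteristic subgroup $Z(E)=Z_2$, and induces a transvection on $E/Z_2$; I would then argue that the induced action of $\Aut_\F(E)$ on $Z_2$, constrained to fix the characteristic line $\Phi(E)=Y$, is incompatible with $\Out_\F(E)$ admitting a strongly $p$-embedded subgroup with Sylow $p$-subgroup $\Out_S(E)$. An alternative route uses saturation: since $E$ would be fully normalized and receptive, a nontrivial $p'$-element of $N_{\Aut_\F(E)}(\Aut_S(E))$ extends to an element of $\Aut_\F(R)$ stabilizing $E$, and one seeks a contradiction with the known $\GL_2(p)$-action of $\Aut_\F(R)$ on the $p+1$ order-$p^4$ subgroups of $R$ containing $Z_3$.
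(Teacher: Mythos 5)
Your reduction to $E\cap Q=Z_k$ and the cases $k\in\{1,2,4\}$ are fine and run essentially parallel to the paper (the paper gets $E\cap Q=Z_{t-1}$ from Lemma~\ref{l:charz}(e) rather than from the Jordan-block statement, but both work). The problem is the case you yourself flag as the main obstacle: $|E|=p^4$, $E=\langle Z_3,x\rangle$ with $x\in R$. Here the lemma forces you to show that \emph{no} such $E$ is essential, and you do not do it. You correctly observe that $E\cong p^{1+2}_+\times C_p$ with $\Phi(E)=[E,E]=[Z_3,x]$ a line of $Z_2$ distinct from $Z$ (since $x$ centralizes $Z_2$), that the $p+1$ abelian subgroups of order $p^3$ are permuted by $\Aut(E)$ so no characteristic $F$ is available, and that $\Out_S(E)\cap O_p(\Out(E))=1$, so Lemma~\ref{l:usecrit} genuinely cannot apply. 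But the two ways out you then describe ("I would argue that the induced action \dots is incompatible \dots" and "one seeks a contradiction with \dots") are programmes, not arguments; neither is executed, and it is not clear either one closes as stated (for the second, at this stage of the classification you do not yet know that $R$ is essential or that $\Aut_\F(R)$ induces $\GL_2(p)$ on anything). So the proof is incomplete at exactly the point where the real content lies.

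For comparison, the paper treats $|E|=p^4$ uniformly, without splitting on whether $x\in R$: it first asserts $Z_2=[E\cap Q,E]=[E,E]=\Phi(E)$, then argues that if $Z_3$ is $\Aut_\F(E)$-invariant the stability-chain argument kills $\Out_S(E)$, while if some $\alpha\in\Aut_\F(E)$ moves $Z_3$ then $E=Z_3\cdot Z_3\alpha$ is generated by two commuting-modulo-$N$ elements of order $p$ with $|\Phi(E)|=p^2$, giving the contradiction that $|E/N|$ is both $p^2$ and $p^3$. Note that your own computation of $\Phi(E)$ in the $x\in R$ subcase is in direct tension with the paper's claim $\Phi(E)=Z_2$ (the displayed equality $[E\cap Q,EQ]=[E\cap Q,E]$ uses $Z\le[Z_3,x]$, which fails when $x$ centralizes $Z_2$); so you have correctly isolated a genuinely delicate subcase rather than an artefact of your approach. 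But identifying the difficulty is not the same as resolving it: as submitted, the statement is not proved.
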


\begin{proof}
Since $E$ is $\F$-centric, $Z \le C_S(E) \le E$, so we may assume that $|E|=p^t$ for some $2 \le t \le 5$. If $t=2$ then $E=Z\langle x \rangle$ and as $E$ must be centric, $C_S(E) \not \ge Z_2$.  Hence, as $E \not \le Q$, we have $x \in S \backslash ({Q\cup R})$. Thus $E \in \mathcal W$ in this case.

Suppose that  $t \geq 3$. Then $Z \le  Q \cap E$ and, as $Q/Z$ is abelian, we have  $Q \cap E \unlhd Q$. As $Q$ is normal in $S$, $Q\cap E$ is normal in $E$ and so $Q\cap E$ is normal in $S=\langle Q,E\rangle$. Therefore, by Lemma \ref{l:charz}(e) we have that $E \cap Q=Z_{t-1}$.

If $t=3$ (so that $E \cap Q= Z_2$), then  $E \in \mathcal U$.

It remains to show that if $t > 3$ then $E=R$.
Suppose that  $t=4$. Then $E = \langle Z_3, x\rangle$ for some $x \in S \backslash Q$.  We have
 $$Z_2=[E \cap Q,S]=[E \cap Q, EQ]=[E \cap Q,E] \le [E,E]$$  and so we infer that $Z_2=[E,E]=\Phi(E)$. By Lemma \ref{l:charz}(c) $Z_3$  is elementary abelian.  If $Z_3$ is normalized by $\Aut_\F(E)$, then using Lemma~\ref{l:outinj} together with $[E,N_S(E)] \le Z_3$ and $[Z_3,N_S(E)] \le Z_2 = \Phi(E)$  yields that $\Aut_S(E) \le O_p(\Aut_\F(E))$ contrary to $E$ being $\F$-essential. Hence there exists  $\alpha \in \Aut_\F(E)$ such that $Z_3 \ne Z_3\alpha$.  As $|E| = p^4$, we have $E= Z_3Z_3\alpha$ and $Z_3\cap Z_3\alpha = Z_2$.  Since $Z_3$ is elementary abelian, this means that $Z_2= \Phi(E) = Z(E)$ and we remark that this group is elementary abelian. Let $x \in Z_3\setminus Z(E)$ and $y \in Z_3\alpha \setminus Z(E)$. Then $E= \langle x,y\rangle$ and $x$ and $y$ have order $p$.  Set $N= \langle[ x,y]\rangle$.  Then $N \le \Phi(E)=Z(E)$ and $N$ has order $p$.  But then $E/N$ is generated by $xN$ and $yN$ and these elements commute and have order $p$.  It follows that $E/N$ has order both $p^3$ and $p^2$, a contradiction.

Finally, suppose that $t=5$ so $E$ is a maximal subgroup of $S$ which is not equal to $Q$. If $E \neq R$, we claim that the hypotheses of Lemma \ref{l:usecrit} are satisfied with $F=Z_4$. Indeed, $Z_4$ is characteristic in $E$ by Lemma \ref{l:z4char} (c). Moreover $\Phi(E)=Z_3$, so that for any $x \in S \backslash E$, $$[x,E] \le \Phi(S)=Z_4 \mbox{ and } [x,F]=[x,Z_4]\le Z_3=\Phi(E).$$ Hence by Lemma \ref{l:usecrit} $E$ is not $\F$-essential. Thus, if $E\ne Q$ is an $\F$-essential subgroup of order $p^5$, then $E =R$ and this completes the proof.
\end{proof}

We also observe the following fact which can also be deduced from the remark after Lemma~\ref{l:autR}.

\begin{Lem}\label{l:AFR1}
If $R \in \E$, then $\Out_\F(R)$ is isomorphic to a subgroup of $\GL_2(p)$ and $O^{p'}(\Out_\F(R)) \cong \SL_2(p)$. Furthermore, $O^{p'}(\Out_\F(R))$ acts faithfully on $R/\Phi(R)$ and on $Z_2=Z(R)$.
\end{Lem}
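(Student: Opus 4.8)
The plan is to combine the explicit structure of $\Out(R)$ recorded in Lemma~\ref{l:autR} with the two standard consequences of $R$ being $\F$-essential: that $\Out_S(R)$ is a Sylow $p$-subgroup of $\Out_\F(R)$ (essential subgroups are fully $\F$-automized) and that $O_p(\Out_\F(R))=1$ (because $\Out_\F(R)$ has a strongly $p$-embedded subgroup). First I would assemble the local data. As $R$ is $\F$-centric and maximal in $S$, we have $C_S(R)=Z(R)=Z_2$ and $N_S(R)=S$, so $\Aut_S(R)\cong S/Z_2$ and $\Out_S(R)\cong S/R$ has order $p$; hence a Sylow $p$-subgroup of $\Out_\F(R)$ has order $p$. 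Writing $\overline A=\Out(R)$, by Lemma~\ref{l:autR}(d),(e) this group has shape $p^{3}{:}\GL_2(p)$, with $\overline U=O_p(\overline A)$ elementary abelian of order $p^3$ and $\overline A/\overline U\cong\GL_2(p)$.

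For the first assertion I would observe that $\Out_\F(R)\cap\overline U$ is a normal $p$-subgroup of $\Out_\F(R)$, hence trivial since $O_p(\Out_\F(R))=1$. Therefore $\Out_\F(R)$ embeds into $\overline A/\overline U\cong\GL_2(p)$, as required. Identifying $\Out_\F(R)$ with its image, every element of order $p$ in $\GL_2(p)$ is unipotent, hence lies in $\SL_2(p)=O^{p'}(\GL_2(p))$; since $K:=O^{p'}(\Out_\F(R))$ is generated by its $p$-elements we get $K\le\SL_2(p)$.

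The heart of the matter is to pin down $K$, and this is the step I expect to be the main obstacle, as it relies on a classification input and on excluding the subgroups with a normal Sylow $p$-subgroup. Since $\Out_\F(R)$ has a strongly $p$-embedded subgroup, so does its normal subgroup $K$, and $K=O^{p'}(K)$ with $|K|_p=p$. Passing to $\overline K=K/(K\cap Z(\SL_2(p)))\le\PSL_2(p)$ preserves all of these properties, as we quotient by a central $p'$-group. By Dickson's classification of subgroups of $\PSL_2(p)$, the only subgroups of order divisible by $p$ are the Borel-type groups $p{:}d$, which have a normal Sylow $p$-subgroup and hence no strongly $p$-embedded subgroup, together with $\PSL_2(p)$ itself; thus $\overline K=\PSL_2(p)$. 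Lifting back, $K\le\SL_2(p)$ surjects onto $\PSL_2(p)$, and were $K$ to avoid the centre it would be a complement, contradicting the non-splitness of the central extension $\SL_2(p)$ of $\PSL_2(p)$ for $p\ge 5$. Hence $K=\SL_2(p)$, proving the second assertion.

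Finally I turn to faithfulness. The action of $\overline A$ on $R/\Phi(R)$ has kernel exactly $\overline U$ by the remark following Lemma~\ref{l:autR}, and $K\cap\overline U=1$, so $K$ acts faithfully on $R/\Phi(R)$. For $Z_2=Z(R)$ I would first note that $\overline U$ acts trivially: an automorphism centralizing $R/Z(R)$ fixes every commutator and so acts trivially on $[R,R]=Z_3\supseteq Z_2$, whence the action on $Z_2$ factors through $\overline A/\overline U\cong\GL_2(p)$. The central involution $-I$ of $K\cong\SL_2(p)$ is the image of the automorphism $\theta$ acting as the scalar $-1$ on $R/\Phi(R)$, which by the scalar computation in the proof of Lemma~\ref{l:autR} (see Equation~\ref{interestingfact}) acts as $(-1)^3=-1$ on $Z_2$, hence nontrivially. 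The kernel of the action of $K$ on $Z_2$ is therefore a proper normal subgroup of $\SL_2(p)$ not containing $-I$; as the only such normal subgroup for $p\ge 5$ is trivial, $K$ acts faithfully on $Z_2$ as well.
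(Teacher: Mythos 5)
Your proof is correct, and its skeleton (embed $\Out_\F(R)$ in $\GL_2(p)$, identify $O^{p'}$ with $\SL_2(p)$, check that the central involution moves the centre) is the same as the paper's; but at each of the three junctures you justify the step by a genuinely different and heavier route. For the embedding, the paper applies Burnside's Lemma~\ref{l:burnside} directly to $\Aut_\F(R)$: the normal $p$-subgroup $C_{\Aut_\F(R)}(R/\Phi(R))$ contains $\Inn(R)$ and so equals it by essentiality, giving the faithful action on $R/\Phi(R)$ without invoking the full computation of $\Out(R)$; your route through Lemma~\ref{l:autR}(d),(e) is exactly the alternative the paper flags in the sentence preceding the statement (``can also be deduced from the remark after Lemma~\ref{l:autR}''). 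For $O^{p'}(\Out_\F(R))\cong\SL_2(p)$, the paper replaces your Dickson-plus-nonsplitness argument by the one-line observation that $\Out_S(R)$ has order $p$ and is not normal, so $\Out_\F(R)$ contains two distinct cyclic subgroups of order $p$ of $\GL_2(p)$, and any two such generate $\SL_2(p)$. Your version works, but note that the transfer of ``strongly $p$-embedded'' to the normal subgroup $K$ is a fact needing (standard) justification; you could sidestep it, since $O_p(K)\le O_p(\Out_\F(R))=1$ already excludes the Borel-type subgroups in Dickson's list. For faithfulness on $Z_2$, the paper argues intrinsically: the perfect group $O^{p'}(\Out_\F(R))$ centralizes the cyclic group $Z_3/Z_2$, the central involution inverts $Z_4/Z_3$, hence it inverts $[Z_4,Z_3]=Z$. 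You instead quote the scalar computation $a\mapsto a^3$ of Equation~\ref{interestingfact}, having correctly first shown that the action on $Z_2$ factors through $\Out(R)/O_p(\Out(R))$ so that the central involution of $K$ and the explicit $\theta$ with $a=-1$ agree on $Z_2$; that extra step is essential to your argument and you supply it, and as a small bonus your version shows the involution inverts all of $Z_2$ rather than just $Z$.
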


\begin{proof} By Lemma~\ref{l:burnside},  $\Out_\F(R)$ acts faithfully on $R/\Phi(R)$ which is elementary abelian of order $p^2$.  Since $R \in \E$ and any two distinct cyclic subgroups of order $p$ in $\GL_2(p)$ generate $\SL_2(p)$, the main statement follows and the action of $O^{p'}(\Out_\F(R))$ on $R/\Phi(R)$ is of course faithful. To see that the action on $Z(R)$ is faithful, it suffices to show that the central involution $t$ of $O^{p'}(\Out_\F(R))$ acts non-trivially on $Z(R)$.  Notice that $O^{p'}(\Out_\F(R))$ centralizes the cyclic group $Z_3/Z_2$. Hence, as $t$ inverts $Z_4/Z_3$, $t$ also inverts $[Z_4,Z_3]= Z$. This proves the claim.
\end{proof}

\begin{Lem}\label{l:outfsid}
 $\Out_\F(S)$  is conjugate in $\Out(S)$ to a subgroup of diagonal matrices in $\GL_2(p)$. In particular we may assume that $\Aut_\F(S) \le \Aut_B(S),$ where $B$ is as defined in Equation \ref{d:Bo}. Moreover an element $$d= \left(t, \left( \begin{smallmatrix}
\lambda & 0    \\
0 &   1\\  \end{smallmatrix} \right)\right)\in B$$ with $t, \lambda \in \mathbb F_p^\times$  centralizes $Z$ if  and only if $t^2\lambda^3=1$.
\end{Lem}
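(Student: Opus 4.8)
The plan is to treat the three assertions separately, with almost all the work going into the first. Write $C=C_{\Aut(S)}(S/\Phi(S))$, a normal $p$-subgroup of $\Aut(S)$ by Lemma~\ref{l:burnside}. Since $\F$ is saturated and $S$ is (trivially) fully $\F$-normalized, $S$ is fully $\F$-automized, so $\Inn(S)=\Aut_S(S)\in\Syl_p(\Aut_\F(S))$; in particular $\Out_\F(S)$ is a $p'$-group. Because every inner automorphism acts trivially on $S/\Phi(S)$ we have $\Inn(S)\le C$, so $\Aut_\F(S)\cap C$ is a normal $p$-subgroup of $\Aut_\F(S)$ containing the Sylow $p$-subgroup $\Inn(S)$; hence $\Aut_\F(S)\cap C=\Inn(S)$. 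Consequently $\Out_\F(S)$ maps injectively into $\Aut(S)/C$, which by Lemma~\ref{l:liftnorm} is the group of diagonal matrices in $\GL_2(p)$. This already yields that $\Out_\F(S)$ is isomorphic to a subgroup of the diagonal matrices.

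To upgrade ``isomorphic'' to ``conjugate in $\Out(S)$'', I would observe that $O_p(\Out(S))=C/\Inn(S)$ is a normal Sylow $p$-subgroup of $\Out(S)$ with $p'$-quotient $\Aut(S)/C$, so by Schur--Zassenhaus the $p'$-complements in $\Out(S)$ form a single conjugacy class. Taking $B$ as in \eqref{d:Bo}, the identity $\Aut(S)=\Aut_B(S)\,C$ of Lemma~\ref{l:liftnorm} shows that the image of $\Aut_B(S)$ in $\Aut(S)/C$ is the whole diagonal torus; moreover $S\in\Syl_p(B)$ gives $\Inn(S)=\Aut_S(S)\in\Syl_p(\Aut_B(S))$, while $\Aut_B(S)\cap C$ is a normal $p$-subgroup of $\Aut_B(S)$ containing $\Inn(S)$, so the two coincide. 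Hence $\overline D:=\Aut_B(S)/\Inn(S)$ is a $p'$-complement to $O_p(\Out(S))$ mapping isomorphically onto the diagonal matrices, and the $p'$-group $\Out_\F(S)$ is therefore $\Out(S)$-conjugate to a subgroup of $\overline D$.

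For the parenthetical ``may assume'' I would translate $\Out(S)$-conjugacy into an isomorphism of fusion systems: choosing $\alpha\in\Aut(S)$ whose image in $\Out(S)$ conjugates $\Out_\F(S)$ into $\overline D$, the relation $\Aut_{\F^\alpha}(S)=\alpha^{-1}\Aut_\F(S)\alpha$ gives $\Out_{\F^\alpha}(S)\le\overline D=\Aut_B(S)/\Inn(S)$, whence $\Aut_{\F^\alpha}(S)\le\Aut_B(S)$ since $\Inn(S)\le\Aut_B(S)$; replacing $\F$ by the isomorphic system $\F^\alpha$ justifies the assumption. The last assertion is then an immediate computation from the action \eqref{action}: with $A=\left(\begin{smallmatrix}\lambda&0\\0&1\end{smallmatrix}\right)$ one has $\det A=\lambda$, so for $(0,z)\in Z=Z(Q)$ the action gives $(0,z)^{d}=(0,t^2\lambda^3 z)$, and $d$ fixes every element of $Z$ exactly when $t^2\lambda^3=1$.

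I expect the main obstacle to be the bookkeeping in the second paragraph: checking that $\overline D$ is a genuine $p'$-complement to $O_p(\Out(S))$ rather than merely a subgroup surjecting onto the diagonal torus (this is what the equality $\Aut_B(S)\cap C=\Inn(S)$ secures), and then correctly converting the Schur--Zassenhaus conjugacy in $\Out(S)$ back into the statement about $\Aut_\F(S)$ modulo $\Inn(S)$. The remaining steps, including the final computation, are routine.
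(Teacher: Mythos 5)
Your argument is correct and follows essentially the same route as the paper: the paper's proof is the one-line "By Lemma~\ref{l:liftnorm} and Hall's Theorem, $\Out_\F(S)$ is $\Out(S)$-conjugate to a subgroup of $\Out_B(S)$," followed by the same explicit computation with Equation~\ref{action}, and your Schur--Zassenhaus/complement argument is just that Hall-type conjugacy statement written out in full (including the useful verifications that $\Aut_\F(S)\cap C=\Inn(S)=\Aut_B(S)\cap C$).
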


\begin{proof} By Lemma~\ref{l:liftnorm} and Hall's Theorem, $\Out_\F(S)$ is $\Out(S)$-conjugate to a subgroup of $\Out_B(S)$. An explicit calculation using Equations \ref{action}  and \ref{kernel}  gives the second part of the result.  \end{proof}

\begin{Lem}\label{l:diags} Suppose that $D \le \Out_\F(S)$  normalizes a non-trivial proper subgroup of $S/Z_4$ which is not equal to $Q/Z_4$ or $R/Z_4$.  If $D$ centralizes $Z$, then $D$ has order dividing $5$.
\end{Lem}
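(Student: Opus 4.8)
The plan is to exploit the faithful diagonal action of $\Out_B(S)$ on $S/\Phi(S)=S/Z_4$ and to combine the two hypotheses on $D$ into a single numerical constraint on eigenvalues.

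First I would record the relevant linear algebra. By Lemma~\ref{l:charz}(c) we have $Z_4=\Phi(S)$, so $S/Z_4$ is elementary abelian of order $p^2$, i.e.\ a two-dimensional $\mathbb{F}_p$-space with basis $\{x_1Z_4,x_2Z_4\}$; here $R/Z_4=\langle x_1Z_4\rangle$ and $Q/Z_4=\langle x_2Z_4\rangle$ are the two coordinate lines. By Lemma~\ref{l:liftnorm} (and Lemma~\ref{l:outfsid}, under which we assume $\Aut_\F(S)\le \Aut_B(S)$) the group $\Out_\F(S)\le \Out_B(S)$ acts faithfully on $S/Z_4$ by diagonal matrices with respect to this basis, and every element is represented by some $d=(t,\operatorname{diag}(\lambda,1))\in B$ with $t,\lambda\in\mathbb{F}_p^\times$. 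A direct computation using Equation~\ref{action} shows that $d$ sends $x_1\mapsto x_1^{\lambda}$ and $x_2\mapsto x_2^{t}$ modulo $Z_4$; equivalently $d$ acts on $S/Z_4$ as $\operatorname{diag}(\lambda,t)$.

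Next I would translate the normalizer hypothesis. Since $S/Z_4$ is two-dimensional, the non-trivial proper subgroup of $S/Z_4$ that $D$ normalizes is a line $L=\langle x_1^a x_2^b Z_4\rangle$, and the assumption $L\ne R/Z_4,Q/Z_4$ forces $a,b\ne 0$. A diagonal matrix $\operatorname{diag}(\lambda,t)$ preserves such an off-axis line only if $\lambda=t$; applying this to each element of $D$ shows that every $d\in D$ acts on $S/Z_4$ as the single scalar $\lambda=t$. I would then feed this into the centralizer hypothesis: by Lemma~\ref{l:outfsid}, $d$ centralizes $Z$ if and only if $t^2\lambda^3=1$, and with $\lambda=t$ this collapses to $t^5=1$. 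Thus each element of $D$ is represented by a $d$ whose parameter $t$ is a fifth root of unity. Since $\lambda$ is now determined by $t$, the faithfulness of the action on $S/Z_4$ shows that $d\mapsto t$ embeds $D$ into $\{t\in\mathbb{F}_p^\times\mid t^5=1\}$, a cyclic group of order $\gcd(5,p-1)$; hence $|D|$ divides $5$, as required.

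The step needing the most care — though it is not deep — is ensuring the action is genuinely faithful, so that the eigenvalue constraint $t^5=1$ bounds $|D|$ itself rather than merely its image on $S/Z_4$. This is precisely the faithful diagonal embedding $\Out_B(S)\hookrightarrow\GL(S/\Phi(S))$ underlying Lemma~\ref{l:liftnorm}, which rests on the torus part of $B$ being a $p'$-complement to $\Inn(S)$. Everything else reduces to the two short computations already indicated: the action of $d$ on $x_1$ and $x_2$ modulo $Z_4$, and the elementary fact that $\operatorname{diag}(\lambda,t)$ fixes an off-axis line exactly when $\lambda=t$.
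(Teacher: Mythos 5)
Your proof is correct and follows essentially the same route as the paper's: represent each element of $D$ by $d=(t,\operatorname{diag}(\lambda,1))$ acting as $\operatorname{diag}(\lambda,t)$ on $S/Z_4$, deduce $t=\lambda$ from the preservation of an off-axis line, and combine with $t^2\lambda^3=1$ to get $\lambda^5=1$. Your explicit remark on faithfulness of the $\Out_B(S)$-action on $S/\Phi(S)$ is a welcome, if minor, addition to what the paper leaves implicit.
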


\begin{proof} Let $c_d \in D^\#$. Then, by Lemma~\ref{l:outfsid},  $d= \left(t, \left( \begin{smallmatrix}
\lambda & 0    \\
0 & 1  \\  \end{smallmatrix} \right)\right)$ with $t^2\lambda^3=1$.
We calculate that on $Q/Z_4$ (which is generated by $Y^3$) $c_d$ acts by scaling $Q/Z_4$ by $t$, and on $R/Z_4$ we calculate $c_d$ scales by $\lambda$.
Thus for a diagonal subgroup to remain fixed by $d$, we require $t=\lambda$. On the other hand, from Lemma~\ref{l:outfsid} we know $t^2\lambda^3= \lambda^5=1$.
It follows that   $D$ is cyclic of order dividing $5$.
\end{proof}

We use Lemmas \ref{l:outfsid} and \ref{l:diags} to help eliminate the possibility that $\F$ contains an essential subgroup in $\mathcal U$. We achieve this in the next three lemmas.

\begin{Lem}\label{l:nop3}
If $U_x \in \E$ for some $x \in S \backslash Q$, then $U_x$ is abelian (equivalently $U_x \le R$).
\end{Lem}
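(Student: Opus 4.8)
The plan is to prove the contrapuntal form: I will show that a \emph{non-abelian} $U_x$ cannot be $\F$-essential. Since $U_x=\langle Z_2,x\rangle$ with $Z_2$ abelian, $U_x$ is abelian exactly when $x$ centralises $Z_2$, i.e. when $x\in C_S(Z_2)=R$ by Lemma~\ref{l:charz}(b); this is also equivalent to $U_x\le R$ as $Z_2\le R$, which gives the parenthetical equivalence. So I assume $x\in S\setminus(Q\cup R)$ with $U_x\in\E$ and seek a contradiction. First I record the local structure: as $x\notin R$ and $Z_2\le Z_2(S)$ we have $[x,Z_2]=Z$ and $U_x'\le[S,Z_2]\le Z$, so $U_x$ is extraspecial of order $p^3$ with $Z(U_x)=Z$. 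The case $p=5$ is disposed of immediately: every element of $S\setminus(Q\cup R)$ has order $25$ (Section~\ref{s:constructs}), so $U_x$ is the extraspecial group of exponent $p^2$; its characteristic subgroup of elements of order dividing $p$ forces $\Aut(U_x)$ to fix a line of $U_x/Z$, whence $\Out(U_x)$, and a fortiori $\Out_\F(U_x)$, has a non-trivial normal $p$-subgroup and admits no strongly $p$-embedded subgroup, so $U_x\notin\E$. Thus I may assume $p\ge 7$, $S$ has exponent $p$, and $U_x$ is extraspecial of exponent $p$ with $\Out(U_x)\cong\GL_2(p)$.

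Next I pin down the normaliser. From $[Z_3,Z_2]=1$ and $[Z_3,x]\le[S,Z_3]=Z_2$ one gets $\langle Z_3,x\rangle\le N_S(U_x)$, a subgroup of order $p^4$. Since $U_x$ is $\F$-centric, $C_S(U_x)=Z$, so $\Aut_S(U_x)=N_S(U_x)/Z$ embeds in a Sylow $p$-subgroup of $\Aut(U_x)$, which has order $p^3$; hence $N_S(U_x)=\langle Z_3,x\rangle$ has order exactly $p^4$ and $\Out_S(U_x)$ has order $p$. As $U_x$ is essential it is fully automised, so $\Out_S(U_x)\in\Syl_p(\Out_\F(U_x))$ and $\Out_\F(U_x)$ has a strongly $p$-embedded subgroup; since any two distinct subgroups of order $p$ in $\GL_2(p)$ generate $\SL_2(p)$, we get $O^{p'}(\Out_\F(U_x))\cong\SL_2(p)$. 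The crucial point is that this $\SL_2(p)$ acts on $Z=Z(U_x)$ through the determinant, hence \emph{centralises} $Z$, while acting transitively on the lines of $U_x/Z$.

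The heart of the argument is to manufacture from this an element of $\Aut_\F(S)$ violating Lemma~\ref{l:diags}. I take a torus element $t\in N_{\Aut_\F(U_x)}(\Aut_S(U_x))$ of order $p-1$ inside the $\SL_2(p)$, so $t$ centralises $Z$ and acts on $U_x/Z$ with eigenvalues $a,a^{-1}$ for a generator $a$ of $\mathbb F_p^\times$. I then propagate $t$ up the chain $U_x<N<N_S(N)<S$, where $N:=N_S(U_x)$. Here $N$ is not normal in $S$ (the unique normal subgroup of order $p^4$ is $Z_4$ by Lemma~\ref{l:charz}(e), and $N\not\cong Z_4$ since $[N,N]=Z_2$ has order $p^2$ whereas $[Z_4,Z_4]\le[Q,Q]=Z$ has order $p$); consequently no $\F$-conjugate of $N$ is normal in $S$, every conjugate has normaliser of order at most $p^5$, and $|N_S(N)|=p^5$ is attained, so $N$ is fully $\F$-normalised. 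Receptivity of $U_x$ extends $t$ to $N$, receptivity of $N$ extends it to the maximal subgroup $M:=N_S(N)$, and as $M\lhd S$ a final extension yields $\hat t\in\Aut_\F(S)$ with $\hat t|_{U_x}=t$, each step passing to the $p'$-part normalising the relevant $\Aut_S$. Then $\hat t$ normalises $U_x$, hence the line $\bar U_x=U_xZ_4/Z_4$ of $S/\Phi(S)$, which is neither $Q/Z_4$ nor $R/Z_4$ because $x\notin Q\cup R$; and $\hat t|_Z=t|_Z=\mathrm{id}$, so the image $D$ of $\langle\hat t\rangle$ in $\Out_\F(S)$ centralises $Z$. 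Compatibility of $\hat t|_{U_x}=t$ with the quotient $U_x/Z\to S/Z_4$ shows $\hat t$ acts on $S/\Phi(S)$ as the scalar $a^{-1}$, so by Burnside's Lemma~\ref{l:burnside} the image of $\hat t$ modulo $C_{\Aut(S)}(S/\Phi(S))$, and hence $D$, has order $p-1\ge 6$. Now Lemma~\ref{l:diags} forces $|D|\mid 5$, a contradiction. This shows $U_x$ is abelian, as claimed.

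The step I expect to be the main obstacle is the propagation: one must verify that the intermediate groups are fully $\F$-normalised and, at each of the three extensions, choose the lift to be a $p'$-element normalising the next $\Aut_S$-subgroup, so that the resulting $\hat t$ genuinely restricts to $t$ on $U_x$ and retains $p'$-order $p-1$ (the existence of such a lift follows from a Schur--Zassenhaus argument applied to the coset of extensions). The full normalisation of $N$ and $M$, together with the commutator computations $[x,x_4]\in Z_2\setminus Z$ that give $[N,N]=Z_2$, are the technical crux; the $p=5$ reduction and the $\Out_\F(U_x)\cong$ containing $\SL_2(p)$ computation are routine by comparison.
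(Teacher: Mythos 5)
Your overall strategy coincides with the paper's: extract from $O^{p'}(\Out_\F(U_x))\cong\SL_2(p)$ (which centralises $Z$ because it acts on $Z$ through the determinant) an element of $\Aut_\F(S)$ of order $p-1$ modulo $\Inn(S)$ that centralises $Z$ and normalises the line $U_xZ_4/Z_4\neq Q/Z_4,R/Z_4$, then contradict Lemma~\ref{l:diags}. Your endgame, your computation of $\Out_\F(U_x)$, and your separate disposal of $p=5$ via the exponent of $U_x$ are all sound. The genuine gap is in the propagation up the tower $U_x<N<M<S$ with $N=N_S(U_x)$ and $M=N_S(N)$. To pass from $N$ to $M$ by receptivity of $N$ you must exhibit an element $s\in\Aut_\F(N)$ with $N_s\supseteq M$, i.e.\ with $s$ normalising $\Aut_M(N)=\Aut_S(N)$, a full Sylow $p$-subgroup of $\Aut_\F(N)$. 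Your Schur--Zassenhaus argument does produce a $p'$-element in the coset $\tilde tK$ of extensions of $t$ (and $K$, the kernel of $N_{\Aut_\F(N)}(U_x)\to\Aut(U_x)$, is indeed a normal $p$-subgroup since $U_x$ is centric), but every element of that coset normalises $U_x$, whereas $\Aut_M(N)\not\le N_{\Aut_\F(N)}(U_x)$ because $U_x$ is not normal in $M$: the unique Sylow $p$-subgroup $\Inn(N)K$ of $N_{\Aut_\F(N)}(U_x)$, which $s$ automatically normalises, meets $\Aut_M(N)$ only in $\Inn(N)$. Neither Schur--Zassenhaus nor the full normalisation of $N$ forces any extension of $t$ to normalise that particular Sylow subgroup, so the second rung of your ladder is unsupported.

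The paper sidesteps the ladder entirely. Having extended $t$ to $\tilde t\in\Aut_\F(N_S(U_x))$ by receptivity of the essential subgroup $U_x$, it treats $\tilde t$ as a morphism of $\F$ and applies Alperin's Theorem~\ref{t:alp} together with the classification of essential candidates (Lemmas~\ref{l:econtq} and \ref{l:2cases}): the members of $\U\cup\W$ are too small to contain a group of order $p^4$, and no $\F$-conjugate of $N_S(U_x)$ lies in $Q$ or $R$ (its derived subgroup is $Z_2$ of order $p^2$, while subgroups of $Q$, and subgroups of $R$ of order $p^4$, have derived subgroups of order $p$ or $p^3$), so every factor in the Alperin decomposition of $\tilde t$ is a restriction of an element of $\Aut_\F(S)$ and $\tilde t$ itself extends to $S$ in one step. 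If you replace your two upper extension steps by this argument, the remainder of your proof goes through.
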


\begin{proof}
Write $E=U_x$ for some $x \in S \backslash Q$. If $E$ is non-abelian, then  $x \notin R$ and, as $E$ is $\F$-essential, $E \cong p_+^{1+2}$ with $[E,E]=Z$. Since $\Out_\F(E)$ acts faithfully on $E/\Phi(E)$, we have $\Out_\F(E)$ is isomorphic to a subgroup of $\GL_2(p)$ containing $\SL_2(p)$ just as in Lemma~\ref{l:AFR1}. Let $C= C_{\Aut_\F(E)}(Z)$.  Then $C/\Inn(E)\cong \SL_2(p)$ and $N_C(\Aut_S(E))$ is cyclic of order $p-1$.
Since $\F$ is saturated, the elements of  $C$ extend to a maps in $\Aut_\F(N_S(E))$. Now using  $x \notin R$, we see that each $\alpha \in C$ is the restriction of an element $\bar{\alpha} \in \Aut_\F(S)$ by Lemmas \ref{l:econtq} and \ref{l:2cases}. But then $\Out_\F(S)$ contains a subgroup $C_0$ of order $p-1$ which centralizes $Z$ and whose elements restrict to elements in $C$. We have that $C_0$ normalizes $Z_4E$ and so, as $p-1$ does not divide $5$, Lemma \ref{l:diags} implies that $EZ_4= R$ or $EZ_4=Q$. Since $x \not \in Q \cup R$,  we have a contradiction.
\end{proof}

We have the following observation:

\begin{Lem}\label{l:UxEthenRnotE}
Suppose that $U_x \in \E$. Then $R \not \in \E$.  In particular, $O_p(\Aut_\F(R)) =\Aut_S(R)=S/Z_2$.
\end{Lem}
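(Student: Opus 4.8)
The plan is to argue by contradiction: assume both $U_x\in\E$ and $R\in\E$, and produce an $\F$-conjugate of $U_x$ sitting inside $Q$ whose presence violates a $p$-local count. First I would record the structure forced by Lemma~\ref{l:nop3}: since $U_x\in\E$ it is abelian, hence $U_x\le R$, and as $U_x\cap Q=Z_2$ with $|U_x|=p^3$ we have $U_x\not\le Z_4=Q\cap R$ and $U_x$ is elementary abelian (it lies in the exponent-$p$ group $R$). The crucial preliminary computation is that $N_S(U_x)=U_xZ_3$ has order $p^4$. To see this I would show $N_Q(U_x)=Z_3$: writing $x\in R\setminus Z_4$ with $x\equiv x_1\pmod{Z_4}$, an element $q\in Q$ normalizes $U_x$ if and only if $[x,q]\in U_x\cap Q=Z_2$ (note $[x,q]\in Q$ as $Q\unlhd S$), and since $x$ acts on $Q/Z$ with a single Jordan block by Lemma~\ref{l:charz}(f), the set of such $q$ is exactly $Z_3$. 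As $N_S(U_x)\cap Q=N_Q(U_x)=Z_3$ and $[S:Q]=p$, this forces $|N_S(U_x)|\le p^4$, while $U_xZ_3\le N_S(U_x)$ gives the reverse inequality. Since $U_x$ is $\F$-centric, $C_S(U_x)=U_x$, so $\Aut_S(U_x)=N_S(U_x)/U_x$ has order $p$; being fully automized, $\Aut_S(U_x)\in\Syl_p(\Aut_\F(U_x))$ and hence $|\Aut_\F(U_x)|_p=p$.

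Now suppose $R\in\E$. By Lemma~\ref{l:AFR1}, $O^{p'}(\Out_\F(R))\cong\SL_2(p)$ acts faithfully, hence as the natural module and transitively on the $p+1$ one-dimensional subspaces, on $R/\Phi(R)=R/Z_3$. Since $U_x\not\le Z_4$ and $Z_3\le Q$ forces $U_x\not\le Z_3$, the image $U_xZ_3/Z_3$ is a line of $R/Z_3$ distinct from the ``$Q$-line'' $Z_4/Z_3$; by transitivity there is $\beta\in\Aut_\F(R)$ with $(U_xZ_3/Z_3)\beta=Z_4/Z_3$, that is, $U_x\beta\, Z_3=Z_4$ and so $U_x\beta\le Z_4\le Q$. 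The subgroup $U_x\beta$ is $\F$-conjugate to $U_x$, so $\Aut_\F(U_x\beta)\cong\Aut_\F(U_x)$ and in particular $|\Aut_\F(U_x\beta)|_p=p$. On the other hand $U_x\beta$ is abelian of order $p^3$ inside the extraspecial group $Q$ of order $p^5$, hence is a maximal abelian, and therefore self-centralizing, subgroup of $Q$; thus $\Aut_Q(U_x\beta)=Q/C_Q(U_x\beta)=Q/U_x\beta$ is a $p$-subgroup of $\Aut_\F(U_x\beta)$ of order $p^2>p$, a contradiction. Hence $R\notin\E$. (This is essentially the mechanism of Lemma~\ref{l:econtq} applied to the conjugate $U_x\beta$, avoiding any appeal to its being fully normalized.)

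Finally, for the ``In particular'' clause: $R$ is characteristic in $S$ by Lemma~\ref{l:charz}(d), so $N_S(R)=S$, and $R$ is fully normalized and $\F$-centric with $C_S(R)=Z(R)=Z_2$, giving $\Aut_S(R)=S/Z_2$ and $\Out_S(R)=S/R$ of order $p$; being fully automized, $\Out_S(R)\in\Syl_p(\Out_\F(R))$. As this Sylow $p$-subgroup is cyclic of order $p$, $\Out_\F(R)$ admits a strongly $p$-embedded subgroup if and only if $O_p(\Out_\F(R))=1$; since $R\notin\E$ this fails, forcing $O_p(\Out_\F(R))=\Out_S(R)$, and pulling back along $\Inn(R)\le\Aut_S(R)$ yields $O_p(\Aut_\F(R))=\Aut_S(R)=S/Z_2$. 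I expect the only delicate point to be the normalizer computation $N_S(U_x)=U_xZ_3$, and hence $|\Aut_S(U_x)|=p$; everything downstream (transitivity of $\SL_2(p)$ on lines, maximality of an order-$p^3$ abelian subgroup in extraspecial $Q$, and the cyclic-Sylow criterion for strong $p$-embedding) is routine, so the argument really hinges on controlling $N_S(U_x)$ via the single-Jordan-block action of Lemma~\ref{l:charz}(f).
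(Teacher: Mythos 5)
Your proposal is correct and follows essentially the same route as the paper: assume $R\in\E$, use the transitive action of $O^{p'}(\Aut_\F(R))\cong\SL_2(p)$ on the maximal subgroups of $R$ to produce an $\F$-conjugate of $U_x$ inside $Q\cap R$, and contradict the saturation of $U_x$. The only (cosmetic) differences are that the paper contradicts ``fully $\F$-normalized'' by comparing $|N_S(U_x)|=|U_x\Phi(R)|=p^4$ with $|N_S(U_x\alpha)|\ge |Q|=p^5$, whereas you contradict ``fully $\F$-automized'' via $|\Aut_Q(U_x\beta)|=p^2>p=|\Aut_S(U_x)|$ (equivalent statements here since both subgroups are centric), and you compute $N_S(U_x)=U_xZ_3$ from the single-Jordan-block action rather than from $\Inn(R)\cong p^{1+2}_+$ as in Lemma~\ref{l:autR}(a).
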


\begin{proof} Suppose that $R \in \E$. Then Lemma \ref{l:AFR1} implies that $O^{p'}(\Out_\F(R)) \cong \SL_2(p)$. By Lemma~\ref{l:nop3}, $U_x \le R$ and $U_x\Phi(R)$ is a maximal subgroup of $R$.  Since $O^{p'}(\Aut_\F(R)) \cong \SL_2(p)$ acts transitively on the maximal subgroups of $R$, we see that $U_x\Phi(R)$ is conjugate to $Q\cap R$ by some $\alpha \in O^{p'}(\Aut_\F(R))$.  Thus $U_0=U_x\alpha \ge Z_2$
  and $N_S(U_0) \ge Q$.  Since $N_S(U_x)= U_x\Phi(R)$ by Lemma \ref{l:autR}(a), we see that $U_x$ is not fully $\F$-normalized and thus $U_x \not \in \E$.  This proves the claim. \end{proof}

\begin{Lem}\label{l:uxstruct}
Suppose $U_x \in \E$ for some $x \in S \setminus Q$. Then $O^{p'}(\Aut_\F(U_x))\cong \SL_2(p)$ and the following statements hold:
\begin{itemize}
\item[(a)] As an $\mathbb{F}_pO^{p'}(\Aut_\F(U_x))$-module, $U_x$ is the direct sum of a $2$-dimensional module and a $1$-dimensional trivial module.
\item[(b)]$Z= C_{U_x}(O^{p'}(\Aut_\F(U_x)))$.
\end{itemize}
\end{Lem}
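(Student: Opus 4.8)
The plan is first to pin down $U_x$ as an $\mathbb F_p$-vector space. By Lemma~\ref{l:nop3} an essential $U_x$ is abelian and contained in $R$, and by Lemma~\ref{l:2cases} (the case $t=3$) it has order $p^3$ with $U_x\cap Q=Z_2$; since $R$ has exponent $p$ it is elementary abelian, so $U_x\cong\mathbb F_p^3$ and $\Aut(U_x)\cong\GL_3(p)$. As $U_x$ is $\F$-centric, $C_S(U_x)=U_x$, and since $N_S(U_x)=U_x\Phi(R)=\langle Z_3,x\rangle$ has order $p^4$ (cf.\ Lemma~\ref{l:autR}(a)), the group $\Aut_S(U_x)=N_S(U_x)/U_x$ has order $p$. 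Because $U_x$ is abelian, $\Aut_\F(U_x)=\Out_\F(U_x)$; thus $A:=\Aut_\F(U_x)$ is a subgroup of $\GL_3(p)$ with $O_p(A)=1$, a Sylow $p$-subgroup of order $p$, and (as $U_x$ is essential) a strongly $p$-embedded subgroup.

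Next I would determine $O^{p'}(A)$. The scalar matrices in $A$ form a central $p'$-subgroup, which therefore lies in every strongly $p$-embedded subgroup of $A$; hence the image $\overline A$ of $A$ in $\mathrm{PGL}_3(p)$ again has a strongly $p$-embedded subgroup, and Proposition~\ref{p:l3pstr} (with $p\ge5$, so that the $\Frob(39)$ case is excluded) gives $O^{p'}(\overline A)\cong\PSL_2(p)$ or $\SL_2(p)$. To read off the module I use the generator of $\Aut_S(U_x)$: it is induced by $c_g$ for $g\in Z_3\setminus Z_2$, and since $[Z_2,Z_3]=1$ by Lemma~\ref{l:charz}(c) it fixes $Z_2=C_{U_x}(\Aut_S(U_x))$ pointwise while sending $x$ into $Z_2$. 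Thus it acts on $U_x$ as a transvection, i.e.\ with a single non-trivial Jordan block of size $2$. This excludes the $3$-dimensional irreducible $\mathbb F_p\SL_2(p)$-module $\mathrm{Sym}^2$ (on which a non-trivial unipotent element is regular, a single Jordan block of size $3$), so the composition factors of $U_x$ are the natural $2$-dimensional module and the trivial module. Writing $G:=O^{p'}(A)$ and noting that $G$ acts trivially on the trivial constituent, faithfulness of $A$ on $U_x$ forces $G$ to act faithfully on the natural constituent $M_2$, so $G$ embeds in $\GL(M_2)\cong\GL_2(p)$ as a transvection-generated group. Since $O_p(A)=1$ there are at least two distinct Sylow $p$-subgroups, and as in Lemma~\ref{l:AFR1} two distinct transvection subgroups generate $\SL_2(p)$; hence $O^{p'}(\Aut_\F(U_x))=G\cong\SL_2(p)$.

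For part~(a) I would split $U_x$ using the central involution $z$ of $G$. As $\langle z\rangle$ is a $2$-group and $p$ is odd, $U_x=C_{U_x}(z)\oplus[U_x,z]$, and because $z$ is central in $G$ both summands are $G$-submodules. Since $z$ acts as $-1$ on the natural constituent and as $+1$ on the trivial one, $[U_x,z]=M_2$ is the natural module and $C_{U_x}(z)=C_{U_x}(G)=M_1$ is the trivial module; this is exactly the decomposition of~(a), with no appeal to cohomology.

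It remains to identify $M_1$ with $Z$ (part~(b)), and this I expect to be the main obstacle. Since $M_1=C_{U_x}(G)\le C_{U_x}(\Aut_S(U_x))=Z_2=\langle x_5,x_6\rangle$ while $[U_x,\Aut_S(U_x)]=\langle[x,g]\rangle$ lies in $M_2\cap Z_2$, the fixed line $M_1$ is determined inside $Z_2$ as the weight-zero line $C_{Z_2}(H)$ of a maximal torus $H\le G$. The strategy is to realise $H$ by diagonal automorphisms of $S$: by Lemma~\ref{l:outfsid} one may assume $\Aut_\F(S)\le\Aut_B(S)$, and a diagonal element $d=(t,\mathrm{diag}(\lambda,1))$ scales $\langle x_5\rangle$ by $t\lambda^3$ and $Z=\langle x_6\rangle$ by $t^2\lambda^3$; the subgroup of such $d$ with $t^2\lambda^3=1$ (those centralizing $Z$, again by Lemma~\ref{l:outfsid}) then fixes $\langle x_6\rangle$ and moves $\langle x_5\rangle$, giving $C_{Z_2}(H)=\langle x_6\rangle=Z$ and hence $M_1=Z$. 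The delicate point is precisely the claim that $H$ (equivalently $z$) is restricted from $\Aut_\F(S)$: unlike the case $x\notin R$ treated in Lemma~\ref{l:nop3}, here $x\in R$, so one must propagate $H$ up the normalizer chain $U_x\lhd N_S(U_x)\lhd\cdots\lhd S$ by repeated use of the saturation/extension property, invoking that the intermediate subgroups are not $\F$-essential (Lemmas~\ref{l:econtq},~\ref{l:2cases} and~\ref{l:UxEthenRnotE}, the last giving $R\notin\E$) so that these automorphisms are forced to come from $\Aut_\F(S)$, where $Z(S)=Z$ is characteristic. Carrying out this climb, and verifying the full-normalization and Sylow-normalizing hypotheses at each link, is the crux of the argument.
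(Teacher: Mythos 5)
Your identification of $U_x$ as a faithful $3$-dimensional $\mathbb F_p\Aut_\F(U_x)$-module, the appeal to Proposition~\ref{p:l3pstr}, the transvection argument forcing $O^{p'}(\Aut_\F(U_x))\cong\SL_2(p)$, and the splitting of $U_x$ by the central involution in part (a) all agree in substance with the paper (which phrases the Jordan-block step as $[U_x,\Aut_S(U_x),\Aut_S(U_x)]\le[Z_3,Z_3]=1$). The problem is part (b), which you correctly flag as the crux but for which your argument is circular. Writing $G=O^{p'}(\Aut_\F(U_x))$, you propose to compute $C_{Z_2}(H)$ for a torus $H\le G$ by extending $H$ to elements $c_d$ of $\Aut_B(S)$ with $d=(t,\mathrm{diag}(\lambda,1))$, and then you restrict attention to ``the subgroup of such $d$ with $t^2\lambda^3=1$''. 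But $t^2\lambda^3=1$ is exactly the condition that $c_d$ centralize $Z$, i.e.\ that $Z\le C_{U_x}(G)$ --- the statement to be proved. Nothing you have said rules out the alternative $t\lambda^3=1$, which would give $C_{Z_2}(H)=\langle x_5\rangle\ne Z$; the extension acts on the three invariant factors $Z$, $Z_2/Z$, $U_x/Z_2$ with eigenvalues $t^2\lambda^3$, $t\lambda^3$, $\lambda$, and a priori the trivial weight of $H$ could sit on either line of $Z_2$. Identifying which extensions occur is not settled by the extension climb you describe.

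The paper closes this with a weaker and cheaper input: the central involution $\tau$ of $G$ extends (via the normalizer chain and the absence of intermediate essentials, as you sketch) to some $\tau^*\in\Aut_\F(S)$, so $Z=Z(S)$ is merely \emph{normalized} by $\tau$. Now $Z_2=[U_x,\Aut_S(U_x)]\oplus C_{U_x}(G)$, with $\tau$ inverting the first summand and fixing the second, so the only $\tau$-invariant lines in $Z_2$ are these two; and the paper separately checks that $[U_x,\Aut_S(U_x)]\not\le Z$ (otherwise $[S,Z_3]=[U_xQ,Z_3]\le Z$, contradicting $[S,Z_3]=Z_2$). Hence $Z$ is forced to be the fixed line $C_{U_x}(G)$. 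You assert that $[U_x,\Aut_S(U_x)]$ lies in $[U_x,G]\cap Z_2$ but never prove $[U_x,\Aut_S(U_x)]\ne Z$, and without that inequality even the corrected invariance argument does not close. So your proof of the isomorphism type and of (a) stands; your proof of (b) must be replaced by (or repaired into) the normalization argument just described.
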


\begin{proof}
By Lemma \ref{l:nop3}, $U_x$ is elementary abelian and so, as $U_x$ is centric we may regard $U_x$ as a faithful $\mathbb{F}_p\Aut_\F(U_x)$-module. In particular,  $\Aut_\F(U_x)$ is isomorphic to a subgroup of $\GL_3(p)$. Since $\Aut_\F(U_x)$ has a strongly $p$-embedded subgroup and $p \ge 5$, Proposition \ref{p:l3pstr} yields
 $O^{p'}(\Aut_\F(U_x))$ is isomorphic to either of   $\mathrm{PSL}_2(p)$ or $\SL_2(p)$.  Now  $\Aut_S(U_x) = \Aut_{Z_3}(U_x)$ and $Z_3$ is abelian by Lemma~\ref{l:charz}, we have
 $$[U_x,\Aut_S(U_x) ,\Aut_S(U_x) ]=[U_x,Z_3,Z_3]\le [Z_3,Z_3]=1.$$
 In particular, as a subgroup of $\GL_3(p)$, the Jordan form  of the elements  of $\Aut_S(U_x) $ have one block of size $2$ and a trivial block. It follows that $O^{p'}(\Aut_\F(U_x))\cong \SL_2(p)$ because the $p$-elements of  $\mathrm{PSL}_2(p)$ have Jordan block of size $3$. Let $\tau \in O^{p'}(\Aut_\F(U_x))$ be an involution contained  in the centre of $\SL_2(p)$.  Then $\tau \in Z(\Aut_\F(U_x) )$ and $U_x=[U_x,\tau]\oplus C_{U_x}(\tau)$ is an $O^{p'}(\Aut_\F(U_x) )$ decomposition of $U_x$ as the direct sum of a $2$-dimensional module and a $1$-dimensional trivial module.

We now prove (b).  We have
 $[U_x,\Aut_S(U_x) ]=[U_x,Z_3]\le [S,Z_3]=Z_2$ and, by (a), $[U_x,\Aut_S(U_x) ]$ has order $p$.   Since $\Aut_S(U_x) = \Aut_{Z_3}(U_x)$  and $Z_3$ is abelian, we have $C_{U_x}(\Aut_S(U_x))= C_{U_x}(Z_3) = Z_2$.
 If $[U_x,\Aut_S(U_x) ] \le Z$, then $[S,Z_3] = [U_xQ,Z_3] \le Z$, which is impossible. Thus $[U_x,\Aut_S(U_x) ] $ is a subgroup of $Z_2$ of order $p$ which is not contained in $Z$. Let $\tau \in Z(\Aut_\F(U_x))$ have order $2$.  Then $[U_x,\Aut_S(U_x) ]$ is inverted by $\tau$ and to prove the result it suffices to show that $Z$ is normalized by  $\tau$  for then $Z_2= [U_x,\Aut_S(U_x) ] Z$ with $Z$ centralized by $\tau$.  Suppose that $\tau$ does not normalize $Z$.  Since $\tau$ normalizes $\Aut_S(U_x)$ and $\F$ is saturated, $\tau$ lifts to $\bar \tau \in \Aut_\F(N_S(U_x))$.  Now using Theorem~\ref{t:alp} and Lemmas \ref{l:2cases} and \ref{l:UxEthenRnotE}, we see that $\bar \tau$ is the restriction of some $\tau^* \in \Aut_\F(S)$.  But then $$Z\tau= Z\bar \tau = Z\tau^*= Z,$$ which is a contradiction. This proves (b).
\end{proof}

\begin{Lem}\label{l:Nop3}
$U_x \notin \E$ for all $x \in S \setminus Q$.
\end{Lem}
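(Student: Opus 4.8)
The goal is to rule out essential subgroups in $\mathcal U$, so I want to derive a contradiction from the assumption that some $U_x$ with $x \in S \setminus Q$ is $\F$-essential. The plan is to exploit the involution $\tau \in Z(\Aut_\F(U_x))$ coming from $O^{p'}(\Aut_\F(U_x)) \cong \SL_2(p)$ together with the fine structural information accumulated in Lemma~\ref{l:uxstruct}, especially that $U_x = [U_x,\tau] \oplus C_{U_x}(\tau)$ and that $Z = C_{U_x}(O^{p'}(\Aut_\F(U_x)))$ lies in the centralized summand.

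First I would use saturation to lift automorphisms of $U_x$ to $N_S(U_x)$ and then, via Alperin's Theorem (Theorem~\ref{t:alp}) together with Lemmas~\ref{l:2cases} and \ref{l:UxEthenRnotE}, push them up to genuine elements of $\Aut_\F(S)$; note that Lemma~\ref{l:UxEthenRnotE} guarantees $R \notin \E$, so the only essential subgroups available above $U_x$ other than $Q$ itself are members of $\mathcal W$, and morphisms normalizing $\Aut_S(U_x)$ can only be realized through $S$. The key point is that the normalizer $N_{\Aut_\F(U_x)}(\Aut_S(U_x))$ contains an element of order $p-1$ centralizing $Z$ (the diagonal torus of the $\SL_2(p)$ together with its central involution), and this lifts to a subgroup $C_0 \le \Out_\F(S)$ of order $p-1$ that centralizes $Z$. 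I would then mimic the endgame of Lemma~\ref{l:nop3}: the subgroup $U_xZ_4 / Z_4$ is a nontrivial proper subgroup of $S/Z_4$ normalized by $C_0$, and since $U_x \not\le Q$ and $U_x \not\le R$ (by Lemma~\ref{l:nop3}, $U_x \le R$ forces $U_x$ abelian, so in the abelian case $U_xZ_4 = R$ is excluded because $U_x \ne R\cap(\dots)$, and I must check $U_xZ_4$ is neither $Q/Z_4$ nor $R/Z_4$), Lemma~\ref{l:diags} forces $|C_0|$ to divide $5$. Since $p \ge 5$, this gives $p-1 \mid 5$, which is impossible for $p \ge 5$ (as $p-1 \ge 4$), yielding the contradiction.

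The main obstacle I anticipate is handling the dichotomy between the case $U_x \le R$ (abelian, so $U_x \in \mathcal U$ with $U_x$ elementary abelian) and making the "$U_xZ_4$ is neither $Q$ nor $R$" check airtight. In the abelian case $U_x = \langle Z_2, x\rangle$ with $x \in S \setminus (Q \cup R)$ would give $U_xZ_4 \ne R$ automatically, but if $x \in R \setminus Q$ then $U_x \le R$ and $U_xZ_4 = R$, so Lemma~\ref{l:diags} would \emph{not} apply directly to that subgroup. Thus I would need to argue separately that when $U_x \le R$, the action of $\tau$ (the central involution) on $Z$ being trivial, established in Lemma~\ref{l:uxstruct}(b), conflicts with the action forced by $\Aut_\F(R)$ or $\Aut_\F(Q)$: specifically, Lemma~\ref{l:AFR1} shows the central involution of $O^{p'}(\Out_\F(R))$ inverts $Z$, whereas the central involution of $\Aut_\F(U_x)$ centralizes $Z$ by Lemma~\ref{l:uxstruct}(b); relating these two involutions (both acting on $Z_2$ via restrictions coming from $\Aut_\F(S)$) should produce the contradiction in the remaining case.

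The cleanest route may be to treat all cases uniformly through $\Out_\F(S)$: having lifted a torus $C_0 \le \Out_\F(S)$ of order $p-1$ centralizing $Z$, I use Lemma~\ref{l:outfsid} to write its elements as diagonal pairs $\left(t,\left(\begin{smallmatrix}\lambda & 0\\ 0 & 1\end{smallmatrix}\right)\right)$ with $t^2\lambda^3 = 1$, and compute directly how such elements scale $U_xZ_4/Z_4$ inside $S/Z_4$. Since $C_0$ preserves this line and has order $p-1$, the scaling constraint forces a relation among $t$ and $\lambda$ that, combined with $t^2\lambda^3 = 1$, bounds $|C_0|$, and the contradiction $p - 1 \le 5$ follows. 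I would lean on the explicit action computations already set up for Lemma~\ref{l:diags} rather than re-deriving them, so the whole argument reduces to identifying the appropriate line in $S/Z_4$ and invoking Lemmas~\ref{l:diags} and \ref{l:uxstruct}.
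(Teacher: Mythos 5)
Your argument has a genuine gap, and it sits exactly where you suspected trouble. By Lemma~\ref{l:nop3}, any $U_x\in\E$ is abelian, equivalently $U_x\le R$ (note you have the cases backwards: since $C_S(Z_2)=R$ by Lemma~\ref{l:charz}(b), $U_x=\langle Z_2,x\rangle$ is non-abelian precisely when $x\notin R$, so the abelian case is $x\in R\setminus Q$). Hence $U_xZ_4=R$ in every case that survives to this lemma, and your primary route --- applying Lemma~\ref{l:diags} to $U_xZ_4/Z_4$ --- never gets off the ground: that subgroup is $R/Z_4$, which is preserved by every automorphism of $S$, so normalizing it imposes no condition at all. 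Neither of your proposed repairs closes this. The fallback via Lemma~\ref{l:AFR1} is self-defeating: Lemma~\ref{l:UxEthenRnotE}, which you must invoke to push automorphisms of $N_S(U_x)$ up to $\Aut_\F(S)$, says precisely that $R\notin\E$ when $U_x\in\E$, so $\Out_\F(R)$ need not contain $\SL_2(p)$ and there is no ``central involution of $O^{p'}(\Out_\F(R))$ inverting $Z$'' to play off against the involution of $\Aut_\F(U_x)$. Your ``cleanest route'' is also not closed, because the relation between $t$ and $\lambda$ cannot come from $C_0$ preserving the line $U_xZ_4/Z_4$; the constraint that actually does the work is that $c_d|_{U_x}$ must be an element of order $p-1$ of $O^{p'}(\Aut_\F(U_x))\cong\SL_2(p)$ with $Z$ as the trivial summand, i.e.\ of determinant $1$ on $U_x/Z$. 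Since $c_d$ scales $Z_2/Z$ by $t\lambda^3$ and $U_x/Z_2\cong R/Z_4$ by $\lambda$, that condition reads $t\lambda^4=1$, which combined with $t^2\lambda^3=1$ gives $\lambda^5=1$ and hence $p-1\mid 5$, impossible for $p\ge 5$. With that determinant condition supplied, your eigenvalue approach can be completed; as written, the decisive relation is missing.

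For comparison, the paper proceeds differently: it extends the central involution $t\in Z(O^{p'}(\Aut_\F(U_x)))$ to some $\tau\in\Aut_\F(S)$, reads off from Lemma~\ref{l:uxstruct} that $\tau$ centralizes $Z$, inverts $Z_2/Z$ and inverts $R/Z_4=U_xZ_4/Z_4$, and uses Lemma~\ref{l:diags} only to show that $\tau$ cannot \emph{also} invert $Q/Z_4$ (otherwise it would normalize a diagonal subgroup of $S/Z_4$ while centralizing $Z$ and having even order). Then, for $a\in Q\setminus Z_4$ and $b\in Z_2\setminus Z$ with $b\tau=b^{-1}$, the computation $[a,b]=[a,b]\tau=[a^e,b^{-1}]=[a,b]^{-e}$ (valid because $[a,b]\in Z$ is fixed and $[Z_4,b]=1$) forces $e=p-1$, i.e.\ $\tau$ does invert $Q/Z_4$ --- a contradiction. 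Either mechanism works, but your write-up currently contains neither in complete form.
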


\begin{proof} We have $O^{p'}(\Aut_\F(U_x)) \cong \SL_2(p)$ by Lemma~\ref{l:uxstruct}.
Let $t \in Z(O^{p'}(\Aut_\F(U_x)))$ be an involution. Then $t \in N_{\Aut_\F(U_x)}(\Aut_S(U_x))$ and so $t=\hat{\tau}|_{U_x}$ for some $\hat \tau \in \Aut_\F(N_S(U_x))$. Since $R \notin \E$ and $U_x \not \le Q$,  $\hat{\tau}$ must extend to a map ${\tau} \in \Aut_\F(S)$ by Lemmas \ref{l:econtq} and \ref{l:2cases}.  Now, by Lemma \ref{l:uxstruct} (a) and (b),  $\tau$ centralizes $Z$, inverts $Z_2/Z$ and inverts $R/Z_4= U_xZ_4/Z_4 \cong U_x/Z_2$. Furthermore, as $Q$ is characteristic in $S$, $\tau$ acts on $Q/Z_4$. Since $\tau$ has even order, $\tau$ does not invert $Q/Z_4$ by Lemma~\ref{l:diags}.  Let $a\in Q \setminus Z_4$ and $b \in Z_2 \setminus Z$ with $b\tau =b^{-1}$. Then, for some $1 \le e \le p-1 $, $aZ_4\tau = a^eZ_4$ and so, as
$[a,b]\in Z^\#$, we obtain  $$[a,b]=[aZ_4,b]\tau =[a Z_4\tau, b^{-1}]= [a^e,b^{-1}]= [a,b]^{-e}.$$ Hence $e=p-1$. As we have argued that $Q/Z_4$ is not inverted by $\tau$, this is a contradiction.
\end{proof}

\begin{Lem}\label{l:wine}
Suppose that $W \in \E \cap \W$. Then $p=7$ and the following hold:\begin{enumerate}
\item[(a)] $\Aut_\F(W) \cong \SL_2(7)$ is uniquely determined;
\item [(b)] $|N_{\Aut_\F(S)}(W)\Inn(S)/\Inn(S)| = 6$;
\item[(c)] there exists $\theta \in N_{\Aut_\F(Q)}(\Aut_S(Q))$  such that $\theta$   induces an automorphism of order $6$ on both $\Out_{S}(Q)$ and $Z$.
\end{enumerate}
Furthermore, the subgroup $N_{\Aut_\F(S)}(W)\Inn(S)/\Inn(S) \le \Out(S)$ is  generated by the images of  $c_d$ where
$d=\left(\lambda,\left(\begin{smallmatrix} \lambda &0\\0&1\end{smallmatrix}\right)\right)\in B$ with $\lambda\in \mathbb F_7^\times$ which acts as scalars on $S/\Phi(S)$ and is independent of the choice of $W \in \E \cap \W$.
\end{Lem}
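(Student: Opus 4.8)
The equality $p=7$ is immediate from Theorem~\ref{t:esslist}, so I would begin from there. Since $S$ has exponent $p$ when $p=7$, the subgroup $W=\langle Z,x\rangle$ (with $[Z,x]=1$) is elementary abelian of order $p^2$; hence $\Inn(W)=1$, $\Aut(W)=\GL_2(7)$ and $\Out_\F(W)=\Aut_\F(W)$. As $W$ is $\F$-centric, $C_S(W)=W$, so $\Aut_S(W)=N_S(W)/W$ is a non-trivial $p$-subgroup of $\GL_2(7)$ and thus has order $p$. Because $W$ is $\F$-essential, $\Aut_\F(W)$ has a strongly $p$-embedded subgroup and hence more than one Sylow $p$-subgroup; since any two distinct subgroups of order $p$ in $\GL_2(p)$ generate $\SL_2(p)$ (exactly as in the proof of Lemma~\ref{l:AFR1}), $O^{p'}(\Aut_\F(W))\cong\SL_2(7)$. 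Writing $\Aut_\F(W)=\{g\in\GL_2(7)\mid\det g\in\Lambda\}$ for some $\Lambda\le\mathbb F_7^\times$, part (a) amounts to proving $\Lambda=1$.

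Next I would pin down $Z$ inside $W$. The order-$p$ group $\Aut_S(W)$ acts on $W$ as a transvection, so $|C_W(\Aut_S(W))|=p$; as $Z=Z(S)$ is centralised by $N_S(W)$ we get $Z\le C_W(\Aut_S(W))$, whence $C_W(\Aut_S(W))=Z$. Thus $Z$ is the axis of the transvection $\Aut_S(W)$, and $N_{\Aut_\F(W)}(\Aut_S(W))$ — the Borel subgroup of $\Aut_\F(W)$ — stabilises the line $Z$, acts diagonally on the pair $Z$, $W/Z$, and has image all of $\Lambda$ under $\det$.

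The technical heart is to realise these morphisms in $\Aut_\F(S)$. Given $\alpha\in N_{\Aut_\F(W)}(\Aut_S(W))$, receptivity of $W$ extends $\alpha$ to $\widetilde\alpha\in\Aut_\F(N_S(W))$, where $|N_S(W)|=p^3$. No member of $\E$ contains an $S$-conjugate of $N_S(W)$: by Theorem~\ref{t:esslist} the essential subgroups have order $p^2$ (too small) or are $Q,R$ of order $p^5$, and the latter cannot since they are normal in $S$ while $N_S(W)\ni x\notin Q\cup R$. Hence in the Alperin decomposition (Theorem~\ref{t:alp}) of $\widetilde\alpha$ every contributing member of $\E\cup\{S\}$ is $S$, so $\widetilde\alpha=\beta|_{N_S(W)}$ for some $\beta\in\Aut_\F(S)$, and $\beta$ normalises $W$. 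I expect this realisation step to be the crux. Now I would compute with~(\ref{action}): by Lemma~\ref{l:outfsid} we may take $\beta=c_d$ with $d=\left(t,\left(\begin{smallmatrix}\lambda&0\\0&1\end{smallmatrix}\right)\right)$, which scales $Q/Z_4$ by $t$, $R/Z_4$ by $\lambda$ and $Z$ by $t^2\lambda^3$. Since $\beta$ normalises $W$ it fixes the line $\langle xZ_4\rangle$, which (as $x\notin Q\cup R$) has non-zero image in both $Q/Z_4$ and $R/Z_4$, forcing $t=\lambda$; then $\beta$ acts as the scalar $\lambda$ on $S/\Phi(S)$, as $\lambda$ on $W/Z$ and as $\lambda^5=\lambda^{-1}$ on $Z$. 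Therefore $\det(\alpha)=\det(\beta|_W)=\lambda^6=1$, so $\Lambda=1$ and $\Aut_\F(W)=\SL_2(7)$, giving (a).

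Finally I would read off (b), (c) and the last assertion. Running the realisation in reverse, each torus element $\mathrm{diag}(s,s^{-1})$ of the Borel of $\SL_2(7)=\Aut_\F(W)$ lifts to an element of $\Aut_\F(S)$ with $\lambda=s^{-1}$; as $s$ runs over $\mathbb F_7^\times$ this yields all six scalars, and a scalar $c_d$ is inner precisely when $\lambda=1$, so $N_{\Aut_\F(S)}(W)\Inn(S)/\Inn(S)$ is cyclic of order $6$, generated by the displayed $c_d$ with $d=\left(\lambda,\left(\begin{smallmatrix}\lambda&0\\0&1\end{smallmatrix}\right)\right)$, which acts as the scalar $\lambda$ on $S/\Phi(S)$; this is independent of $x$ since the condition $t=\lambda$ used only $x\notin Q\cup R$. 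This gives (b) and the concluding statement. For (c), since $Q$ is characteristic in $S$, $\theta:=\beta|_Q$ lies in $N_{\Aut_\F(Q)}(\Aut_S(Q))$ and acts on $\Out_S(Q)\cong S/Q$ as $\lambda$ and on $Z$ as $\lambda^{-1}$; choosing $\lambda$ of order $6$ makes $\theta$ of order $6$ on both, proving (c).
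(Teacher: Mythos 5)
Your argument has one genuine logical flaw: the opening step, deducing $p=7$ from Theorem~\ref{t:esslist}, is circular. The clause ``if $\W\cap\E\ne\emptyset$ then $p=7$'' in Theorem~\ref{t:esslist} is proved in the paper precisely by invoking Lemma~\ref{l:wine}; the other ingredients of that theorem (Lemmas~\ref{l:econtq}, \ref{l:2cases} and \ref{l:Nop3}) only yield the containment $\E\subseteq\{Q,R\}\cup\W$. Since you then use $p=7$ essentially (``$S$ has exponent $p$ when $p=7$'', and crucially the identity $\lambda^5=\lambda^{-1}$, which holds in $\mathbb F_7^\times$ but in no larger prime field), your proof as written assumes the first assertion of the lemma rather than proving it. The fix is already latent in your own computation and is exactly what the paper does: work with a general $p\ge5$, take $\delta$ in the torus of $O^{p'}(\Aut_\F(W))\cong\SL_2(p)$ normalizing $\Aut_S(W)$, so that $\delta$ scales $W/Z$ by a generator $\lambda$ of $\mathbb F_p^\times$ and scales $Z$ by $\lambda^{-1}$ (determinant $1$); extending to $\delta^*=c_d\in\Aut_\F(S)$ and running your ``$t=\lambda$'' argument shows $\delta^*$ scales $Z$ by $t^2\lambda^3=\lambda^5$, whence $\lambda^5=\lambda^{-1}$, $\lambda^6=1$, and $p-1\mid 6$ together with $p\ge 5$ forces $p=7$. (Your appeal to Theorem~\ref{t:esslist} for the shape of $\E$ in the realisation step is fine, since that part does not depend on Lemma~\ref{l:wine}.)

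Apart from this, your route coincides with the paper's: the same receptivity-plus-Alperin realisation of $N_{\Aut_\F(W)}(\Aut_S(W))$ inside $\Aut_\F(S)$, the same scalar computation on $S/\Phi(S)$ forcing $t=\lambda$, the same determinant argument giving $\Aut_\F(W)=\SL_2(7)$, and the same restriction to $Q$ for part (c). Your treatment of (b) is sound provided you note explicitly that the upper bound $|N_{\Aut_\F(S)}(W)\Inn(S)/\Inn(S)|\le 6$ applies to \emph{every} element of $N_{\Aut_\F(S)}(W)$, not only to the lifts you constructed; this follows because any such element normalizes $Q$, $R$ and $W\Phi(S)$ and hence acts as a scalar on $S/\Phi(S)$.
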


\begin{proof}  Suppose that $W \in\E\cap  \W$. Then $O^{p'}(\Aut_\F(W))\cong \SL_2(p)$. Since $\F$ is saturated, any element of $N_{\Aut_\F(W)}(\Aut_S(W))$ extends to an automorphism of $N_S(W)$ and then, by Theorem~\ref{t:alp}, to an automorphism of $S$.  Let $\delta \in O^{p'}(\Aut_\F(W))$ normalize $\Aut_S(W)$ have order $p-1$. Then we may assume that $\delta$ scales $Z$ by $\lambda^{-1}$ and $W/Z$ by $\lambda$ where $\lambda$ is a generator of $\mathbb F_p^\times$. Let $\delta^* \in \Aut_\F(S)$ extend $\delta$.    By Lemma~\ref{l:outfsid}, we may suppose that $\delta^*$ acts as $c_d$ where we may assume  $d=\left(t, \left( \begin{smallmatrix}
\lambda & 0    \\
0 & 1\\  \end{smallmatrix} \right)\right)$. Since  $\delta^*$ normalizes $Q$, $W\Phi(S)$ and $R$.  Hence $\delta^*$ acts as a scalar on $S/\Phi(S)$.  We calculate $c_d$ scales $R/\Phi(S)$ by $\lambda$ and $Q/\Phi(S)$ by $t$. Hence $t = \lambda$. Now Equation~\ref{action} shows that $\delta^*$ scales $z$ by $t^2\lambda^3= \lambda^5$.  As $\delta$ scales $W/Z$ as $\delta^*$ scales $W\Phi(S)/\Phi(S)$ and $\delta$ scales $Z$ by the inverse of this (as $\delta$ as determinant $1$), we have $\lambda^5= \lambda^{-1}$.  Thus we have $\lambda^6=1$ and we conclude that $p=7$.

Let $D \le \Aut_\F(S)$ denote the subgroup generated by the extensions of the automorphisms in $N_{\Aut_\F(W)}(\Aut_S(W))$. Then
  $Q \ne W\Phi(S)\ne R$, are invariant under the action of $D$  and so $D$ acts as scalars on $S/\Phi(S)$. Therefore  $|D\Inn(S)/\Inn(S)|\le 6$. On the other hand, as $\delta^*\in D$,  $|N_{\Aut_\F(W)}(\Aut_S(W))/\Aut_S(W)|\ge 6$ with equality if and only if $\Aut_\F(W)= O^{p'}(\Aut_\F(W))$. This proves (a) and part (b) follows as $D=N_{\Aut_\F(S)}(W)$.

Let $\delta^* \in D$ have order $6$.  Then $\theta=\delta^*|_Q$ induces a faithful action on $\Out_S(Q)\cong S/Q$ and, as $\theta|_Z=\delta^*|_Z = (\delta^*|_W)|_Z= \delta|_Z$  acts faithfully on $Z$, we see that (c) holds.
\end{proof}

We can now prove Theorem \ref{t:esslist}.

\begin{proof}[Proof of Theorem \ref{t:esslist}]
This follows from Lemmas \ref{l:econtq}, \ref{l:2cases}, \ref{l:Nop3} and \ref{l:wine}.
\end{proof}
\section{Determining the fusion systems up to isomorphism when $p \ge 5$}\label{s:determining}
Our hypotheses for this section are that $p \ge 5$, $\F$ is a saturated fusion system on $S$, a  Sylow $p$-subgroup of $\Gee_2(p)$, with $O_p(\F)=1$ and $\E$ is the set of $\F$-essential subgroups of $S$. Here is the result we shall prove:

\begin{Thm}\label{t:main}
Suppose that  $p \geq 5$, $S$ is a Sylow $p$-subgroup of $\Gee_2(p)$ and $\F$ is a saturated fusion system on $S$ with $O_p(\F)=1$. Then either $\F$ is isomorphic to the fusion system of  $\Gee_2(p)$ on $S$ or else $p \le 7$ and $\F$ is isomorphic to a subsystem of $p'$-index in one of the fusion systems listed in Table \ref{table1}. Furthermore in each row of Table \ref{table1}, columns 3-6 determine (up to isomorphism) at most one saturated fusion system on $S$.
\begin{table}[!ht]
  \renewcommand\thetable{5.1}
\centering
\caption{Exceptional fusion systems $\F$ on $S$ with $O_p(\F)=1$}
\label{table1}
\begin{tabular}{|c|c|c|c|c|c|c|c|c|}
\hline
&$p$ & $\Out_\F(W) $& $\Out_\F(R)$&$\Out_\F(Q)$&$\Out_\F(S)$&\rm{Example}&$\Gamma_{p'}(\F)$\\
\hline
$\F_5^0$&$5$ &--&$\GL_2(5)$&$2\udot \Alt(6). 4$&$4 \times 4$&$\Ly$&$1$\\
$\F_5^1$&$5$ &--&$\GL_2(5)$&$4 \circ 2_{-}^{1+4}.\Frob(20)$&$4 \times 4$&$\Aut(\HN)$&$2$\\
$\F_5^2$&$5$ &--&$\GL_2(5)$&$2_{-}^{1+4}.\Alt(5).4$&$4 \times 4$&$\BM$&$1$\\
$\F_7^{0}$&$7$&--&$\GL_2(7)$&$3 \times 2\udot \Sym(7)$&$6\times 6$&--&$1$\\
$\F_7^1(1_1)$&$7$ & $\SL_2(7)$ &--&--&$6 $&--&$1$\\
$\F_7^1(2_{1})$&$7$ & $\SL_2(7)$ &--&--&$6 $&--&$1$\\
$\F_7^1(2_2)$&$7$ & $\SL_2(7)$ &--&--&$6 $&--&$1$\\
$\F_7^1(2_3)$&$7$ & $\SL_2(7)$ &--&--&$6 \times 2 $&--&$2$\\
$\F_7^1(3_1)$&$7$ & $\SL_2(7)$ &--&--&$6 $&--&$1$\\
$\F_7^1(3_2)$&$7$ & $\SL_2(7)$ &--&--&$6 $&--&$1$\\
$\F_7^1(3_3)$&$7$ & $\SL_2(7)$ &--&--&$6 $&--&$1$\\
$\F_7^1(3_4)$&$7$ & $\SL_2(7)$ &--&--&$6 \times 3$&--&$3$\\
$\F_7^1(4_1)$&$7$ & $\SL_2(7)$ &--&--&$6 $&--&$1$\\
$\F_7^1(4_2)$&$7$ & $\SL_2(7)$ &--&--&$6 $&--&$1$\\
$\F_7^1(4_3)$&$7$ & $\SL_2(7)$ &--&--&$6 \times 2 $&--&$2$\\
$\F_7^1(5)$&$7$ & $\SL_2(7)$ &--&--&$6 $&--&$1$\\
$\F_7^1(6)$&$7$ & $\SL_2(7)$ &--&--&$6 \times 6$&--&$6$\\
$\F_7^2(1)$&$7$ & $\SL_2(7)$ &$\SL_2(7).2$&--&$6\times 2$&--&$1$\\
$\F_7^2(2)$&$7$ & $\SL_2(7)$ &$\SL_2(7).2$&--&$6\times 2$&--&$1$\\
$\F_7^2(3)$&$7$ & $\SL_2(7)$ &$\GL_2(7)$&--&$6\times 6$&--&$3$\\
$\F_7^3$&$7$ &$\SL_2(7)$  &--&$\GL_2(7)$&$6\times 6$&--&$1$\\
$\F_7^4$&$7$ &$\SL_2(7)$  &--&$3 \times 2\udot \Sym(7)$&$6\times 6$&--&$1$\\
$\F_7^5$&$7$ &$\SL_2(7)$  &$\GL_2(7)$&$\GL_2(7)$&$6\times 6$&--&$1$\\
$\F_7^{6}$&$7$ &$\SL_2(7)$  &$\GL_2(7)$&$3 \times 2\udot \Sym(7)$&$6\times 6$&$\Mo$&$1$\\
\hline
\end{tabular}
\end{table}
\end{Thm}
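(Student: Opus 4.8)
The plan is to run a case analysis on the set $\E$ of $\F$-essential subgroups, using the structural results of Sections \ref{s:constructs}--\ref{s:candidates} first to decide which configurations of essentials can occur and then, for each, to recover $\F$ from its action on $S$. By Theorem \ref{t:esslist} we have $\E \subseteq \{Q,R\} \cup \W$, with $\E \cap \W \neq \emptyset$ forcing $p = 7$, so for $p \geq 11$ only $Q$ and $R$ are available. The hypothesis $O_p(\F) = 1$ cuts the list down: a nontrivial $\F$-normal subgroup must lie in $\bigcap_{P\in\E}P$ and be $\Aut_\F(P)$-invariant for all $P \in \E \cup \{S\}$, and since $O_p(\F) \trianglelefteq S$ it would contain the unique minimal normal subgroup $Z$ of $S$. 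I would trace how this invariance propagates along the characteristic series $Z < Z_2 < Z_3 < Z_4$: because $\Aut_\F(S)$ and $\Aut_\F(Q)$ fix $Z = Z(Q)$ whereas $\Aut_\F(R)$ and $\Aut_\F(W)$ act without an invariant line (faithful natural $\SL_2(p)$-modules, by Lemmas \ref{l:AFR1} and \ref{l:wine}), a nontrivial $O_p(\F)$ containing $Z$ would, under these irreducible actions, grow past $\bigcap_{P\in\E}P$ unless the essential configuration is too small. This excludes $\E = \emptyset, \{Q\}, \{R\}$ and leaves exactly the configurations appearing as rows of Table \ref{table1}.

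Next, for each admissible $\E$ I would pin down the possible isomorphism types of $\Aut_\F(E)$. For $R$, Lemma \ref{l:AFR1} gives $\Out_\F(R) \leq \GL_2(p)$ with $O^{p'}(\Out_\F(R)) \cong \SL_2(p)$; for $W \in \E \cap \W$, Lemma \ref{l:wine} forces $\Aut_\F(W) \cong \SL_2(7)$ and simultaneously determines the image of $N_{\Aut_\F(S)}(W)$ in $\Out(S)$; and for $Q$ the short list of \cite{COS} for subgroups of $\Out(Q) \cong \GSp_4(p)$ (Proposition \ref{l:autQ}) containing a strongly $p$-embedded subgroup yields precisely the entries of column $5$. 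The coupling between these groups comes from saturation: every $\alpha \in N_{\Aut_\F(E)}(\Aut_S(E))$ extends to $\Aut_\F(N_S(E))$, and since $Q, R$ are maximal and each $N_S(W)$ is covered by $Q$ or $R$, these extensions lie in $\Aut_\F(S)$. Restricting them back and comparing the scalar actions on $Q/\Phi(S)$, $R/\Phi(S)$ and $Z$ (Lemmas \ref{l:outfsid}, \ref{l:diags}, \ref{l:wine}) is what forces $p \in \{5,7\}$ in the exceptional rows and fixes the column-$6$ torus $\Out_\F(S)$.

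The heart of the theorem, and the step I expect to be the main obstacle, is the uniqueness claim that columns $3$--$6$ determine at most one saturated $\F$ up to isomorphism. Here I would follow the scheme sketched in the introduction. When $Q \in \E$, the subsystem $N_\F(Q)$ is constrained ($Q$ being centric and normal in it), so the Model Theorem \cite[Theorem I.4.9]{AKO} produces a unique model group and hence determines $N_\F(Q)$, and with it $\Aut_\F(S)$, from the isomorphism type of $\Aut_\F(Q)$ alone; when $Q \notin \E$ I would instead read $\Aut_\F(S)$ off column $6$ together with the extension data of Lemma \ref{l:wine}. Replacing $\F$ by an $\Aut(S)$-conjugate, which preserves its isomorphism class, Lemma \ref{l:outfsid} lets me take $\Aut_\F(S)$ to be a fixed subgroup of $\Aut_B(S)$. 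Restricting $\Aut_\F(S)$ to $R$ produces $N_{\Aut_\F(R)}(\Aut_S(R))$, and Lemma \ref{l:runique} shows there is a \emph{unique} overgroup $X \leq \Aut(R)$ of it with $\Aut_S(R) \in \Syl_p(X)$ and $O^{p'}(X) \cong \SL_2(p)$, so $\Aut_\F(R)$ is forced; likewise $\Aut_\F(W)$ is the determined copy of $\SL_2(7)$ from Lemma \ref{l:wine}. Since $\F = \langle \Aut_\F(E) \mid E \in \E \cup \{S\} \rangle$ by Alperin's Theorem \ref{t:alp}, all generating morphisms are now determined and $\F$ is unique up to isomorphism. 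The delicate part, where the computations of Lemmas \ref{l:autR} and \ref{l:runique} are indispensable, is checking that these restriction maps into $\Aut(R)$ and $\Aut(Q)$ are mutually compatible and that the numerous $p = 7$ configurations with several essential members of $\W$ are correctly separated.

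Finally, for the reduction statement I would invoke Theorem \ref{Op'}. The essential data determines $O^{p'}_*(\F)$ and hence the subsystem $O^{p'}(\F)$, so within each admissible family every $\F$ contains a common minimal subsystem and is a subsystem of $p'$-index in the member with the largest admissible torus $\Aut_\F(S)$; identifying that maximal member with a row of Table \ref{table1} and recording the quotient $\Gamma_{p'}(\F)$ in the last column then gives the stated dichotomy: either no exceptional essential configuration occurs and $\F \cong \F_{\Gee_2(p)}$, the only possibility when $p \geq 11$, or $p \in \{5,7\}$ and $\F$ is isomorphic to a subsystem of $p'$-index in one of the tabulated examples.
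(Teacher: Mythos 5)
Your proposal is correct and follows essentially the same route as the paper: Theorem \ref{t:esslist} to restrict the essentials, the case split on $\E\cap\W$, Lemma \ref{l:possforAQ} (via \cite{COS}) for $\Out_\F(Q)$, Lemmas \ref{l:AFR1} and \ref{l:wine} for $R$ and $W$, the Model Theorem together with Lemma \ref{l:runique} and Alperin's Theorem for uniqueness, and Theorem \ref{Op'} for the $p'$-index reduction. The only place the paper is more explicit than your sketch is in enumerating and separating the $\E\subseteq\W$ configurations via the $\mathbb F_7^\times$-orbits on subsets of $I$ in Notation \ref{not}, and in its shorter argument that $\E=\{Q\}$ or $\{R\}$ is impossible because $Q$ and $R$ are characteristic in $S$.
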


A description of the fusion systems in Table~\ref{table1} is developed throughout this section. Especially for the fusion systems $\F_7^1(j_i)$ see the discussion surrounding Notation~\ref{not}. One further remark on the notation: the subscript indicates the prime $p$ while the superscript just assists in distinguishing the different systems.
Recall from Lemma~\ref{l:outfsid} that, since we may adjust $\F$ by an automorphism of $S$, we may assume
$$\Aut_\F(S) \text{ is a subgroup of } \Aut_B(S)$$
and so
$$\Out_\F(S) \text{ is a subgroup of } \Out_B(S).$$

We start by   presenting an important preliminary result  for the case when $Q \in \E$.

\begin{Lem}\label{l:possforAQ} Suppose $p \ge 5$, $Q \in \E$ and assume
\begin{enumerate}
\item there exists $\theta \in N_{\Aut_\F(Q)}(\Aut_S(Q))$  such that $\theta$   induces an automorphism of order $p-1$ on both $\Out_{S}(Q)$ and $Z(Q)$; and
\item  if $p=5$  then $\det \theta|_{Z_2}=1$.
\end{enumerate}
Then  $\Out_\F(Q)$ is $\Out(Q)$-conjugate to one of the subgroups in the following list:
\begin{enumerate}
\item[(a)] $p=5$ and  $\Out_\F(Q) \sim 2\udot \Alt(6).4$;
\item[(b)] $p=5$ and $\Out_\F(Q) \sim  4 \circ  2 ^{1+4}_-. \Frob(20)$;
\item[(c)] $p=5$ and $\Out_\F(Q) \sim 2 ^{1+4}_-. \Frob(20)$;
\item[(d)] $p=5$ and $\Out_\F(Q) \sim 2 ^{1+4}_-. \Alt(5).4$;
\item[(e)] $p=5$ and $\Out_\F(Q) \cong \GL_2(5)$  with $\Out_\F(Q)$ acting  reducibly on $Q/Z$ normalizing $Z_3/Z$;
\item[(f)] $p=7$ and $\Out_\F(Q) \sim 3 \times 2\udot \Sym(7)$; or
\item[(g)] $p\ge 5$ and $\Out_\F(Q) \cong \GL_2(p)$.
\end{enumerate}
Furthermore, either $\Out_\F(S)= \Out_B(S)$ is isomorphic to $(p-1)\times (p-1)$ or else $p=5$,  case (c) holds, $\Out_\F(S) \cong 4 \times 2$ has index $2$ in $\Out_B(S)$ and $\Out_\F(Q)$ is the subgroup listed in (b). Finally, if $\Out_\F(Q)$ is one of the groups listed in (a)-(g) then $N_\F(Q)$ is uniquely determined up to isomorphism and in particular $\Aut_\F(Q)$ and $\Aut_\F(S)$ are uniquely determined. \end{Lem}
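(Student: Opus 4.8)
There are three assertions to establish: that $\Out_\F(Q)$ lies in the list (a)--(g), that $\Out_\F(S)$ has the stated shape, and that $N_\F(Q)$ (hence $\Aut_\F(Q)$ and $\Aut_\F(S)$) is uniquely determined. I would treat them in that order, since the second and third both feed on the classification produced by the first.

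\textbf{Step 1: the list for $\Out_\F(Q)$.} Since $Q\in\E$ it is fully $\F$-normalized and $\F$-centric, so $C_S(Q)=Z$, $\Aut_S(Q)\cong S/Z$ and $\Out_S(Q)\cong S/Q$ has order $p$; moreover $\Out_\F(Q)$ contains a strongly $p$-embedded subgroup with $\Out_S(Q)\in\Syl_p(\Out_\F(Q))$. Using Proposition~\ref{l:autQ} I identify $\Out(Q)$ with $\GSp_4(p)$ acting on $Q/Z$, and by Lemma~\ref{l:charz}(f) a generator of $\Out_S(Q)$ is a regular unipotent element of $\GSp_4(p)$ (a single Jordan block on $Q/Z$), which for $p\ge 5$ has order exactly $p$. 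Thus $\Out_\F(Q)$ is a subgroup of $\GSp_4(p)$ with a strongly $p$-embedded subgroup whose Sylow $p$-subgroup is generated by a regular unipotent, and the permissible such subgroups are exactly those on the short list for $\Aut_\F(Q)$ produced by Craven, Oliver and Semeraro in \cite{COS}, which I would quote. I then impose the hypotheses: condition (1) supplies $\theta\in N_{\Aut_\F(Q)}(\Aut_S(Q))$ acting with order $p-1$ on $\Out_S(Q)\cong S/Q$ and, through the similitude character, on $Z$, so $N_{\Out_\F(Q)}(\Out_S(Q))$ meets the maximal torus of $\GSp_4(p)$ in an element that is regular in both these directions. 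This discards every entry of the \cite{COS} list whose normalizer of $\Out_S(Q)$ is too small to contain such a $\theta$. For $p=5$ the extra determinant normalization (2) on the $2$-dimensional group $Z_2$ pins down the torus action and separates the surviving $2^{1+4}_-$-type and $\Frob(20)$-type possibilities; for $p=7$ only $\GL_2(7)$ and $3\times 2\udot\Sym(7)$ survive, and for $p>7$ only $\GL_2(p)$. Matching the abstract structures and $\Sp_4(p)$-module actions left by this sieve to the \textsc{atlas} descriptions yields precisely (a)--(g).

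\textbf{Step 2: determining $\Out_\F(S)$.} By Lemma~\ref{l:outfsid} I may assume $\Out_\F(S)\le\Out_B(S)\cong(p-1)\times(p-1)$. Because $Q$ is characteristic in $S$ (Lemma~\ref{l:charz}(d)), restriction to $Q$ carries $\Aut_\F(S)$ into $N_{\Aut_\F(Q)}(\Aut_S(Q))$; its kernel consists of central automorphisms and is a $p$-group by Lemma~\ref{l.CA}, while saturation together with $N_S(Q)=S$ shows every element of $N_{\Aut_\F(Q)}(\Aut_S(Q))$ extends to $\Aut_\F(S)$. Passing to the $p'$-toral images, restriction therefore induces an isomorphism $\Out_\F(S)\cong N_{\Out_\F(Q)}(\Out_S(Q))/\Out_S(Q)$, and condition (1) already forces this group to contain an element acting with full order $p-1$ on both $Z$ and $S/Q$. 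Computing $N_{\Out_\F(Q)}(\Out_S(Q))$ case by case from (a)--(g)---using that the torus normalizing a regular unipotent in $\GSp_4(p)$ is cut out by equating the two simple roots and has order $(p-1)^2$---I read off the image along the unique $\Out_S(Q)$-invariant flag of $Q/Z$ (whose socle is $Z$ and whose top quotient is $Q/\Phi(S)$). In every case other than the $p=5$ one the quotient is all of $(p-1)\times(p-1)$, giving $\Out_\F(S)=\Out_B(S)$; in the single exceptional configuration the $\Frob(20)$-normalizer has determinant-one toral part, so its quotient has order $8$, whence $\Out_\F(S)\cong 4\times 2$ is of index $2$ in $\Out_B(S)$, and tracing this back identifies $\Out_\F(Q)$ with the group displayed in (b), exactly as recorded in the statement.

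\textbf{Step 3: uniqueness of $N_\F(Q)$.} As $Q$ is fully $\F$-normalized, $N_\F(Q)$ is a saturated fusion system on $N_S(Q)=S$, and since $Q$ is normal and $\F$-centric in it, $N_\F(Q)$ is constrained. Now $\Aut_\F(Q)$ is precisely the full preimage in $\Aut(Q)$ of $\Out_\F(Q)$, so once the $\Out(Q)$-conjugacy class of $\Out_\F(Q)$ is fixed among (a)--(g) (the module data in each item pins this class) the subgroup $\Aut_\F(Q)\le\Aut(Q)$ is determined up to $\Aut(Q)$-conjugacy. By the Model Theorem \cite[Theorem I.4.9]{AKO} there is a finite group $G$, unique up to isomorphism, with $S\in\Syl_p(G)$, $C_G(Q)=Z$, $\Aut_G(Q)=\Aut_\F(Q)$ and $N_\F(Q)=\F_S(G)$; since the pair $(\Aut(Q),\Aut_\F(Q))$ is determined up to conjugacy, so is $G$, and hence $N_\F(Q)$ up to isomorphism. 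Finally $\Aut_\F(Q)=\Aut_G(Q)$, and because $Q$ is characteristic in $S$ every $\F$-automorphism of $S$ lies in $N_\F(Q)$, so $\Aut_\F(S)=\Aut_{N_\F(Q)}(S)=\Aut_G(S)$ is determined as well.

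\textbf{Main obstacle.} The delicate point is Step 1: extracting from the \cite{COS} list exactly which subgroups survive hypotheses (1) and (2), and correctly matching their abstract isomorphism types and symplectic module structures to the descriptions (a)--(g)---in particular disentangling the four $p=5$ possibilities where the $2$-group and $\Frob(20)$ factors interact and where condition (2) is decisive. The associated torus computation in Step 2, and especially the isolation of the lone $p=5$ exception together with its $(b)$/$(c)$ bookkeeping, is the second point requiring care.
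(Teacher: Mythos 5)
Your skeleton coincides with the paper's: identify $\Out(Q)$ with $\GSp_4(p)$ via Winter's theorem, observe that $\Out_S(Q)$ is generated by a regular unipotent element (Lemma \ref{l:charz}(f)), invoke the Craven--Oliver--Semeraro classification \cite{COS} of subgroups with a strongly $p$-embedded subgroup, sieve with hypotheses (1) and (2), compute $\Out_\F(S)$ by lifting, and finish with the Model Theorem. Your Step 2 is in fact a correct and more explicit version of the paper's one-line appeal to saturation: the isomorphism $\Out_\F(S)\cong N_{\Out_\F(Q)}(\Out_S(Q))/\Out_S(Q)$ does hold, since every element of $N_{\Aut_\F(Q)}(\Aut_S(Q))$ extends to $S$ by receptivity, and the kernel of restriction is a $p$-group by Lemmas \ref{l.CA} and \ref{l:outinj} while $\Out_\F(S)$ is a $p'$-group. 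Your Step 3 is essentially verbatim the paper's argument.

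The genuine gap is inside Step 1. The final clause of the lemma requires that each item (a)--(g) single out \emph{one} $\Out(Q)$-conjugacy class, and your proposal asserts this (``the module data in each item pins this class'') without an argument. This is exactly where the paper does its real work. Existence and conjugacy-uniqueness of the irreducible candidates come from \cite[Tables 8.12 and 8.13]{BHR}, which is also what eliminates the configuration $G/O_{5'}(G)\cong\Sym(6)$ --- note that hypothesis (1) is used in the paper constructively, to force $G\supseteq O^{p'}(G)\langle\theta\rangle$ (the full torus-normalizing extension), rather than as the sieve you describe. In case (e) uniqueness rests on a cohomological fact: the $1$-cohomology of the $3$-dimensional $\mathbb F_5\SL_2(5)$-module is one-dimensional \cite[Lemma 3.11]{COS}, so the relevant parabolic of $\GSp_4(5)$ contains exactly five classes of subgroups $\SL_2(5)$, one acting completely reducibly and the other four fused by an element of order $4$; only then does $\theta$ (inducing order $4$ on $Z$) force $G\cong\GL_2(5)$ in a unique class. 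In case (c) the paper shows the normalizer of $P\Out_S(Q)$ in $\GSp_4(5)$ contains exactly two candidates for $G$ and uses hypothesis (2) to prove $\langle\theta\rangle$ is uniquely determined inside $\langle Z(\GSp_4(5)),\theta\rangle$ as the elements of determinant $1$ on $Z_2$; your sketch of (c) captures this mechanism correctly, but nothing in your outline supplies the cohomological uniqueness in (e), and without it the Model-Theorem step cannot conclude that $N_\F(Q)$ is determined up to isomorphism. A minor slip besides: in the exceptional configuration $\Out_\F(Q)$ is an index-$2$ subgroup of the group displayed in (b), not that group itself.
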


\begin{proof} Recall that $\Out(Q) \cong \GSp_4(p)$ by  Proposition~\ref{l:autQ}. Thus, as $\Aut_\F(Q) \ge \Inn(Q)$,
we are required to find all the possibilities for $\Out_\F(Q)$ up to $\GSp_4(p)$-conjugacy. Set   $V= Q/Z$, $G= \Out_\F(Q)$  and $\Gamma= \GSp_4(p) = \GSp(V) \le \GL(V)$.  We know that $\Out_S(Q)= S/Q $ has order $p$ and, by  Lemma~\ref{l:charz} (f), the non-trivial elements of $\Out_S(Q)$ act on $Q/Z$ with a single Jordan block. Moreover, as $Q \in \mathcal E$, $\Out_S(Q)$ is not normal in $G$.

Suppose that  the projection  of $G$ into $\mathrm{PGL}(V) $ is almost simple.
Then  \cite[Theorem 4.1]{COS}    yields candidates for $O^{p'}(G)$: if $V$ is irreducible $p=5$ with $O^{5'}(G) \cong 2\udot \Alt(6)$, $p=7$ with $O^{7'}(G)\cong 2\udot \Alt(7)$ with $p=7$ or $p \ge 5$ is arbitrary and $O^{p'}(G)\cong \SL_2(p)$.
If $V$ is not irreducible, we have $p=5$ and  $O^{5'}(G) \cong \SL_2(5)$.
Assuming that $V$ is irreducible,
\cite[Tables 8.12 and 8.13]{BHR} shows that all the candidates for $O^{p'}(G)$ in (b) exist and are unique up to conjugacy  in $\Gamma$.  Furthermore, as $O^{p'}(G) \le \Sp_4(p)$, we obtain $\langle \theta\rangle \cap O^{p'}(G) =1$, $G=N_\Gamma(O^{p'}(G))= O^{p'}(G)\langle \theta \rangle$ and the information provided in \cite[Tables 8.12 and 8.13]{BHR} (and Schur's Lemma) gives the details listed in (a), (f) and (g).

In the case $V$ is indecomposable, the $2$-space preserved by $G$ is isotropic. Thus $G$ is contained in a maximal parabolic subgroup $P$ of $\Gamma$ which leaves an isotropic $2$-space invariant.  To see uniqueness here, we note that the 1-cohomology of the 3-dimensional $\mathbb F_5\SL_2(5)$-module has dimension 1 (see \cite[Lemma 3.11]{COS}). Thus there are five  $O^{5'}(P)$ conjugacy classes  of subgroups isomorphic to $\SL_2(5)$ contained in  $O^{5'}(P)$.   One of these acts completely reducibly on $V$ and the others are all conjugate by an element of order $4$ in $P$.  Thus $O^{5'}(G)$ is uniquely determined and since $\theta \in G$ induces an element of order $4$ on $Z$, we have $G\cong \GL_2(5)$. This is case (e).

  Suppose that the projection  of $G$ into $\mathrm{PGL}(V) $ is not an almost simple group. Then, by \cite[Proposition 4.4]{COS}, $p=5$, $P=O_{5'}(G)= F^*(G)$ is isomorphic to one of $\mathrm 4 \circ 2^{1+4}$ or $2^{1+4}_{-}$   and either
\begin{itemize}
\item $G/P = \Sym(6);$
\item $G/P= \Sym(5);$ or
\item $G/P= \Frob(20)$.
\end{itemize}
By \cite[Tables 8.12 and 8.13]{BHR}, the first case  cannot occur.

 In $\Gamma=\GSp_4(5)$,  $P$ is uniquely determined up to $\Gamma$-conjugacy. It follows that $H=N_{\Gamma}(P)$ is also uniquely determined  up to conjugacy in $\Gamma$.

Suppose $G/P\cong \Sym(5)$.
Then, as $G/C_G(Z(Q))$ is cyclic of order $4$ generated by the image of $\theta$, we see that $O^{5'}(G) \sim 2^{1+4}_-.\Alt(5)$ and $G=  N_{\Gamma}(P)$. This is the configuration in (d) and it contains $\langle Z(\Gamma), \theta\rangle$ of order 16.

Suppose $G/P\cong \Frob(20)$. Then $$G \le N_{\Gamma}([P,\Out_S(Q)]\Out_S(Q))\sim  4 \circ 2^{1+4}. \Frob(20).$$  It follows that if $P \cong  4 \circ 2^{1+4}$, then $G$ is uniquely determined and again it contains $\langle Z(\Gamma), \theta\rangle$ of order 16. This is listed as (b).  If $P \cong 2^{1+4}_-$, then $N_{\Gamma}(P\Out_S(Q))/P\Out_S(Q)$ is abelian of type $2 \times 4$. It follows that $N_{\Gamma}(P\Out_S(Q))$ contains exactly two candidates for $G$. However, $\theta \in G$ and so we know in this case that $$G= P\Out_S(Q)\langle \theta\rangle.$$ To see that this group is unique, we    show that $\langle \theta\rangle$ is uniquely determined as a subgroup of $\langle Z(\Gamma),\theta\rangle$ and this is where we   hypothesis (2). In the   case that $3$ does not divide $p-1$, we have that $\langle Z(\Gamma),\theta\rangle$ acts faithfully on $Z_2$ because the elements of $Z(\Gamma)$ scale $V$ by some  $\omega\in \mathbb F_5$ and then $Z$ by $\omega^2$ (so the determinant $1$ elements in $Z(\Gamma)$ have order dividing $3$). This means that when $p=5$, $\langle \theta\rangle$ is uniquely determined as the subgroup of $\langle Z(\Gamma),\theta\rangle$ consisting of those elements which have determinant $1$ on $Z_2$. This gives (c).

Now we observe that in all cases other than (c), $N_{\Out_\F(Q)}(\Out_S(Q)) \cong p:(p-1)^2$. In case (c),  we have already remarked that $ N_{\Out_\F(Q)}(\Out_S(Q)) \cong 5:(4 \times 2).$
By saturation these morphisms   lift to elements of $\Aut_\F(S)$ and so we have the order and isomorphism type of $\Out_\F(S)$.

It remains to prove the final uniqueness statement. Assume that $\Out_\F(Q)$ is one of the subgroups listed in (a)-(g). Then since $\Out_\F(Q)$ is uniquely determined up to conjugacy in $\Out(Q)$, $\Aut_\F(Q)$ is uniquely determined up to conjugacy in $\Aut(Q)$. Since $\F$ is a saturated fusion system \cite[Theorem I.4.9]{AKO} uniquely determines a group $M$ with Sylow $p$-subgroup $S$ such that $N_\F(Q)=\F_S(M)$. Hence $\Aut_\F(S)=\Aut_{N_\F(Q)}(S)=\Aut_M(S)$ is uniquely determined.
\end{proof}

The next lemma unlocks the results from Lemma~\ref{l:possforAQ}.

\begin{Lem}\label{l:AFQ1} Suppose that $R\in \E$. Then
\begin{enumerate}
\item [(a)] there exists $\theta \in N_{\Aut_\F(Q)}(\Aut_S(Q))$  such that $\theta$   induces an automorphism of order $p-1$ on both $\Out_{S}(Q)$ and $Z$; and
\item [(b)]  if $p=5$  then $\det \theta|_{Z_2}=1$.
        \end{enumerate}
\end{Lem}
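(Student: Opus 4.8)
The plan is to manufacture $\theta$ by lifting a torus element of $O^{p'}(\Out_\F(R))\cong\SL_2(p)$ all the way up to $\Aut_\F(S)$ and then restricting it to the characteristic subgroup $Q$. Since $R\in\E$, Lemma~\ref{l:AFR1} supplies $O^{p'}(\Out_\F(R))\cong\SL_2(p)$ acting faithfully on both $R/\Phi(R)$ and $Z_2=Z(R)$. As $\Out_S(R)$ is a Sylow $p$-subgroup of this $\SL_2(p)$, its normalizer contains a cyclic maximal torus of order $p-1$; I would take a generator of that torus and lift it to some $\delta\in N_{\Aut_\F(R)}(\Aut_S(R))$. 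Because $R$ is maximal in $S$ we have $N_S(R)=S$, and $R$, being $\F$-essential, is $\F$-receptive, so the extension property from saturation yields $\hat\delta\in\Aut_\F(N_S(R))=\Aut_\F(S)$ with $\hat\delta|_R=\delta$. Setting $\theta=\hat\delta|_Q$, which is legitimate as $Q$ is characteristic in $S$ by Lemma~\ref{l:charz}(d), the computation $\theta^{-1}c_s\theta=c_{s\hat\delta}$ for $s\in S$ (with $s\hat\delta\in S$) shows that $\theta\in N_{\Aut_\F(Q)}(\Aut_S(Q))$.

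The core of the argument is to read off the two relevant actions of $\theta$ from module structures. A faithful two-dimensional $\mathbb F_p\SL_2(p)$-module must be irreducible, since the perfectness of $\SL_2(p)$ for $p\ge5$ forces trivial action on any one-dimensional composition factor and then on the whole module; hence both $R/\Phi(R)$ and $Z_2$ are natural modules, and my chosen torus generator acts on each with eigenvalues $\mu^{\pm1}$ for a generator $\mu$ of $\mathbb F_p^\times$. The lines $Z_4/Z_3\le R/\Phi(R)$ (recall $\Phi(R)=Z_3$ and $|Z_4/Z_3|=p$) and $Z\le Z_2$ are characteristic in $S$, hence $\hat\delta$-invariant, so each is one of the two torus eigenlines and therefore carries an eigenvalue of order $p-1$. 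Using $S=RQ$ and $R\cap Q=Z_4$ from Lemma~\ref{l:charz}(c) I identify $\Out_S(Q)\cong S/Q\cong R/Z_4$, which is the quotient of $R/\Phi(R)=R/Z_3$ by $Z_4/Z_3$; thus $\theta$ acts on $\Out_S(Q)$ with the complementary eigenvalue, again of order $p-1$. Since $Z$ is an eigenline of $Z_2$, $\theta$ acts on $Z$ with order $p-1$ as well, establishing (a).

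For (b), observe that $Z_2\le R$, so $\theta|_{Z_2}=\delta|_{Z_2}$, and that $\delta$ acts on $Z_2=Z(R)$ through its image in $O^{p'}(\Out_\F(R))\cong\SL_2(p)$, because inner automorphisms of $R$ centralize $Z(R)$. As that image lies in $\SL_2(p)$ acting on the natural module $Z_2$, we get $\det\theta|_{Z_2}=1$ for every $p$, which in particular gives the required statement when $p=5$. The one genuinely delicate point is the module bookkeeping in the middle paragraph: identifying $R/\Phi(R)$ and $Z_2$ as natural modules and tracking which eigenline is $Z$ (respectively $Z_4/Z_3$). This is exactly where the faithfulness supplied by Lemma~\ref{l:AFR1}, combined with the perfectness of $\SL_2(p)$, is indispensable, and it is the only step requiring real care.
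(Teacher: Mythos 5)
Your proposal is correct and follows essentially the same route as the paper: both produce $\theta$ by taking a $p'$-element of $N_{\Aut_\F(R)}(\Aut_S(R))$ coming from the $\SL_2(p)$ of Lemma~\ref{l:AFR1}, extending it to $\Aut_\F(S)$ by receptivity of $R$, restricting to the characteristic subgroup $Q$, and reading off the actions on $Z$, $Z_2$ and $\Out_S(Q)\cong S/Q\cong R/Z_4$ from the natural-module structure of $Z_2$ and $R/\Phi(R)$. Your eigenline bookkeeping just makes explicit what the paper asserts more tersely.
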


\begin{proof} By Lemma~\ref{l:AFR1},  $O^{p'}(\Out_\F(R))=\SL_2(p)$ and this group acts faithfully on $Z_2=Z(R)$ and on $R/\Phi(R)= R/Z_3$. Thus there exists $\theta_0\in N_{\Aut_\F(R)}(\Aut_S(R))$ such that $\theta_0$ induces an automorphism of $Z$ of order $p-1$ and has determinant $1$ when acting on $Z_2$. Since $\F$ is saturated, $\theta_0$ extends to an element of $\Aut_\F(S)$ and then by restriction we obtain an element $\theta$ of $\Aut_\F(Q)$ which acts on $Z$ with order $p-1 $. Now we note that $\theta_0$ acts on $R/Z_4=R/(Q\cap R)\cong RQ/Q= S/Q$ faithfully and so we also have   $\theta$   induces an automorphism of order $p-1$ on   $\Out_{S}(Q)$ .
\end{proof}

\subsection{The case $\mathcal E \subseteq \{ Q,R\}$}

By Theorem~\ref{t:esslist}, $\mathcal E \cap \mathcal W \ne \emptyset $ implies that $p= 7$ and so the typical case occurs when $\mathcal E \subseteq \{ Q,R\}$.   We consider this scenario in this subsection.

\begin{Lem}\label{l53} If $\mathcal E \subseteq \{ Q,R\}$, then $\mathcal E= \{Q,R\}$.
\end{Lem}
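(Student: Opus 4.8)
The plan is to argue by contradiction using the characterization of normality in a fusion system recalled in Section~\ref{s:pgroup} (from \cite[Proposition 4.5]{AKO}): a subgroup $P_0 \le S$ satisfies $P_0 \unlhd \F$ if and only if $P_0 \le \bigcap_{P \in \E} P$ and $P_0$ is $\Aut_\F(P)$-invariant for every $P \in \E \cup \{S\}$. Since $O_p(\F)$ is the largest normal subgroup of $\F$, exhibiting \emph{any} nontrivial normal subgroup will contradict the standing hypothesis $O_p(\F)=1$. Under the assumption $\E \subseteq \{Q,R\}$, the only way to have $\E \ne \{Q,R\}$ is $\E \in \{\emptyset,\ \{Q\},\ \{R\}\}$, and the strategy is to rule out each of these three possibilities separately.

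The single structural input I would use is Lemma~\ref{l:charz}(d), which tells us that $Q$ and $R$ are both characteristic in $S$; consequently each is $\Aut_\F(S)$-invariant, since $\Aut_\F(S) \le \Aut(S)$. With this in hand the three cases dispatch immediately. If $\E = \{Q\}$, then $\bigcap_{P \in \E} P = Q$, the group $Q$ is (trivially) $\Aut_\F(Q)$-invariant and (by characteristicity) $\Aut_\F(S)$-invariant, so the criterion gives $Q \unlhd \F$ and hence $O_p(\F) \ge Q \ne 1$. The case $\E = \{R\}$ is identical with $R$ in place of $Q$. For $\E = \emptyset$, the intersection $\bigcap_{P \in \emptyset} P$ is all of $S$, and $S$ is $\Aut_\F(S)$-invariant, so $S \unlhd \F$ and $O_p(\F) = S \ne 1$; alternatively one can note that Alperin's Theorem~\ref{t:alp} forces $\F = \langle \Aut_\F(S) \rangle$, which makes $S$ visibly normal. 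In every case we contradict $O_p(\F) = 1$, so $\E = \{Q,R\}$.

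I do not expect a genuine obstacle here: the lemma is essentially a bookkeeping consequence of the normality criterion combined with the fact that $Q$ and $R$ are characteristic in $S$. The only points that require a moment's care are the convention that the empty intersection equals $S$ (so that $\E=\emptyset$ really does yield $S \unlhd \F$) and the routine verification that characteristicity in $S$ upgrades to $\Aut_\F(S)$-invariance. Both are harmless, so I would keep the write-up to the short case analysis above rather than developing any further machinery.
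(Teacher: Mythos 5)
Your proof is correct and follows essentially the same route as the paper: the paper invokes Alperin's Theorem together with Lemma~\ref{l:charz}(d) to conclude that a lone essential subgroup $X\in\{Q,R\}$ would be normal in $\F$, which is exactly the normality criterion you apply. The only cosmetic difference is that you also spell out the case $\E=\emptyset$, which the paper's proof leaves implicit.
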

\begin{proof} Suppose that $\mathcal E  $ has a unique element $X \in \{Q,R\}$. Then $$\mathcal F = \langle \Aut_\F(X), \Aut_\F(S)\rangle$$ by Theorem~\ref{t:alp}. By Lemma~\ref{l:charz} (d), $X$ is a characteristic subgroup of $S$ and thus we see that $1\ne X= O_p(\F)=1$, which is a contradiction.
\end{proof}

\begin{Lem}\label{l:exccase}
If $\E \subseteq \{Q,R\}$, then either \begin{enumerate} \item [(a)] $\Out_\F(S)=\Out_B(S)$ has order $(p-1)^2$ and $\Aut_\F(R) \cong \GL_2(p)$; or
\item [(b)]$p=5$, Lemma \ref{l:possforAQ} case (c) holds, $\Out_\F(S)$ has order $8$ and $\Out_\F(R) \cong 4 \circ \SL_2(5)$.
\end{enumerate} In both  cases $|C_{\Out_\F(R)}(\Out_S(R))| > 2$.
\end{Lem}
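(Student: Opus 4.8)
The plan is to first extract the two possibilities for $\Out_\F(S)$ from Lemma~\ref{l:possforAQ}, and then to pin down $\Out_\F(R)$ by a single order computation comparing $\Aut_\F(S)$ with $\Aut_\F(R)$ through restriction to the characteristic subgroup $R$. By Lemma~\ref{l53} the hypothesis $\E\subseteq\{Q,R\}$ forces $\E=\{Q,R\}$, so both $Q$ and $R$ are $\F$-essential. Since $R\in\E$, Lemma~\ref{l:AFR1} shows $\Out_\F(R)$ acts faithfully on $R/\Phi(R)$ and hence embeds in $\GL_2(p)$ with $O^{p'}(\Out_\F(R))\cong\SL_2(p)$, while Lemma~\ref{l:AFQ1} verifies hypotheses (1) and (2) of Lemma~\ref{l:possforAQ}. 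The final structural assertion of Lemma~\ref{l:possforAQ} then gives exactly the dichotomy for $\Out_\F(S)$ recorded in (a) and (b): either $\Out_\F(S)=\Out_B(S)$ of order $(p-1)^2$, or $p=5$, case~(c) of that lemma holds and $|\Out_\F(S)|=8$. It remains to identify $\Out_\F(R)$ and to prove the centralizer bound.

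The device is restriction to $R$. As $R$ is characteristic and maximal in $S$ (Lemma~\ref{l:charz}(d)), $N_S(R)=S$, and restriction gives a homomorphism $\rho\colon\Aut_\F(S)\to\Aut_\F(R)$, $\alpha\mapsto\alpha|_R$, whose image normalizes $\Aut_S(R)$. Conversely, every $\alpha\in N_{\Aut_\F(R)}(\Aut_S(R))$ has extension-control subgroup $N_\alpha=N_S(R)=S$, so by saturation it extends to $\Aut_\F(S)$; thus $\rho$ maps $\Aut_\F(S)$ \emph{onto} $N_{\Aut_\F(R)}(\Aut_S(R))$, with kernel $K=\Aut_\F(S)\cap C_{\Aut(S)}(R)$. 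Writing $d=|\Out_\F(R)/\SL_2(p)|$ for the size of the determinant image of $\Out_\F(R)\le\GL_2(p)$ and identifying $\Out_S(R)$ with a Sylow $p$-subgroup of $\GL_2(p)$, a routine count in $\GL_2(p)$ gives $|N_{\Out_\F(R)}(\Out_S(R))|=p(p-1)d$, whence $|N_{\Aut_\F(R)}(\Aut_S(R))|=|\Inn(R)|\,p(p-1)d=p^4(p-1)d$. One must be careful here that $\rho$ does \emph{not} descend to outer automorphism groups, since $\rho(\Inn(S))=\Aut_S(R)$ properly contains $\Inn(R)$; the bookkeeping is therefore carried out with full automorphism groups. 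Comparing $|\Aut_\F(S)|=|\Inn(S)|\,|\Out_\F(S)|=p^5|\Out_\F(S)|$ with the order of the image yields the master relation $p\,|\Out_\F(S)|=|K|\,(p-1)\,d$.

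The crux is then to show $|K|=p$. For the lower bound, the inner automorphisms $c_s$ with $s\in Z_2=C_S(R)$ fix $R$ pointwise and form a subgroup of $K$ isomorphic to $Z_2/Z$, of order $p$. For the upper bound, Lemma~\ref{l.CA} applied with $Y=R$ gives $[S,C_{\Aut(S)}(R)]\le C_S(R)=Z_2\le\Phi(S)$, so $C_{\Aut(S)}(R)\le C_{\Aut(S)}(S/\Phi(S))$ is a $p$-group by Lemma~\ref{l:burnside}; moreover, writing $S=R\langle x_2\rangle$, every $\alpha\in C_{\Aut(S)}(R)$ is determined by $x_2^{-1}(x_2\alpha)\in Z_2$, so $|C_{\Aut(S)}(R)|\le|Z_2|=p^2$. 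Hence $|K|\in\{p,p^2\}$. Feeding each value of $|\Out_\F(S)|$ into the master relation, the possibility $|K|=p^2$ is incompatible with $d$ being a positive integer dividing $p-1$, so $|K|=p$; then $d=p-1$ in case (a) and $d=2$ in case (b). Since the subgroups of $\GL_2(p)$ containing $\SL_2(p)$ are classified by their determinant image, this gives $\Out_\F(R)=\GL_2(p)$ in case (a) (the content of $\Aut_\F(R)\cong\GL_2(p)$, read as the image of $\Aut_\F(R)$ on $R/\Phi(R)$), and in case (b) it gives the index-$2$ subgroup of $\GL_2(5)$ of square determinant, which is the central product $4\circ\SL_2(5)$.

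The final assertion is then immediate: in both cases $\Out_\F(R)$ contains the full scalar subgroup $Z(\GL_2(p))$ — of order $p-1\ge 4$ in case (a) and of order $4$ in case (b) — and scalars centralize $\Out_S(R)$, so $|C_{\Out_\F(R)}(\Out_S(R))|>2$. I expect the main obstacle to be the kernel computation, which amounts to proving that the only $\F$-automorphisms of $S$ restricting to the identity on $R$ are inner, i.e.\ $|K|=p$; this is the step where the maximal-class geometry genuinely enters, through Lemma~\ref{l.CA} and the integrality of $d$, rather than through generic fusion-theoretic formalism. The remaining ingredients — the $\GL_2(p)$-count for $N_{\Out_\F(R)}(\Out_S(R))$ and the scalar argument for the centralizer bound — are routine.
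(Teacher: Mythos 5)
Your proposal is correct and follows essentially the same route as the paper: both arguments hinge on restricting $\Aut_\F(S)$ onto $N_{\Aut_\F(R)}(\Aut_S(R))$ and then invoking the Frattini argument together with the classification of overgroups of $\SL_2(p)$ in $\GL_2(p)$ by their determinant image. The only difference is one of detail --- you make the restriction map quantitative by pinning down its kernel ($|K|=p$, via Lemma~\ref{l.CA}, Burnside and the integrality of $d$), whereas the paper simply asserts the resulting value of $|C_{\Out_\F(R)}(\Out_S(R))|$ after an (in fact truncated) explicit calculation with elements of $B$; your bookkeeping fills in exactly the step the paper glosses over.
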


\begin{proof}  Lemmas~\ref{l:possforAQ}, \ref{l:AFQ1} and \ref{l53}  combine to give the possibilities for $\Out_\F(S) $. We calculate that for $d \in B$ of the form
  As the elements of $\Aut_\F(S)$ restrict  to members of $N_{\Aut_\F(R)}(\Aut_S(R))$ and $\Out_S(R)$ has order $p$, if $|\Out_\F(S)|$ has order $(p-1)^2$ then  $|C_{\Out_\F(R)}(\Out_S(R))| = p-1>2$. So suppose that Lemma \ref{l:possforAQ} case (c) occurs. Recall that $O^{p'}(\Out_\F(R))  \cong \SL_2(5)$. Hence, by a Frattini argument $$\Out_\F(R)=N_{\Out_\F(R)}(\Out_S(R))O^{p'}(\Out_\F(R))  \sim \SL_2(5).2$$ and this group is a subgroup of $\GL_2(5)$. From these observations we conclude that   $\Out_\F(R) \cong 4 \circ \SL_2(5)$. This proves the last part of  the claim.
\end{proof}

 We next show that $\Aut_\F(S)$ uniquely picks out a subgroup of $\Aut(R)$ to play the role of $\Aut_\F(R)$.

\begin{Lem}\label{l:afrunique}
 If $\E \subseteq \{Q,R\}$, then $\Aut_\F(R)$ is uniquely determined as a subgroup of $\Aut(R)$.
\end{Lem}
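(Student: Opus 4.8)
The plan is to pin down $\Aut_\F(R)$ from the already-determined group $\Aut_\F(S)$ by restriction, and then to recover the full group from its ``visible'' part using Lemma~\ref{l:runique}. By Lemma~\ref{l53} we have $\E=\{Q,R\}$, so both $Q$ and $R$ are $\F$-essential. In particular the hypotheses of Lemma~\ref{l:possforAQ} hold by Lemma~\ref{l:AFQ1}, and its final clause tells us that $\Aut_\F(S)$ is uniquely determined once we have fixed $\Aut_\F(S)\le\Aut_B(S)$. Since $R$ is characteristic in $S$ by Lemma~\ref{l:charz}(d), restriction to $R$ defines a homomorphism $\Aut_\F(S)\to\Aut(R)$ whose image normalizes $\Aut_S(R)$: for $\alpha\in\Aut_\F(S)$ and $s\in S$ one has $(\alpha|_R)^{-1}c_s(\alpha|_R)=c_{s\alpha}|_R\in\Aut_S(R)$. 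Thus the image lies in $N_{\Aut_\F(R)}(\Aut_S(R))$.

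First I would check that this image is in fact \emph{all} of $N_{\Aut_\F(R)}(\Aut_S(R))$. Since $R$ is a maximal subgroup of $S$ it is normal, so $N_S(R)=S$; and since $R$ is $\F$-essential it is fully $\F$-automized and $\F$-receptive. By the extension property recorded after the Definition in Section~\ref{s:pgroup}, each $\alpha\in N_{\Aut_\F(R)}(\Aut_S(R))$ extends to some $\widetilde\alpha\in\Aut_\F(N_S(R))=\Aut_\F(S)$ with $\widetilde\alpha|_R=\alpha$. Hence $N_{\Aut_\F(R)}(\Aut_S(R))$ is exactly the restriction of $\Aut_\F(S)$ to $R$, and so is uniquely determined as a subgroup of $\Aut(R)$. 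Passing to $\Out(R)$, the subgroup $\overline Y:=N_{\Out_\F(R)}(\Out_S(R))$ is uniquely determined.

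Next I would recover $\Out_\F(R)$ from $\overline Y$ via Lemma~\ref{l:runique}. Set $\overline A=\Out(R)$ and $\overline T=\Out_S(R)$; as $\Aut_S(R)=S/Z_2$ and $\Inn(R)=R/Z_2$, the group $\overline T\cong S/R$ has order $p$, and since $R$ is fully automized we have $\overline T\in\Syl_p(\Out_\F(R))$, hence $\overline T\in\Syl_p(\overline Y)$. Put $\overline X=O^{p'}(\Out_\F(R))\cong\SL_2(p)$ (Lemma~\ref{l:AFR1}); being characteristic in $\Out_\F(R)$, it is normalized by $\overline Y\le\Out_\F(R)$. The remaining hypothesis $|C_{\overline Y}(\overline T)|=|C_{\Out_\F(R)}(\Out_S(R))|>2$ holds by Lemma~\ref{l:exccase} in both of its cases. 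Lemma~\ref{l:runique} then shows $\overline X$ is uniquely determined by $\overline Y$. Because $\Out_\F(R)/\overline X$ is a $p'$-group we have $\overline T\in\Syl_p(\overline X)$, so a Frattini argument gives $\Out_\F(R)=\overline X\,N_{\Out_\F(R)}(\overline T)=\overline X\,\overline Y=\overline{XY}$. As both $\overline X$ and $\overline Y$ are uniquely determined, so is $\Out_\F(R)$ as a subgroup of $\Out(R)$, whence $\Aut_\F(R)$, its full preimage in $\Aut(R)$, is uniquely determined.

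The main obstacle is the linkage in the middle: transferring uniqueness of $\Aut_\F(S)$ to uniqueness of $N_{\Aut_\F(R)}(\Aut_S(R))$ needs the interplay between receptivity of $R$ (from saturation) and Alperin-type restriction, and then genuinely recovering the larger group $\Out_\F(R)$ from the partial information $\overline Y$ rests entirely on Lemma~\ref{l:runique}. Thus the crux is verifying the hypotheses of that lemma—above all the condition $|C_{\overline Y}(\overline T)|>2$, which is exactly what Lemma~\ref{l:exccase} was arranged to supply.
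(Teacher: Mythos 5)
Your proof is correct and follows essentially the same route as the paper: both arguments reduce to Lemma~\ref{l:runique} applied with $\overline Y=N_{\Out_\F(R)}(\Out_S(R))$ and $\overline T=\Out_S(R)$, using Lemma~\ref{l:exccase} to supply the key hypothesis $|C_{\overline Y}(\overline T)|>2$. You merely make explicit two steps the paper leaves implicit, namely that $\overline Y$ is itself pinned down by restricting the uniquely determined group $\Aut_\F(S)$ to $R$, and that $\Out_\F(R)=\overline{XY}$ by a Frattini argument.
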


\begin{proof}
 Since $R \in \E$, $\Out_\F(R)$ is isomorphic to a subgroup of $\GL_2(p)$ and $O^{p'}(\Out_\F(R)) \cong \SL_2(p)$ by Lemma \ref{l:AFR1}. By Lemma \ref{l:exccase} we have $\Out_\F(R)$ is isomorphic to $\GL_2(p)$ or Lemma \ref{l:possforAQ} case (c) holds with $\Out_\F(R)\cong 4 \circ \SL_2(5)$. Set $$\bar{Y}=N_{\Out_\F(R)}(\Out_S(R)) \mbox{ and } \bar{T}=\Out_S(R).$$ Then $C_{\bar{Y}}(\bar{T})$ has order greater than 2 by Lemma \ref{l:exccase}. Thus Lemma \ref{l:runique} implies that $\Aut_\F(R)$ is uniquely determined as a subgroup of $\Aut(R)$.
\end{proof}

\begin{Lem}\label{l:unique}
 If $\E \subseteq \{Q,R\}$, then $\E = \{Q,R\}$ and $\F$ is uniquely determined   by specifying
    the  subgroup of $\Aut(Q)$ from Lemma~\ref{l:possforAQ} which is $\Aut_\F(Q)$.
\end{Lem}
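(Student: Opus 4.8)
The first assertion requires no new work: since $\E \subseteq \{Q,R\}$, Lemma~\ref{l53} gives $\E = \{Q,R\}$, so in particular both $Q$ and $R$ are $\F$-essential. The substance of the lemma is the uniqueness claim, and the plan is to show that once $\Aut_\F(Q)$ is prescribed as one of the subgroups of $\Aut(Q)$ appearing in Lemma~\ref{l:possforAQ}, each of the three groups $\Aut_\F(Q)$, $\Aut_\F(R)$ and $\Aut_\F(S)$ is forced, and then to read off $\F$ from Alperin's Theorem (Theorem~\ref{t:alp}), which here reads $\F = \langle \Aut_\F(Q), \Aut_\F(R), \Aut_\F(S)\rangle$.

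First I would check that Lemma~\ref{l:possforAQ} applies. Since $R \in \E$, Lemma~\ref{l:AFQ1} supplies an element $\theta \in N_{\Aut_\F(Q)}(\Aut_S(Q))$ inducing an automorphism of order $p-1$ on both $\Out_S(Q)$ and $Z$, with $\det\theta|_{Z_2}=1$ when $p=5$; these are exactly hypotheses (1) and (2) of Lemma~\ref{l:possforAQ}. Hence $\Out_\F(Q)$, and therefore $\Aut_\F(Q)$, is conjugate in $\Aut(Q)$ to one of the listed subgroups, and the final sentence of Lemma~\ref{l:possforAQ} tells us that prescribing $\Aut_\F(Q)$ determines $N_\F(Q)$ up to isomorphism through the Model Theorem, and in particular determines $\Aut_\F(S)$. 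Recalling the normalisation $\Aut_\F(S) \le \Aut_B(S)$ from Lemma~\ref{l:outfsid}, this pins $\Aut_\F(S)$ down as a fixed subgroup of $\Aut(S)$.

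With $\Aut_\F(S)$ fixed, I would then invoke Lemma~\ref{l:afrunique} to conclude that $\Aut_\F(R)$ is uniquely determined as a subgroup of $\Aut(R)$. Concretely, the subgroup $\bar Y = N_{\Out_\F(R)}(\Out_S(R))$ is obtained by restricting the morphisms of $\Aut_\F(S)$ to $R$ (using saturation to lift), so $\bar Y$ is determined by $\Aut_\F(S)$; since $R \in \E$ forces $O^{p'}(\Out_\F(R)) \cong \SL_2(p)$ by Lemma~\ref{l:AFR1}, and $|C_{\bar Y}(\Out_S(R))| > 2$ by Lemma~\ref{l:exccase}, Lemma~\ref{l:runique} identifies this $\SL_2(p)$, and hence $\Aut_\F(R)$, as the unique such subgroup containing $\bar Y$. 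Having forced all three generators, Alperin's Theorem gives $\F = \langle \Aut_\F(Q), \Aut_\F(R), \Aut_\F(S)\rangle$, so $\F$ is uniquely determined by the single choice of $\Aut_\F(Q)$.

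The proof is in essence an assembly of the preceding lemmas, so I do not expect any single step to be a genuine obstacle; the only point requiring care is to track the chain of dependencies cleanly, namely that $\Aut_\F(Q)$ forces $\Aut_\F(S)$ via Lemma~\ref{l:possforAQ} and the Model Theorem, which in turn forces $\bar Y$ and hence $\Aut_\F(R)$ via Lemmas~\ref{l:afrunique} and~\ref{l:runique}, so that the entire generating set of Alperin's Theorem is pinned down by $\Aut_\F(Q)$ alone.
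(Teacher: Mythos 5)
Your proposal is correct and follows essentially the same route as the paper, whose proof simply ``coalesces'' Lemmas~\ref{l:possforAQ}, \ref{l:exccase} and \ref{l:afrunique}; you have merely made explicit the verification of hypotheses (via Lemmas~\ref{l53} and~\ref{l:AFQ1}) and the final appeal to Alperin's Theorem that the paper leaves implicit.
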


\begin{proof}  For this we just  coalesce Lemmas~\ref{l:possforAQ}, \ref{l:exccase}(a) and  \ref{l:afrunique}.
\end{proof}

\begin{Lem}
Suppose that $\E \subseteq \{Q,R\}$. Then $\Gamma_{p'}(\F)=1$ or $p=5$,   Lemma~\ref{l:possforAQ} (b) holds and $\Gamma_{5'}(\F)=2$.
\end{Lem}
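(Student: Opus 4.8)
The plan is to compute $\Gamma_{p'}(\F)=\Aut_\F(S)/\Aut^0_\F(S)$ via its action on $Z$. Since $O^{p'}(\Aut_\F(S))=\Inn(S)$ we have $\Inn(S)\le\Aut^0_\F(S)$, so $\Gamma_{p'}(\F)=\Out_\F(S)/\Out^0_\F(S)$. By Lemma~\ref{l53} we have $\E=\{Q,R\}$, so by Theorem~\ref{t:alp} and the definition of $\Aut^0_\F(S)$ one gets $\Out^0_\F(S)=\langle D_Q,D_R\rangle$, where for $E\in\{Q,R\}$ the subgroup $D_E\le\Out_\F(S)\le\Out_B(S)$ is the image of those $\alpha\in\Aut_\F(S)$ with $\alpha|_E\in O^{p'}(\Aut_\F(E))$; by saturation these $\alpha$ are exactly the extensions to $S$ of $N_{O^{p'}(\Aut_\F(E))}(\Aut_S(E))$. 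I identify $\Out_B(S)$ with $(\mathbb F_p^\times)^2$ by $c_d\mapsto(t,\lambda)$ for $d=\left(t,\left(\begin{smallmatrix}\lambda&0\\0&1\end{smallmatrix}\right)\right)$; by Lemmas~\ref{l:outfsid} and~\ref{l:diags}, $c_d$ scales $Q/Z_4$ by $t$, scales $R/Z_4$ by $\lambda$, and scales $Z$ by $\chi(t,\lambda):=t^2\lambda^3$.

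First I would compute the two subgroups. Since $R\in\E$, Lemma~\ref{l:AFR1} gives $O^{p'}(\Out_\F(R))\cong\SL_2(p)$ acting with determinant $1$ on $R/Z_3$ and on $Z_2$; restricting its diagonal torus and reading off the eigenvalues on $x_1,x_3$ gives $D_R=\{(t,\lambda):t\lambda^2=1\}$, of order $p-1$, on which $\chi$ restricts to an isomorphism onto $\mathbb F_p^\times$. On the other hand every $p$-element of $\Out(Q)=\GSp_4(p)$ lies in the kernel $\Sp_4(p)$ of the similitude character, so $O^{p'}(\Out_\F(Q))\le\Sp_4(p)$ centralizes $Z$ and hence $D_Q\le\ker\chi=\{(t,\lambda):t^2\lambda^3=1\}$, a group of order $p-1$. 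As $D_R\cap\ker\chi=1$, the character $\chi$ carries both $\Out^0_\F(S)$ and $\Out_\F(S)$ onto $\mathbb F_p^\times$, and $\Out^0_\F(S)\cap\ker\chi=D_Q$, so
$$\Gamma_{p'}(\F)\cong C_{\Out_\F(S)}(Z)/D_Q,\qquad |C_{\Out_\F(S)}(Z)|=|\Out_\F(S)|/(p-1).$$

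It remains to compute $|D_Q|$. Projecting $D_Q\le\ker\chi$ onto the $\lambda$-coordinate identifies its image with $c(N)$, where $c\colon N:=N_{O^{p'}(\Out_\F(Q))}(\Out_S(Q))\to\Aut(\Out_S(Q))\cong\mathbb F_p^\times$ is the conjugation action; the central involution of $O^{p'}(\Out_\F(Q))$ lies in $O^{p'}$ and acts as $-1$ on $Q/Z$, so it contributes $(-1,1)\in D_Q$, whence $|D_Q|=2|c(N)|$, and $|c(N)|\le(p-1)/2$ since $|D_Q|\le|\ker\chi|=p-1$. Thus $|\Gamma_{p'}(\F)|=|\Out_\F(S)|/\bigl(2(p-1)|c(N)|\bigr)$. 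When $O^{p'}(\Out_\F(Q))$ is one of $\SL_2(p)$, $2\udot\Alt(6)$, $2\udot\Alt(7)$ or $2^{1+4}_-.\Alt(5)$, the Sylow $p$-normalizer $N$ already conjugates $\Out_S(Q)$ by every square, so $|c(N)|=(p-1)/2$; combined with $|\Out_\F(S)|=(p-1)^2$ from Lemma~\ref{l:exccase}(a) this gives $\Gamma_{p'}(\F)=1$.

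The delicate point, and the source of the exceptional value, is the Frobenius-type configuration. Here I expect the main work to be verifying that in cases~\ref{l:possforAQ}(b) and~\ref{l:possforAQ}(c) one has $O^{p'}(\Out_\F(Q))\cong 2^{1+4}_-.C_5$: the order-$4$ automorphism realizing all of $\Aut(\Out_S(Q))$ lives in the $\Frob(20)$-complement, which lies outside $O^{p'}$, so that $N=\Out_S(Q)\times\langle-1\rangle$ is abelian and $|c(N)|=1$, forcing $|D_Q|=2$. The stated dichotomy then comes entirely from $|\Out_\F(S)|$: in case~\ref{l:possforAQ}(b), which falls under Lemma~\ref{l:exccase}(a) with $|\Out_\F(S)|=(p-1)^2=16$, we get $|\Gamma_{5'}(\F)|=16/(2\cdot4\cdot1)=2$; whereas case~\ref{l:possforAQ}(c), which is Lemma~\ref{l:exccase}(b) with $|\Out_\F(S)|=8$, yields $8/(2\cdot4\cdot1)=1$. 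Together with the computation of the previous paragraph this gives exactly the assertion.
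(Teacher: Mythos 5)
Your proof is correct and follows essentially the same route as the paper's: both identify $\Out_\F^0(S)$ as the product of a cyclic group of order $p-1$ acting faithfully on $Z$ (coming from $O^{p'}(\Aut_\F(R))\cong\SL_2(p)$) with the $Z$-centralizing contribution from $O^{p'}(\Aut_\F(Q))$, and then compare orders with $\Out_\F(S)$ case by case through Lemma~\ref{l:possforAQ}. The paper's version is a brief sketch of exactly this computation; yours supplies the explicit coordinates on $\Out_B(S)$ and the formula $|D_Q|=2|c(N)|$, but the underlying argument is identical.
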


\begin{proof} We have $O^{p'}(\Out_\F(R)) \cong \SL_2(p)$ and so this group contributes a cyclic group of order $p-1$ which acts faithfully on $Z$ to $\Out_\F^0(S)=\Aut_\F^0(S)/\Inn(S)$. If $N_{O^{p'}(\Out_\F(Q))}(\Out_S(Q))$ has order $p-1$, then, as $O^{p'}(\Aut_\F(Q))$ centralizes $Z$, we have $|\Out_\F^0(S)|=(p-1)^2$ and we obtain $\Gamma_{p'}(\F)=1$. Now examining the groups in listed in Lemma~\ref{l:possforAQ}, yields that the  only possibility for   $\Gamma_{p'}(\F)$ to be non-trivial arises when $p=5$ and Lemma~\ref{l:possforAQ} (b) holds. In this case $\Out_\F(S) \cong 4 \times 4$ and $\Out_\F^0(S)$ has index $2$.
\end{proof}

\begin{Thm}\label{t:case esubsetqr}
Suppose that $\E \subseteq \{Q,R\}$ and $\F$ is a saturated fusion system on $S$ with $O_p(\F)=1$.  Then $\E= \{Q,R\}$ and $\F$ is isomorphic to the fusion system of $\Gee_2(p)$ or to $\F_5^0$,   $\F_5^1$, $\F_5^2$ or $\F_7^0$  or to a subsystem of index $2$ in $\F_5^1$ as listed Table \ref{table1}.
\end{Thm}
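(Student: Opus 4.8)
The plan is to feed the standing hypotheses into the structural lemmas of this subsection and then read off the answer case by case. First, Lemma~\ref{l53} gives $\E=\{Q,R\}$, so in particular $R\in\E$. I would then invoke Lemma~\ref{l:AFQ1}, whose two conclusions are exactly hypotheses~(1) and~(2) of Lemma~\ref{l:possforAQ}; applying the latter shows that $\Out_\F(Q)$ is $\Out(Q)$-conjugate to one of the seven groups (a)--(g) listed there and simultaneously constrains $\Out_\F(S)$. Combining this with Lemma~\ref{l:exccase}, the pair $(\Aut_\F(R),\Out_\F(S))$ is forced: in cases (a), (b), (d), (f), (g) we have $\Out_\F(S)=\Out_B(S)\cong(p-1)\times(p-1)$ and $\Aut_\F(R)\cong\GL_2(p)$, while the remaining case (c) occurs only for $p=5$, where $\Out_\F(S)\cong 4\times 2$ has index $2$ in $\Out_B(S)$ and $\Aut_\F(R)\cong 4\circ\SL_2(5)$. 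Here I use the normalisation $\Aut_\F(S)\le\Aut_B(S)$ recalled before the theorem.

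The decisive reduction is uniqueness. By Lemma~\ref{l:afrunique}, $\Aut_\F(R)$ is determined as a subgroup of $\Aut(R)$ once $\Aut_\F(S)$ is fixed, and by Lemma~\ref{l:unique} the whole of $\F$ is determined up to isomorphism by the choice of $\Aut_\F(Q)$ from the list; so it remains only to match each surviving entry with a row of Table~\ref{table1}. Before matching I would discard case (e) using the hypothesis $O_p(\F)=1$. In that case $\Out_\F(Q)$ acts reducibly on $Q/Z$ normalising $Z_3/Z$, so $Z_3$ is $\Aut_\F(Q)$-invariant. Since $Z_3=\Phi(R)$ is characteristic in $R$ by Lemma~\ref{l:z4char}(a), and $Z_3$ is characteristic in $S$ by Lemma~\ref{l:charz}, it is also invariant under $\Aut_\F(R)$ and under $\Aut_\F(S)$; as $Z_3\le Q\cap R$, the characterisation of normal subgroups of a fusion system shows $Z_3$ is normal in $\F$, whence $O_p(\F)\ge Z_3\ne 1$, a contradiction. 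Hence case (e) never arises.

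For the surviving cases I would match directly. Case (g) is realised by $\F_S(\Gee_2(p))$, whose essential subgroups are exactly the two parabolic radicals $Q$ and $R$ with $\Out_\F(Q)\cong\GL_2(p)$, so uniqueness gives $\F\cong\F_S(\Gee_2(p))$. For $p\in\{5,7\}$, cases (a), (b), (d), (f) have $\Aut_\F(Q)$ agreeing with the rows $\F_5^0$, $\F_5^1$, $\F_5^2$, $\F_7^0$ of Table~\ref{table1}, and so $\F$ is isomorphic to the corresponding system; for $p=5$ these are the fusion systems of $\Ly$, $\Aut(\HN)$, $\BM$ supplied by Theorem~\ref{t:simpfus}. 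Finally, case (c) is handled via index prime to $p$: here $\Out_\F(Q)\sim 2^{1+4}_-.\Frob(20)$ is the index-$2$ subgroup of the case-(b) group and $\Out_\F(S)$ has index $2$ in $\Out_B(S)$. The preceding lemma computes $\Gamma_{5'}(\F_5^1)=2$, so by Theorem~\ref{Op'} the system $\F_5^1$ has a unique saturated subsystem of $p'$-index, necessarily of index $2$; this subsystem has $\Aut_\F(Q)$ equal to the case-(c) group, so by Lemma~\ref{l:unique} our $\F$ is isomorphic to it. I expect the main subtlety to lie in the exclusion of case (e)---correctly spotting that $Z_3=\Phi(R)$ is forced to be normal in $\F$---and in recognising the case-(c) configuration as the index-$2$ subsystem of $\F_5^1$ rather than as a system standing on its own.
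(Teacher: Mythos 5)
Your proposal is correct and follows essentially the same route as the paper: reduce to $\E=\{Q,R\}$ via Lemma~\ref{l53}, feed Lemma~\ref{l:AFQ1} into Lemma~\ref{l:possforAQ}, use Lemmas~\ref{l:exccase}--\ref{l:unique} for uniqueness, kill case~(e) by observing that $Z_3=\Phi(R)$ would be normal in $\F$, and match the survivors to the rows of Table~\ref{table1} (with case~(c) as the index-$2$ subsystem of $\F_5^1$). The paper's proof is just a terser version of the same argument, with the only cosmetic difference that it also notes explicitly why $O_p(\F)=1$ holds in the irreducible cases.
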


\begin{proof}   From Lemma~\ref{l:unique}, $\F$ is uniquely determined once $\Out_\F(Q)$ is specified. Thus we only need to check that $O_p(\F)=1$.  If $\Aut_\F(Q)$ acts irreducibly on $Q/Z$, then the only candidates for $O_p(\F)$ are $Z$ and $Q$. Since $O_p(\F)$ is contained in all the $\F$-essential subgroups and  $\Aut_\F(R)$ does not normalize $Z$, we are done.  The only possibility which arises with $\Aut_\F(Q)$ acting  reducibly on $Q/Z$, occurs in Lemma~\ref{l:possforAQ} (e).  In this case, $p=5$ and using the detail in Lemma~\ref{l:possforAQ} (e), we see that $\Aut_\F(Q)$ leaves invariant the unique normal subgroup of $S$ of order $5^3$. That is $\Aut_\F(Q)$ leaves $Z_3$ invariant. Since $Z_3=\Phi(R)$ is also invariant under the action of $\Aut_\F(R)$, we have $O_5(\F)= Z_3$ in this case, a contradiction.
\end{proof}

\subsection{The case  $\mathcal E \cap \mathcal W \ne \emptyset$}
In this subsection, we assume that $\mathcal E \cap \mathcal W \ne \emptyset$ and consequently $p=7$ by Lemma~\ref{l:wine}. Since $\Gee_2(7)$ has a $7$-dimensional representation over $\mathbb F_7$, $S$ has exponent $7$. In fact, as $S$ is now a fixed group, we may use {\sc Magma} \cite{magma} to perform calculations in $S$ and also  to calculate in the automorphism group of   subgroups of $S$.

Motivated by Lemma~\ref{l:wine}, for an arbitrary subgroup  $W \in \W \cap \E$ we define  $$\Delta= N_{\Aut_\F(S)}(W)\Inn(S) =\left\langle c_d, \Inn(S)\mid  d=\left(\lambda,\left(\begin{smallmatrix} \lambda &0\\0&1\end{smallmatrix}\right)\right)\in B, \lambda \in \mathbb F_7^\times\right\rangle.$$
 Thus  $\Delta/\Inn(S)$ is cyclic of order $6$.

\begin{Lem}\label{l:36outfs}
The following hold:
\begin{enumerate}
 \item[(a)] $\Delta$ has six orbits on $\W$;
 \item [(b)] for $W_1, W_2 \in \W$,  $W_1\Phi(S)= W_2\Phi(S)$ if and only if $W_1$ and $W_2$ are in the same $\Delta$-orbit;
\item  [(c)]  $\Aut(S)$ acts transitively on  $\W$;
\item [(d)] $\W$ is the union of $\frac{36}{|\Out_\F(S)|}$ $\F$-conjugacy classes of subgroups of order 49.
\end{enumerate}
\end{Lem}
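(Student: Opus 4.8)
The plan is to study the map $\W\to\{\text{maximal subgroups of }S\}$, $W\mapsto W\Phi(S)$, to identify its fibres, and to prove that $\Inn(S)\le\Delta$ is already transitive on each fibre. First I would record the combinatorics. Since $|S\setminus(Q\cup R)|=p^6-2p^5+p^4=p^4(p-1)^2$ and, for each $W\in\W$, one has $W\cap Q=W\cap R=Z$ (so $W\setminus Z$ consists of $p^2-p$ elements lying in $S\setminus(Q\cup R)$), a double count gives $|\W|=p^3(p-1)$. Because $Q$ and $R$ are the only characteristic maximal subgroups of $S$ (Lemma~\ref{l:charz}(d)) and $S/\Phi(S)\cong\mathbb F_p^2$ has exactly $p+1$ hyperplanes, the subgroups $W\Phi(S)$ for $W\in\W$ are precisely the $p-1$ maximal subgroups $M\ne Q,R$; for $p=7$ there are $6$ of them, each normal in $S$, and each fibre $\{W\in\W\mid W\Phi(S)=M\}$ has size $p^3$. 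As $S$ has exponent $p=7$, every $W\in\W$ is elementary abelian of order $49$.

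For (a) and (b) the ``only if'' direction is immediate: every element of $\Delta$ acts as a scalar on $S/\Phi(S)$ (inner automorphisms act trivially, and $c_d$ acts as the scalar $\lambda$ by the computation in Lemma~\ref{l:wine}), hence normalises each $M$ and preserves each fibre. The substance is the converse, that $\Inn(S)$ is transitive on each fibre. Fixing $W=\langle Z,y\rangle$ in the fibre over $M$, I would compute the $Q$-conjugates $W^q=\langle Z,y[y,q]\rangle$. Writing $y=x_1^aq_0$ with $a\not\equiv 0$ and $q_0\in Q$, and noting that modulo $Z$ the commutator $[y,q]$ depends only on the $x_1$-part, one gets $[y,q]\equiv(v_q(x_1^a-1))\pmod Z$; since $x_1$ acts on $Q/Z$ as a single Jordan block (Lemma~\ref{l:charz}(f)), the image $(x_1^a-1)(Q/Z)$ is the $3$-dimensional subgroup $Z_4/Z$. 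As $\bar y=yZ$ has order $p$ and lies outside $Z_4/Z$, the $p^3$ cosets in $\bar y\,(Z_4/Z)$ determine $p^3$ distinct lines $\langle \bar y[y,q]\rangle$ of $S/Z$, i.e.\ $p^3$ distinct members of the fibre. As the fibre has size $p^3$, conjugation by $Q$ alone is transitive on it. This gives transitivity of $\Delta$, and together with the six fibres establishes (a) and (b).

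For (c) I would combine fibre-transitivity with transitivity on the \emph{set} of fibres. By Lemma~\ref{l:liftnorm}, $\Aut(S)$ maps onto the full diagonal torus of $\GL_2(p)$ acting on $S/\Phi(S)$, and this torus is transitive (indeed regular) on the $p-1$ non-coordinate lines, multiplying slopes by $\mathbb F_p^\times$. Hence $\Aut(S)$ permutes the fibres transitively, and since $\Inn(S)\le\Aut(S)$ is transitive within each fibre, $\Aut(S)$ is transitive on $\W$.

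For (d) the decisive reduction is that $\F$-conjugacy between members of $\W$ coincides with $\Aut_\F(S)$-conjugacy. Using $\E\subseteq\{Q,R\}\cup\W$ (Theorem~\ref{t:esslist}) and $W\cap Q=W\cap R=Z$, I would argue via Alperin's Theorem~\ref{t:alp} that in any decomposition of an $\F$-isomorphism $W_1\to W_2$ with $W_1,W_2\in\W$, every intermediate image stays in $\W$: a restriction of $\Aut_\F(Q)$ or $\Aut_\F(R)$ cannot be applied to a subgroup not contained in $Q$ or $R$; a restriction of $\Aut_\F(W')$ with $W'\in\W\cap\E$ forces the subgroup to equal $W'$ and fixes it; and $\Aut_\F(S)$ preserves $\W$. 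Thus the isomorphism is realised by an element of $\Aut_\F(S)$. Since $\Inn(S)\le\Aut_\F(S)$ is transitive on each fibre, the $\Aut_\F(S)$-orbits on $\W$ are the preimages of the $\Out_\F(S)$-orbits on the $6$ slopes, of which there are $6/|H|$, where $H\le\mathbb F_p^\times$ is the image of $\Out_\F(S)$ under the slope-ratio homomorphism $\Out_B(S)\cong\mathbb F_p^\times\times\mathbb F_p^\times\to\mathbb F_p^\times$ whose kernel is the scalar subgroup of order $6$. Finally $\Delta\le\Aut_\F(S)$ supplies this entire scalar subgroup, so $|\Out_\F(S)|=6\,|H|$ and the number of classes is $6/|H|=36/|\Out_\F(S)|$.

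The main obstacle is the commutator computation behind the fibre-transitivity in (b): one must verify that the single Jordan block action of $x_1$ forces $[y,Q]\equiv Z_4/Z\pmod Z$ and that the resulting $p^3$ lines are genuinely distinct. The reduction of $\F$-fusion to $\Aut_\F(S)$-fusion in (d) is the other delicate point, but it falls out cleanly once the essential subgroups are confined to $\{Q,R\}\cup\W$.
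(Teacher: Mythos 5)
Your proposal is correct, and its overall skeleton (count $\W$, show the $\Delta$-orbits are exactly the fibres of $W\mapsto W\Phi(S)$, use the image of $\Aut(S)$ in the diagonal torus for (c), and the order-$6$ scalar kernel of the slope action for (d)) is the same as the paper's. The two places where you genuinely diverge are both local. For the fibre-transitivity underlying (a) and (b), the paper simply observes that $N_S(W)=WZ_2$ (because $[W,Z_2]\le [S,Z_2]= Z\le W$), so $|W^S|=|S:WZ_2|=7^3$ equals the fibre size; your commutator/Jordan-block computation of the $Q$-conjugates reaches the same count by a longer but equally valid route, and has the minor virtue of exhibiting the orbit explicitly rather than inferring transitivity from matching cardinalities. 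For (d), the paper silently identifies $\F$-conjugacy on $\W$ with $\Aut_\F(S)$-conjugacy; your Alperin-style reduction (no member of $\W$ lies in $Q$ or $R$, and an essential $W'\in\W$ containing a member of $\W$ must equal it and so is only fixed setwise) is exactly the justification that is needed and is worth recording. One cosmetic caveat there: the composite $\F$-isomorphism $W_1\to W_2$ need not itself be the restriction of an element of $\Aut_\F(S)$, since the factors coming from $\Aut_\F(W')$ fix $W'$ setwise without being such restrictions; what your argument actually shows, and all that the counting requires, is that $W_2=W_1\beta$ for some $\beta\in\Aut_\F(S)$.
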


\begin{proof}
As $S$ has exponent $7$, the number of subgroups of $S$ of order $49$ which are not contained in $Q$ or $R$ and contain $Z$  is $$|\W|= \frac{|S|-|Q|-(|R|-|R \cap Q|)}{(49-7)} = \frac{7^6-7^5-(7^5-7^4)}{42}=7^3.6.$$ Here we use the fact that $W_x=W_{x'}$ if and only if $x' \in W_{x} \backslash Z$, where $x\in S \backslash {Q \cup R}$ and $W_x$ is as defined in Notation \ref{n:wx}. Let $W\in \W$.  Then, as $W \not \le Q$ and $W \not\le R$, $N_S(W)= WZ_2$. Thus $|W^S|= |S:WZ_2|= 7^3$ and so $\W$ is the union of six $S$-orbits and also six $\Delta$-orbits. This proves (a).  Now   $W_x, W_y \in \W$ are in the same $\Delta$-orbit if and only if $W_x\Phi(S)= W_y \Phi(S)$ which is (b). Since $\Aut(S)/C_{\Aut(S)}(S/\Phi(S))=\Aut_B(S)C_{\Aut(S)}(S/\Phi(S))/C_{\Aut(S)}(S/\Phi(S))$ acts as diagonal matrices on $S/\Phi(S)$ by Lemma~\ref{l:liftnorm}, we see that $\Aut(S)$ acts transitively on $\mathcal X = \{W\Phi(S)\mid W \in \W\}$ and hence also on $\W$. Thus (c) holds.

Now, for $W\in \W \cap \E$,  $|N_{\Aut_\F(S)}(W)\Inn(S)/\Inn(S)| = 6$ by Lemma~\ref{l:wine} (b).  Therefore $\Out_\F(S)$  has  $36/|\Out_\F(S)|$ orbits on $\mathcal X$ and so there are $36/|\Out_\F(S)|$  $\F$-conjugacy classes. This proves (d).
\end{proof}

\begin{Lem}\label{l:possAQ2} If $Q \in \E$, then \begin{enumerate} \item[(a)]  $\Out_\F(S) =\Out_B(S)\cong 6\times 6$;
\item [(b)]  $\Aut_\F(Q)$ is a uniquely determined subgroup $\Aut(Q)$; and\item[(c)] either $\Out_\F(Q)\cong \GL_2(7)$ or $\Out_\F(Q)\cong 3 \times 2 \udot \Sym(7)$.\end{enumerate} Furthermore, $\Aut_\F(S)$ acts transitively on $\W$. \end{Lem}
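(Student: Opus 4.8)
The plan is to obtain parts (a)--(c) as essentially immediate consequences of Lemma~\ref{l:possforAQ} once its hypotheses are checked, and then to prove the transitivity separately by an elementary orbit computation on $S/\Phi(S)$. First I would fix some $W \in \E \cap \W$, which exists by the standing assumption of this subsection. Lemma~\ref{l:wine} then yields $p=7$ together with an element $\theta \in N_{\Aut_\F(Q)}(\Aut_S(Q))$ inducing an automorphism of order $6=p-1$ on both $\Out_S(Q)$ and $Z=Z(Q)$ (this is exactly Lemma~\ref{l:wine}(c)). This is precisely hypothesis (1) of Lemma~\ref{l:possforAQ}, while hypothesis (2) there is vacuous since $p \ne 5$. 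As $Q \in \E$ by assumption, Lemma~\ref{l:possforAQ} applies.

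Next I would read off the three numbered conclusions. Since $p=7$, cases (a)--(e) of Lemma~\ref{l:possforAQ} (all of which require $p=5$) are excluded, leaving only (f) $\Out_\F(Q)\sim 3\times 2\udot\Sym(7)$ and (g) $\Out_\F(Q)\cong\GL_2(7)$; this gives part (c). The dichotomy on $\Out_\F(S)$ in Lemma~\ref{l:possforAQ} has its exceptional branch only when $p=5$, so here $\Out_\F(S)=\Out_B(S)\cong (p-1)\times(p-1)=6\times 6$, which is part (a). Finally, the closing assertion of Lemma~\ref{l:possforAQ} states that under these hypotheses $\Aut_\F(Q)$ is uniquely determined; combined with the standing normalization $\Aut_\F(S)\le\Aut_B(S)$, which fixes $N_{\Aut_\F(Q)}(\Aut_S(Q))$ as the restriction of $\Aut_\F(S)$ to $Q$ (legitimate because $Q$ is characteristic in $S$ by Lemma~\ref{l:charz}(d), so $N_S(Q)=S$ and saturation forces every element of $N_{\Aut_\F(Q)}(\Aut_S(Q))$ to extend to $\Aut_\F(S)$), this pins $\Aut_\F(Q)$ down as a subgroup of $\Aut(Q)$ and yields part (b).

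It then remains to prove that $\Aut_\F(S)$ acts transitively on $\W$. Here I would use part (a) and Lemma~\ref{l:liftnorm}: since $\Out_\F(S)=\Out_B(S)$, the group $\Aut_\F(S)$ induces on $S/\Phi(S)\cong\mathbb F_7^2$ the full group of diagonal matrices with respect to the decomposition into $Q/\Phi(S)$ and $R/\Phi(S)$. These diagonal matrices act transitively on the six one-dimensional subspaces distinct from $Q/\Phi(S)$ and $R/\Phi(S)$, that is, on $\mathcal X=\{W'\Phi(S)\mid W'\in\W\}$. Since $S$, and hence $\Inn(S)\le\Aut_\F(S)$, acts trivially on $S/\Phi(S)$, each fibre of the map $\W\to\mathcal X$, $W'\mapsto W'\Phi(S)$, is a union of $S$-orbits; by Lemma~\ref{l:36outfs}(a),(b) these fibres are exactly the six $\Delta$-orbits, each of size $7^3$, and each (containing $\Inn(S)$) is therefore a single $S$-orbit. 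As $\Inn(S)$ is transitive on each fibre and $\Aut_\F(S)$ is transitive on $\mathcal X$, it follows that $\Aut_\F(S)$ is transitive on $\W$.

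Most of this argument is assembly: parts (a)--(c) drop out of Lemma~\ref{l:possforAQ} the moment Lemma~\ref{l:wine} supplies $\theta$, and the only step needing independent work is the transitivity. I expect the only mild obstacle there to be the passage from transitivity on the six lines $\mathcal X$ to transitivity on all of $\W$, which rests on recognizing that the fibres over $\mathcal X$ coincide with the $S$-orbits; this identification is exactly the orbit bookkeeping recorded in Lemma~\ref{l:36outfs}.
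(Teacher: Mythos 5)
Your proposal is correct and follows the paper's own proof essentially verbatim: the paper likewise obtains (a)--(c) by combining Lemma~\ref{l:wine}(c) with Lemma~\ref{l:possforAQ}, and derives the transitivity on $\W$ from Lemma~\ref{l:36outfs}. Your unpacking of the fibre/orbit bookkeeping for the final claim is just the content of Lemma~\ref{l:36outfs}(a),(b),(d) made explicit.
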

\begin{proof}  Combining Lemmas~\ref{l:wine}(c) and \ref{l:possforAQ}  gives parts (a), (b) and (c).  Lemma~\ref{l:36outfs} shows that $\Aut_\F(S)$ acts transitively on $\W$.\end{proof}

\begin{Lem}\label{l:WandR}
If $\{Q,R\} \subset \E$, then $\Aut_\F(R)$ is uniquely determined, $\Out_\F(R) \cong \GL_2(7)$ and $\Out_\F(S) \cong 6\times 6$.
\end{Lem}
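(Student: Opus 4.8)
The plan is as follows. Since we work in the subsection where $\E\cap\W\neq\emptyset$, we have $p=7$, and the hypothesis $\{Q,R\}\subset\E$ gives in particular $Q\in\E$. Hence Lemma~\ref{l:possAQ2}(a) immediately yields $\Out_\F(S)=\Out_B(S)\cong 6\times 6$, which settles the last assertion and, crucially, tells us that the \emph{entire} torus of $\Out_B(S)$ is realized in $\Out_\F(S)$. Since $R\in\E$, Lemma~\ref{l:AFR1} lets me regard $\Out_\F(R)$ (via its faithful action on $R/\Phi(R)=R/Z_3$, an $\mathbb{F}_7$-space of dimension $2$) as a subgroup of $\GL_2(7)$ with $O^{7'}(\Out_\F(R))\cong\SL_2(7)$; moreover, as $R$ is fully automized, $\Out_S(R)$ is a Sylow $7$-subgroup of $\Out_\F(R)$, so it is a root subgroup of order $7$ acting on $R/Z_3$ as a nontrivial transvection.

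The heart of the argument is to compute the image in $\Out_\F(R)\le\GL_2(7)$ of the restriction map $\rho\colon\Aut_\F(S)\to\Aut_\F(R)$, which is well defined because $R$ is characteristic in $S$ by Lemma~\ref{l:charz}(d). Working in the basis $\{x_1Z_3,x_3Z_3\}$ of $R/Z_3$ and using Equations~\ref{action} and \ref{kernel}, I would show that $c_d$ with $d=\left(t,\left(\begin{smallmatrix}\lambda&0\\0&1\end{smallmatrix}\right)\right)$ scales $x_1$ by $\lambda$ and scales $x_3=(-3XY^2,0)$ by $t\lambda$, so that $\rho(c_d)$ is the diagonal matrix with entries $\lambda$ and $t\lambda$. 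As $(t,\lambda)$ runs over $(\mathbb{F}_7^\times)^2$ these matrices exhaust the full diagonal torus $T_0\cong 6\times 6$ of $\GL_2(7)$; in particular $\det\circ\rho$ surjects onto $\mathbb{F}_7^\times$. Since $\SL_2(7)=O^{7'}(\Out_\F(R))\le\Out_\F(R)$ and $\Out_\F(R)$ now contains an element of determinant a generator of $\mathbb{F}_7^\times$, the determinant map $\Out_\F(R)\to\mathbb{F}_7^\times$ is onto with kernel $\SL_2(7)$, forcing $\Out_\F(R)=\GL_2(7)$.

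For uniqueness I would exploit receptivity: since $R$ is $\F$-receptive and $R\trianglelefteq S$, every element of $N_{\Aut_\F(R)}(\Aut_S(R))$ extends to $\Aut_\F(N_S(R))=\Aut_\F(S)$, so $\overline Y:=N_{\Out_\F(R)}(\Out_S(R))$ is precisely the image under $\rho$ of the fixed subgroup $\Aut_\F(S)\le\Aut_B(S)$, hence a determined subgroup of $\Out(R)$ (the Borel of $\GL_2(7)$ normalizing $\overline T:=\Out_S(R)$). Setting $\overline X:=O^{7'}(\Out_\F(R))\cong\SL_2(7)$, I would verify the hypotheses of Lemma~\ref{l:runique}: $\overline T\in\Syl_7(\overline Y)$ has order $7$, $\overline Y$ normalizes the characteristic subgroup $\overline X$, and $C_{\overline Y}(\overline T)$ has order greater than $2$ (it contains the central scalars $\cong 6$ arising from the $t=1$ restrictions $\rho(c_d)=\lambda I$). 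Lemma~\ref{l:runique} then determines $\overline X$ uniquely from $\overline Y$, and a Frattini argument ($\Out_S(R)\in\Syl_7(\Out_\F(R))$ lies in $\overline X$) gives $\Out_\F(R)=\overline X\,\overline Y$; as both factors are pinned down, $\Aut_\F(R)$ is uniquely determined as a subgroup of $\Aut(R)$.

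The main obstacle I anticipate is the explicit computation of the torus action on $R/\Phi(R)$ — keeping the polynomial action of Equation~\ref{action} and the conjugation of $x_1$ consistent, and checking that the determinants realized are all of $\mathbb{F}_7^\times$ — together with the bookkeeping needed to see that $\overline Y$ is genuinely a fixed subgroup of $\Out(R)$ (and not merely a Borel up to conjugacy), so that Lemma~\ref{l:runique} upgrades the uniqueness of $O^{7'}(\Out_\F(R))$ to uniqueness of the whole of $\Aut_\F(R)$.
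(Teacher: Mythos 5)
Your argument is correct and follows essentially the same route as the paper, which simply invokes Lemma~\ref{l:possAQ2} for $\Out_\F(S)$ and then repeats the steps of Lemma~\ref{l:afrunique} (restriction of the full torus $\Aut_B(S)$ to $R$, the bound $|C_{\overline Y}(\overline T)|>2$, and Lemma~\ref{l:runique}) to pin down $\Aut_\F(R)$. Your explicit computation of $\rho(c_d)=\mathrm{diag}(\lambda,t\lambda)$ and the surjectivity of the determinant merely fleshes out what the paper leaves implicit, and it agrees with the analogous calculation in the proof of Theorem~\ref{case r in e}.
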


\begin{proof} The proof of the uniqueness of $\Aut_\F(R)$ follows the same steps  as in Lemma~\ref{l:afrunique}.
\end{proof}

\begin{Thm}\label{t:7andQ}
If $Q \in \E$ and $\E \cap \W \ne \emptyset$, then $\F$ is isomorphic to either $\F_7^3$, $\F_7^4$, $\F_7^5$ or $\F_7^6$.
\end{Thm}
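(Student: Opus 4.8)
The plan is to combine the uniqueness statements already obtained for the automorphism groups of the candidate essential subgroups and then invoke Alperin's Theorem (Theorem~\ref{t:alp}) to conclude that $\F$ is determined by finitely many binary choices. Since $\E \cap \W \ne \emptyset$, Lemma~\ref{l:wine} first gives $p=7$. As $Q \in \E$, Lemma~\ref{l:possAQ2} then yields $\Out_\F(S) = \Out_B(S) \cong 6 \times 6$, that $\Aut_\F(Q)$ is a uniquely determined subgroup of $\Aut(Q)$ with $\Out_\F(Q)$ isomorphic to either $\GL_2(7)$ or $3 \times 2\udot \Sym(7)$, and that $\Aut_\F(S)$ acts transitively on $\W$. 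Because we may assume $\Aut_\F(S) \le \Aut_B(S)$ by Lemma~\ref{l:outfsid} and $\Inn(S) \le \Aut_B(S)$, the equality $\Out_\F(S) = \Out_B(S)$ forces $\Aut_\F(S) = \Aut_B(S)$, so $\Aut_\F(S)$ is itself fixed as an explicit subgroup of $\Aut(S)$.

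Next I would pin down the essential set $\E$. Since $|\Out_\F(S)| = 36$, Lemma~\ref{l:36outfs}(d) shows that $\W$ is a single $\F$-conjugacy class. Every $W \in \W$ satisfies $N_S(W) = WZ_2$ (as in the proof of Lemma~\ref{l:36outfs}), so all members of $\W$ share the same normaliser order; hence being fully $\F$-normalised is an $\F$-class invariant on $\W$. Together with the facts that $\F$-centricity and the isomorphism type of $\Out_\F$ are preserved under $\F$-conjugacy, the hypothesis $\E \cap \W \ne \emptyset$ upgrades to $\W \subseteq \E$. By Theorem~\ref{t:esslist} we have $\E \subseteq \{Q,R\} \cup \W$, so $\E$ equals $\{Q\} \cup \W$ or $\{Q,R\} \cup \W$ according as $R \notin \E$ or $R \in \E$.

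The remaining automorphism data is now forced. For each $W \in \W$, Lemma~\ref{l:wine}(a) gives $\Aut_\F(W) \cong \SL_2(7)$, uniquely determined as a subgroup of $\Aut(W) \cong \GL_2(7)$. If $R \in \E$, then $\{Q,R\} \subset \E$ and Lemma~\ref{l:WandR} shows $\Aut_\F(R)$ is uniquely determined with $\Out_\F(R) \cong \GL_2(7)$. By Alperin's Theorem, $\F = \langle \Aut_\F(E) \mid E \in \E \cup \{S\}\rangle$; since $\W$ is a single $\F$-class on which the fixed group $\Aut_\F(S)$ acts transitively, the collection $\{\Aut_\F(W) \mid W \in \W\}$ is the $\Aut_\F(S)$-conjugacy orbit of $\Aut_\F(W_0)$ for a fixed $W_0 \in \W$, and so is determined by $\Aut_\F(W_0)$ together with $\Aut_\F(S)$. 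Consequently $\F$ is completely determined by the two binary choices: whether $R \in \E$, and whether $\Out_\F(Q) \cong \GL_2(7)$ or $\Out_\F(Q) \cong 3 \times 2\udot \Sym(7)$. These four combinations correspond exactly to the rows $\F_7^3$ (here $R \notin \E$ and $\Out_\F(Q) \cong \GL_2(7)$), $\F_7^4$ ($R \notin \E$, $\Out_\F(Q) \cong 3 \times 2\udot \Sym(7)$), $\F_7^5$ ($R \in \E$, $\Out_\F(Q) \cong \GL_2(7)$) and $\F_7^6$ ($R \in \E$, $\Out_\F(Q) \cong 3 \times 2\udot \Sym(7)$) of Table~\ref{table1}.

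The main obstacle is the passage from ``each $\Aut_\F(E)$ is a uniquely determined subgroup'' to ``the fusion system $\F$ is uniquely determined,'' that is, verifying that Alperin generation genuinely forces uniqueness rather than merely constraining the generators. This is handled by the observations that $\Aut_\F(S)$ has been fixed as an explicit subgroup of $\Aut(S)$ (not merely up to isomorphism), that $\W$ forms a single $\F$-class on which $\Aut_\F(S)$ acts transitively, and that the remaining generators $\Aut_\F(Q)$ and, when present, $\Aut_\F(R)$ are likewise fixed as explicit subgroups by Lemmas~\ref{l:possAQ2} and \ref{l:WandR}. This is precisely the uniqueness scheme described in the introduction, and it carries through here.
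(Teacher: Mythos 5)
Your proof is correct and follows essentially the same route as the paper, whose own proof is just the one-line citation ``collect the results of Lemmas~\ref{l:wine}, \ref{l:possAQ2} and \ref{l:WandR}.'' You have simply made explicit the details the paper leaves implicit: that $\W\subseteq\E$ via Lemma~\ref{l:36outfs}(d) and the transitivity of $\Aut_\F(S)$ on $\W$, and that Alperin's Theorem plus the uniqueness of each $\Aut_\F(E)$ as an explicit subgroup pins down $\F$ from the two binary choices.
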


\begin{proof} This follows by collecting the results of  Lemmas~\ref{l:wine}, \ref{l:possAQ2} and \ref{l:WandR}.
\end{proof}

We now move on to the case where $\E \subseteq \W \cup\{R\}$.

\begin{Not}\label{not}
Suppose that $I=\mathbb F_7^\times= \{1,2,3,4,5,6\}$.  Then the action of $\mathbb F_7^\times$ by multiplication on non-empty subsets of $I$  has orbit representatives  as follows.

\begin{eqnarray*}&& 1_1= \{1\},\\&& 2_1= \{1,2\}, 2_2=\{1,3\}, 2_3= \{1,6\},\\&&3_1 =\{1,2,3\}, 3_2=\{1,2,5\},3_3=\{1,2,6\}, 3_4= \{1,2,4\} ,\\&&
4_1=\{1,2,3,4\}, 4_2=\{1,2,3,5\}, 4_3=\{1,2,5,6\},\\
&&5_1=\{1,2,3,4,5\},\\&&6_1=\{1,2,3,4,5,6\}. \end{eqnarray*}
Observe that these orbits are regular other than  $2_3$, $4_3$ (both of which have length $3$), $3_4$ (which has length $2$ and $6_1$ which has length $1$.)
\end{Not}

By Lemma \ref{l:36outfs} $$\X= \{W\Phi(S)/\Phi(S)\mid W\in \W\}$$  consists of the six diagonal subgroups to $Q /\Phi(S)$ and $R/\Phi(S)$ in $S/\Phi(S)$ and the action of $\Aut_B(S)$ on $\X$ can be identified with the action of $\mathbb F_7^\times $ on $I$. In particular, $\Delta$ is contained in the kernel of this action.
This means that, if  $\mathcal Y$ is a union of $\Delta$-orbits on $\W\cap \mathcal E$, then $\{W\Phi(S)/\Phi(S)\mid W \in \mathcal Y\} \subseteq \X$. Since the elements of $\X$ correspond to $\Delta$-orbits on $\W$, we may sensibly denote the $\Delta$-orbits on $\W$ by $\W_i$ where $i\in I$. Now the $\Aut_B(S)$-orbits on the non-empty subsets of the set of $\Delta$-orbits $\{\W_1, \dots, \W_6\}$ on $\W$  have representatives as described in Notation~\ref{not}.  We may suppose that there exists $W_1\in \mathcal \W \cap \E$ such that $W_1\in \W_1$.  Of course $\W \cap \E$ is a union of $\Delta$-orbits and so corresponds to a subset $j$ of $I$ and any $\Aut_B(S)$ translate of $j$ corresponds to an isomorphic fusion system.  Thus we may suppose that $\W \cap \E$ corresponds to one of the subsets listed in Notation~\ref{not}. Now given fusion systems   $\F_1$ and $\F_2$ on $S$  with $\Aut_{\F_i}(S) \le \Aut_B(S)$ and $\W \cap E \ne \emptyset$, for $\F_1$ and $\F_2$ to be isomorphic, the corresponding subsets of $I$ must be $\Aut_B(S)$-conjugate. Thus, if $\W \supseteq \E$, to uniquely specify a fusion system, we need to specify a subset $j$ of $I$ to correspond to the $\Delta$-orbits on $\E$ and then a  subgroup of $\Aut_B(S)$ containing $\Delta$ and stabilizing $j$.

Let $j_i$ be a subset of $I$ as in Notation~\ref{not} and define $$B(j_i)= \Stab_{\Aut_B(S)}(j_i).$$
For an orbit representative $j_i$, define the fusion systems $$\G({j_i}) = \langle \Aut_{\F}(W), \Delta\mid W \in \W_k, k \in j_i\rangle$$
and then put $$\F_7^1(j_i) = \langle \G({j_i}), B(j_i)\rangle.$$

\begin{Thm}\label{t: case esubsetw}
Suppose that  $\E  \subseteq \W$.  Then   $\F$ is isomorphic to a subsystem of $7'$-index of $\F_7^1(j_i)$ containing $\G(j_i)$ where $j_i$ is an $\Aut_B(S)$-orbit on the non-empty subsets of  $\{\W_1, \dots, \W_6\}$. Furthermore, if these fusion systems are saturated then no two of them are isomorphic.
\end{Thm}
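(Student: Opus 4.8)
The plan is to combine Alperin's Theorem with the structural information about $\W\cap\E$ assembled in this subsection, and then to rule out isomorphisms by analysing the action of $\Aut(S)$ on $S/\Phi(S)$. First I would fix the essential data. Since $\E\subseteq\W$, Theorem~\ref{t:alp} gives $\F=\langle\Aut_\F(W),\Aut_\F(S)\mid W\in\E\rangle$. As essentiality is preserved by $\F$-automorphisms, $\E$ is $\Aut_\F(S)$-invariant; and since $\Delta\le\Aut_\F(S)$ while the $\Delta$-orbits on $\W$ are precisely the sets $\W_k$ (Lemma~\ref{l:36outfs}), the set $\E$ is a union of such orbits and hence corresponds to a subset $j$ of $I$. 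By Lemma~\ref{l:outfsid} we may take $\Aut_\F(S)\le\Aut_B(S)$, and after replacing $\F$ by an $\Aut_B(S)$-conjugate (which preserves the isomorphism type) I may assume $j=j_i$ is one of the orbit representatives of Notation~\ref{not}.

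Next I would establish the two inclusions $\G(j_i)\subseteq\F\subseteq\F_7^1(j_i)$. For the first, every $W\in\W_k$ with $k\in j_i$ is essential, so $\Aut_\F(W)$ equals the uniquely determined group $\SL_2(7)$ used to build $\G(j_i)$ by Lemma~\ref{l:wine}(a), while $\Delta\le\Aut_\F(S)\subseteq\Mor(\F)$; thus all generators of $\G(j_i)$ are morphisms of $\F$. For the second, $\Aut_\F(S)$ stabilises $\E$ and therefore the subset $j_i$, so $\Aut_\F(S)\le\Stab_{\Aut_B(S)}(j_i)=B(j_i)$, whence every generator of $\F$ lies in $\langle\G(j_i),B(j_i)\rangle=\F_7^1(j_i)$. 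To upgrade this to a containment of $7'$-index I would identify the $O^{p'}$-data of $\F_7^1(j_i)$: each essential $W$ has $O^{p'}(\Aut_{\F_7^1(j_i)}(W))=\SL_2(7)=\Aut_\F(W)$, and since $B(j_i)/\Inn(S)$ is a $p'$-group we have $O^{p'}(\Aut_{\F_7^1(j_i)}(S))=\Inn(S)$. Using Lemma~\ref{l:wine} — every element of $N_{\SL_2(7)}(\Aut_S(W))$ extends to the scalar action on $S/\Phi(S)$ generating $\Delta$, independently of $W$ — I would identify $\Aut^0_{\F_7^1(j_i)}(S)$ with $\Delta$, so that $O^{p'}(\F_7^1(j_i))=\G(j_i)$. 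As $\G(j_i)\subseteq\F$ and $\Aut_\F(S)/\Delta\le B(j_i)/\Delta$ is a $p'$-group, $\F$ is of $7'$-index in $\F_7^1(j_i)$; when $\F_7^1(j_i)$ is saturated this is exactly the correspondence of Theorem~\ref{Op'}, matching $\F$ to the subgroup $\Aut_\F(S)/\Delta$ of $\Gamma_{7'}(\F_7^1(j_i))=B(j_i)/\Delta$. I expect the identification $\Aut^0_{\F_7^1(j_i)}(S)=\Delta$ to be the main obstacle, since it is the one point requiring care about which automorphisms of $S$ actually arise as restrictions of $O^{p'}$-morphisms.

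Finally, for the non-isomorphism statement, suppose the systems $\F_7^1(j_i)$ and $\F_7^1(j_{i'})$ are saturated and that $\F_7^1(j_i)^\alpha=\F_7^1(j_{i'})$ for some $\alpha\in\Aut(S)$. Then $\alpha$ carries the essential subgroups of the first system, namely those $W\in\W_k$ with $k\in j_i$, onto those of the second. Since $\Phi(S)$ is characteristic, $\alpha$ induces a permutation of $\X\cong I$ sending the slope-set $j_i$ to $j_{i'}$. By Lemma~\ref{l:liftnorm}, $\Aut(S)=\Aut_B(S)\,C_{\Aut(S)}(S/\Phi(S))$, and the second factor acts trivially on $S/\Phi(S)$ and so fixes every member of $\X$; hence the induced permutation of $\X$ agrees with that of an element of $\Aut_B(S)$, that is, with multiplication by some $\mu\in\mathbb F_7^\times$. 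Thus $\mu j_i=j_{i'}$, so $j_i$ and $j_{i'}$ lie in a common $\Aut_B(S)$-orbit; being distinct orbit representatives from Notation~\ref{not}, they must coincide. Hence distinct representatives yield pairwise non-isomorphic saturated fusion systems, as claimed.
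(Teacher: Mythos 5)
Your proposal is correct and follows essentially the same route as the paper, whose proof simply defers to the discussion preceding the theorem: parametrising $\E\cap\W$ by a subset of $I$ up to $\Aut_B(S)$-conjugacy, pinning down the generators via Alperin's Theorem and Lemma~\ref{l:wine}, invoking Theorem~\ref{Op'} for the $7'$-index statement, and using the induced action on $\X\cong I$ (via Lemma~\ref{l:liftnorm}) to rule out isomorphisms. You in fact supply more detail than the paper does, notably the identification $O^{7'}(\F_7^1(j_i))=\G(j_i)$ and the explicit reduction of an arbitrary $\alpha\in\Aut(S)$ to multiplication by some $\mu\in\mathbb F_7^\times$ on $\X$.
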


\begin{proof} The claim follows from the previous discussion.
\end{proof}

We remark that the set of $\F_7^1(j_i)$-essential subgroups in $\W$ is exactly $\bigcup\limits_{k \in j_i} \W_k$.

\begin{Thm}\label{case r in e} Suppose $R \in \E$ and $Q \not \in \E$. Then
$\Aut_\F(R) $ is uniquely determined and either
\begin{enumerate}
\item $\Out_\F(R) \cong \GL_2(7)$, $\Out_\F(S)=\Out_B(S) \cong 6 \times 6$ and $\Aut_\F(S)$ acts transitively on $\W$; or
\item $\Out_\F(R) \sim  \SL_2(7).2$, $\Out_\F(S) \cong 6 \times 2$ is uniquely determined in $\Out_B(S)$ containing $\Delta$ and $\Aut_\F(S)$ has three orbits each of length two on  $\W$.
\end{enumerate}
In particular, $\F$ is  isomorphic to either $\F_7^2(1)$, $\F_7^2(2)$, $\F_7^2(3)$ or $O^{7'}(\F_7^2(3))$.
\end{Thm}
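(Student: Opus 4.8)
The plan is to determine $\Out_\F(R)$ and $\Out_\F(S)$, show that $\Aut_\F(R)$ is then forced as a subgroup of $\Aut(R)$, and finally recover $\F$ via Alperin's Theorem~\ref{t:alp}. Since $R\in\E$, Lemma~\ref{l:AFR1} gives a faithful embedding $\Out_\F(R)\hookrightarrow\GL(R/\Phi(R))=\GL_2(7)$ whose $O^{7'}$-part maps onto $\SL_2(7)=\ker(\det)$; as $\GL_2(7)/\SL_2(7)\cong\mathbb F_7^\times$ is cyclic of order $6$, there are exactly four candidates for $\Out_\F(R)$, namely the four determinant-preimages $\det^{-1}(D)$ for $D\le\mathbb F_7^\times$. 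Because $\E\cap\W\neq\emptyset$ I fix $W\in\E\cap\W$ and apply Lemma~\ref{l:wine} to produce $\Delta=N_{\Aut_\F(S)}(W)\Inn(S)$ with $\Delta/\Inn(S)$ cyclic of order $6$ acting as the diagonal scalars $\{(\mu,\mu)\}$ on $S/\Phi(S)$; after the standing normalization $\Aut_\F(S)\le\Aut_B(S)$ this gives $\Delta/\Inn(S)\le\Out_\F(S)\le\Out_B(S)\cong 6\times 6$.

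The decisive step is a restriction computation to $R$. Since $R$ is characteristic in $S$ (Lemma~\ref{l:charz}(d)) and $N_S(R)=S$, saturation forces every element of $N_{\Aut_\F(R)}(\Aut_S(R))$ to extend to $\Aut_\F(S)$; reducing modulo the Sylow part $\Out_S(R)$, which is supplied by $\Inn(S)$, the induced map $\Out_\F(S)\to\bar T$ onto the torus $\bar T=N_{\Out_\F(R)}(\Out_S(R))/\Out_S(R)$ is surjective. Using the action formula (\ref{action}) I would read off the eigenvalues of a diagonal element of $\Out_B(S)$ on $R/\Phi(R)=\langle x_1,x_3\rangle Z_3/Z_3$, obtaining a faithful isomorphism $\rho\colon\Out_B(S)\xrightarrow{\sim}T$ onto the diagonal torus $T$ of $\GL_2(7)$ given by $(\mu_1,\mu_2)\mapsto\operatorname{diag}(\mu_1,\mu_1\mu_2)$ (equivalently checked in {\sc Magma}, $p=7$ being fixed). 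Hence $\rho(\Out_\F(S))=\bar T=\{\operatorname{diag}(a,b)\mid ab\in D\}$ with $D=\det(\Out_\F(R))$, so $\Out_\F(S)=\rho^{-1}(\bar T)$ is determined by $\Out_\F(R)$. Now $\rho(\Delta)=\{\operatorname{diag}(\mu,\mu^2)\mid\mu\in\mathbb F_7^\times\}$ has determinant $\mu^3$, which ranges over $\{\pm1\}$, so $\Delta\le\Out_\F(S)$ forces $\{\pm1\}\subseteq D$. Among the four subgroups of $\mathbb F_7^\times$ only $\{\pm1\}$ and $\mathbb F_7^\times$ contain $\{\pm1\}$ — the squares $\{1,2,4\}$ do not contain $-1$ — so $\Out_\F(R)\cong\GL_2(7)$ or $\Out_\F(R)\sim\SL_2(7).2$. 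I expect this determinant bookkeeping to be the crux: the point is that $\Delta$ restricts to $R$ as the \emph{non-scalar} torus element $\operatorname{diag}(\mu,\mu^2)$, the non-scalarity arising because $x_3\in\Phi(S)\setminus\Phi(R)$, and it is exactly this that separates the two surviving overgroups of $\SL_2(7)$ from the excluded $\SL_2(7)$ and $\SL_2(7).3$.

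The remaining assertions follow by assembling standard facts. If $\Out_\F(R)\cong\GL_2(7)$ then $\bar T=T$, so $\Out_\F(S)=\Out_B(S)\cong 6\times 6$; since $\Out_B(S)$ acts on the six lines of $\X$ through the surjective character $(\mu_1,\mu_2)\mapsto\mu_2\mu_1^{-1}$, Lemma~\ref{l:36outfs} gives that $\Aut_\F(S)$ is transitive on $\W$. If $\Out_\F(R)\sim\SL_2(7).2$ then $D=\{\pm1\}$, so $\bar T\cong 6\times 2$ has order $12$, whence $\Out_\F(S)=\rho^{-1}(\bar T)\cong 6\times 2$ is the unique subgroup of $\Out_B(S)$ containing $\Delta$ with this torus, the character above has image $\{\pm1\}$, and Lemma~\ref{l:36outfs} yields three orbits of length two on $\W$. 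In either case $\Aut_\F(R)$ is pinned down as a subgroup of $\Aut(R)$ exactly as in Lemma~\ref{l:afrunique}: with $\overline Y=N_{\Out_\F(R)}(\Out_S(R))$ (known, since its torus is $\rho(\Out_\F(S))$) and $\overline T=\Out_S(R)$ one checks $|C_{\overline Y}(\overline T)|=42$ or $14$ respectively, both greater than $2$, and applies Lemma~\ref{l:runique}.

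Finally I would match $\F$ to the tabulated systems. By Theorem~\ref{t:alp}, $\F$ is generated by $\Aut_\F(S)$, $\Aut_\F(R)$ and the groups $\Aut_\F(W)\cong\SL_2(7)$, $W\in\E\cap\W$ (Lemma~\ref{l:wine}(a)); as all of these are now determined once the $\Aut_\F(S)$-invariant set $\E\cap\W$ is specified, $\F$ is determined up to isomorphism by the pair $(\Out_\F(R),\E\cap\W)$. When $\Out_\F(R)\cong\GL_2(7)$, transitivity forces $\E\cap\W=\W$, giving the single system $\F_7^2(3)$. When $\Out_\F(R)\sim\SL_2(7).2$, $\E\cap\W$ is a nonempty union of the three $\F$-conjugacy classes in $\W$ (three because $36/|\Out_\F(S)|=3$ by Lemma~\ref{l:36outfs}), hence consists of one, two or three of them; since $\Out_B(S)$ permutes these three classes transitively (through the character above), the numbers one, two and three yield three pairwise non-isomorphic systems, namely $\F_7^2(1)$, $\F_7^2(2)$ and the all-classes system, the last being identified with the $7'$-index subsystem $O^{7'}(\F_7^2(3))$ via $\Gamma_{7'}(\F_7^2(3))=3$ and Theorem~\ref{Op'}. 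This exhausts the four possibilities of the statement.
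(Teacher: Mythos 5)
Your proposal is correct and its skeleton coincides with the paper's: both arguments hinge on restricting $\Delta$ to $R$, computing that its image on $R/\Phi(R)$ is the non-scalar torus $\{\operatorname{diag}(\lambda,\lambda^2)\}$ with determinant $\lambda^3$ ranging over the cubes $\{\pm1\}\le\mathbb F_7^\times$, and concluding that $\Out_\F(R)$ is one of the two overgroups of $\SL_2(7).2$ in $\GL_2(7)$; the subsequent determination of $\Out_\F(S)$ by lifting, the orbit count on $\W$ via Lemma~\ref{l:36outfs}, and the enumeration of the admissible sets $\E\cap\W$ up to $\Aut_B(S)$-conjugacy are likewise the same. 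The one genuine divergence is the uniqueness of $\Aut_\F(R)$ inside $\Aut(R)$: the paper establishes this by a {\sc Magma} computation (there is a unique $X\le\Aut(R)$ with $X/\Inn(R)\cong\SL_2(7)$ normalized by $\Aut_S(R)\widetilde\Delta$), whereas you invoke Lemma~\ref{l:runique} with $\overline Y=N_{\Out_\F(R)}(\Out_S(R))$ and $\overline T=\Out_S(R)$, checking $|C_{\overline Y}(\overline T)|=42$ or $14>2$ (both values are correct: they are $7\cdot|Z(\GL_2(7))|$ and $7\cdot|\{\pm I\}|$ respectively, since $-1$ is a non-square mod $7$). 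This mirrors the paper's own Lemma~\ref{l:afrunique} from the case $\E\subseteq\{Q,R\}$ and buys a computer-free argument; the paper's computation buys the slightly stronger explicit statement that $N_{\Aut(R)}(X)/\Inn(R)\cong\GL_2(7)$, but your route suffices for the theorem as stated. Your repackaging of the first step as ``which of the four determinant subgroups $D\le\mathbb F_7^\times$ contain the cubes'' is only a cosmetic variant of the paper's ``$\langle\SL_2(7),\widetilde\Delta\rangle$ has index $3$''.
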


\begin{proof} Let $W \in \W \cap \E$ and $\widetilde \Delta$ represent the subgroup of $\Aut_\F(R)$ obtained by restricting the morphisms in  $\Delta$ to $R$.
By Lemma~\ref{l:wine}, $\widetilde \Delta$ is generated by $\Aut_S(R)$ together with restrictions to $R$ of the elements $c_d$ where
$$d= \left({\lambda}, \left( \begin{smallmatrix}
\lambda & 0    \\
0 & 1 \\  \end{smallmatrix} \right)\right)$$ with $\lambda \in \mathbb{F}_7^\times$.
We calculate that  $\widetilde \Delta$ is cyclic of order $6$ and that on $R/\Phi(R)$ we can select a basis so that such elements act as diagonal matrices $\mathrm{diag} (\lambda ^2,\lambda)$  and so have determinant $\lambda^3$  which is a cube. Recall from Lemma~\ref{l:wine} (b) that $\widetilde \Delta$ is independent of the choice of $W\in \W$.   Thus $\Out_\F(R) \ge \langle O^{7'}(\Out_\F(R)), \widetilde \Delta\rangle \cong \SL_2(7).2$, the unique subgroup of $\GL_2(7)$ of index $3$. In addition, as $\widetilde \Delta$ acts as scalars on $S/\Phi(S)$, $\Out_S(R)$ admits $\widetilde \Delta$ faithfully. Now calculating in $\Aut(R)$ using  \textsc{Magma} \cite{magma} for example, we see that there is a unique subgroup  $X$ of $\Aut(R)$ containing $\Inn(R) $ with $X/\Inn(R) \cong \SL_2(7)$ which is normalized by   $\Aut_S(R)\widetilde \Delta$. Furthermore, $N_{\Aut(R)}(X)/\Inn(R) \cong \GL_2(7)$.  This means that $\Out_\F(R) \cong \SL_2(7).2$ or $\Out_\F(R)\cong \GL_2(7)$ and $\Aut_\F(R)$ is uniquely determined as a subgroup of $\Aut(R)$. In the respective cases we have $N_{\Aut_\F(R)}(\Aut_S(R))/\Aut_S(R) \cong 6 \times 2$ or $6\times 6$. The extension  of the morphisms in  this subgroup to $\Aut_\F(S)$   determine $\Aut_\F(S)$ to be  either the unique subgroup of index $3$ in $\Aut_B(S)$ containing $\Delta$ or $\Aut_B(S)$.  In particular, either  $\Aut_\F(S)$ has three orbits of length $2$ on $\X$ with representative of the first orbit being given by $2_3$ as in Notation~\ref{not} or $\Aut_\F(S)$ operates transitively on $\mathcal W$.

Hence,  if $\Out_\F(R)\cong \GL_2(7)$, then $\Aut_\F(S)= \Aut_B(S)$ is transitive on $\W$ and we have no choices to make. Thus in this case  $$\F= \langle \Aut_\F(R), \Aut_\F(W), \Aut_\F(S)\rangle$$ and this is the fusion system $\F_7^2(3)$.
Suppose that $\Aut_\F(S)$ has index $3$ in $\Aut_B(S)$. In this case, $\Aut_\F(R) \cong \SL_2(7).2.$ and, setting $\W_{k,\ell}= W_k\cup \W_\ell$,  the $\Aut_\F(S)$ orbits on $\W$ are $\W_{1,6}$, $\W_{3,4}$ and $\W_{2,5}$.  Hence, up to altering $\F$ by an element of $\Aut_B(S)$, we may suppose that one of the following holds
 \begin{eqnarray*}
 \F&=& \langle \Aut_\F(R), \Aut_\F(S), \Aut_\F(W)\mid W \in \W_{1,6}\rangle;\\
 \F&=& \langle \Aut_\F(R), \Aut_\F(S), \Aut_\F(W)\mid W \in \W_{1,6}\cup \W_{3,4}\rangle; \text{ or }\\
 \F&=&\langle \Aut_\F(R), \Aut_\F(S), \Aut_\F(W)\mid W \in \W\rangle.\\
  \end{eqnarray*}
  These fusion systems are $\F_7^2(1)$, $\F_7^2(2)$ and $O^{7'}(\F_7^2(3))$ respectively.
\end{proof}
We can now complete the proof of Theorem \ref{t:main}:

\begin{proof}[Proof of Theorem \ref{t:main}]
This follows from Theorem \ref{t:case esubsetqr} in the case $\W \cap \E = \emptyset$ and from Theorems \ref{t:7andQ}, \ref{case r in e} and \ref{t: case esubsetw} in the case $\W \cap \E \neq \emptyset$.
\end{proof}

\section{Completing the proof of Theorem \ref{t:main1} for $p\ge 5$}\label{s:final}

In order to complete the proof of Theorem \ref{t:main1}, it remains to show that each of the fusion systems described in Table \ref{table1} exists and is saturated, and to establish which ones are realizable as fusion systems of finite groups.

\begin{Thm}\label{They are saturated}
Each of the fusion systems listed in Theorem~\ref{t:main} exists and is saturated.
\end{Thm}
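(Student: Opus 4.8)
The plan is to partition the systems appearing in Theorem~\ref{t:main} into three groups according to the mechanism that establishes saturation. First, the systems carrying an entry in the ``Example'' column of Table~\ref{table1} --- those of $\Gee_2(p)$, $\Ly$, $\Aut(\HN)$, $\BM$ (all at $p=5$) and $\Mo$ (at $p=7$) --- are by definition of the form $\F_S(G)$ for a finite group $G$ and hence exist and are saturated. Secondly, Theorem~\ref{t:main} already phrases the classification in terms of subsystems of $p'$-index, so once a given system is known to be saturated, Theorem~\ref{Op'} supplies a bijection between its saturated subsystems of $p'$-index and the subgroups of $\Gamma_{p'}(\F)$ recorded in the final column; every such subsystem therefore exists and is automatically saturated. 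It remains to construct, and to prove saturated, the exotic systems, and for this I would deploy the two tools Theorem~\ref{t:satamalg} and Theorem~\ref{t:proofsat}.

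The only exotic system whose essential set is contained in $\{Q,R\}$ is $\F_7^0$ (Theorem~\ref{t:case esubsetqr}), and this is the one genuinely amalgam-based construction. By the Model Theorem \cite[Theorem I.4.9]{AKO} the constrained local systems $N_\F(Q)$ and $N_\F(R)$, whose automizers are pinned down by Lemma~\ref{l:possforAQ} and Lemma~\ref{l:AFR1}, are realized by finite groups $M_Q$ and $M_R$, each having $S$ as a Sylow $p$-subgroup and each containing a common subgroup $M_{QR}$ obtained by identifying $N_{M_Q}(S)$ with $N_{M_R}(S)$ via the isomorphism that is the identity on $S$ and induces $\Aut_\F(S)$; this identification is exactly the existence statement for the amalgam. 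Setting $G=M_Q*_{M_{QR}}M_R$, Alperin's Theorem~\ref{t:alp} together with \cite[Theorem 1]{Robinson} identifies $\F_S(G)$ with $\F_7^0$, and I would then read off saturation from Theorem~\ref{t:satamalg}: hypothesis~(b) reduces to checking that the only essential subgroups of $M_Q$ and $M_R$, namely $Q$ and $R$, are $\F_S(G)$-centric, which is immediate from the structure of $S$ in Section~\ref{s:constructs}, while hypothesis~(a) is the real point (see the last paragraph).

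For all the remaining exotic systems --- the families $\F_7^1(j_i)$, $\F_7^2(\cdot)$, $\F_7^3$, $\F_7^4$, $\F_7^5$, whose essential sets meet $\W$ --- I would adjoin the automorphisms of the $\W$-essentials to an already-saturated base $\F_0$ with $\W\cap\E_{\F_0}=\emptyset$, via Theorem~\ref{t:proofsat}. The base is realizable in each case: for $\F_7^1(j_i)$ take $\F_0=\F_S(S{:}D)$ with $D\le\Aut_B(S)$ the relevant $p'$-group of diagonal automorphisms; for $\F_7^2(\cdot)$ take $\F_0=N_\F(R)=\F_S(M_R)$; for $\F_7^3$ and $\F_7^4$ take $\F_0=N_\F(Q)=\F_S(M_Q)$; and for $\F_7^5$ take $\F_0$ to be the fusion system of $\Gee_2(7)$, the unique system with $\E=\{Q,R\}$ and $\Out_\F(Q)\cong\Out_\F(R)\cong\GL_2(7)$. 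Being realized by a finite group, each $\F_0$ is saturated. One then chooses the $W_i$ as fully $\F_0$-normalized representatives of the $\Delta$-orbits in $\W\cap\E$ selected by $j_i$ (Lemma~\ref{l:36outfs}), with $\Delta_i=\Aut_\F(W_i)\cong\SL_2(7)$ as in Lemma~\ref{l:wine}(a). The hypotheses of Theorem~\ref{t:proofsat} are then routine: condition~(b) holds because every proper subgroup of the order-$49$ group $W_i$ has order at most $7$ and so cannot be $\F_0$-essential; condition~(a) holds because such an order-$7$ subgroup is never $\F_0$-centric (so $W_i$ is minimal among $\F$-centric subgroups) while $W_i$ itself is $\F_0$-centric since a short computation gives $C_S(W_i)=W_i$; and, since the $\F_0$-automorphisms of $W_i$ have determinant $1$ (as in the proof of Lemma~\ref{l:wine}), $\Out_{\F_0}(W_i)$ is the Borel subgroup $N_{\SL_2(7)}(\Out_S(W_i))$, which is strongly $7$-embedded in $\tilde\Delta_i\cong\SL_2(7)$. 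That the resulting systems are pairwise non-isomorphic is the combinatorics of Notation~\ref{not} already recorded in Theorem~\ref{t: case esubsetw} and Theorem~\ref{case r in e}; when $p=7$ these last verifications can be confirmed by direct computation in the fixed group $S$ using {\sc Magma} \cite{magma}.

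The main obstacle is hypothesis~(a) of Theorem~\ref{t:satamalg} in the construction of $\F_7^0$: one must show that for every $\F_S(G)$-centric subgroup $P\le S$ the fixed subtree $\Gamma^P$ of the coset graph is finite. Following the philosophy of the proof of Theorem~\ref{t:satamalg}, this amounts to showing that no such $P$ stabilizes an infinite ray, equivalently that $N_G(P)$ is finite. The delicate feature is that a centric $P$ necessarily contains $Z=Z(S)$ and may be normalized by elements that translate along the tree, so one must exploit the precise module structure of $Q/Z$ and of $Z_4$ under $\Aut_\F(Q)$ and $\Aut_\F(R)$ (Lemma~\ref{l:charz}, Lemma~\ref{l:autR}) to bound the diameter of $\Gamma^P$. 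Once this geometric input is secured, the centricity checks for hypothesis~(b) and the remaining hypotheses of Theorem~\ref{t:proofsat} are mechanical, and Theorem~\ref{They are saturated} follows by assembling the three cases above.
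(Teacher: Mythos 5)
Your overall architecture coincides with the paper's: the realizable systems are dispatched by exhibiting the groups in the ``Example'' column, the systems with $\E\cap\W\neq\emptyset$ are obtained from a saturated base $\F_0=\langle \Aut_\F(P)\mid P\in\E\setminus\W\rangle$ by adjoining the $\SL_2(7)$'s on the rank-two subgroups in $\W$ via Theorem~\ref{t:proofsat} (and your verification of hypotheses (a) and (b) there — minimality among centrics because any centric subgroup properly contains $Z(S)$, and no proper subgroup of a $W_i$ can be essential — is exactly the paper's), and the subsystems of $p'$-index are handled by Theorem~\ref{Op'}. The one system requiring a genuinely new construction is $\F_7^0$, realized as $\F_S(G_1*_{G_{12}}G_2)$ with $G_1,G_2$ models for $N_\F(Q),N_\F(R)$, and saturation read off from Theorem~\ref{t:satamalg}.

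The gap is precisely the step you defer: hypothesis (a) of Theorem~\ref{t:satamalg}, the finiteness of $\Gamma^A$ for every $\F_S(G)$-centric $A$. You correctly identify this as the main obstacle and sketch a strategy (``bound the diameter of $\Gamma^A$ using the module structure''), but you do not carry it out, and it is the only non-routine verification in the entire theorem. The paper's argument is short but specific, and you should supply something equivalent: suppose $A\le S_{\alpha\beta}$ is centric and fixes an infinite path $\alpha,\beta,\gamma,\delta,\epsilon,\zeta,\eta,\dots$ with $G_\beta$ of type $Q$. Any $7$-subgroup fixing an arc of length $2$ through a vertex $\delta$ lies in $O_7(G_\delta)$, so $A\le Q_\beta\cap R_\gamma\cap Q_\delta=\Phi(R_\gamma)$; since $Z(Q_\beta)\le A$ and $Z(Q_\delta)\le A$ (centricity plus $Z(Q_\delta)\le Z(R_\gamma)\le S_{\alpha\beta}$), one gets $Z(R_\gamma)\le A\le\Phi(R_\gamma)$, and as $\Phi(R_\gamma)$ is abelian and lies in $S_{\alpha\beta}$, centricity forces $A=\Phi(R_\gamma)$. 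Repeating two steps along the path gives $A=\Phi(R_\epsilon)$, whence $A$ is normalized by $\langle G_{\gamma\delta},G_{\delta\epsilon}\rangle=G_\delta$ and then by $\langle G_\gamma,G_\delta\rangle=G$, which is absurd in a nontrivial free amalgamated product. Without this (or an equivalent) computation the saturation of $\F_7^0$ — and hence of $\F_7^5$ and $\F_7^6$ if one were to build them over that base, though the paper in fact realizes $\F_7^6$ in the Monster — is not established. A secondary, minor point: the existence of the amalgam itself needs a word; the paper gets it for free by observing that the configuration $G_1\ge G_{12}\le G_2$ occurs inside the Monster, whereas your identification of $N_{M_Q}(S)$ with $N_{M_R}(S)$ should be justified by the uniqueness clause of the Model Theorem applied to $N_\F(S)$.
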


\begin{proof} Examining the list of maximal subgroups of $\Gee_2(p),$ $\Ly$, $\Aut(\HN)$, $\BM$  and $\M$ yields  that the fusion systems $\F_{\Gee_2(p)}(S),$ $\F_5^0,$ $\F_5^1$, $\F_5^2$ and $\F_7^6$ are respectively isomorphic to the fusion systems of $\Gee_2(p),$ $\Ly$, $\Aut(\HN)$,  $\BM$ and $\M$ on their Sylow $p$-subgroups. In particular, each of these fusion systems is saturated. Let $\F=\F_7^0$. Then $\E= \{Q,R\} $ and $N_\F(Q)$ and $N_\F(R)$ are saturated fusion systems on $S$ with $O_7(N_\F(Q))=Q$ and $O_7(N_\F(R))=R$. Hence by \cite[Theorem I.4.9]{AKO} there exist  finite groups $G_1$ and $G_2$ which realize $N_\F(Q)$ and $N_\F(R)$ respectively.
Moreover $O_p(G_1)=Q$ and  $O_p(G_2)=R$ and $Q$ and $R$ are self-centralizing in these groups.
 In addition, we may realize $N_\F(S)$ by $G_{12}$ which may be embedded into both $G_1$ and $G_2$. Note that this configuration  appears in the Monster sporadic simple group and so exists.
  Let $G^*$ be the free amalgamated product  $G_1*_{G_{12}}G_2$ and let $\Gamma$  be the coset graph $\Gamma(G^*, G_1, G_2, G_{12})$. Since the only $\F_{S}(G_1)$-essential subgroup is $Q$ and the only $\F_{S}(G_2)$-essential subgroup is $R$, to invoke  Theorem \ref{t:satamalg}, we only have to demonstrate that for any $\F_S(S)$-centric subgroup $A$, the fixed vertex set $\Gamma^A$ is finite.

For adjacent vertices $\alpha, \beta \in \Gamma$ with $\alpha$ a coset of $G_1$ and $\beta$ a coset of $G_2$, we set $Q_\alpha= O_7(G_\alpha)$ and $R_\beta = O_7(G_\beta)$. Thus $Q_\alpha$ is $G^*$-conjugate to $Q$ and $R_\beta$ is $G^*$-conjugate to $R$. We also set $S_{\alpha \beta}= O_7(G_{\alpha \beta})$ where $G_{\alpha\beta}= G_\alpha \cap G_\beta$.  Notice that $G_{\alpha \beta}= N_{G_\alpha}(S_{\a\b})= N_{G_\b}(S_{\a\b}) $ and $G_{\a\b}$ is a maximal subgroup of $G_\a$ and $G_\b$.

Assume that $A \le S_{\alpha \beta}$ is $S_{\alpha \beta}$-centric.  Seeking a contradiction we further assume that  $\Gamma^A$ is infinite.  Notice first that any $7$-group which stabilizes an arc $ \gamma, \delta, \epsilon$ of length $2$ is contained in $S_{\gamma\delta}\cap S_{\delta\epsilon }= O_7(G_\delta) $ which is one of $R_\delta $ or $Q_\delta$. Since $\Gamma^A$ is infinite, we may consider a path emanating from the arc $\alpha, \beta$ of infinite length. We choose notation so that $G_\alpha = G_2$ and $G_\beta= G_1$  and consider  a path $$\alpha, \beta, \gamma, \delta, \epsilon, \zeta, \eta$$ which is fixed by $A$.
Since $A$ stabilizes the arc $\alpha, \beta, \gamma, \delta$, $A$ is contained in $Q_\b\cap R_\gamma$.  In particular, $A \le Q_\beta$ and, as $A$ is $S_{\alpha\beta}$-centric,  $Z(Q_\b)= Z(S_{\alpha \beta}) \le A$. Thus $$Q_\beta\text{ normalizes  }A.$$
Notice that $Z(Q_\delta) \le Z(R_\gamma) \le Q_\beta \le S_{\alpha\beta}$. Therefore, using the fact that $A$ fixes the arc $\alpha, \beta, \gamma, \delta, \epsilon$, we deduce first that $A \le Q_\delta$ and second that $Z(Q_\delta) \le A$.  Now we have $$Z(R_\gamma)= Z(Q_\beta)Z(Q_\gamma) \le A\le Q_\b \cap R_\gamma \cap Q_\delta= \Phi(R_\gamma).$$ Since $\Phi(R_\gamma)$ is abelian and $\Phi(R_\gamma) \le Q_\beta \le S_{\alpha \beta}$, we now see that $$A= \Phi(R_\gamma)$$ because $A$ is $S_{\alpha \b}$-centric. In particular, $A$ is normalized by $G_\gamma$. Since $A$ fixes the arc $ \gamma, \delta, \epsilon, \zeta, \eta$ we have $$A \le Q_\delta \cap R_\epsilon \cap Q_\zeta=\Phi(R_\epsilon).$$ Since $|A| = 7^3$, we must have $A = \Phi(R_\epsilon)$. Hence $\Phi(R_\epsilon)= \Phi(R_\gamma)$ and this subgroup is normalized by $\langle G_{\gamma\delta}, G_{\delta \epsilon}\rangle = G_\delta$.
But then $A=\Phi(R_\gamma)$ is normalized by $\langle G_\gamma,G_\delta\rangle= G^*$ which is absurd. We conclude that $\Gamma^A$ is finite and thus that  $\F_7^0$ is saturated.

We are left only with the cases where $\E \cap \W \neq \emptyset$ and $\F$ is isomorphic to $\F_7^1(j_i)$, $\F_7^2(j)$ or  $\F_7^i$ with $3 \le i \le 5$.  Let $\E_0=\E \backslash (\E \cap \W)$ (so $\E_0 \subseteq \{Q,R\}$) and define $\F_0=\langle \Aut_\F(P) \mid P\in \E_0 \rangle$. Then $\F_0$ is saturated because in each case it is the fusion system of a finite group. We intend to apply Lemma \ref{t:proofsat} with $\{W_1,W_2, \ldots, W_m\}$ a set of representatives for the set of $\F_0$-conjugacy classes of subgroups in $\E \cap \W$. Observe that:
\begin{itemize}
\item[-] $W_i$ is $\F_0$-centric and minimal under inclusion amongst all $\F$-centric subgroups (note that any $\F$-centric subgroup must properly contain $Z(S)$);
\item [-] no proper subgroup of $W_i$ is $\F_0$-essential.
\end{itemize}
Hence the hypotheses of Theorem \ref{t:proofsat} are satisfied and $\F$ is saturated and exists as the fusion system of a tree of groups. Finally, we note that by Theorem~\ref{Op'}, the fusion systems corresponding to subgroups of $\Gamma_{7'}(\F)$ are also saturated.
\end{proof}

\begin{Thm}\label{Some are exotic}
Let $p \geq 5$, $S$ be a Sylow $p$-subgroup of $\Gee_2(p)$ and $\F$ be a saturated fusion system on $S$ with $O_p(\F)=1$. Then either $\F$ is isomorphic to the fusion system of $\Gee_2(p)$ or $p \le 7$ and one of the following holds:
\begin{itemize}
\item[(a)] $\F$ is exotic;
\item[(b)] $\F$ is isomorphic to the fusion system of one of the simple groups listed in Theorem \ref{t:simpfus} parts (a) and (b);
\item[(c)] $\F=\F_5^1$ and $\F$ is isomorphic to the fusion system of $\Aut(\HN)$.
\end{itemize}
\end{Thm}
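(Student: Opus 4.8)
The plan is to combine the classification up to isomorphism in Theorem~\ref{t:main} with the existence and group realizations recorded in Theorem~\ref{They are saturated}, and then to prove that the only systems realizable by a finite group are those already identified there. By Theorem~\ref{t:main}, either $\F\cong\F_{\Gee_2(p)}(S)$ — realized by the simple group $\Gee_2(p)$, giving the first alternative — or $p\in\{5,7\}$ and $\F$ is isomorphic to a subsystem of $p'$-index in one of the entries of Table~\ref{table1}. By Theorem~\ref{They are saturated} the systems $\F_5^0,\F_5^1,\F_5^2,\F_7^6$ are the fusion systems of $\Ly$, $\Aut(\HN)$, $\BM$, $\Mo$; together with the index-$2$ subsystem $O^{5'}(\F_5^1)$, which is the fusion system of $\HN$, these account for alternatives (b) and (c). Thus the whole content is to show that \emph{every other} entry of Table~\ref{table1} (and every $p'$-subsystem not already listed) is exotic, i.e. falls under (a).

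So suppose for contradiction that such an $\F$ is realized, $\F=\F_S(G)$ with $S\in\Syl_p(G)$. Replacing $G$ by $G/O_{p'}(G)$ leaves $\F$ unchanged, so I may assume $O_{p'}(G)=1$; and $O_p(G)\le O_p(\F)=1$, whence $F^*(G)=E(G)$ is a product of components, none of which can be a $p'$-group (its normal closure would be a nontrivial normal $p'$-subgroup). I would first show $E(G)$ is a single simple group. If the components fell into two distinct $S$-orbits, their products would give two nontrivial normal subgroups of $S$ meeting trivially, contradicting the fact that the maximal-class group $S$ has $Z(S)$ as its unique minimal normal subgroup. If instead there were one $S$-orbit of length $p^a>1$, then $P=S\cap E(G)$ would contain the abelian base of a wreath product; for $p\ge7$ this forces $|P|\ge p^{p}>p^6$, and for $p=5$ the only survivor is $P\cong 5^5$ abelian and normal in $S$, contradicting the bound on abelian normal subgroups in Lemma~\ref{l:charz}(c). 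Hence $E(G)=L_1$ is quasisimple; its centre is characteristic with $p'$-part in $O_{p'}(G)=1$ and $p$-part in $O_p(G)=1$, so $L=L_1$ is simple, $C_G(L)=1$, and $L\trianglelefteq G\le\Aut(L)$ is almost simple with $P=S\cap L\in\Syl_p(L)$ and $P\trianglelefteq S$.

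The main obstacle is to upgrade $P\trianglelefteq S$ to the equality $P=S$, so that Theorem~\ref{t:simpfus} can be applied to $L$. For this I would exploit that $P$ is strongly $\F$-closed with $Z(S)\le P$, and then let the essential subgroups do the work: by Theorem~\ref{t:esslist} the essentials lie in $\{Q,R\}\cup\W$, and in each case $\Out_\F(E)$ contains $\SL_2(p)$ (Lemmas~\ref{l:AFR1},~\ref{l:wine}) acting faithfully and irreducibly on $Z(R)=Z_2$, on $W$, and — when $Q\in\E$ — on $Q/Z$ (necessarily irreducibly, since the reducible case (e) of Lemma~\ref{l:possforAQ} forces $O_p(\F)\ne1$). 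Taking $\F$-conjugates of $Z(S)$ under these automorphisms and applying strong closure successively forces $Z_2\le P$, then $Q$ or some $W\le P$, and finally, because the $\SL_2(p)$ in $\Aut_\F(R)$ moves $Z_4=Q\cap R$ out of $Q$, that $P=S$. Consequently $S\in\Syl_p(L)$ is isomorphic to a Sylow $p$-subgroup of $\Gee_2(p)$, and Theorem~\ref{t:simpfus} gives $L\in\{\Gee_2(p),\BM,\HN,\Ly\}$ for $p=5$, $L\in\{\Gee_2(p),\Mo\}$ for $p=7$, and $L=\Gee_2(p)$ for $p>7$.

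To finish, note that for each such $L$ the group $\Out(L)$ has order prime to $p$ (it is trivial except for $\HN$, where it is $2$); since $S/P\hookrightarrow\Out(L)$ is a $p$-group it is trivial, so $S\le L$ and $G$ is one of $\Gee_2(p),\Ly,\BM,\Mo,\HN,\Aut(\HN)$. By Theorem~\ref{They are saturated} and the uniqueness statement in Theorem~\ref{t:main} (columns $3$–$6$ of Table~\ref{table1} determine $\F$ up to isomorphism), $\F_S(G)$ is then isomorphic to one of $\F_{\Gee_2(p)}$, $\F_5^0$, $\F_5^2$, $\F_7^6$, $O^{5'}(\F_5^1)$ or $\F_5^1$, contradicting the assumption that $\F$ was a different entry of Table~\ref{table1}. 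Hence $\F$ is exotic, which is alternative (a); assembling the cases yields the stated trichotomy. I expect the delicate point to be precisely the strong-closure argument that $P=S$, since without it Theorem~\ref{t:simpfus} cannot be brought to bear on $L$.
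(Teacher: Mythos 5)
Your overall strategy matches the paper's: reduce to $O_{p'}(G)=1$, show $S$ lies inside a normal simple subgroup, invoke Theorem~\ref{t:simpfus}, and match the finitely many almost simple candidates against Table~\ref{table1}. Your reduction via the components of $E(G)$ is a harmless reorganisation of the paper's argument, which instead takes a minimal normal subgroup $N$, proves $S\le N$ first, and only then deduces simplicity from $|Z(S)|=p$; the closing bookkeeping with $\Out(L)$ is also fine.

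The gap is in the step you yourself flagged as delicate: forcing $P=S\cap L=S$. Your mechanism for the last push is that ``the $\SL_2(p)$ in $\Aut_\F(R)$ moves $Z_4=Q\cap R$ out of $Q$'', but this is only available when $R\in\E$. For the thirteen systems $\F_7^1(j_i)$ one has $\E\subseteq\W$, so neither $\Aut_\F(R)$ nor $\Aut_\F(Q)$ carries an $\SL_2(7)$, and strong closure only yields $P\ge\langle W^S\rangle=W\Phi(S)$, a maximal subgroup of $S$. When $\E\cap\W$ meets at least two $\Delta$-orbits one can still finish, since two distinct maximal subgroups of $S$ then lie in $P$; but for $\F_7^1(1_1)$ the subgroup $W\Phi(S)$ is itself strongly $\F$-closed (every essential subgroup lies in the single orbit $\W_1$, all of whose members are contained in $W\Phi(S)$, and $\Aut_\F(S)$ normalises $W\Phi(S)$), so no strong-closure argument can rule out $P=W\Phi(S)$ --- and without $P=S$ you cannot bring Theorem~\ref{t:simpfus} to bear on $L$ at all. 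The paper closes exactly this case by exploiting normality of $N$ in $G$ rather than mere strong closure of $S\cap N$: it shows $(N\cap N_G(W))C_G(W)=N_G(W)$, so the whole of $\Aut_G(W)\cong\SL_2(7)$ is induced by $N$, and then applies Lemma~\ref{l:wine} to see that the resulting elements of $\Aut_{\F_S(NS)}(S)$ act as nontrivial scalars on $S/\Phi(S)$, whence $[S,\Out_{NS}(S)]=S$ and $S\le N$. Some argument of this kind is needed to complete your proof.
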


\begin{proof}
Suppose that $\F=\F_S(G)$ for some finite group $G$  with $S \in \Syl_p(G)$. Since $\F_S(G) = \F_{\overline S}(\overline G)$ where $\overline G= G/O_{p'}(G)$, we may suppose that  $O_{p'}(G)=1$. Let $N$ be a minimal normal subgroup of $G$. Then $1 \neq S \cap N$ is a normal subgroup of $S$ and so  $Z(S) \le N$.

If $W \in \W\cap \E$,  then $p=7$ and $W=\langle Z(S)^{\Aut_\F(W)}\rangle =\langle Z(S)^{M}\rangle\le N$ where $M= N_G(W)$. Hence $$WZ_4=W[W,Q]\le W[W,S] =\langle W^S\rangle\le N.$$ Thus $(N \cap M)C_G(W)/C_G(W)$ contains a Sylow $p$-subgroup  of $M/C_G(W)$ and so, as $N \cap M$ is normal in $M$ and $M/C_G(W) \cong \SL_2(7)$, $(N\cap M)C_G(W)=M$.  So $\F_S(NS)$ must  contain one of the fusion systems  $O^{7'}(\F_7^1(j))$ for $j \subseteq I $ as in Notation~\ref{not}. But by Lemma \ref{l:wine}, $\Out_{\F_S(NS)}(S)$ satisfies $[S,\Out_{NS}(S)]= S$ and this means that $S \le N$.

 If  $\E \cap \W=\emptyset$, then $\E=\{Q,R\}$ by Lemma~\ref{l53}. Thus  $Z_2= Z(R) \le \langle Z(S)^{\Aut_\F(R)} \rangle \le N$, and then $Q=\langle Z(R)^{\Aut_\F(Q)}\rangle \le N$. Now $Z_4\le N$ and so $R= \langle Z_4^{\Aut_\F(R)}\rangle \le N$.  Thus $S =QR\le N$.

 We have shown that for all the fusion systems under investigation, we have $S\in \Syl_p(N)$.  Plainly $N$ is non-abelian and so $N$ is a direct product of isomorphic non-abelian simple groups. Therefore, as $Z(S)$ has order $p$, we have that $N$ is simple and that $G$ is almost simple. Since $S \in \Syl_p(N)$, Theorem~\ref{t:simpfus} shows that either $N \cong \Gee_2(p)$ or $p \le 7$ and $N$ is one of the sporadic simple groups $\Ly$, $\HN$, $\BM$ or $\Mo$.   Furthermore, in all  cases except for $N \cong \HN$ we have $\Out(N)=1$ and so either $G=N$ or $G = \Aut(\HN)$. It is now straight forward to match fusion systems to groups and this proves the theorem.
\end{proof}

We now complete the proof of Theorem \ref{t:main1}:

\begin{proof}[Proof of Theorem \ref{t:main1} for $p\ge 5$]
This follows on combining Theorems \ref{t:main}, \ref{They are saturated} and \ref{Some are exotic}.
\end{proof}

\section{Fusion systems on a Sylow $3$-subgroup of $\Gee_2(3)$}
We classify all saturated fusion systems on $S$ where, in this section, $S$ is the group $U$ constructed in the appendix  in the case $\mathbb{F}=\mathbb{F}_3$. For $\alpha$ in the root system of $\Gee_2$, we use $x_\alpha$ to denote $x_\alpha (1)$.  Set $$Q_1=\langle x_\beta, x_{\alpha+\beta}, x_{\alpha+2\beta}, x_{\alpha+3\beta}, x_{2\alpha+3\beta} \rangle \mbox{ and } Q_2=\langle x_\alpha, x_{\alpha+\beta}, x_{\alpha+2\beta}, x_{\alpha+3\beta}, x_{2\alpha+3\beta} \rangle.  $$ In particular we note that $S$ has order $3^6$ and $Q_1$ and $Q_2$ have order $3^5.$

\begin{Lem}\label{l:magma}
Suppose that  $G=\Gee_2(3)$, $G_1=\Aut(G)$, $S \in \Syl_3(G)$, $B=N_G(S)$ and $B_1=N_{G_1}(S)$. Then \begin{itemize}
\item[(a)] $Q_1$ and $Q_2$ are isomorphic to $3^2 \times 3^{1+2}$ and have exponent $3$;
\item[(b)] $Q_1 \cup Q_2$ is the set of elements in $S$ of order dividing $3$;
\item[(c)] every element of $S \backslash Q_1 \cup Q_2$ has order $9$;
\item[(d)] if $M$ is a maximal subgroup of $S$ then either $M \in \{Q_1,Q_2\}$ or $M' \ge Z(S)$;
\item[(e)] $[Q_i,S,S] \not \le \Phi(Q_i)$ for $i=1,2$;
\item[(f)] $[Z(Q_i),S] \not \le \Phi(Q_i)$ for $i=1,2$;
\item[(g)] $|\Aut(S)|=2^3\cdot 3^{10}$ and a Sylow $2$-subgroup of $\Aut(S)$ is conjugate to a subgroup of $\Aut_{B_1}(S)$;
\item[(h)] if $t \in \Aut_B(S)$ has order $2$ then $C_{Q_i}(t)$ has order $3$ or $9$ for $i=1,2$.
\end{itemize}
\end{Lem}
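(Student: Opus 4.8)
The plan is to verify all eight parts by explicit computation in \textsc{Magma} \cite{magma}, after realizing $S$ concretely, since for $p=3$ there is only one group to consider. First I would construct $G=\Gee_2(3)$ as a matrix group over $\mathbb F_3$ (for instance via its $7$-dimensional representation), adjoin the graph automorphism to obtain $G_1=\Aut(G)$, fix $S\in\Syl_3(G)$, and set $B=N_G(S)$ and $B_1=N_{G_1}(S)$. The root elements $x_\gamma=x_\gamma(1)$ provided by the Appendix construction with $\mathbb F=\mathbb F_3$ then give explicit generators, and $Q_1,Q_2$ are recovered as the two maximal parabolic unipotent radicals listed in the statement. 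With these objects in hand, each part becomes a finite check.

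For parts (a)--(c) I would compute the power map on $S$ directly. Reading off the derived and centre subgroups of $Q_1,Q_2$ and checking that every element cubes to the identity identifies both as $3^2\times 3^{1+2}$ of exponent $3$, giving (a); enumerating $\{g\in S : g^3=1\}$ and confirming it equals $Q_1\cup Q_2$ gives (b), whence (c) is the complementary statement. Here one structural remark shortens the work: since $S/\Phi(S)$ is two-dimensional, $S$ has exactly four maximal subgroups, of which $Q_1,Q_2$ are two, so once (a) and (b) are in place it suffices to examine elements lying outside $Q_1\cup Q_2$. For part (d) I would list the four maximal subgroups and verify that the two which are not $Q_1$ or $Q_2$ are non-abelian with $M'\ge Z(S)$; this is precisely the containment needed later to keep them out of the list of essential subgroups.

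Parts (e) and (f) are single non-containment checks: I would compute $[Q_i,S,S]$, $[Z(Q_i),S]$ and $\Phi(Q_i)$ and confirm that neither $[Q_i,S,S]$ nor $[Z(Q_i),S]$ lies in $\Phi(Q_i)$. These are exactly the conditions that \emph{fail} the hypotheses of Lemma~\ref{l:usecrit}, so they must be recorded here. Parts (g) and (h) are the most computational: for (g) I would have \textsc{Magma} compute $\Aut(S)$ and read off $|\Aut(S)|=2^3\cdot 3^{10}$, so that a Sylow $2$-subgroup of $\Aut(S)$ has order $8$; to obtain the conjugacy statement, I would exhibit inside $\Aut_{B_1}(S)$ a subgroup of order $8$ (coming from the torus of order $(3-1)^2=4$ together with the graph automorphism of order $2$ carried by $B_1$) and observe that it is therefore a full Sylow $2$-subgroup of $\Aut(S)$, so by Sylow's theorem every Sylow $2$-subgroup of $\Aut(S)$ is conjugate into $\Aut_{B_1}(S)$. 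Finally, for (h) I would fix an involution $t\in\Aut_B(S)$ and compute the fixed-point subgroups $C_{Q_i}(t)$, verifying $|C_{Q_i}(t)|\in\{3,9\}$ for $i=1,2$.

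The main obstacle is part (g): computing $\Aut(S)$ exactly, and more delicately showing that its Sylow $2$-subgroups are conjugate into the automorphisms induced by $B_1$, rather than merely that $\Aut_{B_1}(S)$ contains a $2$-group of the right order. The clean route is to match orders, \ie to check that the $2$-part of $\Aut_{B_1}(S)$ is $2^3$ and hence equals the full Sylow $2$-subgroup of $\Aut(S)$; once this is confirmed the conjugacy is immediate from Sylow's theorem, and all the remaining parts follow from routine, if tedious, calculations in $S$, $Q_1$ and $Q_2$.
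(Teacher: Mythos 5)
Your proposal is correct and takes essentially the same route as the paper: the paper's proof of this lemma simply records that the statements are well known or found in \cite[Lemma 6.5]{PaR} and are elementary to verify with {\sc Magma}, which is precisely the explicit computation you describe. Your order-matching argument for (g) — exhibiting a $2$-subgroup of order $8=|\Aut(S)|_2$ inside $\Aut_{B_1}(S)$ and invoking Sylow's theorem — is a sound way to make the conjugacy claim precise.
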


\begin{proof}
Some of these results can be found in \cite[Lemma 6.5]{PaR}, and others are well-known. They are also elementary to produce using {\sc Magma} \cite{magma}.
\end{proof}

In this section we complete the proof of Theorem \ref{t:main1} by proving the following result.

\begin{Thm}\label{t:G23Thm}
Let $S$ be a Sylow $3$-subgroup of $\Gee_2(3)$ and $\F$ be a saturated fusion system on $S$ with $O_3(\F)=1$. Then $\F$ is isomorphic  to the fusion system of $\Gee_2(3)$ or $\Aut(\Gee_2(3))$.
\end{Thm}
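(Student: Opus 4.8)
The plan is to follow the template developed for $p\ge 5$, but to lean heavily on the computational input of Lemma~\ref{l:magma}, since for $p=3$ the two unipotent radicals $Q_1$ and $Q_2$ are isomorphic and neither is characteristic in $S$ (they are interchanged by the graph automorphism of $\Gee_2(3)$). First I would determine the candidates for $\E$. Any $\F$-essential subgroup $E$ is $\F$-centric, hence self-centralizing; if $E$ is elementary abelian then, since every element of $S\setminus(Q_1\cup Q_2)$ has order $9$ by Lemma~\ref{l:magma}(c) and a group covered by two proper subgroups equals one of them, $E\le Q_1$ or $E\le Q_2$. I would treat non-maximal candidates as in Lemmas~\ref{l:econtq} and~\ref{l:2cases}: an essential $E\le Q_i$ gives an embedding of $N_{Q_i}(E)/E$ into $\Aut_\F(E)$ with $\Out_\F(E)$ strongly $3$-embedded, so Proposition~\ref{p:l3pstr} leaves only $E=Q_i$ and the sporadic possibility $\Out_\F(E)\cong \Frob(39)$. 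The latter forces an elementary abelian $E$ of order $3^3$ that is centric, fully $\F$-normalized and has $13\mid|\Aut_\F(E)|$; I would exclude it by checking in \textsc{Magma} \cite{magma} that no self-centralizing $3^3$ inside $Q_1\cup Q_2$ is fully normalized with a normalizer admitting $\Frob(39)$. For a maximal $M\notin\{Q_1,Q_2\}$ I would apply Lemma~\ref{l:usecrit} with a suitable characteristic $F$: by Lemma~\ref{l:magma}(d) such $M$ satisfy $M'\ge Z(S)$, and parts (e),(f) supply $g\in N_S(M)\setminus M$ with $[g,M]\le F\Phi(M)$ and $[g,F]\le\Phi(M)$, so $M\notin\E$. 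This yields $\E\subseteq\{Q_1,Q_2\}$.

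Next I would force $\E=\{Q_1,Q_2\}$. If only one, say $Q_1$, were essential, then $\E$ being closed under $\F$-conjugacy means $\Aut_\F(S)$ cannot send $Q_1$ to $Q_2\notin\E$, so $Q_1$ is $\Aut_\F(S)$-invariant; since also $Q_1\le\bigcap_{P\in\E}P=Q_1$ and $Q_1$ is $\Aut_\F(Q_1)$-invariant, the normality criterion quoted after Theorem~\ref{t:alp} gives $Q_1\unlhd\F$, contradicting $O_3(\F)=1$. Hence both $Q_1,Q_2\in\E$. I would then determine $\Aut_\F(Q_i)$ as a subgroup of $\Aut(Q_i)$: using that $\Out_\F(Q_i)$ contains a strongly $3$-embedded subgroup, that $\Out_S(Q_i)$ has order $3$ and is non-normal, and the constraint $O_3(\F)=1$, together with the lattice of subgroups of $\Aut(Q_i)$ for $Q_i\cong 3^2\times 3^{1+2}$ computed in \textsc{Magma}, I expect a unique admissible candidate for $\Aut_\F(Q_i)$ up to $\Aut(Q_i)$-conjugacy, namely the one arising in the parabolic of $\Gee_2(3)$. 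Restricting $N_{\Aut_\F(Q_i)}(\Aut_S(Q_i))$ to $S$ and applying saturation then pins down $\Aut_\F(S)$ up to the ambiguity recorded in Lemma~\ref{l:magma}(g): either $\Aut_\F(S)=\Aut_B(S)$, or $\Aut_\F(S)$ contains an extra involution interchanging $Q_1$ and $Q_2$, the two cases being controlled by Lemma~\ref{l:magma}(h).

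With the automizer data fixed, I would prove $\F$ is uniquely determined by it. After adjusting $\F$ by an element of $\Aut(S)$, using Lemma~\ref{l:magma}(g) to normalize a Sylow $2$-subgroup of $\Aut(S)$, the subgroups $\Aut_\F(Q_1)$, $\Aut_\F(Q_2)$ and $\Aut_\F(S)$ become concrete, and Theorem~\ref{t:alp} gives $\F=\langle\Aut_\F(Q_1),\Aut_\F(Q_2),\Aut_\F(S)\rangle$, so at most two fusion systems arise. Both exist and are saturated because each is realized by a finite group: $\Gee_2(3)$ when $\Aut_\F(S)=\Aut_B(S)$, and $\Aut(\Gee_2(3))$ when the graph involution is present. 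Matching the two cases to these groups via their $3$-local structure completes the classification and incidentally shows neither system is exotic.

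The hardest part will be the rigidity step and the precise determination of $\Aut_\F(Q_i)$: because $Q_i\cong 3^2\times 3^{1+2}$ is not extraspecial, the clean input from \cite{COS} used for $p\ge 5$ is unavailable, and I would instead compute $\Aut(Q_i)$ and the subgroups containing $\Aut_S(Q_i)$ with strongly $3$-embedded $O^{3'}$ directly in \textsc{Magma}, as the paper anticipates. A secondary delicate point is correctly handling the graph automorphism: I must verify that enlarging $\Aut_\F(S)$ by the swap of $Q_1,Q_2$ produces exactly one further saturated system (that of $\Aut(\Gee_2(3))$) compatibly with saturation, which is precisely where Lemma~\ref{l:magma}(h), controlling the fixed points of the involution on each $Q_i$, becomes essential.
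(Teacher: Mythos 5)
Your overall architecture matches the paper's: confine the essentials to $\{Q_1,Q_2\}$, pin down $\Aut_\F(Q_i)$ and $\Aut_\F(S)$, and finish with a \textsc{Magma}-assisted rigidity argument. But the first stage of your plan has concrete gaps. Your reduction to $E\le Q_1$ or $E\le Q_2$ only treats elementary abelian $E$ and maximal $M$; it says nothing about non-elementary-abelian, non-maximal candidates containing elements of order $9$ (subgroups of order $3^3$ or $3^4$ not inside $Q_1\cup Q_2$), which is exactly where the paper's Lemma~\ref{l:g231} has to work hardest (the dichotomy $E\cap Q_1=E\cap Q_2$ versus $E\cap Q_1\ne E\cap Q_2$, with the invariant-chain arguments via Lemma~\ref{l:outinj}). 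Moreover, Lemma~\ref{l:magma}(e),(f) are \emph{negative} statements about $Q_1,Q_2$, used later in Lemma~\ref{l:QiessStruct} to prove faithfulness on $Q_i/Z(Q_i)$; they do not supply the hypotheses of Lemma~\ref{l:usecrit} for the other maximal subgroups $M$ --- for those the paper uses \ref{l:magma}(d) together with the direct computation $[M,S]\le M'$.

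More seriously, Proposition~\ref{p:l3pstr} cannot eliminate essential subgroups with $Z(Q_i)<E<Q_i$: such an $E$ is elementary abelian of order $3^4$ (being centric it contains $Z(Q_i)$, which has order $3^3$ and is not self-centralizing), so $\Aut_\F(E)$ lives in $\GL_4(3)$, not $\mathrm{PGL}_3(3)$, and the sectional-rank-$3$ result simply does not apply. The paper instead needs McLaughlin's transvection theorem (Lemma~\ref{l:MCLapp}) to get $O^{3'}(\Aut_\F(E))\cong\SL_2(3)$ and $|[E,t]|=9$, then the fixed-point count of Lemma~\ref{l:qiess} to force $Q_i\in\E$, the faithfulness of Lemma~\ref{l:QiessStruct}, and finally the observation that $\Aut_\F(Q_i)$ is transitive on the maximal subgroups over $Z(Q_i)$, so such an $E$ is $\F$-conjugate to $Q_1\cap Q_2$ and hence not fully normalized (Proposition~\ref{p:justQ1orQ2}); the subgroup $Q_1\cap Q_2$ itself requires a separate elimination via the classification of groups with strongly $3$-embedded subgroups and an exponent contradiction (Lemma~\ref{l:q1q2not}). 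None of this chain is visible in your plan, and a \textsc{Magma} check of order-$3^3$ subgroups for $\Frob(39)$ does not substitute for it. Your later steps (both $Q_i$ essential because $O_3(\F)=1$, $\Aut_\F(Q_i)\cong\GL_2(3)$, two cases for $\Out_\F(S)$) are in the right spirit, but note that the real content of the final step is showing the \emph{triple} $(\Aut_\F(S),\Aut_\F(Q_1),\Aut_\F(Q_2))$ is determined up to simultaneous $\Aut(S)$-conjugacy --- the paper finds three candidates for each $\Aut_\F(Q_i)$ and must conjugate them together in stages --- which deserves more than the one sentence you give it.
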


Assume that  $\F$ is a saturated fusion system on $S$. To prove Theorem~\ref{t:G23Thm} it suffices to demonstrate that up to isomorphism there are exactly two possible fusion systems on $S$ with $O_3(\F)=1$.

\begin{Lem}\label{l:g231}
 Suppose that $E \le S$ is an $\F$-essential subgroup of $\mathcal F$.  Then $E \le Q_1$ or $E\le Q_2$.
\end{Lem}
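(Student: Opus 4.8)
The plan is to argue by contradiction using the non-essentiality criterion of Lemma~\ref{l:usecrit}. Suppose $E$ is $\F$-essential with $E \not\le Q_1$ and $E \not\le Q_2$. Since a finite group is never the union of two proper subgroups, $E$ is not contained in $Q_1 \cup Q_2$, so by Lemma~\ref{l:magma}(b),(c) the subgroup $E$ contains an element of order $9$. As $Q_1$ and $Q_2$ are maximal in $S$, the hypotheses $E \not\le Q_i$ give $EQ_1 = EQ_2 = S$, so each $D_i := E \cap Q_i$ has index $3$ in $E$ and is normal in $E$.

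First I would record a structural reduction that pins down $E$. Since $E < S$ and $E \not\le \Phi(S) = Q_1 \cap Q_2$ (otherwise $E \le Q_1$), the image of $E$ in $S/\Phi(S) \cong C_3 \times C_3$ is a line, so $M := E\Phi(S)$ is a maximal subgroup of $S$ with $E \le M$; moreover $M \notin \{Q_1, Q_2\}$, as $M = Q_i$ would force $E \le Q_i$. Thus $M$ is one of the two ``diagonal'' maximal subgroups, for which Lemma~\ref{l:magma}(d) gives $M' \ge Z(S)$. Comparing indices in the chain $E \cap \Phi(S) \le D_1 \le E$, together with $|E : E\cap\Phi(S)| = |M : \Phi(S)| = 3$, forces $D_1 = D_2 = E \cap \Phi(S) =: F$, a subgroup of index $3$ in $E$. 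By Lemma~\ref{l:magma}(b) the elements of order dividing $3$ in $E$ all lie in $E \cap (Q_1 \cup Q_2) = F$, so $F$ is exactly this set; hence $F$ is characteristic in $E$, and since $S/\Phi(S)$ is elementary abelian we also have $\Phi(E) = E'E^3 \le E \cap \Phi(S) = F$.

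With $F = E \cap \Phi(S)$ fixed, I would apply Lemma~\ref{l:usecrit}. As $E$ is $\F$-essential it is $\F$-centric and $\Out_\F(E)$ contains a strongly $3$-embedded subgroup, whence $\Aut_S(E) \neq \Inn(E)$ and $N_S(E) > E$; fix $g \in N_S(E) \setminus E$. Condition (a) of Lemma~\ref{l:usecrit} is immediate: $g$ normalizes the characteristic subgroup $F$ and acts on the cyclic group $E/F$ of order $3$, and being a $3$-element it acts trivially, so $[g, E] \le F = F\Phi(E)$. It remains to verify condition (b), namely $[g, F] \le \Phi(E)$; granting this, Lemma~\ref{l:usecrit} gives that $E$ is not $\F$-essential, the desired contradiction.

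The main obstacle is condition (b). Here $F \le \Phi(S) = S'$, so $[g, F] \le [S, S'] = \gamma_3(S)$ while also $[g,F] \le E$, and one must show these commutators land in $\Phi(E)$. I would establish this by an explicit computation inside $S$ using the root-group commutator relations of Lemma~\ref{l:magma} (verifiable in {\sc Magma}): choosing $g \in N_S(E) \setminus E$ to agree with a generator of $E/F$ modulo $\Phi(S)$ reduces $[g,F]$ modulo $\Phi(E)$ to commutators of the shape $[\Phi(S),\Phi(S)]$, which lie in $Z(S)$, and one checks $Z(S) \le \Phi(E)$ (using that $E$ contains an order-$9$ element together with $M' \ge Z(S)$). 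Carrying out this bookkeeping for each of the finitely many diagonal maximal subgroups $M$ and each index-$3$ subgroup $E$ with $E\Phi(S) = M$ is where the concrete structure of $S$, and if necessary a direct {\sc Magma} verification, is used; with (b) in hand, $c_g \in O_3(\Aut(E)) \setminus \Inn(E)$, so $\Out_S(E) \cap O_3(\Out(E)) \neq 1$, contradicting the strongly $3$-embedded subgroup in $\Out_\F(E)$ and forcing $E \le Q_1$ or $E \le Q_2$.
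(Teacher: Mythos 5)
Your overall route is genuinely different from the paper's. The paper argues via Lemma~\ref{l:outinj} applied twice: it first shows (in the case $E\cap Q_1=E\cap Q_2$) that $\Aut_{Q_1\cap Q_2}(E)\le\Inn(E)$, hence $Q_1\cap Q_2\le E$, then pins down $Z(E)=Z(S)$, and finally uses the chain $1\unlhd Z(E)\unlhd\Omega_1(E)\unlhd E$ to force $Q_1\le E$ and $E=S$; a separate (in fact vacuous, as your index count shows) case treats $E\cap Q_1\ne E\cap Q_2$. You instead make a single application of Lemma~\ref{l:usecrit} with $F=E\cap\Phi(S)=\Omega_1(E)$. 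Your reduction in the first two paragraphs is correct: $E\Phi(S)=M$ is a diagonal maximal subgroup, $D_1=D_2=F$ has index $3$, $F$ is characteristic, and $\Phi(E)\le F$; condition (a) of Lemma~\ref{l:usecrit} then follows exactly as you say.

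The gap is in your verification of condition (b). Your sketch rests on the claim ``$Z(S)\le\Phi(E)$'', which is false in general: for a proper subgroup $E<M$ (say $E$ abelian containing a single order-$9$ generator $x$ over $F$), $\Phi(E)$ can have order $3$ while $|Z(S)|=9$, and the parenthetical appeal to ``an order-$9$ element together with $M'\ge Z(S)$'' does not rescue it since $M'\le\Phi(M)$, not $\Phi(E)$. Fortunately the step is true and needs no \textsc{Magma}: the two cases must be separated. If $E=M$, then $F=\Phi(S)$ and $[g,F]\le[S,\Phi(S)]=[S,Q_1\cap Q_2]=Z(S)\le M'=\Phi(E)$ by Lemma~\ref{l:magma}(d), which is the only place $Z(S)\le\Phi(E)$ is needed (and holds). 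If $E<M$, choose $g\in N_M(E)\setminus E$ (nonempty as $M$ is a $3$-group) and write $g=xh$ with $x\in E$, $h\in\Phi(S)$; then for $f\in F\le\Phi(S)$ one has $[g,f]=[x,f]^h[h,f]=[x,f]\in E'\le\Phi(E)$, because $\Phi(S)=Q_1\cap Q_2$ is abelian (so $[h,f]=1$ and $[x,f]\in S'\le\Phi(S)$ is centralized by $h$). Note also that your phrase ``commutators of the shape $[\Phi(S),\Phi(S)]$, which lie in $Z(S)$'' obscures the point that these commutators are trivial. With this repair your argument is complete and arguably shorter than the paper's.
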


\begin{proof}
Suppose that the claim is false.  We first   examine  the possibility that $E \cap Q_1 = E \cap Q_2$.
In this case $|E/(E \cap Q_1\cap Q_2)|=3$ and every element of $E \setminus Q_1$ has order $9$. Thus $$E \cap Q_1 = E\cap Q_2=E\cap Q_1 \cap Q_2= \Omega_1(E)$$ and this group has index $3$ in $E$.
Because $E$ is centric, $E \cap Q_1 \ge Z(S)$. Since $E \ge Z(S)$ and $[S,Q_1\cap Q_2]=Z(S)$, $E$ is normalized by $Q_1\cap Q_2$ and so $$[E,Q_1\cap Q_2]\le E \cap Q_1 \cap Q_2= \Omega_1(E).$$
Furthermore, as $\Omega_1(E) \le Q_1 \cap Q_2$ and $Q_1 \cap Q_2$ is abelian, $[\Omega_1(E),Q_1\cap Q_2] =1$. Hence $$1 \unlhd \Omega_1(E) \unlhd E$$ is an $\Aut_{Q_1 \cap Q_2}(E)$-invariant chain and we conclude from Lemma \ref{l:outinj} that $\Aut_{Q_1 \cap Q_2}(E) \le O_p(\Aut_\F(E)) = \Inn(E)$.  Thus $Q_1 \cap Q_2 \le E$ and $$E \text{  is normalized by }S=Q_1Q_2.$$ Since $Z(Q_1) \le Q_1 \cap Q_2 \le E$, we now have $Z(E) \le C_S(Z(Q_i)) = Q_i$ and so $Z(S) \le Z(E) \le Q_1 \cap Q_2$. Suppose that $Z(E) > Z(S)$. Since, for $i=1,2$, $E$ does not centralizes $Z(Q_i)$,  and $|Z(Q_i)|= 3^3$, we have $Q_1 \cap Q_2= Z(Q_1)Z(E)= Z(Q_2)Z(E) $. But then \begin{eqnarray*}[Q_1\cap Q_2,E]&=& [Z(Q_1)Z(E),E]=[Z(Q_1),E]=[Z(Q_1),Q_1E]\\&=& [Z(Q_1),Q_1Q_2]=[Z(Q_1),Q_2]= \Phi(Q_2)\end{eqnarray*}
and
\begin{eqnarray*}[Q_1\cap Q_2,E]&=& [Z(Q_2)Z(E),E]=[Z(Q_2),E]=[Z(Q_2),Q_2E]\\&=& [Z(Q_2),Q_1Q_2]=[Z(Q_2),Q_1]= \Phi(Q_1)\end{eqnarray*}
 whereas we know $\Phi(Q_1)\ne \Phi(Q_2)$.
This contradiction shows that  $Z(E)= Z(S)$. Now, recalling that $\Omega_1(E)= Q_1 \cap E = E \cap Q_1\cap Q_2$,  we have \begin{eqnarray*}[E,Q_1]&\le& \Omega_1(E)\cr        [\Omega_1(E),Q_1]&\le&[Q_1\cap Q_2,Q_1]= \Phi(Q_1) \le Z(E)\text{ and }\cr   [Z(E),Q_1]&=&1.\end{eqnarray*}  Hence

$$1 \unlhd Z(E) \unlhd \Omega_1(E) \unlhd E$$ is an $\Aut_{Q_1}(E)$-invariant chain so $\Aut_{Q_1}(E) \le O_p(\Aut_\F(E)) = \Inn(E)$ which means that $Q_1 \le E$ and $S=E$, a contradiction. Hence  $E \cap Q_1 \not = E\cap Q_2$.

Suppose $E \cap Q_1 \not \le Q_2$. As $Q_1' \le Z(S) \le E$, $Q_1$ normalizes $E\cap Q_1$ and therefore $EQ_1=S$ normalizes $E\cap Q_1$. It follows that
  $$(E\cap Q_1)Z(Q_1)/Z(Q_1)\ge  C_{Q_1/Z(Q_1)}(S) =(Q_1 \cap Q_2)/Z(Q_1)$$ and this  implies that $$(E\cap Q_1)Z(Q_1)= Q_1$$ and $EZ(Q_1)= S$. Now noting that $ E \ge Z(S)$ and $|Z(Q_1):Z(S)|=3$ yields $|E|= 3^5$.  Thus $E$ is a maximal subgroup of $S$. By Lemma \ref{l:magma}(d), $E' \ge Z(S)$.
Thus
\begin{eqnarray*}[E,S]&=& [E,EZ(Q_1)]= E'[E,Z(Q_1)] \cr&\le & E' [S,Z(Q_1)]\le E'Z(S)=E'.\end{eqnarray*}
 Hence $\Aut_S(E)$ centralizes $E/E'$ and this means that $\Aut_S(E) \le O_p(\Aut_\F(E))$, a contradiction.

\end{proof}

\begin{Lem}\label{l:q1q2not}
We have $Q_1 \cap Q_2$ is not $\F$-essential.
\end{Lem}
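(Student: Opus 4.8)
The plan is to set $A:=Q_1\cap Q_2$ and argue by contradiction, showing that $A$ cannot carry the outer automorphism group that essentiality demands. First I would record the structure of $A$ both as a subgroup and as a module for $\Aut_S(A)$. Since $Q_1$ and $Q_2$ are distinct maximal subgroups of the $3$-group $S$, both are normal in $S$, hence so is $A$, giving $N_S(A)=S$ and, in particular, $A$ fully $\F$-normalized. By Lemma~\ref{l:magma}(a),(b) together with the fact (established inside the proof of Lemma~\ref{l:g231}) that $Q_1\cap Q_2$ is abelian, $A$ is elementary abelian of order $3^4$. Using $Z(Q_i)\le A$ and $C_S(Z(Q_i))=Q_i$ we get $C_S(A)\le C_S(Z(Q_1))\cap C_S(Z(Q_2))=Q_1\cap Q_2=A$, so $A$ is self-centralizing and $\Aut_S(A)\cong S/A$ is elementary abelian of order $9$.

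Next I would suppose $A\in\E$ and extract the constraints. As $A$ is abelian, $\Inn(A)=1$ and $\Out_\F(A)=\Aut_\F(A)=:X\le\Aut(A)\cong\GL_4(3)$; essentiality gives that $X$ contains a strongly $3$-embedded subgroup, and full automization forces $\Aut_S(A)\in\Syl_3(X)$, so a Sylow $3$-subgroup of $X$ is elementary abelian of order $9$.

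The heart of the argument is to determine how $\Aut_S(A)$ acts on $A$ and to see that this is incompatible with $X$. Choosing $s\in Q_1\setminus A$ and $t\in Q_2\setminus A$, the images $\bar s,\bar t$ generate $\Aut_S(A)$. Since $[A,Q_1]\le[Q_1,Q_1]=\Phi(Q_1)$ has order $3$ and is nontrivial (as $|A|=3^4>3^3=|Z(Q_1)|$), the element $\bar s$ acts as a transvection with $[A,\bar s]=\Phi(Q_1)$ and $C_A(\bar s)=Z(Q_1)$; symmetrically $\bar t$ is a transvection with $[A,\bar t]=\Phi(Q_2)$ and $C_A(\bar t)=Z(Q_2)$. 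As $\Phi(Q_1)\ne\Phi(Q_2)$ (used in Lemma~\ref{l:g231}), the group $\Aut_S(A)$ contains the genuine single transvections $\bar s^{\pm1},\bar t^{\pm1}$, with $[A,\Aut_S(A)]=\Phi(Q_1)\Phi(Q_2)=C_A(\Aut_S(A))$ of order $9$.

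Finally I would reach the contradiction. Since $S$ is a fixed group, the cleanest route is to verify by \textsc{Magma}, computing inside $\Aut(A)\cong\GL_4(3)$, that no subgroup $X$ with $\Aut_S(A)\in\Syl_3(X)$ has $X=\Out_\F(A)$ containing a strongly $3$-embedded subgroup; this is exactly the obstruction to essentiality. Conceptually the point is that the classification of groups with a strongly $3$-embedded subgroup and an elementary abelian Sylow $3$-subgroup of order $9$ forces $O^{3'}(X)\cong\SL_2(9)$ acting as the natural $\mathbb F_9$-module, in which every nontrivial $3$-element is a transvection over $\mathbb F_9$ and hence has a $2$-dimensional commutator $[A,u]$ over $\mathbb F_3$; this is incompatible with $[A,\bar s]=\Phi(Q_1)$ being $1$-dimensional. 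Either way, $A=Q_1\cap Q_2$ is not $\F$-essential. The main obstacle is precisely this last step, namely ruling out all candidate overgroups $X$ of $\Aut_S(A)$ in $\GL_4(3)$, which is why I would discharge it by direct computation rather than by invoking the full classification.
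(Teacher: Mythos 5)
Your setup is correct and your transvection observation is a genuine addition not present in the paper, but as written the argument has a gap at exactly the step you yourself flag as the main obstacle. The classification you invoke (the paper cites \cite[Theorem 7.6.1]{GLS3}) only yields $O^{3'}(X/O_{3'}(X))\in\{\PSL_2(9),\PSL_3(4),\Mat(11)\}$; the last two are excluded because $7$ and $11$ do not divide $|\GL_4(3)|$, but you are then still left to show that $X$ acts on $A$ as $\SL_2(9)$ on its natural $\mathbb{F}_9$-module, and this does \emph{not} follow from the classification alone. In particular $\PSL_2(9)\cong\Alt(6)$ itself has a faithful $4$-dimensional $\mathbb{F}_3$-module (the heart of the permutation module, equivalently the natural $\Omega_4^-(3)$-module), and configurations with $O_{3'}(X)\neq 1$ acting nontrivially on $A$ would also need to be ruled out. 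The ``conceptual'' version of your final step simply asserts the conclusion, and the \textsc{Magma} fallback is proposed but not performed. Happily, your transvection obstruction is robust: odd-characteristic orthogonal groups contain no transvections, and on the $\Omega_4^-(3)$-module every nontrivial $3$-element again has $2$-dimensional commutator, so once the candidate modules are actually enumerated your contradiction goes through in each case; but that enumeration is the missing content.

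For comparison, the paper closes the module-identification step with one short observation: $C_A(\Aut_S(A))=Z(S)$ has order $9$, whereas on the $\Omega_4^-(3)$-module a Sylow $3$-subgroup fixes only a group of order $3$; this pins down $O^{3'}(X)\cong\SL_2(9)$ on its natural module. It then finishes differently from you: by the Model Theorem, $N_\F(Q_1\cap Q_2)$ is realized by a finite group containing $3^4{:}\SL_2(9)$, whose Sylow $3$-subgroups have exponent $3$, contradicting Lemma~\ref{l:magma}(c). Your endgame --- elements of $Q_i\setminus(Q_1\cap Q_2)$ induce transvections with $[A,\bar{s}]=\Phi(Q_i)$ of order $3$, while nontrivial unipotent elements of $\SL_2(9)$ on $\mathbb{F}_9^2$ have commutator of $\mathbb{F}_3$-dimension $2$ --- is an equally valid contradiction and arguably more self-contained, since it avoids the Model Theorem and the exponent of $S$; but to turn it into a proof you must either supply the module analysis indicated above (an analogue of the paper's $|C_A(\Aut_S(A))|=9$ step) or actually carry out the computation you defer to.
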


\begin{proof}
Suppose to the contrary that $E= Q_1 \cap Q_2$ is $\F$-essential.  Then $\Out_\F(E)$ has a strongly $3$-embedded subgroup. Since $E$ is normalized by $S$,  $\Out_S(E)$ is elementary abelian of order $9$. It follows from \cite[Theorem 7.6.1]{GLS3} that, setting $X= \Out_\F(E)$, $O^{3'}(X/Z(X))$ is isomorphic to one of $\PSL_2(9)$, $\PSL_3(4)$, or $\Mat(11)$. Because the latter two groups have order which does not divide $|\GL_4(3)|$, we conclude that $O^{3'}(X/Z(X)) \cong \PSL_2(9)$. Since $C_{E}(S)= Z(S)$ has order $9$, we deduce that $O^{3'}(X)\cong \SL_2(9)$. But then, by \cite[Theorem 4.9]{AKO}, $N_{\F}(E)$ is realized by a group which contains $3^4:\SL_2(9)$ and this group has Sylow $3$-subgroups of exponent $3$, a contradiction.
\end{proof}

\begin{Lem}\label{l:AutSG23} The following hold.
\begin{enumerate}
\item [(a)] $\Out_\mathcal F(S)$ is a subgroup of $\Dih(8)$;
 \item [(b)]  $N_{\Aut_\mathcal F(S)}(Q_1)=N_{\Aut_\mathcal F(S)}(Q_2)$ has index at most $2$ in $\Aut_\F(S)$ and   $N_{\Aut_\mathcal F(S)}(Q_1)/\Inn(S)$ is elementary abelian of order at most $4$; and
 \item [(c)]if $t \in N_{\Aut_\mathcal F(S)}(Q_1)$ is an involution, then $C_{Q_i}(t)$ has order either $3$ or $3^2$.
     \end{enumerate}
\end{Lem}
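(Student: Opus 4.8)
The plan is to use the saturation of $\F$ to pin down $\Out_\F(S)$ as a small $2$-group and then to locate it inside the explicitly understood group $\Out_{B_1}(S)$. First I would observe that, since $S$ is trivially fully $\F$-normalized and hence fully $\F$-automized, $\Inn(S)=\Aut_S(S)$ is a Sylow $3$-subgroup of $\Aut_\F(S)$; thus $\Out_\F(S)$ is a $3'$-group. By Lemma~\ref{l:magma}(g), $|\Aut(S)|_{3'}=2^3$, so $\Out_\F(S)$ is a $2$-group of order dividing $8$, and a Schur--Zassenhaus complement to $\Inn(S)$ in $\Aut_\F(S)$ is a $2$-subgroup of $\Aut(S)$. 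Again by Lemma~\ref{l:magma}(g) such a complement lies in a Sylow $2$-subgroup of $\Aut(S)$ conjugate into $\Aut_{B_1}(S)$; adjusting $\F$ by an element of $\Aut(S)$ (which preserves its isomorphism type) I may therefore assume $\Aut_\F(S)\le \Aut_{B_1}(S)$, and hence $\Out_\F(S)\le \Out_{B_1}(S)$.

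The key structural input, and the main obstacle, is to identify $\Out_{B_1}(S)$. Here $\Out_B(S)$ is the image of the maximal torus $T\cong\mathbb F_3^\times\times\mathbb F_3^\times$ of $\Gee_2(3)$, which is elementary abelian of order $4$ and normalizes each of $Q_1$ and $Q_2$, while $\Out_{B_1}(S)$ is obtained by adjoining the graph automorphism $\sigma$ of $\Gee_2(3)$ available at $p=3$, which interchanges $Q_1$ and $Q_2$ and correspondingly swaps the two factors of $T$. Consequently $\Out_{B_1}(S)\cong \Dih(8)$, of order $8$, so it is a full Sylow $2$-subgroup of $\Out(S)$ and every subgroup of it --- in particular $\Out_\F(S)$ --- is isomorphic to a subgroup of $\Dih(8)$. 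This is part (a). (If one prefers, this identification can be read off from a direct \textsc{Magma} computation of $\Aut_{B_1}(S)$, as in Lemma~\ref{l:magma}.)

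For (b) I would first argue that $\{Q_1,Q_2\}$ is permuted by $\Aut(S)$. By Lemma~\ref{l:magma}(a) both $Q_i$ have exponent $3$, and every maximal subgroup $M\ne Q_1,Q_2$ contains an element of order $9$: indeed if $M$ had exponent $3$ then $M\subseteq Q_1\cup Q_2$ by Lemma~\ref{l:magma}(b), and since a group is never the union of two proper subgroups, $M=(M\cap Q_1)\cup(M\cap Q_2)$ forces $M\le Q_1$ or $M\le Q_2$, whence $M\in\{Q_1,Q_2\}$ by maximality. Thus $Q_1,Q_2$ are exactly the exponent-$3$ maximal subgroups, so $\Aut(S)$ --- and in particular $\Aut_\F(S)$ --- permutes $\{Q_1,Q_2\}$. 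This yields $N_{\Aut_\F(S)}(Q_1)=N_{\Aut_\F(S)}(Q_2)$ of index at most $2$ in $\Aut_\F(S)$. Passing to $\Out_{B_1}(S)\cong\Dih(8)$, the stabilizer of $Q_1$ has index $2$ (as $\sigma$ swaps $Q_1,Q_2$) and contains the Klein four group $\Out_B(S)$, hence equals it; therefore $N_{\Aut_\F(S)}(Q_1)/\Inn(S)\le \Out_B(S)$ is elementary abelian of order at most $4$, giving (b).

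Finally, for (c) I would note that the containment $N_{\Aut_\F(S)}(Q_1)/\Inn(S)\le \Out_B(S)=\Aut_B(S)/\Inn(S)$, together with $\Inn(S)\le\Aut_B(S)$, gives $N_{\Aut_\F(S)}(Q_1)\le\Aut_B(S)$. Hence any involution $t\in N_{\Aut_\F(S)}(Q_1)$ is an element of order $2$ of $\Aut_B(S)$ normalizing both $Q_1$ and $Q_2$, and Lemma~\ref{l:magma}(h) then gives $|C_{Q_i}(t)|\in\{3,3^2\}$ for $i=1,2$, which is (c). The only genuinely delicate point in the whole argument is the identification $\Out_{B_1}(S)\cong\Dih(8)$ with $\Out_B(S)$ as its $Q_i$-stabilizing Klein four subgroup; everything else is bookkeeping with Sylow theory and the cited facts of Lemma~\ref{l:magma}.
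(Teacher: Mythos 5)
Your argument is correct and follows the same route as the paper, whose proof simply cites Lemma~\ref{l:magma}(g) for parts (a) and (b) and Lemma~\ref{l:magma}(h) for part (c); you have just filled in the details that the paper leaves implicit (Sylow theory to reduce $\Out_\F(S)$ to a $2$-subgroup of $\Out_{B_1}(S)$, the identification $\Out_{B_1}(S)\cong\Dih(8)$ with $\Out_B(S)$ as the $Q_i$-stabilizing Klein four subgroup, and the exponent-$3$ characterization of $\{Q_1,Q_2\}$ via Lemma~\ref{l:magma}(a),(b)). The one point you rightly flag as delicate --- that the graph automorphism swaps the two torus factors so that $\Out_{B_1}(S)$ is dihedral rather than elementary abelian of order $8$ --- is exactly the structural fact the paper absorbs into the \textsc{Magma}-verified Lemma~\ref{l:magma}(g), so your proof is a faithful expansion of the paper's.
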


\begin{proof}
Parts (a) and (b) follow from Lemma \ref{l:magma}(g) while part (c) follows from Lemma \ref{l:magma}(h).
\end{proof}

\begin{Lem}\label{l:MCLapp}    Suppose that for $i=1$ or $2$, $E < Q_i$ is $\F$-essential. Then $|E|= 3^4$, $E$ is elementary abelian,   $O^{3'}(\Aut_\mathcal F(E)) \cong \SL_2(3)$ and  $|[E,t]|=9$ for $t \in Z(O^{3'}(\Aut_\mathcal F(E)))$ of order $2$. \end{Lem}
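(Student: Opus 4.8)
The plan is to fix $i=1$ (the case $i=2$ being identical under the graph automorphism swapping $Q_1,Q_2$) and to analyse the elementary abelian subgroups of $Q_1$ that can be essential, following the pattern of Lemmas~\ref{l:uxstruct} and \ref{l:q1q2not}.

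First I would show that $E$ is elementary abelian. Since $E\le Q_1$ and $Q_1$ has class $2$ with $\Phi(Q_1)=[Q_1,Q_1]$ of order $3$ (Lemma~\ref{l:magma}(a)), we have $\Phi(E)=[E,E]\le\Phi(Q_1)$, so either $\Phi(E)=1$ or $\Phi(E)=\Phi(Q_1)$. In the latter case pick $g\in N_{Q_1}(E)\setminus E$, which exists as $E<Q_1$; then $[g,E]\le[Q_1,Q_1]=\Phi(E)$ and $[g,\Phi(E)]\le[Q_1,\Phi(Q_1)]=1$, so Lemma~\ref{l:usecrit} applied with $F=\Phi(E)$ shows $E$ is not essential, a contradiction. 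Hence $\Phi(E)=1$ and $E$ is elementary abelian. Because $E$ is $\F$-centric, $C_{Q_1}(E)=C_S(E)=Z(E)=E$, so $E$ is self-centralizing, hence a maximal abelian subgroup of $Q_1$ containing $Z(Q_1)$. As $Q_1\cong 3^2\times 3^{1+2}$ and $Q_1/Z(Q_1)$ carries a nondegenerate alternating form, every maximal abelian subgroup has order $3^4$; thus $|E|=3^4$ and $E\unlhd Q_1$.

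Next I would pin down $\Aut_S(E)$. The generator of $Q_1/E=\Aut_{Q_1}(E)$ acts on $E$ with $[E,Q_1]\le\Phi(Q_1)$ of order $3$ and $[E,Q_1]\ne1$, so it is a transvection, with centralizer of order $3^3$ on $E$. I claim $N_S(E)=Q_1$. Indeed $E\ne Q_1\cap Q_2$ by Lemma~\ref{l:q1q2not}; and if $E\unlhd S$ then $C_E(S)=Z(S)$ has order $9$, so the argument of Lemma~\ref{l:q1q2not} (via \cite[Theorem 7.6.1]{GLS3}) forces $O^{3'}(\Aut_\F(E))\cong\SL_2(9)$ acting on $E$ as the natural $\mathbb F_9$-module. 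But then every nontrivial $3$-element has centralizer of order $3^2$ on $E$, contradicting the transvection just exhibited. Hence $N_S(E)=Q_1$ and $\Aut_S(E)$ has order $3$, generated by a transvection; since $E$ is essential it is fully automized, so this is a Sylow $3$-subgroup of $\Aut_\F(E)$. Now $\Aut_\F(E)\le\GL(E)\cong\GL_4(3)$ has a strongly $3$-embedded subgroup, and $X:=O^{3'}(\Aut_\F(E))$ is generated by transvections (the conjugates of the generator of $\Aut_S(E)$). Applying McLaughlin's classification of transvection groups to the action of $X$ on $[E,X]$, together with the fact that a strongly $3$-embedded subgroup forces $3$-rank $1$, I would conclude $X\cong\SL_2(3)$ with $[E,X]$ the natural $2$-dimensional module, this being the only transvection group over $\mathbb F_3$ with a Sylow $3$-subgroup of order $3$. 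Finally, for $t\in Z(X)$ the central involution of $\SL_2(3)$, coprime action gives $E=[E,t]\oplus C_E(t)$; the $X$-composition factors of $E$ are the natural module, on which $t=-1$, and two trivial factors, on which $t=+1$ since transvections act trivially on $E/[E,X]$. Hence $[E,t]$ is exactly the natural module and $|[E,t]|=9$.

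The main obstacle is the third step: controlling $\Aut_\F(E)$ as a transvection subgroup of $\GL_4(3)$ and extracting $\SL_2(3)$. The elementary abelian and order-$3^4$ reductions are routine consequences of Lemma~\ref{l:usecrit} and the abelian subgroup structure of $Q_1$, but identifying $O^{3'}(\Aut_\F(E))$ needs both McLaughlin's classification and the eigenspace bookkeeping that separates the reducible $2+2$ module, giving $\SL_2(3)$ and $|[E,t]|=9$, from the natural $\SL_2(9)$ configuration of $Q_1\cap Q_2$; the latter is precisely what the $\F$-essentiality of $E$ excludes, via the incompatible centralizer dimensions $3^3$ versus $3^2$.
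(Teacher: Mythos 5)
Your proposal is correct and follows essentially the same route as the paper: reduce to $E$ elementary abelian of order $3^4$ with $N_S(E)=Q_1$, so that $\Aut_S(E)$ has order $3$ and acts as a transvection, and then invoke McLaughlin's classification to force $O^{3'}(\Aut_\F(E))\cong\SL_2(3)$ with $[E,t]$ the natural module of order $9$. Two of your intermediate justifications are more roundabout than necessary but harmless: ruling out $E\unlhd S$ follows at once from $\Phi(S)=Q_1\cap Q_2\not\le E$ rather than via the $\SL_2(9)$ detour, and the phrase ``a strongly $3$-embedded subgroup forces $3$-rank $1$'' is not true in general --- what you actually need (and have already established) is that $\Aut_S(E)$ of order $3$ is Sylow in $\Aut_\F(E)$ by full automization.
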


\begin{proof}
Without loss of generality, assume that $E \le Q_1$.  Then, by Lemma~\ref{l:q1q2not}, $E \ne Q_1 \cap Q_2$.  Since $E \le Q_1$, $Z(Q_1) \le E$ and, as $E$ is centric, $E> Z(Q_1)$.  Since, by assumption, $E \ne Q_1$ and using $Q_1$ has exponent $3$, we obtain $E$ is elementary abelian of order $3^4$. As $E \ne Q_1 \cap Q_2$, we have $N_{S}(E)= Q_1$ and so $\Aut_S(E)= \Aut_{Q_1}(E)$ has order $3$.  Since $[E,Q_1]= \Phi(Q_1)$ has order $3$ we may apply the main result of \cite{McL} to see that $O^{3'}(\Aut_\mathcal F(E)) \cong \SL_2(3)$ and that $|[E,t]|=9$ for $t \in Z(O^{3'}(\Aut_\mathcal F(E)) )$ of order $2$.
\end{proof}

\begin{Lem}\label{l:qiess} Suppose that for $i=1$ or $2$, $E < Q_i$ is $\F$-essential.  Then $Q_i$ is $\F$-essential.
\end{Lem}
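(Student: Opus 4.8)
The plan is to verify the three defining conditions of an $\F$-essential subgroup for $Q_i$, the only substantial one being that $\Out_\F(Q_i)$ contains a strongly $3$-embedded subgroup. Assume without loss that $E \le Q_1$. Since $Q_1$ is a maximal subgroup of the $3$-group $S$ it is normal, so $N_S(Q_1) = S$ and $Q_1$ is fully $\F$-normalized; using Lemma~\ref{l:magma} one checks $C_S(Q_1) = Z(Q_1)$, and the same holds for the (at most two) $\F$-conjugates of $Q_1$, so $Q_1$ is $\F$-centric and hence, by saturation, fully $\F$-automized. Because $Q_1 \cong 3^2\times 3^{1+2}$ has $Z(Q_1)$ of order $3^3$ and $\Inn(Q_1)$ of order $3^2$, the group $\Out_S(Q_1)$ has order $3$ and is a Sylow $3$-subgroup of $\Out_\F(Q_1)$. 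For any finite group whose Sylow $3$-subgroup has order $3$, the normalizer of a non-normal Sylow $3$-subgroup is strongly $3$-embedded, and conversely a strongly $3$-embedded subgroup forces the Sylow $3$-subgroup to be non-normal; thus the whole problem reduces to showing that $\Out_S(Q_1)$ is \emph{not} normal in $\Out_\F(Q_1)$.

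To produce the required automorphism I would use Lemma~\ref{l:MCLapp}: $E$ is elementary abelian of order $3^4$, $N_S(E) = Q_1$, $O^{3'}(\Aut_\F(E)) \cong \SL_2(3)$, and the central involution $t$ of this $\SL_2(3)$ satisfies $|[E,t]| = 9$, so $E = [E,t]\oplus C_E(t)$ with $[E,t]$ the natural module and $|C_E(t)| = 9$. As $t$ is central in $\SL_2(3)$ it normalizes $\Aut_S(E) = \Aut_{Q_1}(E)$, so saturation extends $t$ to $\hat t \in \Aut_\F(N_S(E)) = \Aut_\F(Q_1)$, and passing to its $2$-part we may take $\hat t$ to be an involution with $\hat t|_E = t$. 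The geometric content is this: since $Q_1$ has exponent $3$, the four elementary abelian subgroups of order $3^4$ containing $Z(Q_1)$ are exactly the preimages of the four lines of $Q_1/Z(Q_1)\cong \mathbb F_3^2$, i.e. the four points of $\mathbb P^1(\mathbb F_3)$; among them are $E$ and, by Lemma~\ref{l:q1q2not}, the \emph{distinct} subgroup $Q_1\cap Q_2$, whose line $\ell = (Q_1\cap Q_2)/Z(Q_1)$ is the unique line centralized by $\Out_S(Q_1)$ (Lemma~\ref{l:magma}(e)). Since $\Inn(Q_1)$ acts trivially on $Q_1/Z(Q_1)$ and $\Out_S(Q_1)$ maps to the order-$3$ unipotent fixing exactly $\ell$, if $\hat t$ normalizes $\Out_S(Q_1)$ then $\hat t$ must stabilize $Q_1\cap Q_2$. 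Equivalently, if $\hat t$ does not stabilize $Q_1\cap Q_2$ then $\Out_S(Q_1)$ is already non-normal and we are done.

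So the argument splits into two cases. If $\hat t$ does not normalize $\Aut_S(Q_1)$ modulo $\Inn(Q_1)$, then $\Out_S(Q_1)$ is non-normal in $\Out_\F(Q_1)$, which gives the strongly $3$-embedded subgroup and hence $Q_1$ is essential. The remaining case, which must be shown to be impossible, is that $\hat t$ normalizes $\Aut_S(Q_1)$: then $\hat t \in N_{\Aut_\F(Q_1)}(\Aut_S(Q_1))$, and as $Q_1$ is fully $\F$-normalized, saturation extends $\hat t$ to an involution $\tilde t \in \Aut_\F(N_S(Q_1)) = \Aut_\F(S)$ with $\tilde t|_{Q_1} = \hat t$ and $\tilde t|_E = t$; after adjusting $\F$ by an element of $\Aut(S)$ (Lemma~\ref{l:AutSG23}) we may assume $\tilde t \in \Aut_B(S)$ normalizes both $Q_1$ and $Q_2$. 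Now $C_{Q_1}(\tilde t) \ge C_E(t)$ has order at least $9$, while Lemma~\ref{l:AutSG23}(c) (equivalently Lemma~\ref{l:magma}(h)) forces $|C_{Q_1}(\tilde t)| \in \{3,9\}$, so $C_{Q_1}(\tilde t) = C_E(t)$ has order exactly $9$ and $\tilde t$ inverts both $[E,t]$ and $Q_1/E$.

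The main obstacle is to turn this last configuration into a contradiction, and this is where I expect the real work to lie. The plan is to analyse the induced action of the involution $\tilde t$ on the characteristic sections $Z(Q_1)$, on $Q_1\cap Q_2$ (using that $E\cap Q_2 = Z(Q_1)$ because $E$ and $Q_1\cap Q_2$ are distinct line-preimages meeting in $Z(Q_1)$), and on $Z(Q_2)$, and to compare the resulting fixed-point orders against the second bound $|C_{Q_2}(\tilde t)|\le 9$ of Lemma~\ref{l:magma}(h) together with the non-containments $[Z(Q_i),S]\not\le\Phi(Q_i)$ of Lemma~\ref{l:magma}(f) and the inequality $\Phi(Q_1)\ne\Phi(Q_2)$. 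The over-determination produced by simultaneously inverting the natural module $[E,t]$ and $Q_1/E$ while respecting these commutator constraints should be incompatible; as is done elsewhere in this section, the resulting numerical incompatibility can be confirmed by an explicit computation in $\Aut(Q_1)$ and $\Aut(Q_2)$ using \textsc{Magma} \cite{magma}. This rules out the second case, so $\Out_S(Q_1)$ is non-normal in $\Out_\F(Q_1)$; combined with the centric and fully-normalized properties established at the outset, $Q_1$ is $\F$-essential, as required.
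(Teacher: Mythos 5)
Your reduction to showing that $\Out_S(Q_1)$ is not normal in $\Out_\F(Q_1)$ is sound, and your overall route --- lift the central involution $t$ of $O^{3'}(\Aut_\F(E))\cong\SL_2(3)$ to $\hat t\in\Aut_\F(Q_1)$, push it into $\Aut_\F(S)$, and play fixed-point orders off against Lemma~\ref{l:magma}(h) --- is essentially the paper's. But there is a genuine gap at the decisive step, together with an error that conceals the immediate contradiction. Since $t$ is central in $O^{3'}(\Aut_\F(E))$ and $\Aut_{Q_1}(E)=\Aut_S(E)$ is a Sylow $3$-subgroup of that $\SL_2(3)$, the element $t$ \emph{centralizes} $\Aut_{Q_1}(E)$; because $E=C_{Q_1}(E)$, this translates into $[Q_1,\hat t]\le E$, i.e.\ $\hat t$ acts trivially on $Q_1/E$. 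Coprime action then gives $|C_{Q_1}(\hat t)|=|Q_1/E|\cdot|C_E(t)|=3\cdot 9=27$, which already contradicts the bound of at most $9$ for restrictions to $Q_1$ of involutions in $N_{\Aut_\F(S)}(Q_1)$. Your assertion that $\tilde t$ ``inverts $Q_1/E$'' is therefore false, and the configuration you propose to analyse further (actions on $Z(Q_i)$, on $Q_1\cap Q_2$, a possible \textsc{Magma} check) never needs to be entered: the ``real work'' you defer is a one-line consequence of the centrality of $t$, and without that observation your Case 2 is not closed.

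A secondary, structural difference: you split on whether $\hat t$ normalizes $\Aut_S(Q_1)$ modulo $\Inn(Q_1)$, whereas the paper argues by contradiction from the assumption that $Q_i$ is not $\F$-essential and uses Lemma~\ref{l:g231} together with Alperin's Theorem to force $\tau=\sigma|_{Q_i}$ for some $\sigma\in\Aut_\F(S)$ (no essential subgroup lies strictly between $Q_i$ and $S$). Both framings work once the $27$-versus-$9$ contradiction is in place, but as written your proposal is missing the one fact that makes the contradiction appear.
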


\begin{proof} Assume that $Q_i$ is not  $\F$-essential.  By Lemma \ref{l:MCLapp} there exists $t \in Z(O^{3'}(\Aut_\mathcal F(E)))$ of order $2$.  Then $t$ normalizes $\Aut_{S}(E)=  Q_i/Z(E)$ and hence lifts to $\tau \in \Aut_\mathcal F(Q_i)$ by saturation. Since $Q_i$ is not $\F$-essential, Lemma \ref{l:g231} implies that $\tau = \sigma|_{Q_i}$  for some $\sigma \in \Aut_\F(S)$.  Now $|C_{Q_i}(\tau)|= |Q_i/E||C_E(t)|=27$ by Lemma~\ref{l:MCLapp}. On the other hand, by Lemma~\ref{l:AutSG23} we have $|C_{Q_i}(\sigma|_{Q_i})| \le 9$, a contradiction. Hence $Q_i$ is $\F$-essential.
\end{proof}

\begin{Lem}\label{l:QiessStruct}
Suppose that $Q_i$ is $\F$-essential for some $1\le i \le 2$. Then $\Out_\F(Q_i)$ acts faithfully on $Q_i/Z(Q_i)$. In particular,  $O^{3'}(\Out_\F(Q_i))\cong \SL_2(3)$  and $\Out_\F(Q)$ embeds into $\GL_2(3)$.
\end{Lem}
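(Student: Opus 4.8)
The plan is to prove first that the kernel of the action of $\Out_\F(Q_i)$ on $Q_i/Z(Q_i)$ is trivial, and then to read off the structure of $O^{3'}(\Out_\F(Q_i))$ from the resulting embedding into $\GL_2(3)$. Recall from Lemma~\ref{l:magma}(a) that $Q_i \cong 3^2 \times 3^{1+2}$ has exponent $3$, so $\Phi(Q_i)=[Q_i,Q_i]$ has order $3$, $Z(Q_i)$ is elementary abelian of order $3^3$, and both $Q_i/Z(Q_i)$ and $Z(Q_i)/\Phi(Q_i)$ are elementary abelian of order $9$; all three subgroups are characteristic in $Q_i$. Since $[Q_i,Q_i]\le Z(Q_i)$, every inner automorphism acts trivially on $Q_i/Z(Q_i)$, so the action factors through $\Out_\F(Q_i)$ and faithfulness is equivalent to the normal subgroup $\bar K:=\ker\bigl(\Out_\F(Q_i)\to \GL(Q_i/Z(Q_i))\bigr)$ being trivial. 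Because $Q_i$ is $\F$-essential we have $O_3(\Out_\F(Q_i))=1$, so it suffices to show that $\bar K$ is a $3$-group.

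The reduction of $\bar K$ would proceed in two steps. First, any $\alpha\in\Aut_\F(Q_i)$ acting trivially on $Q_i/Z(Q_i)$ preserves the commutator $[x,y]$ of elements generating $Q_i$ modulo $Z(Q_i)$, hence fixes $\Phi(Q_i)$ pointwise; thus the subgroup $\bar K_1\le\bar K$ of those elements that in addition centralize $Z(Q_i)/\Phi(Q_i)$ stabilizes the characteristic chain $1\unlhd\Phi(Q_i)\unlhd Z(Q_i)\unlhd Q_i$ and centralizes every factor, so by Lemma~\ref{l:outinj} its preimage lies in $O_3(\Aut(Q_i))$. Hence $\bar K_1$ is a normal $3$-subgroup of $\Out_\F(Q_i)$ and therefore trivial, and $\bar K$ embeds into $\GL(Z(Q_i)/\Phi(Q_i))\cong\GL_2(3)$ via its action on $Z(Q_i)/\Phi(Q_i)$. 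Second, I would show that $\Out_S(Q_i)$, which has order $3$ and is a Sylow $3$-subgroup of $\Out_\F(Q_i)$, acts nontrivially on $Q_i/Z(Q_i)$, i.e. $[S,Q_i]\not\le Z(Q_i)$; this is a property of $S$ alone, readily checked in \textsc{Magma} \cite{magma} and of the same flavour as Lemma~\ref{l:magma}(e),(f). Given this, $\bar K\cap\Out_S(Q_i)=1$, and since $\bar K$ is normal it meets every Sylow $3$-subgroup trivially, so $\bar K$ is a $3'$-group, hence a normal $2$-subgroup by the embedding into $\GL_2(3)$.

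The main obstacle is to rule out $\bar K\ne 1$ at this point, which is where the global hypothesis $O_3(\F)=1$ must enter. I would pass to the model $M$ of the saturated system $N_\F(Q_i)$ furnished by the Model Theorem \cite[Theorem I.4.9]{AKO}: here $S\in\Syl_3(M)$, $Q_i=O_3(M)$, $C_M(Q_i)=Z(Q_i)$ and $M/Q_i\cong\Out_\F(Q_i)$, and $\bar K=L/Q_i$ for $L:=C_M(Q_i/Z(Q_i))\unlhd M$. If $\bar K\ne 1$, then choosing a complement to $Q_i$ in $L$ (Schur--Zassenhaus, as $L/Q_i$ is a $3'$-group) gives a nontrivial coprime action on $Q_i$ that is trivial on $Q_i/Z(Q_i)$ and on $\Phi(Q_i)$ but faithful on $Z(Q_i)/\Phi(Q_i)$, so $J:=[Z(Q_i),L]$ is a nontrivial $M$-normal subgroup of $Z(Q_i)$ with $J\cap\Phi(Q_i)=1$. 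I would then argue that $J$, being normalized by $S$ and by $\Aut_\F(Q_i)$ and contained in $Z(Q_i)$, forces a nontrivial normal subgroup of $\F$, contradicting $O_3(\F)=1$; the cleanest way to close this, consistent with the computational character of this section, is to verify directly in \textsc{Magma} that no such $\F$-admissible automorphism of $Q_i$ exists. This is the delicate point of the argument.

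Finally, once $\bar K=1$ so that $\Out_\F(Q_i)$ embeds in $\GL_2(3)$, the rest is routine. Since $Q_i$ is $\F$-essential, $\Out_\F(Q_i)$ has a strongly $3$-embedded subgroup and hence more than one Sylow $3$-subgroup; any two distinct subgroups of order $3$ in $\GL_2(3)$ generate the unique subgroup $\SL_2(3)$, and $O^{3'}(\GL_2(3))=\SL_2(3)$, so $O^{3'}(\Out_\F(Q_i))\cong\SL_2(3)$. The alternative $\PSL_2(3)\cong\Alt(4)$ is excluded because the centralizer of any noncentral involution $\mathrm{diag}(1,-1)$ is the diagonal torus, so every four-group of $\GL_2(3)$ contains the central involution $-I$; as $-I$ is fixed under conjugation, $\GL_2(3)$ has no subgroup isomorphic to $\Alt(4)$. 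This yields both conclusions, that $O^{3'}(\Out_\F(Q_i))\cong\SL_2(3)$ and that $\Out_\F(Q_i)$ embeds into $\GL_2(3)$.
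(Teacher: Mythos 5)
Your reduction to showing that $\bar K := C_{\Out_\F(Q_i)}(Q_i/Z(Q_i))$ is trivial is sound, as are the observations that $\bar K$ centralizes $\Phi(Q_i)$, that the sub-kernel $\bar K_1$ acting trivially on $Z(Q_i)/\Phi(Q_i)$ dies by Lemma \ref{l:outinj} together with $O_3(\Out_\F(Q_i))=\Inn(Q_i)$, and that $\bar K$ is consequently a normal $2$-subgroup acting faithfully on $Z(Q_i)/\Phi(Q_i)$; the final deduction of $O^{3'}(\Out_\F(Q_i))\cong\SL_2(3)$ from the embedding into $\GL_2(3)$ is also fine. But the step you yourself flag as ``the delicate point'' --- ruling out $\bar K\ne 1$ --- is a genuine gap, and the route you sketch for closing it does not work. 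An appeal to $O_3(\F)=1$ would require showing that $J=[Z(Q_i),L]$ is normal in all of $\F$, not merely in the model of $N_\F(Q_i)$; at this stage the list of $\F$-essential subgroups is not yet known (Proposition \ref{p:justQ1orQ2} is proved \emph{after} this lemma and uses it), so you cannot conclude that $J$ is invariant under $\Aut_\F(Q_{3-i})$ or under the automizers of potential essential subgroups properly contained in $Q_1$ or $Q_2$. Deferring the remainder to an unspecified \textsc{Magma} check of ``$\F$-admissible'' automorphisms is not a proof. Note also that a strongly $3$-embedded subgroup alone cannot exclude a normal $2$-subgroup ($\SL_2(3)$ itself has one), so some genuinely new input is needed here.

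The paper supplies that input purely locally, using only the $S$-action on $Q_i$ recorded in Lemma \ref{l:magma}(e),(f), and never invokes $O_3(\F)=1$. With $X$ the preimage of $\bar K$ in $\Aut_\F(Q_i)$, coprime action gives $Q_i/\Phi(Q_i)=[Q_i/\Phi(Q_i),X]\times C_{Q_i/\Phi(Q_i)}(X)$ with $1\ne[Q_i/\Phi(Q_i),X]=[Z(Q_i)/\Phi(Q_i),X]\le Z(Q_i)/\Phi(Q_i)$, and this decomposition is $\Aut_S(Q_i)$-invariant because $X\unlhd\Aut_\F(Q_i)$. If $C_{Q_i/\Phi(Q_i)}(X)$ meets $Z(Q_i)/\Phi(Q_i)$ nontrivially, then $Z(Q_i)/\Phi(Q_i)$ splits as a product of two $\Aut_S(Q_i)$-invariant subgroups of order $3$, forcing $[Z(Q_i),S]\le\Phi(Q_i)$ against Lemma \ref{l:magma}(f); otherwise both factors have order $9$ and are $\Aut_S(Q_i)$-invariant, forcing $[Q_i,S,S]\le\Phi(Q_i)$ against Lemma \ref{l:magma}(e). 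This dichotomy is the idea missing from your argument, and inserting it in place of your third paragraph would complete the proof.
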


\begin{proof} We may as well suppose that $i=1$. Let $X= C_{\Aut_\F(Q_1)}(Q_1/Z(Q_1))$, we will show that $X= \Inn(Q_1)$. Since $\Out_S(Q_1)$ acts faithfully on $Q_1/Z(Q_1)$ and $\Out_S(Q_1)$ has order $3$, we have $X/\Inn(Q_1)$ has $3'$-order. Thus $$Q_1/\Phi(Q_1) = [Q_1/\Phi(Q_1),X] \times C_{Q_1/\Phi(Q_1)}(X)$$ by coprime action.  Since $X$ is normal in $\Aut_\F(Q_1)$, this is a non-trivial  decomposition which is $\Aut_S(Q_1)$-invariant.   Suppose that $C_{Q_1/\Phi(Q_1)}(X) \cap Z(Q_1)/\Phi(Q_1) \ne 1$.  Then $$Z(Q_1)/\Phi(Q_1) =  (C_{Q_1/\Phi(Q_1)}(X) \cap Z(Q_1)/\Phi(Q_1))\times [Q_1/\Phi(Q_1),X]$$ as $[Q_1/\Phi(Q_1),X]=[Z(Q_1)/\Phi(Q_1),X]$. This decomposition is also non-trivial and we deduce that $\Aut_S(Q_1)$ centralizes $Z(Q_1)/\Phi(Q_1)$ which contradicts Lemma \ref{l:magma} (f).  Hence $C_{Q_1/\Phi(Q_1)}(X) \cap Z(Q_1)/\Phi(Q_1) =1$ and so $[Q_1/\Phi(Q_1),X]=Z(Q_1)/\Phi(Q_1)$.
Now  $$|C_{Q_1/\Phi(Q_1)}(X)|=|[Q_1/\Phi(Q_1),X]|=9.$$ Thus $[Q_1,S,S]\le \Phi(Q_1)$ against Lemma \ref{l:magma}(e). Hence $X= \Inn(Q_1)$.  Therefore, $\Out_\F(Q_1)$ is isomorphic to a subgroup of $\GL_2(3)$ and, as $\Out_S(Q_1)$ is not normal in $\Out_\F(Q_1)$, this proves the lemma.
\end{proof}

\begin{Prop} \label{p:justQ1orQ2} If $E \le S$ is  $\F$-essential, then $E\in\{Q_1,Q_2\}$.
\end{Prop}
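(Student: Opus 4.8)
The plan is to apply Lemma~\ref{l:g231} to reduce to the case $E\le Q_1$ (the case $E\le Q_2$ being entirely symmetric), and then to rule out a \emph{proper} essential subgroup $E<Q_1$. Assuming $E<Q_1$ is $\F$-essential, I would first invoke Lemma~\ref{l:qiess} to deduce that $Q_1$ is itself $\F$-essential, and then Lemma~\ref{l:QiessStruct} to obtain that $\overline L:=O^{3'}(\Out_\F(Q_1))\cong\SL_2(3)$ acts faithfully on the two-dimensional $\mathbb F_3$-space $Q_1/Z(Q_1)$ as the natural module. Meanwhile Lemma~\ref{l:MCLapp} tells us that $E$ is elementary abelian of order $3^4$; since $E$ is $\F$-centric and $E\le Q_1$ we have $Z(Q_1)\le C_S(E)=Z(E)\le E$, and as $|Z(Q_1)|=3^3$ this means $E/Z(Q_1)$ is a line (one-dimensional subspace) of $Q_1/Z(Q_1)$.

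The heart of the argument is a counting-and-transitivity observation. The four lines of $Q_1/Z(Q_1)$ are in bijection with the four subgroups of $Q_1$ of order $3^4$ containing $Z(Q_1)$; because $Q_1\cong 3^2\times 3^{1+2}$ has exponent $3$ (Lemma~\ref{l:magma}(a)) and each such subgroup is generated by $Z(Q_1)$ together with a single further element, every one of them is elementary abelian. The subgroup $Q_1\cap Q_2$ is one of these four: it has order $3^4$, is abelian, and contains $Z(Q_1)$ (as recorded in the proof of Lemma~\ref{l:g231}). Crucially, $\overline L\cong\SL_2(3)$ is transitive on the four lines, since its induced action on $\mathbb P^1(\mathbb F_3)$ factors through $\PSL_2(3)\cong\Alt(4)$, which is transitive on four points.

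With this in hand I would finish as follows. By Lemma~\ref{l:q1q2not}, $Q_1\cap Q_2$ is not $\F$-essential, so $E\neq Q_1\cap Q_2$; since the only $S$-invariant line of $Q_1/Z(Q_1)$ is $(Q_1\cap Q_2)/Z(Q_1)$ (a nontrivial order-$3$ element of $\GL_2(3)$ fixes a unique line), the subgroup $E$ is not normal in $S$, whence $|N_S(E)|\le 3^5$. Choosing an element of $\overline L$ carrying the line $E/Z(Q_1)$ to $(Q_1\cap Q_2)/Z(Q_1)$ and lifting it to $\alpha\in\Aut_\F(Q_1)$ (which fixes the characteristic subgroup $Z(Q_1)$), we obtain $E\alpha=Q_1\cap Q_2$, so $Q_1\cap Q_2\in E^\F$. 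But $Q_1\cap Q_2\unlhd S$ gives $|N_S(Q_1\cap Q_2)|=|S|=3^6>|N_S(E)|$, contradicting the requirement that an $\F$-essential subgroup be fully $\F$-normalized. Hence $E=Q_1$, and symmetrically $E=Q_2$ when $E\le Q_2$; thus $E\in\{Q_1,Q_2\}$.

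I expect the only delicate point to be the transitivity step together with the identification of $Q_1\cap Q_2$ as one of the four relevant subgroups; once the module structure of $Q_1/Z(Q_1)$ under $\overline L\cong\SL_2(3)$ and the location of the unique $S$-fixed line are pinned down, the contradiction via full $\F$-normalization is immediate, and everything else is bookkeeping with the structure $Q_1\cong 3^2\times 3^{1+2}$ supplied by Lemma~\ref{l:magma}.
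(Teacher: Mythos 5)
Your proposal is correct and follows essentially the same route as the paper: reduce via Lemma~\ref{l:g231} to $E<Q_1$, use Lemmas~\ref{l:qiess} and \ref{l:QiessStruct} to get $Q_1$ essential with $O^{3'}(\Out_\F(Q_1))\cong\SL_2(3)$ acting transitively on the four maximal subgroups of $Q_1$ containing $Z(Q_1)$, and conclude that $E$ is $\F$-conjugate to the normal subgroup $Q_1\cap Q_2$, contradicting full $\F$-normalization. You merely make explicit a few points the paper leaves implicit (the appeal to Lemma~\ref{l:q1q2not} to exclude $E=Q_1\cap Q_2$, the transitivity of $\SL_2(3)$ on the four lines, and the bound $|N_S(E)|\le 3^5$), all of which check out.
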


\begin{proof} Suppose that $E$ is  $\F$-essential and that $E \not \in \{Q_1,Q_2\}$.  Then by Lemma~\ref{l:g231}, without loss of generality we may assume  $Z(Q_1)<E < Q_1$. Then by Lemma~\ref{l:qiess}, $Q_1 $ is $\F$-essential and, by Lemma~\ref{l:QiessStruct}, $\Aut_\F(Q_1)$   acts transitively on the maximal subgroups  of $Q_1$ containing $Z(Q_1)$. In particular, $E$ is $\F$-conjugate to $Q_1 \cap Q_2$ and this contradicts $E$ being fully $\F$-normalized.  This contradiction shows that if $E$ is $\F$-essential, then $E \in\{Q_1,Q_2\}$.
  \end{proof}

\begin{Lem}\label{l:structg23} We have $\Aut_\F(Q_1) \cong \GL_2(3)\cong \Aut_\F(Q_2)$ and $\Out_\F(S)$ is either elementary abelian of order $4$ or dihedral of order $8$.
\end{Lem}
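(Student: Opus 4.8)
The plan is to first pin down the essential subgroups, then force $\Out_\F(Q_i)\cong\GL_2(3)$, and finally read off $\Out_\F(S)$.

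First I would record that $\E=\{Q_1,Q_2\}$. By Proposition~\ref{p:justQ1orQ2} we have $\E\subseteq\{Q_1,Q_2\}$. If $\E=\emptyset$ then $\F=N_\F(S)$ and $O_3(\F)=S\ne1$, while if $\E=\{Q_i\}$ then $\Aut_\F(S)$ permutes $\E$ and so fixes the unique essential subgroup $Q_i$; as $Q_i$ is then contained in every essential subgroup and is invariant under $\Aut_\F(P)$ for $P\in\E\cup\{S\}$, we get $Q_i\le O_3(\F)=1$. Both alternatives are absurd, so $Q_1,Q_2\in\E$. Now Lemma~\ref{l:QiessStruct} gives that each $\Out_\F(Q_i)$ embeds in $\GL_2(3)$ with $O^{3'}(\Out_\F(Q_i))\cong\SL_2(3)$, so $\Out_\F(Q_i)$ is one of the two subgroups $\SL_2(3)$ or $\GL_2(3)$ of $\GL_2(3)$ containing $\SL_2(3)$. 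The entire task is to exclude $\SL_2(3)$.

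The mechanism is restriction. Since $Q_i\trianglelefteq S$ we have $N_S(Q_i)=S$, so saturation extends every element of $N_{\Aut_\F(Q_i)}(\Aut_S(Q_i))$ to an automorphism of $S$ normalizing $Q_i$. I would show that restriction induces an \emph{isomorphism} $N_{\Out_\F(S)}(Q_i)\cong N_{\Out_\F(Q_i)}(\Out_S(Q_i))/\Out_S(Q_i)$. Surjectivity is exactly the extension statement just quoted. For injectivity, an element of the kernel is represented by $\alpha\in\Aut_\F(S)$ fixing $Q_i$ pointwise; Lemma~\ref{l.CA} with $X=S$, $Y=Q_i$ gives $[S,\alpha]\le C_S(Q_i)=Z(Q_i)$, so $\alpha$ is trivial on $S/Q_i$ and on every factor of the chain $1\le Z(Q_i)\le Q_i\le S$, whence $\alpha\in O_3(\Aut(S))$ by Lemma~\ref{l:outinj}. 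Thus the kernel is a $3$-group; but the domain $N_{\Out_\F(S)}(Q_i)\le\Out_\F(S)\le\Dih(8)$ is a $2$-group by Lemma~\ref{l:AutSG23}(a), so the kernel is trivial.

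Next I would track the central involution $-1$ of $O^{3'}(\Out_\F(Q_i))\cong\SL_2(3)$. It lies in $N_{\Out_\F(Q_i)}(\Out_S(Q_i))$ and, not lying in the $3$-group $\Out_S(Q_i)$, has image of order $2$ in the quotient above, corresponding via the isomorphism to an involution $\tau_i\in N_{\Out_\F(S)}(Q_i)$. Because $-1$ is the scalar $-1$ on $Q_i/Z(Q_i)$ and $Z(Q_i)\le\Phi(S)$, the element $\tau_i$ acts as $-1$ on $Q_i/\Phi(S)$; and because $-1$ is central in $\Out_\F(Q_i)$ it centralizes $\Out_S(Q_i)\cong S/Q_i$, so $\tau_i$ acts trivially on $S/Q_i$. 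Since $Q_1\cap Q_2=\Phi(S)$ we have $S/\Phi(S)=Q_1/\Phi(S)\oplus Q_2/\Phi(S)$ and $S/Q_i\cong Q_j/\Phi(S)$ for $j\ne i$, so in this decomposition $\tau_1=\mathrm{diag}(-1,1)$ and $\tau_2=\mathrm{diag}(1,-1)$. By Lemma~\ref{l:AutSG43}(b) one has $N_{\Out_\F(S)}(Q_1)=N_{\Out_\F(S)}(Q_2)=:N$, so $\tau_1,\tau_2\in N$; as $N$ stabilizes both $Q_1$ and $Q_2$ it acts by diagonal matrices on $S/\Phi(S)$, and it embeds there by Lemma~\ref{l:burnside}. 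Hence $N=\langle\tau_1,\tau_2\rangle$ is elementary abelian of order $4$. Feeding $|N|=4$ back through the isomorphism of the previous paragraph makes $N_{\Out_\F(Q_i)}(\Out_S(Q_i))/\Out_S(Q_i)$ have order $4$, which rules out $\SL_2(3)$ (where it has order $2$) and forces $\Out_\F(Q_i)\cong\GL_2(3)$ for $i=1,2$. Finally $N$ has index at most $2$ in $\Out_\F(S)$ by Lemma~\ref{l:AutSG23}(b), so $|\Out_\F(S)|\in\{4,8\}$; as $\Out_\F(S)\le\Dih(8)$ by Lemma~\ref{l:AutSG23}(a), either $\Out_\F(S)=N$ is elementary abelian of order $4$ or $\Out_\F(S)=\Dih(8)$.

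The step I expect to be the main obstacle is the identification in the third paragraph: one must be sure that the $2$-element $\tau_i$ restricts to the semisimple involution $-1$ (and not to $-1$ times a nontrivial unipotent in the coset $-1\cdot\Out_S(Q_i)$), which uses that $\tau_i$ has order $2$ while the other elements of that coset have order $6$; the small bookkeeping facts $Z(Q_i)\le\Phi(S)$ and $Q_1\cap Q_2=\Phi(S)$ also have to be verified (readily from Lemma~\ref{l:magma}) so that the two restricted actions assemble to the claimed diagonal matrices.
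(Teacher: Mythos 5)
Your proposal is correct, and its engine is the same as the paper's: both arguments lift the central involutions of the two copies of $O^{3'}(\Out_\F(Q_i))\cong\SL_2(3)$ to involutions of $S$ and observe that, on $S/\Phi(S)=Q_1/\Phi(S)\oplus Q_2/\Phi(S)$, these act as $\mathrm{diag}(-1,1)$ and $\mathrm{diag}(1,-1)$, which produces the elementary abelian subgroup of order $4$ in $\Out_\F(S)$. Where you genuinely diverge is in how $\Out_\F(Q_i)\cong\GL_2(3)$ is then extracted. The paper restricts the lift $\hat\tau_2$ of the involution attached to $Q_2$ back down to $Q_1$ and asserts that $\langle \hat\tau_2|_{Q_1}, O^{3'}(\Aut_\F(Q_1))\rangle/\Inn(Q_1)\cong\GL_2(3)$; this implicitly requires that $\hat\tau_2|_{Q_1}$ have determinant $-1$ on $Q_1/Z(Q_1)$, i.e.\ some control of its action on $(Q_1\cap Q_2)/Z(Q_1)$, a point the paper does not spell out. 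You instead prove that restriction induces an isomorphism $N_{\Out_\F(S)}(Q_i)\cong N_{\Out_\F(Q_i)}(\Out_S(Q_i))/\Out_S(Q_i)$ --- surjectivity from receptivity of $Q_i$, injectivity from Lemmas~\ref{l.CA}, \ref{l:outinj} and \ref{l:burnside} together with $\Out_\F(S)$ being a $2$-group --- and then separate $\SL_2(3)$ from $\GL_2(3)$ by the order ($2$ versus $4$) of the normalizer of a Sylow $3$-subgroup modulo that Sylow $3$-subgroup. This costs you the extra injectivity argument but spares you the determinant computation, and it is a sound alternative that reaches the same conclusion (note the target of the lemma is really $\Out_\F(Q_i)\cong\GL_2(3)$, which is what you and the paper's own proof establish). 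Two small repairs are needed: the citation of ``Lemma AutSG43(b)'' is a mistyped reference to Lemma~\ref{l:AutSG23}(b); and the bookkeeping facts $\Phi(S)=Q_1\cap Q_2$, $Z(Q_i)\le \Phi(S)$ and $C_S(Q_i)=Z(Q_i)$, which you rightly flag, are all true (the last because $Q_i$ is $\F$-centric, the others being used without comment in the proof of Lemma~\ref{l:g231}) but are not literally itemized in Lemma~\ref{l:magma}, so they should be verified or cited explicitly.
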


\begin{proof} By Proposition~\ref{p:justQ1orQ2}, we may assume that $Q_1$ is $\F$-essential. If $Q_1^\F$ is the only class of essential subgroups, then, as $O_3(\F)= 1$, we must have that $\Aut_\F(S)$ has an element $\alpha$ which does not normalize $Q_1$. But then $Q_1\alpha = Q_2$.  It follows that either $Q_1$ and $Q_2$ are not $\F$-conjugate and are both $\F$-essential or that they are $\F$-conjugate.  Thus, by Lemma~\ref{l:QiessStruct}, $$O^{3'}(\Out_\F(Q_1)) \cong    O^{3'}(\Out_\F(Q_2))
\cong \SL_2(3).$$ For  $1\le i\le 2$, let $\tau_i \in \Aut_\F(Q_i)$ project to an involution in $Z(O^{3'}(\Out_\F(Q_i)))$. Then $\tau_i$ lifts to $\hat \tau_i \in \Aut_\F(S_i)$ of order $2$. Furthermore, on $S/(Q_1\cap Q_2)$, these maps normalize both $Q_1/(Q_1\cap Q_2)$  and $Q_2/(Q_1\cap Q_2)$ with  $\hat \tau_1$ centralizing $S/Q_1$ and inverting $Q_1/(Q_1 \cap Q_2)$ whereas $\hat \tau_2$ centralizes $S/Q_2$ and inverts $Q_2/(Q_1\cap Q_2)$ by Lemma \ref{l:QiessStruct}. It follows that $\Out_\F(S) \ge \langle \hat \tau_1,\hat \tau_2\rangle \Inn(S)/\Inn(S)$ which is elementary abelian of order $4$. Thus \begin{eqnarray*}\Out_\F(Q_1)&=&\langle \hat \tau_2|_{Q_1}, O^{3'}(\Aut_\F(Q_1))\rangle/\Inn(Q_1) \cong \GL_2(3), \text{ and }\\
\Out_\F(Q_2)&=&
 \langle \hat \tau_1|_{Q_2}, O^{3'}(\Aut_\F(Q_2))\rangle / \Inn(Q_2)\cong \GL_2(3).\end{eqnarray*} Finally, either $\Out_\F(S) = \langle \hat \tau_1,\hat \tau_2\rangle \Inn(S)/\Inn(S)$ or $\Out_\F(S) \cong \Dih(8)$ by Lemma~\ref{l:AutSG23}.

\end{proof}

\begin{proof}[Proof of Theorem~\ref{t:G23Thm}]  By Lemmas~\ref{l:AutSG23} and \ref{l:structg23}   $\Aut_\F(S)$ has a subgroup $ \Aut_\F^0(S)$ of index at most $2$ which has order $2^23^4$ with elementary abelian Sylow $2$-subgroups. This subgroup normalizes both $Q_1$ and $Q_2$ and is uniquely determined up to conjugacy in $\Aut(S)$. We fix it once and for all. Let $N = N_{\Aut(S)}(\Aut_\F^0(S))$. Then $N$ has a subgroup $N^0$ of index $2$ which normalizes both $Q_1$ and $Q_2$.  Now we calculate using {\sc Magma} that, for $i=1,2$,  the restriction $K$ of   $\Aut_\F^0(S) $ to $Q_i$ is contained in exactly three subgroups $X$ of $\Aut(Q_i)$ containing $\Inn(Q_i)$ which have $X/\Inn(Q_i) \cong \GL_2(3)$. Since $ K$ must coincide with $N_{\Aut_\F(Q_i)}(\Aut_S(Q_i)) \sim 3^{1+2}_+.2^2$ we see, using Lemma~\ref{l:structg23}, that there are exactly three candidates for the subgroup $\Aut_\F(Q_1)$ of $\Aut(Q_1)$ and also three candidates for the subgroup $\Aut_\F(Q_2)$ of $\Aut(Q_2)$. Next we calculate that $N_0$ restricted to $Q_1$ conjugates these three candidates for $\Aut_\F(Q_1)$ together and thus we have a subgroup $N^1$ of index $3$ in $N_0$ which normalizes $\Aut_\F(Q_1)$, $\Aut_\F^0(S)$. We calculate that the restriction of $N^1$ to $Q_2$ acts transitively on the three candidates for $\Aut_\F(Q_2)$. Thus the triple $\Aut_\F^0(S)$, $\Aut_\F(Q_1)$, $\Aut_\F(Q_2)$ is uniquely determine up to $\Aut(S)$ conjugacy. If $\Out_\F(S)$ has order $4$, then  $\Aut_\F^0(S)= \Aut_\F(S)$ and $\F$ is uniquely determined up to isomorphism.  If $\Aut_\F(S)> \Aut_\F(S)$, we check that $\Out_\F^0(S)$ is contained in a unique subgroup of order $8$ which conjugates $\Aut_\F(Q_1)$ to $\Aut_\F(Q_2)$. This proves that  there are exactly two saturated fusion systems on $S$ up to isomorphism.  Since $\F_S({\Gee_2(3)})$ and $\F_S({\Aut(\Gee_2(3))})$ provide examples of fusion systems, we have completed the proof of the theorem.
\end{proof}

\section{Appendix}
\subsection{Construction of a Sylow $p$-subgroup of $\Gee_2(q)$}
Following Wilson \cite{Wilson}, we construct the Sylow $p$-subgroup $U$ of  $\Gee_2(q)$. Let $\mathbb{F}$ denote a finite field of order $q=p^f$ for some prime $p \ge 3$ and let  $\mathbb{O}=\mathbb{F}[i_0,i_1,i_2,i_3,i_4,i_5,i_6]$ be the octonion algebra where, taking subscripts modulo 7, $i_t,i_{t+1},i_{t+3}$ satisfy same multiplication rules as $i,j,k$ in the quaternions. For completeness, we give the multiplication table below:

\begin{center}
\label{my-label}
\begin{tabular}{l|rrrrrrr|}
  & $i_0$  & $i_1$  & $i_2$  & $i_3$  & $i_4$  & $i_5$  & $i_6$  \\ \hline
$i_0$ & $-1$   & $i_3$  & $i_6$  & $-i_1$ & $i_5$  & $-i_4$ & $-i_2$ \\
$i_1$ & $-i_3$ & $-1$   & $i_4$  & $i_0$  & $-i_2$ & $i_6$  & $-i_5$ \\
$i_2$ & $-i_6$ & $-i_4$ & $-1$   & $i_5$  & $i_1$  & $-i_3$ & $i_0$  \\
$i_3$ & $i_1$  & $-i_0$ & $-i_5$ & $-1$   & $i_6$  & $i_2$  & $-i_4$ \\
$i_4$ & $-i_5$ & $i_2$  & $-i_1$ & $-i_6$ & $-1$   & $i_0$  & $i_3$  \\
$i_5$ & $i_4$  & $-i_6$ & $i_3$  & $-i_2$ & $-i_0$ & $-1$   & $i_1$  \\
$i_6$ & $i_2$  & $i_5$  & $-i_0$ & $i_4$  & $-i_3$ & $-i_1$ & $-1$   \\ \hline
\end{tabular}

\end{center}

Wilson  defines a additional  basis for $\mathbb{O}$ as follows:  first choose $a,b \in \mathbb{F}$ such that $a^2+b^2=-1$, $b \ne 0$ and define a new basis $\{y_1,y_2,\ldots,y_8\}$ of $\mathbb{O}$ by setting:

$$\begin{array}{cc}
2y_1=i_4+ai_6+bi_0, & 2y_8=i_4-ai_6-bi_0 \\
2y_2=i_2+bi_3+ai_5, & 2y_7=i_2-bi_3-ai_5, \\
2y_3=i_1-bi_6+ai_0, & 2y_6=i_1+bi_6-ai_0, \\
2y_4=1+ai_3-bi_5, & 2y_5=1-ai_3+bi_5.
\end{array}$$

The new multiplication table is given on \cite[p. 123]{Wilson} and involves coefficients  $\pm 1$.  Moreover, on  \cite[p. 124]{Wilson} Wilson gives a maximal unipotent subgroup $U$ of $\Gee_2(q)$   in terms of its action with respect to this basis. Let   $\mathcal{R}=\{\alpha, \beta, \alpha+\beta, \alpha+2\beta, \alpha+3\beta, 2\alpha+3\beta\}$ be a set of positive roots for the $\Gee_2$ root system. Then,  for $\lambda \in \mathbb F$, $U$ is generated by the  matrices
$$\begin{array}{rrr}
x_{2\alpha+3\beta}(\lambda)= \left( \begin{smallmatrix}
1 & 0 & 0 & 0 & 0 & 0 & 0 & 0  \\
0 & 1 & 0 & 0 & 0 & 0 & 0 & 0  \\
0 & 0 & 1 & 0 & 0 & 0 & 0 & 0  \\
0 & 0 & 0 & 1 & 0 & 0 & 0 & 0  \\
0 & 0 & 0 & 0 & 1 & 0 & 0 & 0  \\
0 & 0 & 0 & 0 & 0 & 1 & 0 & 0  \\
-\lambda & 0 & 0 & 0 & 0 & 0 & 1 & 0  \\
0 & \lambda & 0 & 0 & 0 & 0 & 0 & 1  \\
 \end{smallmatrix} \right), & x_{\alpha+3\beta}(\lambda)= \left( \begin{smallmatrix}
1 & 0 & 0 & 0 & 0 & 0 & 0 & 0  \\
0 & 1 & 0 & 0 & 0 & 0 & 0 & 0  \\
0 & 0 & 1 & 0 & 0 & 0 & 0 & 0  \\
0 & 0 & 0 & 1 & 0 & 0 & 0 & 0  \\
0 & 0 & 0 & 0 & 1 & 0 & 0 & 0  \\
-\lambda & 0 & 0 & 0 & 0 & 1 & 0 & 0  \\
0 & 0 & 0 & 0 & 0 & 0 & 1 & 0  \\
0 & 0 & \lambda & 0 & 0 & 0 & 0 & 1  \\
 \end{smallmatrix} \right), & x_{\alpha+2\beta}(\lambda)= \left( \begin{smallmatrix}
1 & 0 & 0 & 0 & 0 & 0 & 0 & 0  \\
0 & 1 & 0 & 0 & 0 & 0 & 0 & 0  \\
0 & 0 & 1 & 0 & 0 & 0 & 0 & 0  \\
-\lambda & 0 & 0 & 1 & 0 & 0 & 0 & 0  \\
\lambda & 0 & 0 & 0 & 1 & 0 & 0 & 0  \\
0 & -\lambda & 0 & 0 & 0 & 1 & 0 & 0  \\
0 & 0 & \lambda & 0 & 0 & 0 & 1 & 0  \\
\lambda^2 & 0 & 0 & -\lambda& \lambda & 0 & 0 & 1  \\
 \end{smallmatrix} \right), \\
&& \\
x_{\alpha+\beta}(\lambda)= \left( \begin{smallmatrix}
1 & 0 & 0 & 0 & 0 & 0 & 0 & 0  \\
0 & 1 & 0 & 0 & 0 & 0 & 0 & 0  \\
-\lambda & 0 & 1 & 0 & 0 & 0 & 0 & 0  \\
0 & -\lambda & 0 & 1 & 0 & 0 & 0 & 0  \\
0 & \lambda & 0 & 0 & 1 & 0 & 0 & 0  \\
0 & 0 & 0 & 0 & 0 & 1 & 0 & 0  \\
0 & \lambda^2 & 0 & -\lambda & \lambda & 0 & 1 & 0  \\
0 & 0 & 0 & 0 & 0 & \lambda & 0 & 1  \\
 \end{smallmatrix} \right), & x_{\beta}(\lambda)= \left( \begin{smallmatrix}
1 & 0 & 0 & 0 & 0 & 0 & 0 & 0  \\
0 & 1 & 0 & 0 & 0 & 0 & 0 & 0  \\
0 & -\lambda & 1 & 0 & 0 & 0 & 0 & 0  \\
0 & 0 & 0 & 1 & 0 & 0 & 0 & 0  \\
0 & 0 & 0 & 0 & 1 & 0 & 0 & 0  \\
0 & 0 & 0 & 0 & 0 & 1 & 0 & 0  \\
0 & 0 & 0 & 0 & 0 & \lambda & 1 & 0  \\
0 & 0 & 0 & 0 & 0 & 0 & 0 & 1  \\
 \end{smallmatrix} \right), & x_{\alpha}(\lambda)= \left( \begin{smallmatrix}
1 & 0 & 0 & 0 & 0 & 0 & 0 & 0  \\
-\lambda& 1 & 0 & 0 & 0 & 0 & 0 & 0  \\
0 & 0 & 1 & 0 & 0 & 0 & 0 & 0  \\
0 & 0 & \lambda & 1 & 0 & 0 & 0 & 0  \\
0 & 0 & -\lambda & 0 & 1 & 0 & 0 & 0  \\
0 & 0 & \lambda^2 & \lambda & -\lambda & 1 & 0 & 0  \\
0 & 0 & 0 & 0 & 0 & 0 & 1 & 0  \\
0 & 0 & 0 & 0 & 0 & 0 & \lambda& 1  \\
 \end{smallmatrix} \right).
\end{array}
$$
 For $\theta \in \mathcal R$, the elements $x_\theta(\lambda)$, $\lambda \in \mathbb F$ generate the root group corresponding to $\theta$.  It is straightforward to verify that the following relations hold for all $\lambda, \mu \in \mathbb F$:
$$\begin{array}{rl}
\left[x_\beta(\lambda), x_\alpha(\mu) \right] & = x_{2\alpha+3\beta}(2\mu^3\lambda^2)   x_{\alpha+3\beta}(-\mu^3\lambda)   x_{\alpha+2\beta}(\mu^2 \lambda)   x_{\alpha+\beta}(-\mu\lambda) \\

\left[x_{\alpha+\beta}(\lambda), x_\alpha(\mu)\right] & = x_{2\alpha+3\beta}(-3\mu\lambda^2)    x_{\alpha+3\beta}(3\mu^2\lambda) x_{\alpha+2\beta}(-2\mu\lambda) \\

\left[x_{\alpha+2\beta}(\lambda), x_\alpha(\mu)\right] & = x_{\alpha+3\beta}(-3\mu\lambda) \\

\left[x_{\alpha+3\beta}(\lambda),x_\beta(\mu) \right] & = x_{2\alpha+3\beta}(3\mu\lambda) \\
\left[x_{\alpha+2\beta}(\lambda),x_{\alpha+\beta}(\mu) \right] & = x_{2\alpha+3\beta}(-\mu\lambda) \\

\left[x_r(\lambda),x_s(\mu)\right] & = 1 \mbox{ for all other } \{r,s\} \subset \mathcal{R}.
\end{array}$$

Recall the definitions of the elements $x_1(\lambda), x_2(\lambda),\ldots, x_6(\lambda)$ in Section \ref{ss:constructs}.

\begin{Prop}
Let $\phi: \mathcal{R} \rightarrow \{1,2,3,4,5,6\}$ be the mapping which sends the tuple of elements $(\alpha, \beta, \alpha+\beta, \alpha+2\beta, \alpha+3\beta, 2\alpha+3\beta) \mbox{ to } (1,2,3,4,5,6).$  Then the induced map $U \rightarrow S$ given for each $\lambda \in \mathbb{F}$ by $x_r(\lambda) \mapsto x_{r\phi}(\lambda)$ is an isomorphism.
\end{Prop}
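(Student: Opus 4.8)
The plan is to realize $\phi$ as an isomorphism by checking that the generators $x_{r\phi}(\lambda)$ of $S$ satisfy precisely the defining relations of $U$, and then comparing orders. First I would record that $U$ is generated by the six root subgroups $X_\theta=\{x_\theta(\lambda)\mid\lambda\in\mathbb F\}$, $\theta\in\mathcal R$, subject to the additivity relations $x_\theta(\lambda)x_\theta(\mu)=x_\theta(\lambda+\mu)$ together with the commutator relations displayed in the Appendix. Since $U$ is the unipotent radical of a Borel subgroup of the split group $\Gee_2(q)$, these Chevalley relations constitute a presentation of $U$; consequently, to obtain a homomorphism $U\to S$ with $x_\theta(\lambda)\mapsto x_{\theta\phi}(\lambda)$ it suffices to verify that the images of these relations hold among $x_1(\lambda),\dots,x_6(\lambda)$ in $S$.

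The additivity relations are immediate. For $2\le j\le 5$ the element $x_j(\lambda)$ lies in $Q=(V\times\mathbb F,\ast)$ with trivial second coordinate and first coordinate a scalar multiple of a single basis vector $w\in\{X^3,X^2Y,XY^2,Y^3\}$, and since $\beta(w,w)=0$ (the form $\beta$ being alternating) one reads off $x_j(\lambda)\ast x_j(\mu)=x_j(\lambda+\mu)$; for $j=6$ the subgroup is central, and for $j=1$ it follows from $S_0\cong(\mathbb F,+)$ inside $L$. The commutator relations split into two families. Those taking values in the centre $Z=\langle x_6\rangle$, namely the commutators of pairs drawn from $x_2,\dots,x_5$, are computed inside $Q$ from the identity $[(v,0),(w,0)]=(0,2\beta(v,w))$, so they are governed entirely by the values $\beta(X^aY^b,X^cY^d)=(-1)^a/\binom{3}{a}$. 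The remaining relations involve $x_1(\mu)=(1,\left(\begin{smallmatrix}1&0\\\mu&1\end{smallmatrix}\right))$ and are computed from the action \eqref{action}, under which $X^aY^b\mapsto X^a(\mu X+Y)^b$; expanding $(\mu X+Y)^b$ by the binomial theorem and collecting coordinates reproduces the polynomial coefficients $-\mu\lambda,\ \mu^2\lambda,\ -\mu^3\lambda,\ 2\mu^3\lambda^2,\dots$ occurring on the right-hand sides in the Appendix.

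Once $x_1(\lambda),\dots,x_6(\lambda)$ are known to satisfy the relations defining $U$, the assignment $x_\theta(\lambda)\mapsto x_{\theta\phi}(\lambda)$ extends to a homomorphism $U\to S$ whose image contains every generator $x_1,\dots,x_6$ of $S$, hence is surjective. I would then finish with an order count: $U$ has order $q^{6}$ because $\Gee_2$ has six positive roots and the product map on the root subgroups is a bijection, while $|S|=|S_0|\,|Q|=q\cdot q^{5}=q^{6}$ since $S_0\cap Q=1$. A surjection between finite groups of equal order is an isomorphism, which is exactly the assertion.

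The main obstacle is the bookkeeping in the commutator verifications: one must match the structure constants on the two sides with the correct signs, which requires careful attention to the chosen commutator convention, to the alternating coefficients $(-1)^a/\binom{3}{a}$ in the central relations, and to the binomial coefficients produced by the $\GL_2$-action in the remaining ones. A subsidiary point, which is what licenses the reduction from constructing a homomorphism to checking finitely many identities, is the assertion that the Appendix relations really do present $U$; this is most cleanly justified by appealing to the Steinberg presentation of the positive unipotent part of a split group of type $\Gee_2$, rather than re-deriving it from Wilson's matrices.
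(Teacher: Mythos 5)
Your proposal is sound and is essentially the ``routine check'' that the paper leaves entirely unwritten (its proof of this Proposition is the single sentence ``This is routine to check''): one verifies the additivity and commutator relations among $x_1(\lambda),\dots,x_6(\lambda)$ inside $S$, obtains a surjective homomorphism $U\to S$, and compares orders $|U|=q^6=|S|$. The one step deserving explicit justification is your claim that the listed relations present $U$: rather than citing the abstract Steinberg presentation, it is cleaner to use the collection argument --- every word in the generators can be rewritten in the normal form $\prod_{\theta\in\mathcal{R}}x_\theta(\lambda_\theta)$ using only the given relations, so the abstractly presented group has order at most $q^{6}$ and therefore maps isomorphically onto $U$ --- and to read the structure constants directly off Wilson's matrices, since his normalisation of the root elements (and hence the precise coefficients, e.g.\ in $[x_{\alpha+3\beta}(\lambda),x_\beta(\mu)]$) need not coincide with those of a standard Chevalley basis, so the relations to be verified in $S$ must be the ones the matrices actually satisfy.
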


\begin{proof}
This is routine to check.
\end{proof}

\end{document}